\newtheorem{thm}{Theorem}[section]
\newtheorem{prop}[thm]{Proposition}
\newtheorem{lemma}[thm]{Lemma}
\newtheorem{cor}[thm]{Corollary}
\def\operatorname#1{\mathop{\operator@font #1}\nolimits}%
\newenvironment{proof}[1][{}]{{ \textsc{Proof{#1}:~}}}{{\hspace*{\fill}$\square$\\}}
\begin{document}

\setcounter{page}{1}
\pagestyle{plain}

\title{Intrinsic reflections in Coxeter systems}
\author{Bernhard M\"uhlherr${}^1$ \and Koji Nuida${}^{23}$}
\date{\normalsize ${}^1$ Mathematisches Institut, JLU Giessen \\
${}^2$ Information Technology Research Institute, National Institute of Advanced Industrial Science and Technology (AIST) \\
${}^3$ Japan Science and Technology Agency (JST) PRESTO Researcher}

\maketitle

\begin{abstract}
Let $(W,S)$ be a Coxeter system and let $s \in S$. We call $s$ a right-angled generator
of $(W,S)$ if $st = ts$ or $st$ has infinite order for each $t \in S$. We call
$s$ an intrinsic reflection of $W$ if $s \in R^W$ for all Coxeter generating sets $R$ of $W$.
We give necessary and sufficient conditions for a right-angled generator $s \in S$ of $(W,S)$
to be an intrinsic reflection of $W$.
\end{abstract}

\section{Introduction}

Let $(W,S)$ be a Coxeter system. We call an element $x \in W$
an {\sl intrinsic reflection of $W$} if $x \in R^W := \{ w^{-1}rw \mid r \in R, w \in W \}$
for all Coxeter generating sets $R$ of $W$. This paper is a contribution
to the following problem.

\smallskip
\noindent
{\bf Problem:} Let $(W,S)$ be a Coxeter system and $s \in S$. Give
necessary and sufficient condition for $s$ to be an intrinsic reflection
of $W$ in terms of the diagram of $(W,S)$.

\smallskip
\noindent
This problem arises naturally in the context of the isomorphism problem
for Coxeter groups which is still open at present. Substantial progress
has been made by Caprace and Przytycki in \cite{CP} in this area.
Combined with their result, the complete solution to the problem above
would provide a characterization of all strongly rigid Coxeter systems
in terms of their diagrams. This will be discussed in more
detail in the paragraph on strong rigidity below.

\bigskip
\noindent
In \cite{FHM} the {\sl finite continuation} of a finite
order element in a finitely generated Coxeter group was introduced.
The notion of an intrinsic reflection was not defined in that paper. However,
its main purpose was to give a criterion which ensures that a generator
$s \in S$ of a Coxeter system $(W,S)$ is an intrinsic reflection of $W$.
Indeed, Part b) of the Main result of \cite{FHM} asserts that $s \in S$ is
an intrinsic reflection of $W$ if its finite continuation coincides
with the subgroup of order 2 generated by $s$.
%At the end of the introduction
%it is mentioned that the results of \cite{FHM} could be used to contribute
%to the problem above.
In a forthcoming paper we intend to give a complete solution of the problem
above and it is indeed the case that the criterion given in \cite{FHM} plays
a central role in our arguments. However, it turns out that
the information provided by this criterion is rather limited if the generator
$s \in S$ of the Coxeter system $(W,S)$ is {\sl right-angled} by which we mean that
the order of $st$ is in $\{ 1,2,\infty \}$ for all $t \in S$. The purpose of
this paper is to solve the above problem for right-angled generators.

There are several reasons
for treating this special case in a separate paper: As already pointed out above,
the general results about the finite continuation do not provide any particularly
deep insights for right-angled generators. In fact, the corresponding information can be
deduced more efficiently by direct ad-hoc arguments. Thus, we do not make use of the
finite continuation here. This has the advantage that we do not have to assume that our
Coxeter systems have finite ranks. Another reason for treating right-angled
generators separately is the fact that there are specific tools which are only needed
in this special case. Indeed, the case of right-angled generators is the only one where
the {\sl blowing down} procedure for Coxeter generating sets comes into play. The latter
has been introduced in \cite{MR} and some of our intermediate results should be compared
with those in \cite{MR}. However, our treatment is completely independent because
our assumptions and goals are quite different from those in \cite{MR}.

\subsection*{Intrinsic reflections and blowing down in spherical Coxeter systems}

Before stating the main result of this paper it is convenient to provide
first some basic information
about intrinsic reflections in finite Coxeter groups.

Let $(W,S)$ be a Coxeter system. We call $(W,S)$ {\sl spherical} if $W$ is a finite group.
A subset $J$ of $S$ is called a direct factor
if $J \neq \emptyset$ and $[J, S \setminus J] = 1$; moreover $(W,S)$ is called irreducible if
$S$ is the only direct factor of $(W,S)$.
%For $s,t \in S$ we denote the order of $st$ by $m_{st}$.
%We call $s \in S$ a right-angled generator for $(W,S)$ if $m_{st} \in \{ 1,2,\infty \}$
%for all $s \in S$ and we call $(W,S)$ right-angled if each $s \in S$ is a right-angled
%generator.
%We call $(W,S)$ 2-spherical if $m_{st} \neq \infty$ for all $s,t \in S$.
%We call $(W,S)$ irreducible if $[J, S \setminus J] = 1$ implies $J \in \{ \emptyset, S \}$
%for $J \subseteq S$, $(W,S)$ is called spherical if $W$ is a finite group.

The irreducible spherical Coxeter systems are known. We denote
their types (i.e. their diagrams) as
in \cite{Humphreys} with the
only exception that we use here $C_n$ instead of $B_n$.
%for the diagrams
%$A_n$, $C_n (=B_n)$, $D_n$, $I_2(n)$, $E_6$, $E_7$, $E_8$, $F_4$, $H_3$ and $H_4$.
For each irreducible
spherical Coxeter system $(W,S)$
a description of $W$ as abstract group
is found in Appendix 5 of \cite{Ro} and the following
is a straightforward (but somewhat lengthy) exercise in finite group theory.

\smallskip
\noindent
{\bf Fact 1:} Let $(W,S), (W',S')$ be two irreducible spherical Coxeter systems
such that $W$ is isomorphic to $W'$. Then there exists an isomorphism from $W$ onto
$W'$ mapping $S$ onto $S'$.

\smallskip
\noindent
For each irreducible spherical Coxeter system $(W,S)$ the description of the
automorphism group of the abstract group $W$ is given in Theorem 31 of \cite{FH}.
Combining this with the previous fact one deduces the following.

\smallskip
\noindent
{\bf Fact 2:} Let $(W,S)$ be an irreducible spherical Coxeter system.
\begin{itemize}
\item[(i)]
If the center of $W$ is trivial and $(W,S)$ is not of type $A_5$,
then each $s \in S$ is an intrinsic reflection of $W$.
\item[(ii)]
If $(W,S)$ is of type $A_1$, $H_3$, $E_7$ or $I_2(4k)$ for some $k \in {\bf N}$,
then each $s \in S$ is an intrinsic reflection of $W$.
\item[(iii)] If $(W,S)$ is not covered by (i) or (ii), then
no $s \in S$ is an intrinsic reflection of $W$.
\end{itemize}

\bigskip
\noindent
{\bf Remark:}
Let $3 \leq n \in {\bf N}$ be odd and let $(W,S)$ be a Coxeter system
of type $C_n$ or $I_2(2n)$. Then $W$ can be written as a direct product
$W = A \times W'$ where $A$ is a group of order 2 and $W'$ has a Coxeter generating
set $S'$ such that $(W',S')$ is of type $D_n$ (resp. $I_2(n)$). Thus there is
a Coxeter generating set $R$ of $W$ such that $|R| = |S|+1$.

\bigskip
\noindent
Let $(W,S)$ be a Coxeter system and let $R$ be a Coxeter generating set of $W$.
As the previous remark shows it may happen that $|S| \neq |R|$. The question,
to which extent the abstract group $W$ determines the cardinality of a Coxeter generating
set $R$ was adressed by Mihalik and Ratcliffe in \cite{MR}. They define two procedures
of manipulating Coxeter generating sets which they call {\sl blowing up} and
{\sl blowing down}. These procedures rely on the examples described in
the previous remark.

\bigskip
\noindent
The investigation of intrinsic right-angled generators leads
naturally to the consideration of a situation which is almost equivalent to their
blowing down procedure.
In our context it is convenient to introduce the notion
%An essential ingredient in the statement of our main result is the notion
of
a {\sl blowing down generator} for a right-angled generator $s$ of a Coxeter system
$(W,S)$.
The existence of such a blowing down generator for $s$
ensures that one can find a Coxeter generating set $R$ of $W$ such that
$s$ is not in $R^W$ and hence that $s$ is not an intrinsic reflection of $W$.

\subsection*{The main result}

The precise statement of our main result needs some preparation.
In particular, the definition of a {\sl blowing down generator}
for a right-angled generator in a Coxeter system is somewhat technical.

Let $(W,S)$ be a Coxeter system. For $s,t \in S$ we denote the
order of $st$ by $m_{st}$ and we call $s \in S$ a {\sl right-angled}
generator of $(W,S)$ if $\{ m_{st} \mid t \in S \} \subseteq \{ 1,2,\infty \}$.
Moreover, for any generator $s \in S$ we put $s^{\perp} := \{ t \in S \mid m_{st} = 2 \}$
and $s^{\infty} := \{ t \in S \mid m_{st} = \infty \}$.

Let $s \in S$ be a right-angled generator of $(W,S)$. An {\sl $s$-component} is
an irreducible spherical component of the Coxeter system $(\langle s^{\perp} \rangle, s^{\perp})$.
We call $a \in s^{\perp}$ a {\sl blowing down generator for $s$} if the following conditions
are satisfied:

\begin{itemize}
\item[(BDG1)] If $C$ denotes the irreducible component of $(\langle s^{\perp} \rangle, s^{\perp})$
containing $a$, then $(\langle C \rangle,C)$ is of type $I_2(2k+1)$ or $D_{2k+1}$
for some $1 \leq k \in {\bf N}$; moreover, if $\rho$ denotes the longest element in the
Coxeter system $(\langle C \rangle,C)$, then $b:= \rho a \rho \neq a$.
(Note that
$b \in C$ because $\rho$ normalizes $C$.)
\item[(BDG2)] If $u_0,u_1,\ldots,u_n \in s^{\infty}$ are such that
$u_{i-1}u_i$ has finite order for all $1 \leq i \leq n$, then there exists
an element $x \in \{ a,b \}$ such that $\{ u_i \mid 0 \leq i \leq n \} \subseteq x^{\infty}$.
\end{itemize}

\smallskip
\noindent
{\bf Main result:} Let $(W,S)$ be a Coxeter system of arbitrary rank and
let $s \in S$ be a right-angled generator. Then $s$ is an intrinsic reflection
of $W$ if and only if each $s$-component has trivial center and there is
no blowing down generator for $s$. Moreover, if $s$ is an intrinsic reflection
and if $R$ is a Coxeter generating set of $W$ containing $s$, then
$s$ is a right-angled generator of $(W,R)$.

\bigskip
\noindent
The proof of the main result will be completed in the final section
of this paper.

\subsection*{Some consequences and remarks}

\noindent
{\bf 2-spherical Coxeter systems:}
A Coxeter system $(W,S)$ is called
{\sl 2-spherical} if the order of $st$ is finite for all $s,t \in S$.
Let $(W,S)$ be a 2-spherical, irreducible Coxeter system such that
$W$ is an infinite group. Then
Theorem 1 in \cite{FHM} asserts that any fundamental generator $s \in S$
is an intrinsic reflection of $W$.
Thus, if $(W,S)$ is irreducible and $|W|=\infty$
and if there exists a generator $s \in S$ which is not an intrinsic
reflection of $W$, then there have to exist $t,u \in S$ such that $tu$
has infinite order. In some sense, right-angled generators violate
the 2-sphericity condition for a Coxeter system to large extent.
Thus, heuristically speaking, they are good candidates for generators
which are not
intrinsic reflections $W$.

\bigskip
\noindent
{\bf Right-angled Coxeter-systems:} A Coxeter system $(W,S)$ is called
{\sl right-angled} if each $s \in S$ is right-angled. It is a consequence of
our main result that a generator $s \in S$ of a right-angled Coxeter system
$(W,S)$ is an intrinsic reflection of $W$ if and only if the
Coxeter system $(\langle s^{\perp} \rangle, s^{\perp})$ does not have
a direct factor of type $A_1$. We do not know a reference for
this statement in the literature but it is implicitly in
\cite{Ti88} (for finite $|S|$) and \cite{AC06}.

\bigskip
\noindent
{\bf Blowing-down generators:} We already mentioned that {\sl blowing down}
for Coxeter generating sets was introduced in \cite{MR}. The conditions
for the existence of a blowing down are given in Theorem 3.7 in \cite{MR}.
These conditions are slightly more general than ours.

\subsection*{Strongly rigid Coxeter systems}

We already mentioned that the problem of finding a characterization
of intrinsic fundamental generators in a Coxeter system in terms of
its diagram arises naturally in the context of the isomorphism problem
for Coxeter groups. 
For more information about this open question we refer to \cite{Mu05}
and to \cite{Nuidasurvey} for a more recent account including the infinite rank case.

In the context of the isomorphism problem the notion
of
strong rigidity for Coxeter systems has been introduced in \cite{BMMN}.
A Coxeter system $(W,S)$ is called {\sl strongly rigid} if each Coxeter generating
set $R$ of $W$ is conjugate to $S$ in $W$. If $(W,S)$ is strongly rigid,
then the abstract group $W$ determines the set of its reflections and hence each $s \in S$ is an intrinsic reflection;
moreover, the abstract group $W$ also determines the diagram and
all automorphisms of the abstract group $W$ are inner-by-graph. Thus, strongly
rigid Coxeter systems form an interesting class of Coxeter systems which one
would like to characterize in terms of their diagrams. An important step towards
such a characterization is provided  by a substantial result of Caprace and Przytycki in \cite{CP}.
Their result gives in particular a characterization of all {\sl strongly reflection rigid
Coxeter systems} (see Definition 3.2 in \cite{BMMN}) in terms of their diagrams.
It is a basic fact that a Coxeter system $(W,S)$
is strongly rigid if and only if it is strongly reflection rigid and if each
$s \in S$ is an intrinsic reflection of $W$. Thus, combined with the result of \cite{CP}
the solution of the problem above would provide a characterization of all
strongly rigid Coxeter systems in terms of their diagrams.

\subsection*{Organisation of the paper}

In order to obtain our main result, we have to prove both implications.
One of them is considerably harder to establish and the following
Proposition is an equivalent formulation of it:

\smallskip
\noindent
{\bf Proposition:} Let $(W,S)$ be a Coxeter system of arbitrary rank and
let $s \in S$ be a right-angled generator such that each $s$-component
has trivial center. If there exists a Coxeter generating set $R$ of $W$
such that $s$ is not a reflection of $(W,R)$,
then there exists a blowing down generator for $s$.

\smallskip
\noindent
The proof of the Proposition above will be accomplished
in the final section of the paper.

\bigskip
\noindent
The paper consists of two parts. In the first
part we reduce the proof of the main result to this Proposition. This will
be accomplished at the end of Section \ref{rightangledsec}.
Then, in the considerably more technical second part, we first provide several additional
tools that will be only needed in the proof
of the Proposition.

\smallskip
\noindent
The main tools that we shall use are the following:

\smallskip
\noindent
{\bf A detailed analysis of spherical Coxeter systems:} We shall need some specific
information about Krull-Remak-Schmidt decompositions of finite Coxeter groups.
Furthermore, for the Coxeter groups of type $C_n,I_2(n)$ where $3 \leq n \in {\bf N}$
is odd, we have to study all Coxeter generating sets. Later on, the outcome
of this analysis will lead us to three different cases in the proof of the
Proposition above. These will be treated in Sections \ref{sectioncasebarD}, \ref{sectioncaseD}
and \ref{sectioncaseI} separately.

\smallskip
\noindent
{\bf The Coxeter complex:} We shall consider the Cayley graph of a Coxeter system
$(W,S)$ in order to prove Proposition \ref{keyproposition} which is the key step in the proof
of the Proposition above. The Cayley graph is a building and we shall use
techniques from the theory of buildings. It is for this reason, that we
call the Cayley graph the {\sl Coxeter complex} of $(W,S)$.

\smallskip
\noindent
{\bf Conjugacy theorems and Property FA:} Let $(W,S)$ be a Coxeter system
and $J \subseteq S$. We call $J$ spherical (resp. 2-spherical) if
$\langle J \rangle$ is a finite group (resp. if $st$ has finite order for all $s,t \in J$).
In the proof of Proposition \ref{keyproposition} we shall use the result that each
finite subgroup of $W$ is conjugate to a subgroup of $\langle J \rangle$
for some spherical $J \subseteq S$. There is also a characterization of
the subgroups $U$ of $W$ which are conjugate to subgroups $\langle J \rangle$
for 2-spherical $J \subseteq S$. It turns out that this is the case if and only if $U$
is an FA-group. This will be explained and applied in Section \ref{sectioncaseI}.

\bigskip
\noindent
{\bf Acknowledgements:} Much of the research was undertaken while the
first author was invited at the University of Tokyo. Both authors would like to thank this
institution, and in particular Professor Itaru Terada,
for providing us the possibility
to work together at the Tambara Institute for Mathematical Sciences.
Furthermore, we are grateful to both referees for their valuable comments.

\section{Coxeter systems}

In this paper we allow Coxeter systems of arbitrary rank. Whenever
'finite rank' is required it will be mentioned explicitly. Dropping the
assumption of finite rank does not lead to any serious problems for
most of the basic results on Coxeter groups that we shall use here.
But there are also exceptions to this since the corresponding statements have to be
modified in order to hold also in the infinite rank case (e.g. parabolic closure
of a subgroup) and their proofs need some additional argument. Moreover, even if
a result is also valid in the infinite rank case, it is not obvious to find
it in this generality in the literature, since finite rank is often explicitly required
or tacitly assumed. In this section we collect the basic results on Coxeter systems
that will be used in later and sketch their proofs in the infinite rank case if
finite rank is not assumed.

\medskip
\noindent

Let $(W,S)$ be a Coxeter system. For $s,t \in S$ we denote the order
of $st$ by $m_{st}$. The {\sl length} of an element $w \in W$
with respect to the generating set $S$ is denoted by $\ell(w)$.
A subset $J$ of $S$ is called a {\sl direct factor of $(W,S)$}
if $J \neq \emptyset$ and $[J,S \setminus J] = 1$. The Coxeter system $(W,S)$ is
called {\sl irreducible} if $S$ is the only direct factor of $(W,S)$.
We put $S^W := \{ w^{-1}sw \mid s \in S,w \in W \}$.
The elements of $S^W$ are called the {\sl reflections of $(W,S)$}.

\smallskip
\noindent
In the following lemma we recall several basic facts about
Coxeter systems. We shall often use them in the sequel without
explicitly referring to this lemma.

\begin{lemma} \label{Coxeterbasics}
Let $(W,S)$ be a Coxeter system and $J \subseteq S$. Then
$(\langle J \rangle,J)$ is a Coxeter system and
$S^W \cap \langle J \rangle = J^{\langle J \rangle}$. Moreover,
if $\ell_J:\langle J \rangle \rightarrow {\bf N}$
denotes its length function, then $\ell_J = \ell \mid_{\langle J \rangle}$.
Finally, if $K \subseteq S$, then $\langle J \rangle \cap \langle K \rangle = \langle J \cap K \rangle$.
\end{lemma}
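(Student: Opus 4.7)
The plan is to invoke the Tits reflection representation $\sigma: W \to \GL(V)$, where $V = \bigoplus_{s \in S}\mathbb{R}\alpha_s$ carries the symmetric bilinear form $B(\alpha_s,\alpha_t) = -\cos(\pi/m_{st})$ (with $\cos(\pi/\infty) := 1$) and each $\sigma(s)$ acts as the $B$-reflection with root $\alpha_s$. This representation is faithful in arbitrary rank, yields a root system $\Phi = W\{\alpha_s : s \in S\} = \Phi^+ \sqcup \Phi^-$, and provides the length formula $\ell(w) = |w\Phi^+ \cap \Phi^-|$ together with the exchange and deletion conditions. Everything we need is local in $S$: any word involves only finitely many letters, so the familiar finite-rank arguments transfer verbatim once faithfulness is in hand.

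For the first assertion I would restrict to the subspace $V_J := \bigoplus_{j \in J}\mathbb{R}\alpha_j$. The form $B|_{V_J}$ together with the operators $\sigma(j)$ for $j \in J$ is precisely the Tits representation of the abstract Coxeter system on $J$ with matrix $(m_{jj'})_{j,j' \in J}$. Consequently the canonical surjection from that abstract Coxeter group onto $\langle J \rangle$ is injective, because it embeds into $\GL(V_J)$, so $(\langle J \rangle, J)$ is itself a Coxeter system.

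For $\ell_J = \ell|_{\langle J\rangle}$ I would take a reduced $J$-expression $j_1 \cdots j_n$ of $w \in \langle J \rangle$: if it were not reduced in $(W,S)$, the deletion condition would produce a strictly shorter $S$-expression of $w$ whose letters still all come from $J$, contradicting reducedness in $(W_J, J)$. For $S^W \cap \langle J \rangle = J^{\langle J \rangle}$ the inclusion $\supseteq$ is immediate; the reverse uses the standard fact that $W_J$ preserves $\Phi^+ \setminus \Phi_J^+$, so the unique positive root $\alpha_r$ of a reflection $r \in \langle J \rangle$ must already lie in $\Phi_J := W_J\{\alpha_j : j \in J\}$, forcing $r$ to be $W_J$-conjugate to some $j \in J$.

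Finally, for $\langle J \rangle \cap \langle K \rangle = \langle J \cap K \rangle$ I would take $w$ in the left-hand side, pick any reduced $S$-expression $w = s_1 \cdots s_n$, and invoke the standard consequence of the exchange condition that the set $\{s_1,\ldots,s_n\}$ depends only on $w$. Combined with the length identity above, this set must lie simultaneously in $J$ and in $K$, hence in $J \cap K$, so $w \in \langle J \cap K \rangle$ as required. The main technical point that needs attention in the infinite rank setting is the faithfulness of the Tits representation; once that is granted, each remaining step reduces to the manipulation of a finite subword and proceeds as in the finite-rank case.
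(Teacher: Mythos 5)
Your argument is correct and is essentially the standard proof underlying the reference the paper cites (Humphreys, Theorem 5.5, together with the standard facts on reflection subgroups, supports of reduced words, and lengths in standard parabolics); the paper itself only cites this and remarks that the infinite-rank case reduces to finite rank, whereas you work directly in arbitrary rank via faithfulness of the Tits representation and locality of words. Both routes are fine and amount to the same mathematics.
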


\begin{proof} See, for instance, Theorem 5.5 in \cite{Humphreys} for the case where
$(W,S)$ has finite rank. It is straightforward to reduce the infinite rank case to
the finite rank case.
\end{proof}

Let $(W,S)$ be a Coxeter system and let $J \subseteq S$. An {\sl irreducible component of $J$}
is a direct factor $C$ of $(\langle J \rangle, J)$ such that $(\langle C \rangle,C)$
is irreducible.

\begin{lemma} \label{dirdecbasic}
Let $(W,S)$ be a Coxeter system and let $J \subseteq S$ be a direct factor of $(W,S)$.

\begin{itemize}
\item[(i)] If $K := S \setminus J$ then $W = \langle J \rangle \times \langle K \rangle$
and $\ell(uv) = \ell(u) + \ell(v)$ for all $u \in \langle J \rangle$ and $v \in \langle K \rangle$.
\item[(ii)] If $K \subseteq J$ is a direct factor of $(\langle J \rangle, J)$, then
$K$ is a direct factor of $(W,S)$ as well. In particular, the irreducible
components of $(W,S)$ form a partition of $S$.
\item[(iii)] If there are finitely many irreducible components $C_1,\ldots,C_m$
of $(W,S)$,
%and if $\ell_i$ denotes the length function of the Coxeter system
%$(\langle C_i \rangle, C_i)$ for $1 \leq i \leq k$,
then $W = \oplus_{1 \leq i \leq m} \langle C_i \rangle$; moreover, if $v_i \in \langle C_i \rangle$
for $1 \leq i \leq m$, then $\ell(v_1 v_2 \ldots v_m) = \sum_{1 \leq i \leq m} \ell(v_i)$.
\end{itemize}
\end{lemma}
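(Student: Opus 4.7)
For part (i), the group-theoretic direct product decomposition is essentially formal: since $[J, K]=1$ we have $\langle J\rangle$ and $\langle K\rangle$ commute and together generate $W$, while Lemma \ref{Coxeterbasics} (applied to the subsets $J$ and $K$ of $S$) gives $\langle J\rangle\cap\langle K\rangle=\langle J\cap K\rangle=\langle\emptyset\rangle=\{1\}$. So $W=\langle J\rangle\times\langle K\rangle$. For the length statement, let $u=s_1\cdots s_p$ be a reduced $S$-expression for $u\in\langle J\rangle$ and $v=t_1\cdots t_q$ a reduced $S$-expression for $v\in\langle K\rangle$; by Lemma \ref{Coxeterbasics} we may take $s_i\in J$ and $t_j\in K$, and $p=\ell(u)$, $q=\ell(v)$. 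The concatenation gives an $S$-word of length $p+q$ representing $uv$, so $\ell(uv)\le p+q$. If this inequality were strict, the deletion condition would let us erase two letters. Two erasures inside the $s$-part would contradict $\ell(u)=p$, and symmetrically for the $t$-part, so one letter must come from each part. But then, after commuting the remaining $t$-letters past the remaining $s$-letters (legal because $[J,K]=1$), we obtain $u'v'=uv$ with $u'\in\langle J\rangle$ of shorter length and $v'\in\langle K\rangle$ of shorter length; applying the already-established direct product decomposition forces $u'=u$ and $v'=v$, contradicting $\ell(u)=p$. This rules out strict inequality.

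For part (ii), observe $S\setminus K=(J\setminus K)\sqcup(S\setminus J)$; by hypothesis $[K,J\setminus K]=1$, and because $J$ is a direct factor of $(W,S)$ we have $[K,S\setminus J]\subseteq[J,S\setminus J]=1$. Hence $[K,S\setminus K]=1$, so $K$ is a direct factor of $(W,S)$. To get the partition, define the equivalence relation on $S$ generated by $s\sim t$ whenever $m_{st}\ne 2$. Each equivalence class $C$ satisfies $[C,S\setminus C]=1$, so $C$ is a direct factor; and $(\langle C\rangle,C)$ has no non-trivial direct factor, because any proper non-empty subset $C'\subsetneq C$ would have to commute with all of $C\setminus C'$, contradicting the definition of the equivalence relation. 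Hence the equivalence classes are exactly the irreducible components, and they form a partition of $S$.

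For part (iii), proceed by induction on $m$. The case $m=1$ is trivial. For the step, set $J:=C_1$ and $K:=C_2\cup\cdots\cup C_m$. By part (ii) applied repeatedly, $J$ is a direct factor of $(W,S)$, so by part (i), $W=\langle J\rangle\times\langle K\rangle$ and $\ell(uv)=\ell(u)+\ell(v)$ for $u\in\langle J\rangle$, $v\in\langle K\rangle$. The components $C_2,\ldots,C_m$ are the irreducible components of the sub-Coxeter system $(\langle K\rangle,K)$ (using part (ii) in both directions), so by induction $\langle K\rangle=\bigoplus_{i=2}^{m}\langle C_i\rangle$ with additive $K$-length, which by Lemma \ref{Coxeterbasics} coincides with $S$-length on $\langle K\rangle$. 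Combining these, writing $v=v_2\cdots v_m$ and applying the additivity from part (i) once and then the inductive additivity, yields $\ell(v_1 v_2\cdots v_m)=\sum_{i=1}^{m}\ell(v_i)$.

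The only non-routine step is the length-additivity in (i); I expect the main obstacle to be presenting the deletion-condition argument cleanly enough that the induced shorter factors are visibly in $\langle J\rangle$ and $\langle K\rangle$ respectively, so that the direct-product decomposition can be invoked to derive the contradiction. All the rest is bookkeeping, invoking Lemma \ref{Coxeterbasics} and a straightforward induction.
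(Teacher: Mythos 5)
Your proof is correct and takes the same route as the paper, whose own proof of this lemma is just the one-liner ``(i) follows from the previous Lemma, (ii) is straightforward from the definitions, (iii) follows by induction on $m$''; your deletion-condition argument for the length additivity in (i) (using the already-established uniqueness of the factorization $w=uv$ to rule out deleting one letter from each block) and your induction for (iii) are exactly the intended filling-in of those details. The only half-sentence worth adding in (ii) is the converse direction of ``the equivalence classes are exactly the irreducible components'': an arbitrary irreducible component is a union of connectivity classes (since it commutes with its complement), and irreducibility forces it to be a single class, which is what makes distinct components disjoint and hence yields the partition.
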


\begin{proof}
Assertion (i) follows from the previous Lemma and Assertion (ii) is straightforward from the
definitions and Assertion (iii) follows from Assertions (i) and (ii) by induction on $m$.
\end{proof}

\subsection*{Spherical Coxeter systems}

A Coxeter system $(W,S)$ is called {\sl spherical} if $W$ is a finite group.
A subset $J$ of $S$ is called {\it spherical} if $(\langle J \rangle,J)$
is a spherical Coxeter system. Note that any spherical Coxeter system
is always of finite rank.

\begin{lemma} \label{longestelt}
Let $(W,S)$ be a spherical Coxeter system with $S \neq \emptyset$. Then there
exists a unique element $\rho \in W$ such that
$\ell(\rho) \geq \ell(w)$ for all $w \in W$. The element $\rho$ is
an involution and $S^{\rho} = S$.
\end{lemma}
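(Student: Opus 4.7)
The plan is to invoke the standard geometric (Tits) representation of $(W,S)$ on a real vector space $V = \bigoplus_{s \in S} \mathbb{R}\alpha_s$ together with its root system $\Phi = \Phi^+ \sqcup \Phi^-$. This is legitimate because a spherical Coxeter system has finite rank (as noted just before the lemma), so $V$ is finite-dimensional and in particular $|\Phi^+| < \infty$. The key tool is the classical length formula $\ell(w) = |\Phi^+ \cap w^{-1}\Phi^-|$, which shows $\ell(w) \leq |\Phi^+|$ for every $w \in W$, with equality if and only if $w(\Phi^+) = \Phi^-$.

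Existence of $\rho$ follows at once: $\ell$ takes finitely many values on the finite group $W$, so some element attains the maximum, and by the formula any such element $\rho$ sends $\Phi^+$ to $\Phi^-$. For uniqueness I would use simple transitivity of the $W$-action on positive systems: if $\rho_1, \rho_2$ both send $\Phi^+$ to $\Phi^-$, then $\rho_2^{-1}\rho_1$ stabilises $\Phi^+$ and is therefore trivial. The involution property is then immediate, since $\rho^2(\Phi^+) = \rho(\Phi^-) = \Phi^+$ forces $\rho^2 = 1$.

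For the final claim $S^\rho = S$, note that $\rho(\{-\alpha_s\}_{s \in S})$ is a simple system for the positive system $\rho(\Phi^-) = \Phi^+$. Because the simple system attached to a positive system is unique, there exists a permutation $\sigma$ of $S$ with $\rho(\alpha_s) = -\alpha_{\sigma(s)}$ for every $s \in S$. Hence $\rho s \rho^{-1}$ is the reflection along the root $\pm\alpha_{\sigma(s)}$, namely $\sigma(s) \in S$, so $\rho S \rho^{-1} = S$, i.e.\ $S^\rho = S$. I do not expect any serious obstacle: once the Tits representation is available the argument is entirely standard, the only mildly delicate ingredient being the uniqueness of the simple system determined by a positive system, which is part of the basic theory of root systems attached to Coxeter groups.
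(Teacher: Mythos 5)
Your proof is correct and is the standard root-system argument; the paper itself gives no argument but simply cites Exercise 2 of Paragraph 5.6 in Humphreys, which rests on exactly the facts you use (the length formula $\ell(w)=|\Phi^+\cap w^{-1}\Phi^-|$, simple transitivity on positive systems, and uniqueness of the simple system). The only point worth spelling out is why the maximum of $\ell$ equals $|\Phi^+|$ (e.g.\ a maximal-length $\rho$ must send every simple root to a negative root, else $\ell(\rho s)>\ell(\rho)$), but this is standard and does not affect the validity of your argument.
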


\begin{proof}
See, for instance, Exercise 2 of Paragraph 5.6 in \cite{Humphreys}.
\end{proof}

Let $(W,S)$ be a spherical Coxeter system. Then we call the unique
element $\rho \in W$ of Lemma \ref{longestelt} {\sl the longest element} of $(W,S)$.

\smallskip
\noindent
Let $(W,S)$ be a Coxeter system. Then
$(W,S)$ is called of {\sl $(-1)$-type} if it is spherical
and if its longest element is contained in the center of $W$.
A subset $J$ of $S$ is said to be of {\sl $(-1)$-type} if
the Coxeter system $(\langle J \rangle,J)$
is of $(-1)$-type.

\begin{lemma} \label{finitecenter}
Let $(W,S)$ be an irreducible spherical Coxeter system and let $\rho \in W$ be
the longest element in $(W,S)$. If $|Z(W)| > 1$, then
$Z(W) = \langle \rho \rangle$. In particular,
$(W,S)$ is of $(-1)$-type if and only if $Z(W) = \langle \rho \rangle$.
\end{lemma}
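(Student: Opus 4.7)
The plan is to use the reflection (geometric) representation of the irreducible spherical Coxeter system $(W,S)$, combined with the characterization of the longest element via positive roots.

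First I would recall that for an irreducible Coxeter system, the canonical reflection representation $V$ (on which $W$ acts faithfully by orthogonal transformations) is an irreducible real representation of $W$; this is a standard fact proved e.g.\ in \cite{Humphreys}. Since $W$ is finite and spherical, $V$ carries a $W$-invariant positive definite inner product. Apply Schur's lemma: any element $z \in Z(W)$ commutes with the action, so $z$ acts on the irreducible real representation $V$ by a scalar. Since $z$ has finite order and preserves the inner product, that scalar lies in $\{+1,-1\}$. By faithfulness of the representation, $Z(W)$ embeds into $\{+1,-1\}$, so $|Z(W)| \leq 2$.

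Now suppose $|Z(W)| > 1$ and let $z$ be the unique non-trivial central element, which then acts on $V$ as $-\operatorname{id}_V$. To identify $z$ with the longest element $\rho$, I would use the following standard characterization: $\rho$ is the unique element of $W$ sending every positive root to a negative root. Since $z$ acts by $-\operatorname{id}_V$, it too sends every positive root to a negative root. Consequently $z\rho$ sends every positive root to a positive root, and the only element of $W$ with this property is the identity; thus $z\rho = 1$, i.e.\ $z = \rho^{-1} = \rho$ (using that $\rho$ is an involution by Lemma \ref{longestelt}). Hence $Z(W) = \langle \rho \rangle$.

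Finally, the ``in particular'' clause is immediate: $(W,S)$ is of $(-1)$-type precisely when $\rho \in Z(W)$, and since $\rho \neq 1$ (as $S \neq \emptyset$ and hence $W$ is non-trivial), this occurs if and only if $Z(W)$ is non-trivial, which by the first part is equivalent to $Z(W) = \langle \rho \rangle$.

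I do not expect any serious obstacle here; the only point requiring a little care is to invoke the irreducibility of the reflection representation for an irreducible spherical Coxeter system, and the characterization of $\rho$ by the sign of its action on roots. Both are well-known and cited in \cite{Humphreys}. Alternatively, one could read this lemma straight off Fact 2 together with the explicit description of $W$ given in Appendix 5 of \cite{Ro}, but the representation-theoretic argument above is type-free and therefore shorter.
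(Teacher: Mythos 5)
Your argument is correct and is essentially the paper's approach: the paper simply cites the geometric representation (Exercise 1 of Paragraph 6.3 in \cite{Humphreys}), and you have spelled out the standard details. The only point worth flagging is that real Schur's lemma alone gives a commutant isomorphic to $\mathbf{R}$, $\mathbf{C}$ or $\mathbf{H}$, so you should invoke the stronger standard fact (Proposition 6.3 in \cite{Humphreys}) that only scalars commute with $W$ on the reflection representation; with that reference the proof is complete.
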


\begin{proof} This is deduced from the geometric representation of $(W,S)$. See
for instance Exercise 1 of Paragraph 6.3 in \cite{Humphreys}.
\end{proof}

\begin{lemma} \label{longesteltoffinite}
Let $(W,S)$ be a Coxeter system of finite rank. Then $(W,S)$
is spherical (resp. of $(-1)$-type) if and only if all irreducible
components of $(W,S)$ are spherical (resp. of $(-1)$-type).

Suppose that $(W,S)$ is spherical and let $\rho$ be the longest
element of $(W,S)$. Let $C_1, \ldots ,C_m$ be the irreducible
components of $(W,S)$ and let $\rho_i$ be the longest element
of $(\langle C_i \rangle, C_i)$ for $1 \leq i \leq m$.
Then $\rho = \rho_1 \rho_2 \ldots \rho_m$ and
$\ell(\rho) = \sum_{i = 1}^m \ell(\rho_i)$.
If $(W,S)$ is of $(-1)$-type, then the center of $W$
is an elementary abelian subgroup of $W$ of order $2^m$
which is generated by the set $\{ \rho_i \mid 1 \leq i \leq m \}$.
\end{lemma}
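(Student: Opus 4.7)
The plan is to bootstrap everything from Lemma \ref{dirdecbasic}(iii), which for finitely many irreducible components $C_1,\ldots,C_m$ gives the internal direct sum $W = \bigoplus_{i=1}^m \langle C_i\rangle$ together with the additivity $\ell(v_1\cdots v_m) = \sum_i \ell(v_i)$ for $v_i \in \langle C_i\rangle$.

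First, for the equivalence of sphericity with componentwise sphericity, observe that the direct sum decomposition shows $W$ is finite if and only if every $\langle C_i\rangle$ is finite. This handles the ``spherical'' half of the first assertion.

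Next, assume $(W,S)$ is spherical. To identify the longest element, take any $w \in W$ and write it uniquely as $w = v_1\cdots v_m$ with $v_i \in \langle C_i\rangle$; the length formula of Lemma \ref{dirdecbasic}(iii) gives
\[
\ell(w) = \sum_{i=1}^m \ell(v_i) \leq \sum_{i=1}^m \ell(\rho_i),
\]
with equality if and only if $v_i = \rho_i$ for every $i$. Hence $\rho = \rho_1\cdots\rho_m$ and $\ell(\rho) = \sum_i \ell(\rho_i)$, as claimed.

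For the $(-1)$-type equivalence, I would use that the center of a (finite) direct sum is the sum of the centers, so $Z(W) = \bigoplus_{i=1}^m Z(\langle C_i\rangle)$, and that under the direct sum decomposition an element $v_1\cdots v_m$ lies in $Z(W)$ iff each $v_i$ lies in $Z(\langle C_i\rangle)$. Now $(W,S)$ is of $(-1)$-type iff $\rho \in Z(W)$, which by the preceding paragraph is equivalent to $\rho_i \in Z(\langle C_i\rangle)$ for each $i$. Since $C_i \neq \emptyset$ forces $\rho_i \neq 1$, Lemma \ref{finitecenter} says this is in turn equivalent to $Z(\langle C_i\rangle) = \langle \rho_i\rangle$, i.e., to $(\langle C_i\rangle, C_i)$ being of $(-1)$-type. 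This gives the $(-1)$-type equivalence in both the ``spherical case'' statement and (combined with the first assertion) in the preliminary iff.

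Finally, when $(W,S)$ is of $(-1)$-type, the description $Z(W) = \bigoplus_{i=1}^m \langle \rho_i\rangle$ together with the fact that each $\rho_i$ is an involution (Lemma \ref{longestelt}) immediately yields that $Z(W)$ is elementary abelian of order $2^m$, generated by $\{\rho_1,\ldots,\rho_m\}$. There is no real obstacle here; the only point that needs a moment of care is invoking that direct factors are by definition nonempty so that the involutions $\rho_i$ are nontrivial, which is what permits the application of Lemma \ref{finitecenter}.
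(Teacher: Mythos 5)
Your proof is correct and follows exactly the route the paper intends: the published proof consists of the single line ``straightforward from Lemma \ref{dirdecbasic} and Lemma \ref{finitecenter},'' and your argument is a careful unpacking of precisely that, using the length additivity of Lemma \ref{dirdecbasic}(iii) to identify $\rho = \rho_1\cdots\rho_m$ and Lemma \ref{finitecenter} to pin down the centers of the components. No gaps.
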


\begin{proof}
This is straightforward from Lemma \ref{dirdecbasic} and Lemma \ref{finitecenter}.
\end{proof}

\begin{lemma} \label{sph2reflectionlemma}
Let $(W,S)$ be a spherical Coxeter system and let $a \neq b \in S^W$
be two reflections of $(W,S)$. Then there exist an element
$w \in W$ and a subset $J$ of $S$ such that $|J| = 2$ and such
that $\langle a,b \rangle^w \leq \langle J \rangle$.
Moreover, if $ab = ba$, then $J$ is of $(-1)$-type and $(ab)^w$ is
the longest element in $(\langle J \rangle,J)$.
\end{lemma}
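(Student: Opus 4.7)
The plan is to work with the geometric representation of $(W,S)$ on a real vector space $V$, identify the desired rank-$2$ parabolic subgroup as the pointwise stabilizer of the codimension-$2$ subspace $L := H_a \cap H_b$ (where $H_a, H_b$ are the reflecting hyperplanes of $a$ and $b$), and then use the structure of dihedral groups to settle the moreover statement.

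For the first part, since $a \neq b$ their reflecting hyperplanes are distinct, so $L$ has codimension exactly $2$. By Steinberg's fixed-point theorem (cf.\ \cite{Humphreys}), the pointwise stabilizer $W_L$ is generated by the reflections of $W$ whose fixed hyperplanes contain $L$, and it is conjugate in $W$ to a standard parabolic subgroup: $W_L = w \langle J \rangle w^{-1}$ for some $w \in W$ and some $J \subseteq S$. The codimension of the fixed subspace of any standard parabolic $\langle J \rangle$ is $|J|$, so the same holds for $W_L$. This fixed subspace contains $L$ by definition and is contained in $H_a \cap H_b = L$ because $W_L$ contains the two reflections $a,b$ with distinct hyperplanes; hence it equals $L$ and $|J| = 2$. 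Since $a, b \in W_L$, we then obtain $\langle a, b \rangle^w = w^{-1}\langle a, b\rangle w \leq \langle J \rangle$, as required.

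For the moreover statement, suppose $ab = ba$. Then $ab$ is an involution, which is non-trivial because $a \neq b$, so $(ab)^w$ is a non-trivial involution inside $\langle J \rangle$. Writing $J = \{s, t\}$ and $m := m_{st}$, the group $\langle J \rangle$ is the dihedral group of order $2m$; by Lemma \ref{Coxeterbasics}, the conjugates $a^w$ and $b^w$ lie in $J^{\langle J \rangle}$ and hence are distinct reflections of $(\langle J \rangle, J)$. Their product $(ab)^w = a^w b^w$ has order $2$, but in a dihedral group the product of two distinct reflections has order $2$ if and only if $m$ is even, in which case the product equals $(st)^{m/2}$, the unique non-trivial central element and the longest element of $(\langle J \rangle, J)$. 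Consequently $m$ is even, $(\langle J \rangle, J)$ is of $(-1)$-type by Lemma \ref{finitecenter}, and $(ab)^w$ is the longest element of $(\langle J \rangle, J)$.

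The main substantive input is Steinberg's fixed-point theorem: identifying the pointwise stabilizer of the codimension-$2$ subspace $L$ as a rank-$2$ standard parabolic (up to conjugation) is what drives Step 1, and the codimension count pins $|J|$ down to $2$. Once this is in place, the moreover statement reduces to a direct analysis of reflections in dihedral groups.
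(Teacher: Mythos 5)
Your proof is correct and follows the route the paper itself indicates (the geometric representation), merely supplying the details that the paper's one-line proof omits: Steinberg's fixed-point theorem identifies the pointwise stabilizer of $H_a\cap H_b$ as a conjugate of a rank-$2$ standard parabolic, and the dihedral analysis of commuting reflections settles the moreover clause. No gaps.
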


\begin{proof} This is seen from the
geometric representation of $(W,S)$. We omit the details.
\end{proof}

\subsection*{Parabolic subgroups, finite subgroups and involutions}

\begin{lemma} \label{FormerLemma7}
Let $(W,S)$ be a Coxeter system and let $I,J$ be finite
subsets of $S$. Suppose that there exists an element $w \in W$ such that
$w^{-1} \langle I \rangle w = \langle J \rangle$. Then there
exists an element $v \in \langle I \rangle w$ such that $v^{-1} I v = J$.
In particular, the Coxeter systems $(\langle I \rangle, I)$ and $(\langle J \rangle, J)$
are isomorphic and we have
$|I| = |J|$.
\end{lemma}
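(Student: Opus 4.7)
The plan is to pick $v$ to be the element of minimum length in $\langle I\rangle w$ and to apply the classical Kilmoyer/Solomon formula for the intersection of a standard parabolic with the conjugate of another standard parabolic by a minimum-length double coset representative. Since $I,J$ are finite and $w$ admits a reduced expression using finitely many generators, I would first reduce to the finite rank case by choosing a finite $T\subseteq S$ with $I\cup J\subseteq T$ and $w\in \langle T\rangle$; by Lemma \ref{Coxeterbasics} we may then replace $(W,S)$ by $(\langle T\rangle,T)$.

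Once we are in finite rank, the hypothesis $w^{-1}\langle I\rangle w=\langle J\rangle$ gives $\langle I\rangle w=w\langle J\rangle$, so the left coset $\langle I\rangle w$ coincides with the whole double coset $\langle I\rangle w\langle J\rangle$. Let $v$ be the unique element of minimum length in this double coset; then $v\in\langle I\rangle w$ and still $v^{-1}\langle I\rangle v=\langle J\rangle$. The key input is Kilmoyer's theorem: since $v$ is the minimum length representative of $\langle I\rangle v\langle J\rangle$, one has
$$\langle I\rangle\cap v\langle J\rangle v^{-1}=\langle I\cap vJv^{-1}\rangle.$$
The left-hand side equals $\langle I\rangle$ by hypothesis, so $\langle I\cap vJv^{-1}\rangle=\langle I\rangle$. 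Since $I$ is a minimal generating set of the Coxeter system $(\langle I\rangle,I)$ (no proper subset of $I$ can generate $\langle I\rangle$, a basic consequence of the Coxeter presentation together with Lemma \ref{Coxeterbasics}), this forces $I\subseteq vJv^{-1}$.

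The reverse inclusion $J\subseteq v^{-1}Iv$ is obtained symmetrically, by applying Kilmoyer's formula to the double coset $\langle J\rangle v^{-1}\langle I\rangle$, whose minimum length representative is $v^{-1}$; indeed, $\langle J\rangle\cap v^{-1}\langle I\rangle v=\langle J\cap v^{-1}Iv\rangle$, the left-hand side equals $\langle J\rangle$, and minimality of $J$ gives $J\subseteq v^{-1}Iv$. Combining the two inclusions yields $v^{-1}Iv=J$, as required.

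For the concluding assertions, conjugation by $v^{-1}$ restricts to a bijection $I\to J$ that preserves products and hence preserves the orders $m_{st}$; thus it extends to an isomorphism of Coxeter systems $(\langle I\rangle,I)\to(\langle J\rangle,J)$, and in particular $|I|=|J|$. The main obstacle is the appeal to Kilmoyer's intersection formula, which does the real work; this is standard in finite rank Coxeter theory but the finite rank reduction must be done carefully since the ambient $(W,S)$ is allowed to have infinite rank. The only other subtle point is the passage from $\langle I\cap vJv^{-1}\rangle=\langle I\rangle$ to $I\cap vJv^{-1}=I$, which rests on the minimality of a Coxeter generating set and should be isolated as a short preliminary remark.
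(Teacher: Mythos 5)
Your argument is correct, and it takes a different route from the paper only in the sense that the paper does not actually prove the finite-rank case: it performs exactly your reduction (choose a finite $K \subseteq S$ with $I \cup J \subseteq K$ and $w \in \langle K \rangle$) and then cites Proposition 12 of \cite{FHM}, whereas you supply a self-contained argument via Kilmoyer's intersection theorem. The key observations you need all check out: since $\langle I \rangle w = w \langle J \rangle$, the left coset is the full double coset, so the distinguished representative $v$ exists and still satisfies $v^{-1}\langle I \rangle v = \langle J \rangle$; the two applications of the intersection formula (to $v$ and to $v^{-1}$) combined with the fact that no proper subset of $I$ (resp.\ $J$) generates $\langle I \rangle$ (resp.\ $\langle J \rangle$) — which indeed follows from the last clause of Lemma \ref{Coxeterbasics}, as $\langle I' \rangle \cap \langle \{ s \} \rangle = \langle I' \cap \{ s \} \rangle = 1$ for $s \in I \setminus I'$ — give $v^{-1}Iv = J$. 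One remark: Kilmoyer's theorem is a heavier tool than the statement requires. Since $v$ is of minimal length in $v\langle J \rangle$, one has $\ell(vu) = \ell(v) + \ell(u)$ for every $u \in \langle J \rangle$; applying this to $u := v^{-1}sv \in \langle J \rangle$ for $s \in I$ gives $\ell(v) + \ell(u) = \ell(sv) \leq \ell(v) + 1$, hence $u \in J$, so $v^{-1}Iv \subseteq J$, and then $\langle v^{-1}Iv \rangle = \langle J \rangle$ forces equality by the same minimal-generation remark. This avoids invoking the intersection formula altogether, but your version is perfectly valid.
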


\begin{proof}
This follows from Proposition 12 in \cite{FHM}
for Coxeter systems of finite rank. If $(W,S)$ has infinite rank,
we can find a finite subset $K$ of $S$ such that $I \cup J \subseteq K$
and such that $w \in \langle K \rangle$. Thus we can reduce the infinite
rank case to the finite rank case by arguing in
the Coxeter system $(\langle K \rangle,K)$.
\end{proof}

Let $(W,S)$ be a Coxeter system and let $J$ be a spherical subset of $S$.
We denote the longest element of $(\langle J \rangle,J)$ by $\rho_J$.

\begin{lemma} \label{FHMLemmas20+21}
Let $(W,S)$ be a Coxeter system,  let $J \subseteq S$ be finite
and suppose that
$I \subseteq S$ is of $(-1)$-type. Then the following hold.
\begin{itemize}
\item[(i)] If $w \in W$ is such that $w^{-1} \rho_Iw \in \langle J \rangle$,
then $w^{-1}\langle I \rangle w \leq \langle J \rangle$.
\item[(ii)] If $J$ is of $(-1)$-type, then $\{ w \in W \mid w^{-1} \rho_I w = \rho_J \}
= \{ w \in W \mid w^{-1} \langle I \rangle w = \langle J \rangle \}$.
\end{itemize}
\end{lemma}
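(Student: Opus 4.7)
The plan for part (i) is to identify $\langle I \rangle$ with the parabolic closure of $\rho_I$ in $W$, meaning the smallest parabolic subgroup (any $W$-conjugate of a standard parabolic) that contains $\rho_I$. Given this identification, the hypothesis $\sigma := w^{-1}\rho_I w \in \langle J \rangle$ says that $w\langle J \rangle w^{-1}$ is a parabolic containing $\rho_I$, hence it contains the parabolic closure $\langle I \rangle$; equivalently, $w^{-1}\langle I \rangle w \leq \langle J \rangle$.

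To establish the identification, I would proceed in two steps. First, the smallest \emph{standard} parabolic $\langle K \rangle$ (with $K \subseteq S$) containing $\rho_I$ is $\langle I \rangle$ itself: by Lemma \ref{longesteltoffinite} we have $\rho_I = \rho_{C_1} \cdots \rho_{C_m}$ with $\ell(\rho_I) = \sum_i \ell(\rho_{C_i})$ for the irreducible components $C_i$ of $I$, and since the longest element of an irreducible spherical Coxeter system has every generator appearing in its reduced expressions, the support of $\rho_I$ in $S$ is $\bigcup_i C_i = I$. Second, no proper non-standard parabolic of $W$ contains $\rho_I$: working in the geometric representation, $\rho_I$ acts as $-1$ on the $I$-subspace and fixes its orthogonal complement pointwise (this is where the $(-1)$-type assumption enters essentially), so any parabolic $g\langle K\rangle g^{-1}$ containing $\rho_I$ stabilizes a face of the Tits cone that is fixed by $\rho_I$; this forces the face into the orthogonal complement of the $I$-subspace and the parabolic to contain $\langle I \rangle$.

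For part (ii), the direction $w^{-1}\rho_I w = \rho_J \Rightarrow w^{-1}\langle I \rangle w = \langle J \rangle$ applies (i) twice, once as stated and once with $I, J$ swapped and $w$ replaced by $w^{-1}$; both applications are legitimate since $I$ and $J$ are both of $(-1)$-type. For the converse, conjugation by $w$ yields a Coxeter system isomorphism $(\langle I \rangle, I) \cong (\langle J \rangle, w^{-1}Iw)$ which sends $\rho_I$ to $w^{-1}\rho_I w$, the longest element of $(\langle J \rangle, w^{-1}Iw)$. Now $(\langle J \rangle, J)$ and $(\langle J \rangle, w^{-1}Iw)$ are two $(-1)$-type Coxeter structures on the same finite group, but by Krull--Remak--Schmidt the indecomposable direct factors of $\langle J \rangle$ are intrinsic, and in each irreducible $(-1)$-type factor the longest element is the unique nontrivial central element (Lemma \ref{finitecenter}), which is also intrinsic to the abstract group. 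Hence the two longest elements agree: $w^{-1}\rho_I w = \rho_J$.

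The main obstacle is the second step of (i), showing $\langle I \rangle$ is minimal among \emph{all} parabolic subgroups containing $\rho_I$, not merely among standard ones. This requires careful work with the geometric representation or Tits cone; it is standard in finite rank, but in the infinite-rank setting requires either an extension of Krammer's theory of parabolic closures or a more hands-on geometric argument to be verified.
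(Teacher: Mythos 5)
There is a genuine gap in your proof of the inclusion $\{ w \mid w^{-1}\langle I\rangle w = \langle J\rangle\} \subseteq \{ w \mid w^{-1}\rho_I w = \rho_J\}$ in part (ii). The group-theoretic claim you rely on --- that the longest element of a $(-1)$-type finite Coxeter group is intrinsic to the abstract group --- is false. Take $G = \langle a\rangle\times\langle b\rangle \cong Z_2\times Z_2$: the sets $\{a,b\}$ and $\{a,ab\}$ are both Coxeter generating sets of type $A_1\times A_1$ (hence of $(-1)$-type), with longest elements $ab$ and $b$ respectively. The Krull--Remak--Schmidt argument cannot repair this: Proposition \ref{RKSversion} identifies the indecomposable factors only up to a \emph{central} automorphism and a permutation, and since $\rho_J$ lives in the center $Z(\langle J\rangle)\cong Z_2^m$, that is precisely the ambiguity that matters. (A second, smaller inaccuracy: the irreducible components of a $(-1)$-type system need not be indecomposable as groups --- e.g.\ $C_{2k+1}$ and $E_7$ split off their centers --- so "irreducible factor" and "indecomposable direct factor" cannot be used interchangeably.) What saves the lemma is that $I$ and $J$ are both subsets of the \emph{same} set $S$, so the two Coxeter structures on $\langle J\rangle$ are not arbitrary: by Lemma \ref{FormerLemma7}, $w^{-1}\langle I\rangle w = \langle J\rangle$ yields some $v \in \langle I\rangle w$ with $v^{-1} I v = J$, hence $v^{-1}\rho_I v = \rho_J$; and since $I$ is of $(-1)$-type, $\rho_I$ is central in $\langle I\rangle$, so $w^{-1}\rho_I w = v^{-1}\rho_I v = \rho_J$. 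This is the missing step, and it is also where the $(-1)$-hypothesis on $I$ enters this direction.

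Your part (i) and the forward inclusion of (ii) are essentially sound: identifying $\langle I\rangle$ with the parabolic closure of $\rho_I$ (using that $\rho_I$ acts as $-\mathrm{id}$ on $V_I$, so no proper parabolic can contain it) is a legitimate route, and applying (i) twice gives the forward inclusion. Two remarks, though. The infinite-rank caveat you raise at the end is handled routinely, exactly as the paper does: choose a finite $K\subseteq S$ with $I\cup J\subseteq K$ and $w\in\langle K\rangle$ and argue inside $(\langle K\rangle,K)$; no extension of the parabolic-closure machinery is needed because the conclusion is a statement about containment of subgroups, not about parabolic closures in $(W,S)$. Finally, note that the paper does not reprove these facts at all --- it cites Lemmas 20 and 21 of \cite{FHM} for the finite-rank case and performs only this reduction --- so your attempt is doing genuinely more work than the paper, which makes the gap in (ii) the part that actually needs fixing.
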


\begin{proof} In the finite rank case, Assertion (i) is Lemma 20
and Assertion (ii) is Lemma 21 in \cite{FHM}.
As in the proof of the previous lemma, the infinite rank case
is reduced to the finite rank case by considering a finite subset $K$ of $S$
such that $I \cup J \cup \{ w \} \subseteq \langle K \rangle$.
\end{proof}

\smallskip
\noindent
{\bf Definition:}
Let $(W,S)$ be a Coxeter system. A subgroup $P \leq W$ is called a {\sl parabolic subgroup} of $(W,S)$ if
there exist $J \subseteq S$ and $w \in W$ such that $P = \langle J \rangle^w$.
The {\sl parabolic closure} of a subset $X \subseteq W$ in $(W,S)$ is the
intersection of all parabolic subgroups of $(W,S)$ containing $X$. It
is denoted by $Pc_S(X)$.

\medskip
\noindent
{\bf Remark:} The notion of the parabolic closure was introduced by Krammer in \cite{Krammer}.

\smallskip
\noindent
\begin{prop} \label{parabolicfiniterank}
Let $(W,S)$ be a Coxeter system of finite rank, $K \subseteq S$ and $X \subseteq W$.
Then the following hold:
\begin{itemize}
\item[(i)] The parabolic closure $Pc_S(X)$ is a parabolic subgroup of $(W,S)$;
\item[(ii)] if $P$ is a parabolic subgroup of $(W,S)$, then $P \cap \langle K \rangle$
is a parabolic subgroup of $(\langle K \rangle,K)$;
\item[(iii)] if $X \subseteq \langle K \rangle$ and $Pc_K(X)$ denotes the
parabolic
closure in $(\langle K \rangle,K)$, then $Pc_K(X) = Pc_S(X)$.
\end{itemize}
\end{prop}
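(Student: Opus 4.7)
The plan is to derive (i)--(iii) from two underlying closure properties of the class of parabolic subgroups of $(W,S)$, both going back to Krammer \cite{Krammer}: (a) the intersection of two parabolic subgroups of $(W,S)$ is a parabolic subgroup of $(W,S)$, and (b) any parabolic subgroup of $(W,S)$ that is contained in the standard parabolic $\langle K \rangle$ is itself a parabolic subgroup of the Coxeter system $(\langle K \rangle, K)$. The conceptual source of both is the action of $W$ on its Davis complex, a CAT(0) cell complex whose cell stabilisers are precisely the parabolic subgroups: intersections of stabilisers are stabilisers of convex subcomplexes, and the Davis complex of $(\langle K \rangle, K)$ embeds equivariantly into that of $(W,S)$, matching the two notions of parabolic on its image. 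Establishing (a) and (b) is the genuine content of the proposition and is the main obstacle; everything else below is formal.

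For (i), the rank $|J|$ of a parabolic $P = w\langle J \rangle w^{-1}$ is an invariant of $P$ by Lemma \ref{FormerLemma7}. Choose, among all parabolics containing $X$, one $P$ of minimal rank $|J|$; such a $P$ exists because $|S|$ is finite. For any parabolic $Q$ containing $X$, property (a) shows that $P \cap Q$ is a parabolic of $(W,S)$ containing $X$, so its rank is at least $|J|$ by minimality. After conjugating $P$ to the standard parabolic $\langle J \rangle$, property (b) identifies $P \cap Q$ with a parabolic of $(\langle J \rangle, J)$, which has rank at most $|J|$. Thus $P \cap Q$ is a rank-$|J|$ parabolic of $(\langle J \rangle, J)$, which forces $P \cap Q = P$, i.e.\ $P \subseteq Q$. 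Since $Q$ was arbitrary, $P$ is the smallest parabolic containing $X$, so $P = Pc_S(X)$.

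Assertion (ii) is the direct combination of (a) and (b): $P \cap \langle K \rangle$ is parabolic in $(W,S)$ by (a), and being contained in $\langle K \rangle$, it is parabolic in $(\langle K \rangle, K)$ by (b).

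For (iii), assume $X \subseteq \langle K \rangle$. Applying (i) inside the Coxeter system $(\langle K \rangle, K)$, the group $Pc_K(X)$ equals $v \langle J \rangle v^{-1}$ for some $J \subseteq K$ and $v \in \langle K \rangle$, which is also a parabolic of $(W,S)$ containing $X$, so $Pc_S(X) \subseteq Pc_K(X)$. Conversely, by (ii), $Pc_S(X) \cap \langle K \rangle$ is a parabolic of $(\langle K \rangle, K)$ containing $X$, so $Pc_K(X) \subseteq Pc_S(X) \cap \langle K \rangle \subseteq Pc_S(X)$. The two inclusions yield $Pc_K(X) = Pc_S(X)$, completing the proof.
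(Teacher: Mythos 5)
Your argument is logically sound, but it is worth being clear about where the weight sits. The paper disposes of this proposition purely by citation: (i) is referred to Krammer (discussion after Proposition 2.1.4) and to Theorem 1.2 of Qi, (ii) to Corollary 7 of Franzsen--Howlett, and (iii) is noted to be an immediate consequence of (ii) exactly as in your last paragraph. You instead isolate two closure properties --- (a) intersections of parabolics are parabolic, (b) a parabolic of $(W,S)$ contained in $\langle K\rangle$ is parabolic in $(\langle K\rangle,K)$ --- and derive (i), (ii), (iii) from them; your minimal-rank argument for (i) (using Lemma \ref{FormerLemma7} to make the rank well defined) and your two-inclusion argument for (iii) are both correct, and this packaging is arguably cleaner than the paper's, since (a) is precisely the content of Qi's Theorem 1.2 and everything else becomes formal. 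Two caveats. First, you rightly say that (a) and (b) carry the real content, and you do not prove them; note also that (b) is only marginally weaker than assertion (ii) itself, so the reduction of (ii) to (a)+(b) outsources essentially the same amount of work as the paper's citation of Franzsen--Howlett. Second, your geometric gloss is not quite right as stated: the cell stabilisers of the Davis complex are the \emph{spherical} parabolic subgroups, not all parabolics, so the "intersections of stabilisers" heuristic does not directly cover infinite parabolics; the correct geometric model for arbitrary parabolics is the Tits cone (point stabilisers there are exactly the parabolic subgroups), which is in fact the setting of Krammer's and Qi's proofs. Since you present this only as motivation, it does not affect the correctness of the argument.
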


\begin{proof} Assertion (i) is an immediate consequence of the discussion
following Proposition 2.1.4  in \cite{Krammer}; for a formal statement and proof we refer to Theorem 1.2 in \cite{Qi}. Assertion (ii) follows from
Corollary 7 in \cite{FHAMS} and Assertion (iii) is an immediate consequence of Assertion (ii).
\end{proof}

\begin{cor} \label{parabolicclosurecorollary}
Let $(W,S)$ be a Coxeter system and let $X \subseteq W$ be finite.
Then $Pc_S(X)$ is a parabolic subgroup of $(W,S)$. In particular,
if $U \leq W$ is a finitely generated subgroup, then $Pc_S(U)$ is a parabolic subgroup
of $(W,S)$.
\end{cor}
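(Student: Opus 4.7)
The plan is to reduce to the finite-rank case already handled by Proposition \ref{parabolicfiniterank}. First note that the ``in particular'' clause follows at once from the main assertion: if $U$ is finitely generated with finite generating set $X$, then a parabolic subgroup of $(W,S)$, being a subgroup of $W$, contains $X$ if and only if it contains $\langle X \rangle = U$; therefore $Pc_S(U) = Pc_S(X)$. So it suffices to establish that $Pc_S(X)$ is a parabolic subgroup of $(W,S)$ whenever $X \subseteq W$ is finite.

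Since $X$ is finite and each element of $W$ has finite length, there is a finite subset $K \subseteq S$ with $X \subseteq \langle K \rangle$. As $(\langle K \rangle,K)$ has finite rank, Proposition \ref{parabolicfiniterank}(i) guarantees that the parabolic closure $Pc_K(X)$ of $X$ in $(\langle K \rangle,K)$ is a parabolic subgroup of $(\langle K \rangle,K)$, say $Pc_K(X) = \langle J \rangle^w$ for some $J \subseteq K$ and $w \in \langle K \rangle$. Since $J \subseteq S$ and $w \in W$, this group is also a parabolic subgroup of $(W,S)$, and it contains $X$, so $Pc_S(X) \leq Pc_K(X)$.

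For the reverse inclusion it is enough to show that $Pc_K(X) \leq P$ for every parabolic subgroup $P = \langle T \rangle^v$ of $(W,S)$ which contains $X$. Given such $P$, choose a finite $T_0 \subseteq T$ with $v^{-1}Xv \subseteq \langle T_0 \rangle$, which is possible because $X$ is finite and each of its conjugates lies in $\langle T \rangle$; next choose a finite $K_0 \subseteq S$ with $K \cup T_0 \subseteq K_0$ and $v \in \langle K_0 \rangle$, which is possible because $v$ has finite length. Inside the finite-rank Coxeter system $(\langle K_0 \rangle,K_0)$, the subgroup $\langle T_0 \rangle^v$ is parabolic and contains $X$, so $Pc_{K_0}(X) \leq \langle T_0 \rangle^v \leq P$. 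Proposition \ref{parabolicfiniterank}(iii), applied inside $(\langle K_0 \rangle,K_0)$ to the subset $K \subseteq K_0$ with $X \subseteq \langle K \rangle$, yields $Pc_{K_0}(X) = Pc_K(X)$. Hence $Pc_K(X) \leq P$, and therefore $Pc_K(X) \leq Pc_S(X)$, completing the argument.

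The only real subtlety is in the last paragraph: to compare $Pc_K(X)$ with an arbitrary parabolic subgroup of $(W,S)$, one must enlarge $K$ to a single finite set $K_0$ that simultaneously sees $X$, the conjugating element $v$, and enough of $T$ to contain $v^{-1}Xv$, so that the conclusion may be pulled back by Proposition \ref{parabolicfiniterank}(iii). Once this compatibility between parabolic closures in different finite standard parabolic subsystems is accepted, the infinite-rank statement follows formally from the finite-rank one with no further ingredients.
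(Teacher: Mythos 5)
Your proof is correct and follows essentially the same route as the paper: fix a finite $K$ with $X\subseteq\langle K\rangle$, observe that $Pc_K(X)$ is a parabolic subgroup of $(W,S)$ containing $X$ to get one inclusion, and for the other inclusion enlarge to a single finite subset of $S$ containing $K$, a finite portion of the type of an arbitrary parabolic $P\supseteq X$, and the conjugating element, then invoke Proposition \ref{parabolicfiniterank}(iii). Your set $K_0$ plays exactly the role of the paper's $L=K\cup J_1\cup M$, and your explicit treatment of the ``in particular'' clause is a small, correct addition.
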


\begin{proof} As $X$ is a finite subset of $W$, there exists a finite subset $K$ of
$S$ such that $X \subseteq \langle K \rangle$. We denote its parabolic closure
in $(\langle K \rangle,K)$ by $Pc_K(X)$. It follows from Assertion (ii) of
Proposition \ref{parabolicfiniterank} that $Pc_K(X)$ is a parabolic subgroup of
$(\langle K \rangle,K)$. Note that, by definition, each parabolic subgroup of $(\langle K \rangle,K)$
is a parabolic subgroup of $(W,S)$.
Therefore, $Pc_K(X)$ is also a parabolic subgroup of $(W,S)$.
Thus, the corollary is proved if we show $Pc_K(X) = Pc_S(X)$.
As each parabolic subgroup of $(\langle K \rangle,K)$
is a parabolic subgroup of $(W,S)$ we have $Pc_S(X) \subseteq Pc_K(X)$.
Thus, the corollary is proved if we show $Pc_K(X) \subseteq Pc_S(X)$
which in turn follows from the following.

\smallskip
\noindent
{\bf Claim:} If $P$ is a parabolic subgroup of $(W,S)$ containing $X$, then it contains
$Pc_K(X)$ as well.

\smallskip
\noindent
{\sl Proof of the Claim:} Let $P$ be a parabolic subgroup of $(W,S)$ which contains $X$.
By definition there exist $J \subseteq S$ and $w \in W$ such that $P = w \langle J \rangle w^{-1}$.
As $X \subseteq P$ we have $Y := w^{-1} X w \subseteq \langle J \rangle$. As $X$ is finite,
the set $Y$ is finite as well and therefore there exists a finite subset $J_1$ of $J$
such that $Y \subseteq \langle J_1 \rangle$. We have also a finite subset $M$ of $S$
such that $w \in \langle M \rangle$. Now $L:= K \cup J_1 \cup M$
is a finite subset of $S$. Let $Pc_L(X)$ denote the parabolic closure of $X$ in $(\langle L \rangle, L)$.
Since $K \subseteq L$ and $X \subseteq \langle K \rangle$, it follows by Assertion (iii)
of Proposition \ref{parabolicclosurecorollary} (with $(W,S) = (\langle L \rangle, L)$)
that $Pc_K(X) = Pc_L(X)$. As $J_1 \subseteq L$ and $w \in \langle L \rangle$,
the group $P_1 := w \langle J_1 \rangle w^{-1}$ is a parabolic subgroup of $(\langle L \rangle, L)$
which contains $X$. Thus it follows that $Pc_K(X) = Pc_L(X) \leq P_1 \leq P$
which yields the claim.
\end{proof}

\medskip
\noindent
{\bf Remark:} In a Coxeter system $(W,S)$ of arbitrary rank it is not true,
that the parabolic closure of a subset $X$ is a parabolic subgroup.
A counter example is described in the introduction of \cite{NuidaJAlg},
where the problem of sensibly generalizing
the concept of the parabolic closure
to Coxeter systems of arbitrary rank is addressed.

\begin{lemma} \label{Bourbakiexercise}
Let $(W,S)$ be a Coxeter system and let $U \leq W$ be a finite
subgroup of $W$. Then there exist a spherical subset $J$ of $S$
and an element $w \in W$ such that $U^w := w^{-1}Uw \leq \langle J \rangle$.
In particular, the parabolic closure $Pc_S(U)$ is a finite group.
\end{lemma}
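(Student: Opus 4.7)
The plan is to reduce the statement to the finite rank case, where it is classical, and then deduce the assertion about the parabolic closure from Corollary~\ref{parabolicclosurecorollary}.

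First I would reduce to finite rank. Since $U$ is finite, it is finitely generated; fix generators $u_1,\ldots,u_n$ of $U$, and express each $u_i$ as a word in $S$. Only finitely many letters of $S$ occur in all of these words, so there is a finite subset $K \subseteq S$ with $U \leq \langle K \rangle$. By Lemma~\ref{Coxeterbasics}, $(\langle K \rangle,K)$ is a Coxeter system, now of finite rank.

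Next I would invoke the finite-rank version of the lemma, which is the classical Bourbaki exercise (Ch.~V, \S 4, Ex.~2): any finite subgroup of a finite rank Coxeter group is conjugate to a subgroup of some standard spherical parabolic. The standard proof uses the action of $\langle K \rangle$ on its Tits cone: each finite subgroup fixes a point in the open Tits cone because the average of an orbit under a finite group again lies in the open Tits cone, and the stabilizer of such a point is the conjugate of a standard spherical parabolic $\langle J \rangle$ with $J \subseteq K$. Applied to $U \leq \langle K \rangle$, this yields an element $w \in \langle K \rangle \subseteq W$ and a spherical subset $J \subseteq K$ of $S$ such that $w^{-1} U w \leq \langle J \rangle$, which is the first assertion.

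For the ``In particular'' clause, I would argue as follows. Since $U$ is finitely generated, Corollary~\ref{parabolicclosurecorollary} tells us that $Pc_S(U)$ is a parabolic subgroup of $(W,S)$. The subgroup $w \langle J \rangle w^{-1}$ is itself a parabolic subgroup of $(W,S)$ containing $U$, so by definition of the parabolic closure we have $Pc_S(U) \leq w \langle J \rangle w^{-1}$. Since $J$ is spherical, $\langle J \rangle$ is finite, and therefore $Pc_S(U)$ is finite as well.

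The only nontrivial ingredient is the finite-rank version, i.e.\ the fixed-point argument on the Tits cone; once this is accepted, the reduction via a finite $K$ and the invocation of Corollary~\ref{parabolicclosurecorollary} are both short and routine. Accordingly I expect no genuine obstacle beyond citing the appropriate classical statement.
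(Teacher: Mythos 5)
Your proof is correct and follows essentially the same route as the paper: reduce to a finite-rank standard parabolic $\langle K\rangle$ containing the finite group $U$, invoke the classical finite-rank statement (the paper cites Krammer's Proposition 3.2.1(a) rather than the Bourbaki exercise, but it is the same fact), and observe that $Pc_S(U)$ is contained in the finite parabolic $w\langle J\rangle w^{-1}$. No gaps.
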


\begin{proof}
This follows from Assertion (a) of Proposition 3.2.1 in \cite{Krammer} in the finite
rank case. The reduction of the general case to the finite rank case is straightforward.
\end{proof}

\begin{cor} \label{finitesubgroupcor}
Let $(W,S)$ be a Coxeter system, let $U \leq W$ be a subgroup
and let $x \in W$ be such that $\langle U,x \rangle$ is a finite
subgroup of $W$. Then $\langle Pc_S(U),x \rangle$ is also a finite
subgroup of $W$.
\end{cor}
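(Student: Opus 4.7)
The plan is to exploit Lemma \ref{Bourbakiexercise} applied not to $U$ itself but to the larger finite group $\langle U, x \rangle$, so that the resulting spherical parabolic subgroup automatically contains $x$.

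First I would observe that $\langle U, x \rangle$ is a finite subgroup of $W$ by assumption, so Lemma \ref{Bourbakiexercise} yields a spherical subset $J \subseteq S$ and an element $w \in W$ such that $w^{-1}\langle U, x\rangle w \leq \langle J\rangle$. Setting $P := w\langle J\rangle w^{-1}$, this means $P$ is a parabolic subgroup of $(W,S)$ that contains both $U$ and $x$, and moreover $P$ is finite since it is conjugate to $\langle J\rangle$ with $J$ spherical.

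Next, since $P$ is a parabolic subgroup containing $U$, the definition of the parabolic closure as the intersection of all parabolic subgroups containing $U$ forces $Pc_S(U) \leq P$. Together with $x \in P$, this gives $\langle Pc_S(U), x\rangle \leq P$, and hence $\langle Pc_S(U), x\rangle$ is finite, as required.

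There is essentially no obstacle here: the only slightly subtle point is that one must apply Lemma \ref{Bourbakiexercise} to the full group $\langle U, x\rangle$ rather than to $U$ alone (which would give a parabolic subgroup containing $U$ but possibly not $x$). Everything else is immediate from the definition of $Pc_S(U)$ and the fact that conjugates of $\langle J\rangle$ for spherical $J$ are finite. Note also that this argument does not require $Pc_S(U)$ itself to be parabolic, which is convenient in view of the Remark following Corollary \ref{parabolicclosurecorollary}.
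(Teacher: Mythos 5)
Your proof is correct and follows essentially the same route as the paper: both apply Lemma \ref{Bourbakiexercise} to the full finite group $\langle U,x\rangle$ to obtain a spherical parabolic subgroup containing $U$ and $x$, and then conclude $\langle Pc_S(U),x\rangle$ lies inside it because $Pc_S(U)$ is contained in every parabolic subgroup containing $U$. The only cosmetic difference is that the paper phrases this after conjugating by $w$ (via $Pc_S(U^w)\leq\langle J\rangle$) while you work directly with $P=w\langle J\rangle w^{-1}$; the content is identical.
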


\begin{proof}
Let $H := \langle U,x \rangle$. By Lemma \ref{Bourbakiexercise} there exist
a spherical subset $J \subseteq S$ and $w \in W$ such that $H^w \leq \langle J \rangle$.
We have $U^w \leq \langle J \rangle$ and therefore $Pc_S(U^w) \leq \langle J \rangle$.
Hence $(\langle Pc_S(U),x \rangle)^w = \langle Pc_S(U^w),x^w \rangle \leq \langle J \rangle$
is a finite group which yields the claim.
\end{proof}

\begin{lemma} \label{2reflectionlemma}
Let $(W,S)$ be a Coxeter system and let $a \neq b \in S^W$ be
such that $ab$ has finite order. Then there exist an element $w \in W$
and a subset $J$ of $S$ such that $|J| = 2$ and  $\langle a,b \rangle^w \leq \langle J \rangle$.
Moreover, if $ab = ba$, then $J$ is of $(-1)$-type and $(ab)^w$ is
the longest element in $(\langle J \rangle,J)$.
\end{lemma}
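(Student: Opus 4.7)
The plan is to reduce this lemma to its spherical counterpart, Lemma \ref{sph2reflectionlemma}, by first conjugating the finite dihedral group $\langle a, b\rangle$ into a finite parabolic subgroup.

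First, since $a$ and $b$ are involutions and $ab$ has finite order by assumption, the subgroup $U := \langle a, b \rangle$ is a finite dihedral group. Applying Lemma \ref{Bourbakiexercise} to $U$, there exist a spherical subset $I \subseteq S$ and an element $w_1 \in W$ such that $U^{w_1} = w_1^{-1} U w_1 \leq \langle I \rangle$. Set $a' := a^{w_1}$ and $b' := b^{w_1}$. These remain distinct reflections of $(W,S)$, and since they lie in $\langle I \rangle$, Lemma \ref{Coxeterbasics} tells us that $a', b' \in S^W \cap \langle I \rangle = I^{\langle I \rangle}$, i.e.\ they are reflections of the spherical Coxeter system $(\langle I \rangle, I)$.

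Next, I would apply Lemma \ref{sph2reflectionlemma} to the spherical Coxeter system $(\langle I \rangle, I)$ and the two distinct reflections $a' \neq b'$. This gives an element $w_2 \in \langle I \rangle$ and a subset $J \subseteq I$ with $|J| = 2$ such that $\langle a', b' \rangle^{w_2} \leq \langle J \rangle$. Setting $w := w_1 w_2 \in W$, we obtain
\[
\langle a, b \rangle^{w} = \bigl(\langle a, b\rangle^{w_1}\bigr)^{w_2} = \langle a', b' \rangle^{w_2} \leq \langle J \rangle,
\]
with $J \subseteq I \subseteq S$ of cardinality $2$, which is the first assertion.

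For the moreover part, suppose $ab = ba$. Then conjugation by $w_1$ preserves this, so $a' b' = b' a'$ as well. The moreover clause of Lemma \ref{sph2reflectionlemma}, applied inside $(\langle I \rangle, I)$ to the commuting pair $a', b'$, then yields that $J$ is of $(-1)$-type and that $(a'b')^{w_2}$ is the longest element of $(\langle J \rangle, J)$. Since $(a'b')^{w_2} = (ab)^{w}$, this finishes the proof. No real obstacle is expected here: the entire content of the lemma is already present in the spherical case, and the only work is packaging the reduction via the finite-subgroup lemma; one just has to verify carefully that the conjugates $a',b'$ are indeed reflections of the smaller Coxeter system $(\langle I\rangle, I)$, which is exactly what Lemma \ref{Coxeterbasics} provides.
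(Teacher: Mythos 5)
Your proposal is correct and follows exactly the paper's own argument: conjugate the finite group $\langle a,b\rangle$ into a spherical parabolic $\langle I\rangle$ via Lemma \ref{Bourbakiexercise}, observe via Lemma \ref{Coxeterbasics} that the conjugates are reflections of $(\langle I\rangle,I)$, and then apply Lemma \ref{sph2reflectionlemma}. The only difference is that you spell out the composition of the two conjugating elements, which the paper leaves implicit.
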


\begin{proof} As $ab$ has finite order, the group $\langle a,b \rangle$ is
finite. By
Lemma \ref{Bourbakiexercise} there exist
a spherical
subset $K$ of $S$ and an element $w \in W$ such that
$\langle a,b \rangle^w \leq \langle K \rangle$.
By Lemma \ref{Coxeterbasics} $a^w,b^w$ are reflections
of $(\langle K \rangle,K)$ and we can apply Lemma \ref{sph2reflectionlemma}
to get the result.
\end{proof}

\begin{cor} \label{FHMLemma20cor}
Let $(W,S)$ be a Coxeter system, let $I \subseteq S$ be of $(-1)$-type
and let $r$ be the longest element in $(\langle I \rangle, I)$.
If $v \in W$ is such that $\langle v,r \rangle$ is a finite subgroup of $W$,
then $\langle \{ v \} \cup I \rangle$ is a finite subgroup of $W$ as well.
\end{cor}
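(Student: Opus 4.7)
The plan is to leverage Lemma \ref{FHMLemmas20+21}(i), which is tailor-made for this situation: it promotes the containment of a single longest element (of a $(-1)$-type subset) in a finite parabolic to the containment of the entire standard subgroup it generates.

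First, I would apply Lemma \ref{Bourbakiexercise} to the finite group $\langle v,r \rangle$: there exist a spherical subset $J \subseteq S$ and an element $w \in W$ such that $w^{-1}\langle v,r \rangle w \leq \langle J \rangle$. In particular, $w^{-1}rw \in \langle J \rangle$ and $w^{-1}vw \in \langle J \rangle$. Note that $J$ is automatically finite, since every spherical subset of $S$ is finite.

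Next, since $I$ is of $(-1)$-type, $r$ is by definition the longest element $\rho_I$ of $(\langle I \rangle, I)$. Thus the hypothesis $w^{-1}\rho_Iw \in \langle J \rangle$ of Lemma \ref{FHMLemmas20+21}(i) is satisfied, and we conclude that $w^{-1}\langle I \rangle w \leq \langle J \rangle$. Combining this with $w^{-1}vw \in \langle J \rangle$, we obtain
\[
w^{-1}\langle \{v\} \cup I \rangle w = \langle \{w^{-1}vw\} \cup w^{-1}Iw \rangle \leq \langle J \rangle.
\]
Since $J$ is spherical, $\langle J \rangle$ is finite, and hence the conjugate subgroup $w^{-1}\langle \{v\} \cup I \rangle w$ is finite. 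Therefore $\langle \{v\} \cup I \rangle$ is itself finite, as required.

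There is no real obstacle here: the corollary is essentially a direct bookkeeping consequence of Lemma \ref{Bourbakiexercise} and Lemma \ref{FHMLemmas20+21}(i). The only minor point to verify is that the finiteness hypotheses on $J$ in these lemmas are met, which follows immediately from sphericity.
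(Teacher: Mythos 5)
Your proof is correct and follows essentially the same route as the paper: conjugate the finite group $\langle v,r\rangle$ into a spherical standard parabolic $\langle J\rangle$ via Lemma \ref{Bourbakiexercise}, then invoke Lemma \ref{FHMLemmas20+21}(i) to pull all of $\langle I\rangle$ into $\langle J\rangle$ and conclude finiteness. The remark about $J$ being automatically finite is a fine (and correct) bit of bookkeeping that the paper leaves implicit.
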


\begin{proof}
Since $\langle v,r \rangle$ is a finite subgroup of $W$,
there exist a spherical subset $J$ of $S$ and $w \in W$ such that
$\langle v,r \rangle^w \leq \langle J \rangle$.
It follows from Lemma \ref{FHMLemmas20+21} that $I^w \subseteq \langle J \rangle$.
As $\langle J \rangle$ is a finite group containing $v^w$ and $I^w$,
the group $\langle \{ v \} \cup I \rangle$ is finite as well.
\end{proof}

\begin{lemma} \label{Richardsonlemma}
Let $(W,S)$ be a Coxeter system and let $r \in W$ be an involution.
Then there exist
a subset $J$ of $S$ and an element $w \in W$
such that $J$ is of $(-1)$-type and such that $w^{-1} r w = \rho_J$.
Moreover, if $K \subseteq S$ is of $(-1)$-type and $v \in W$
is such that $v^{-1} r v = \rho_K$, then $|K| = |J|$.
\end{lemma}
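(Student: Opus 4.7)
The plan is to split the lemma into an existence assertion (production of $J$ and $w$) and a uniqueness assertion ($|K|=|J|$); each reduces to one or two tools already established earlier in this section.

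For existence, since $r$ is an involution, $\langle r \rangle$ is finite, so Lemma \ref{Bourbakiexercise} produces a spherical subset $J_0 \subseteq S$ and an element $w_0 \in W$ with $w_0^{-1} r w_0 \in \langle J_0 \rangle$. As every $(-1)$-type subset of $J_0$ is a $(-1)$-type subset of $S$, it is enough to prove the existence of $J$ and $w$ when $(W,S)$ is itself assumed to be spherical (with $r$ replaced by its conjugate into $\langle J_0 \rangle$). This spherical statement is the classical theorem of Richardson: in a finite Coxeter group, every involution is conjugate to the longest element of some standard parabolic subgroup of $(-1)$-type. I would invoke this result directly rather than reprove it, and this invocation is what I expect to be the main obstacle of the lemma---the ambient tools collected so far in the section do not readily deliver Richardson's theorem on their own.

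For uniqueness, assume $w^{-1} r w = \rho_J$ and $v^{-1} r v = \rho_K$ with both $J,K \subseteq S$ of $(-1)$-type. Equating the two expressions for $r$ and setting $x := v^{-1} w$ yields $x^{-1} \rho_K x = \rho_J$. Lemma \ref{FHMLemmas20+21}(ii) then upgrades this single-element conjugacy to the equality $x^{-1} \langle K \rangle x = \langle J \rangle$ of parabolic subgroups. Because spherical subsets of $S$ are finite, both $J$ and $K$ are finite subsets of $S$, so Lemma \ref{FormerLemma7} applies and produces an element of $W$ conjugating $K$ bijectively onto $J$; in particular $|J| = |K|$, which is the desired conclusion.
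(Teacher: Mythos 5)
Your proposal is correct and matches the paper's own argument: the paper likewise cites Richardson's theorem for the existence statement (reducing the general case to finite rank, which your use of Lemma \ref{Bourbakiexercise} accomplishes concretely), and derives $|K|=|J|$ from Lemma \ref{FHMLemmas20+21}(ii) combined with Lemma \ref{FormerLemma7}, exactly as you do.
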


\begin{proof} The first assertion is a well known result of Richardson.
For Coxeter systems of finite rank it can be found, for instance, in Paragraph 8.3 in \cite{Humphreys}
and the reduction of the general case to finite rank causes no problems.
The second
assertion follows from Assertion (ii) of
Lemma \ref{FHMLemmas20+21} and Lemma
\ref{FormerLemma7}.

\end{proof}

Let $(W,S)$ be a Coxeter system and let $r \in W$ be an involution. By the
previous lemma there exists
$J \subseteq S$
of $(-1)$-type such that $r$ is conjugate to $\rho_J$.
The {\sl $S$-rank} of $r$ is defined to be the cardinality of $J$ which
makes sense in view of the second assertion of the previous lemma. Note
that an involution has $S$-rank one if and only if it is a reflection of $(W,S)$.

\begin{prop} \label{normalizerofparabolicprop}
Let $(W,S)$ be a Coxeter system and let $J \subseteq S$ be a spherical subset of $S$
and let $r$ be the longest element of $(\langle J \rangle, J)$.
Then the following hold:

\begin{itemize}
\item[(i)] If $J$ is of $(-1)$-type, then $C_W(r) = N_W(\langle J \rangle)$.
\item[(ii)] Suppose that $[s,J] = 1$ for all $s \in S \setminus J$ having the property
that $\langle \{ s \} \cup J \rangle$ is a finite group. Then $N_W(\langle J \rangle) =
\langle J \rangle \times \langle J^{\perp} \rangle$.
\end{itemize}
\end{prop}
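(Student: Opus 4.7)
Part (i) follows immediately from Lemma \ref{FHMLemmas20+21}(ii) applied with $I = J$: since $\rho_I = \rho_J = r$, the lemma yields $\{w \in W : w^{-1} r w = r\} = \{w \in W : w^{-1}\langle J \rangle w = \langle J \rangle\}$, which is exactly $C_W(r) = N_W(\langle J \rangle)$.

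For part (ii), the inclusion $\langle J \rangle \cdot \langle J^\perp \rangle \subseteq N_W(\langle J \rangle)$ and the fact that the product is direct are both routine: by the definition of $J^\perp$, the generators in $J^\perp$ centralize all of $J$, and Lemma \ref{Coxeterbasics} gives $\langle J \rangle \cap \langle J^\perp \rangle = \langle J \cap J^\perp \rangle = \{1\}$. The key auxiliary observation I plan to use is the following: any $s \in S \setminus J$ that normalizes $\langle J \rangle$ must lie in $J^\perp$. Indeed, $s \notin \langle J \rangle$ by Lemma \ref{Coxeterbasics}, so $\langle \{s\} \cup J \rangle = \langle J \rangle \sqcup s\langle J \rangle$ has order $2|\langle J \rangle|$, hence is finite; the hypothesis of (ii) then forces $[s, J] = 1$.

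For the reverse inclusion I would argue by contradiction. Let $\mathcal{C} := N_W(\langle J \rangle) \setminus \langle J \rangle \cdot \langle J^\perp \rangle$ and, assuming $\mathcal{C}$ is nonempty, pick $w \in \mathcal{C}$ of minimal length. If some $s \in J \cup J^\perp$ were a left descent of $w$, then $s \in N_W(\langle J \rangle)$, so $sw \in N_W(\langle J \rangle)$ with strictly smaller length; by minimality $sw \in \langle J \rangle \cdot \langle J^\perp \rangle$, and then $w = s \cdot (sw)$ also lies in $\langle J \rangle \cdot \langle J^\perp \rangle$ (using that $\langle J^\perp \rangle$ commutes with $\langle J \rangle$), contradicting $w \in \mathcal{C}$. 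Thus every left descent of $w$ lies in $S \setminus (J \cup J^\perp)$, and by the symmetric argument applied to $w^{-1}$, so does every right descent; in particular $w$ is the minimum-length representative of the coset $\langle J \rangle w$.

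The principal obstacle will be to derive a contradiction from the existence of such a $w$. My plan is to reduce to finite rank by Lemma \ref{Coxeterbasics}, passing to the Coxeter subsystem generated by $J$ together with the support of a reduced expression of $w$, then to work in the geometric representation. The conditions that $w$ is a minimum-length coset representative for $\langle J \rangle w$ and that $w \in N_W(\langle J \rangle)$ should force $w$ to permute the simple roots of $J$ (via the uniqueness of the simple system inside a fixed positive system for the sub-root-system associated to $\langle J \rangle$). The technical heart will then be to analyze the action of $w^{-1}$ on the simple root associated to a left descent $s \in S \setminus (J \cup J^\perp)$: since $s \notin J^\perp$ there exists $t \in J$ not commuting with $s$, and I expect the interplay of this non-orthogonality with $w^{-1}$'s permutation of the simple roots of $J$ to produce a finite spherical subset of $S$ properly extending $J$ by a generator outside $J^\perp$, contradicting the hypothesis of (ii).
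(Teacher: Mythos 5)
Your proof of part (i) is exactly the paper's: both apply Lemma \ref{FHMLemmas20+21}(ii) with $I=J$. For part (ii), however, the paper does not argue at all --- it simply invokes the main result of \cite{BrinkHowlett} in the finite rank case and reduces infinite rank to finite rank as in Lemma \ref{FormerLemma7}. You are therefore attempting something strictly harder than what the paper does, namely reproving the relevant piece of Brink--Howlett from scratch. Your reductions are sound: the inclusion $\langle J\rangle\langle J^{\perp}\rangle\subseteq N_W(\langle J\rangle)$ and directness are fine, the minimal-length counterexample $w$ has no left or right descent in $J\cup J^{\perp}$, and (via Lemma \ref{FormerLemma7} and minimality in $\langle J\rangle w$) $w^{-1}$ permutes the simple roots of $J$.

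The gap is the step you yourself flag as ``the technical heart.'' What you need is precisely Deodhar's Lemma (the inductive engine of \cite{BrinkHowlett}): if $w\neq 1$, $w^{-1}$ maps $\Pi_J$ bijectively onto a set of simple roots, and $s$ is a left descent of $w$, then the irreducible component of $s$ in $J\cup\{s\}$ is spherical (and $w$ factors through the corresponding elementary move $w_0(I)\,w_0(I\setminus\{s\})$). Granting this, you are done: since $J$ is itself spherical, the whole of $J\cup\{s\}$ is spherical, so the hypothesis of (ii) forces $s\in J^{\perp}$, contradicting your descent analysis. But this lemma is a genuine page of root-system combinatorics (one must show that every positive root of that component with nonzero $\alpha_s$-coefficient lies in the inversion set of $w^{-1}$, using the biconvexity of inversion sets), and your proposal replaces it with ``I expect the interplay \ldots to produce a finite spherical subset.'' As written, the argument does not close; either supply Deodhar's Lemma or, as the paper does, cite \cite{BrinkHowlett} and handle only the passage to infinite rank.
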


\begin{proof}
Assertion (i) is a consequence of Assertion (ii) of Lemma \ref{FHMLemmas20+21}.
Assertion (ii) is a consequence of the main result in \cite{BrinkHowlett} in the finite rank case.
As in the proof of Lemma \ref{FormerLemma7} the infinite rank case is easily settled using
the fact that the assertion holds in the finite rank case.
\end{proof}

\section{Coxeter generating sets}

Let $(W,S)$ be a Coxeter system. In this section we shall investigate
conditions on the diagram of $(W,S)$ which ensure that the
Coxeter generating set $S$ can be replaced by another one. In the first
paragraph we provide some very basic observations in the situation
where the Coxeter system admits a visible decomposition as a direct product or
as free product with amalgamation. In the second paragraph we provide specific
information in the spherical case which will play an important role in the sequel.

\subsection*{Direct products and free products with amalgamation}

We recall some definitions from basic group theory.
Let $G$ be a group and let $A,B$ be subgroups. We call $G$ the {\sl
direct product of $A$ and $B$} if $A$ and $B$ are normal subgroups, $G = AB$ and $|A \cap B| = 1$.
We call $G$ the {\sl free product with amalgamation of $A$ and $B$} if $G = \langle A,B \rangle$
and if the following universal property is satisfied:

\begin{itemize}
\item[(FPWA)] If $H$ is a group and $\varphi_A:A \rightarrow H$, $\varphi_B:B \rightarrow H$
are homomorphisms such that $\varphi_A \mid_{A \cap B} = \varphi_B \mid_{A \cap B}$, then
there exists a homomorphism $\varphi:G \rightarrow H$ such that $\varphi \mid_A = \varphi_A$
and $\varphi \mid_B = \varphi_B$.
\end{itemize}

We shall need the following basic observation about free products
with amalgamation.

\begin{lemma} \label{FPAbasicobservation}
Let $G$ be a group and let $A,B$ be subgroups of $G$ such that $G$ is the free product
with amalgamation of $A$ and $B$.
Then $ab$ has infinite
order for all $(a,b) \in (A \setminus B) \times (B \setminus A)$.
\end{lemma}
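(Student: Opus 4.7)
The plan is to reduce the statement to the classical normal form theorem for the concrete construction of an amalgamated free product, using the universal property (FPWA) to transport the conclusion back to $G$.

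First I would fix the concrete amalgamated product $\tilde{G} := A \ast_{A \cap B} B$ built in the familiar way from reduced words alternating across transversals of $A \cap B$ in $A$ and in $B$. The canonical inclusions $\iota_A : A \hookrightarrow \tilde{G}$ and $\iota_B : B \hookrightarrow \tilde{G}$ are injective and visibly agree on $A \cap B$. Applying (FPWA) to these data produces a group homomorphism $\varphi : G \to \tilde{G}$ whose restrictions to $A$ and $B$ coincide with $\iota_A$ and $\iota_B$. (No uniqueness of $\varphi$ is needed; existence is all that the argument uses.)

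Next I would translate the hypotheses into the desired setup inside $\tilde{G}$. Because $A \cap B \subseteq B$, the assumption $a \in A \setminus B$ yields $a \notin A \cap B$; symmetrically $b \notin A \cap B$. Consequently $\varphi(ab) = \iota_A(a)\iota_B(b)$ is an alternating word of syllable length $2$ in the amalgamated product, and for every $n \geq 1$ the element $(\iota_A(a)\iota_B(b))^n$ is an alternating word of syllable length $2n$ in which no syllable lies in the amalgamated subgroup. The normal form theorem for amalgamated free products guarantees that every such word is nontrivial in $\tilde{G}$, so $\varphi(ab)$ has infinite order.

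Finally, since $\varphi$ is a homomorphism, the order of $ab$ in $G$ must be a multiple of the order of $\varphi(ab)$ in $\tilde{G}$, forcing $ab$ to have infinite order in $G$. The only subtle point is a careful invocation of the normal form theorem; (FPWA) is tailor-made to furnish the homomorphism $\varphi$, and once $\varphi$ is in hand no further obstacle arises.
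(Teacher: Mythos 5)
Your proof is correct and takes essentially the same route as the paper, which simply invokes the normal form theorem for amalgamated free products (Theorem 1 in \S 1.2 of Serre's \emph{Trees}). Your write-up usefully spells out the one detail the paper glosses over: since the paper defines the amalgamated product by the universal property (FPWA), one must first use that property to produce the comparison homomorphism onto the concrete construction before the normal form theorem can be applied, and your handling of that step (including the observation that $a,b\notin A\cap B$ and that a homomorphic image of infinite order forces infinite order upstairs) is sound.
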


\begin{proof} This can be seen by using the normal form for elements
in free products with amalgamation. See for instance Theorem 1 in Paragraph 1.2 in \cite{tree}.
\end{proof}

\begin{lemma} \label{productlemma}
Let $(W,S)$ be a Coxeter system and let $K,L \subseteq S$ be such that $S = K \cup L$.
\begin{itemize}
\item[(i)] If $K \cap L = \emptyset$ and $m_{kl} = 2$ for all $(k,l) \in K \times L$,
then $W$ is the direct product of $\langle K \rangle$ and $\langle L \rangle$.
\item[(ii)] If $m_{kl} = \infty$ for all $(k,l) \in (K\setminus L) \times (L\setminus K)$,
then $W$ is the free product with amalgamation of $\langle K \rangle$ and $\langle L \rangle$.
\end{itemize}
\end{lemma}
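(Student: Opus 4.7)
For part~(i), the plan is to recognise the hypothesis as precisely the statement that $K$ is a direct factor of $(W,S)$ in the sense of the definition preceding Lemma~\ref{dirdecbasic}: $K \cap L = \emptyset$ forces $L = S \setminus K$, and $m_{kl} = 2$ for every $(k,l) \in K \times L$ says $[K, S \setminus K] = 1$. Assuming both $K$ and $L$ are nonempty (the degenerate case is trivial), Lemma~\ref{dirdecbasic}(i) then yields $W = \langle K \rangle \langle L \rangle$. I would combine this with $\langle K \rangle \cap \langle L \rangle = \langle K \cap L \rangle = \{1\}$ from Lemma~\ref{Coxeterbasics} and the observation that $\langle K \rangle$ and $\langle L \rangle$ centralise each other, so each is normal in $W$; this is exactly a direct product in the group-theoretic sense recalled at the start of the section.

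For part~(ii), I would verify the universal property (FPWA) directly, using the Coxeter presentation of $W$. Write $A := \langle K \rangle$ and $B := \langle L \rangle$; then $W = \langle A, B \rangle$ since $K \cup L = S$, and $A \cap B = \langle K \cap L \rangle$ by Lemma~\ref{Coxeterbasics}. Given any group $H$ together with homomorphisms $\varphi_A : A \to H$ and $\varphi_B : B \to H$ that agree on $A \cap B$, I would define a set map $\varphi_0 : S \to H$ by $\varphi_0(s) := \varphi_A(s)$ for $s \in K$ and $\varphi_0(s) := \varphi_B(s)$ for $s \in L$; the agreement hypothesis makes this well defined on $K \cap L$.

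The main step is then to extend $\varphi_0$ to a homomorphism $\varphi : W \to H$, which by the Coxeter presentation reduces to checking $(\varphi_0(s)\varphi_0(t))^{m_{st}} = 1$ for every pair $s,t \in S$ with $m_{st} < \infty$. If both $s,t \in K$, the relation $(st)^{m_{st}} = 1$ already holds in $A$ and is therefore preserved by $\varphi_A$; the case $s,t \in L$ is symmetric. The only pair type not covered this way is $s \in K \setminus L$, $t \in L \setminus K$ (or vice versa), and this is excluded from the check by the hypothesis $m_{st} = \infty$. Once $\varphi$ exists, it restricts to $\varphi_A$ on $A$ and to $\varphi_B$ on $B$ on the respective generating sets, and hence everywhere. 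I do not anticipate a genuine obstacle: both parts are formal consequences of the Coxeter presentation, with the only subtle point being the well-definedness of $\varphi_0$ on $K \cap L$. Lemma~\ref{FPAbasicobservation} is not needed for the argument itself, though it conversely explains why the hypothesis $m_{kl} = \infty$ on mixed pairs is the natural one.
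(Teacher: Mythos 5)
Your proposal is correct and follows essentially the same route as the paper: part (i) via $\langle K\cap L\rangle = \langle K\rangle\cap\langle L\rangle$ together with the mutual centralization, and part (ii) by verifying (FPWA) directly from the Coxeter presentation, where the only relations to check on mixed pairs are excluded by the hypothesis $m_{kl}=\infty$. The detour through Lemma~\ref{dirdecbasic}(i) in part (i) is an inessential variation on the paper's direct argument.
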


\begin{proof} Note first that $W$ is generated by $\langle K \rangle$ and $\langle L \rangle$
by the assumption that $S = K \cup L$.
Let $J = K \cap L$. Then $\langle J \rangle = \langle K \rangle \cap \langle L \rangle$
by Lemma \ref{Coxeterbasics}.

Under the assumptions of Assertion (i) it follows
$|\langle K \rangle \cap \langle L \rangle| = 1$ and that $\langle K \rangle$ and $\langle L \rangle$
centralize each other. As we already observed that $W$ is generated by $\langle K \rangle$ and
$\langle L \rangle$ the first
assertion follows.

Let $H$ be a group and let $\varphi_K: \langle K \rangle \rightarrow H$ and
$\varphi_L: \langle L \rangle \rightarrow H$ be homomorphisms which coincide
on $\langle K \rangle \cap \langle L \rangle$. Then $\varphi_K$ and $\varphi_L$
coincide on $J$ and therefore we have a mapping $\varphi: S \rightarrow H$
such that $\varphi \mid_K = \varphi_K$ and $\varphi \mid_L = \varphi_L$.
As $\varphi_K$ (resp. $\varphi_L$) is a homomorphism, it follows
that $(\varphi(a)\varphi(b))^{m_{ab}} = 1$ for all $a,b \in K$ (resp. for
all $a,b \in L$). As $m_{kl} = \infty$ for all $(k,l) \in (K\setminus L) \times (L\setminus K)$,
it follows from the universal property of Coxeter systems that $\varphi$
is the restriction of a homomorphism $\varphi_S$ from $W$ to $H$.
Thus we have shown that (FPWA) holds which finishes the proof of Assertion (ii).
\end{proof}

\begin{prop} \label{productprop}
Let $G$ be a group, let $A,B$ be subgroups and let $K$ (resp. $L$) be a Coxeter generating set
of $A$ (resp. $B$).

\begin{itemize}
\item[(i)] If $G$ is the direct product of $A$ and $B$, then $S := K \cup L$ is a Coxeter generating set
of $G$.
\item[(ii)] If $G$ is the free product with amalgamation of $A$ and $B$ and $J$
is a Coxeter generating set of $A \cap B$ contained in $K$ and $L$, then $S := K \cup L$ is
a Coxeter generating set of $G$.
\end{itemize}
\end{prop}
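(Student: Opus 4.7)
The plan is to realize $G$ as the Coxeter group on $S = K \cup L$ with a specific Coxeter matrix, by first constructing the abstract Coxeter system on $S$ and then exhibiting mutually inverse homomorphisms with $G$.

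Define $(m_{st})_{s,t \in S}$ as follows: for $s, t \in K$ (resp.\ $L$), set $m_{st}$ to the order of $st$ in $(A, K)$ (resp.\ $(B, L)$); for $s \in K \setminus L$ and $t \in L \setminus K$, set $m_{st} = 2$ in case (i) and $m_{st} = \infty$ in case (ii). In case (i), $K \cap L \subseteq A \cap B = 1$ forces $K \cap L = \emptyset$, so the prescription is unambiguous. In case (ii) I claim $K \cap L = J$: the inclusion $J \subseteq K \cap L$ is given, and conversely if $s \in K \cap L$ then $s \in A \cap B = \langle J \rangle$, and Lemma \ref{Coxeterbasics} applied to $(A, K)$ gives $\ell_J(s) = \ell_K(s) = 1$, forcing $s \in J$. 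For $s, t \in J$ the orders of $st$ in $(A, K)$, $(\langle J \rangle, J)$ and $(B, L)$ all coincide by the same length-function restriction, so $(m_{st})$ is consistent.

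Let $(W, S)$ be the Coxeter system with matrix $(m_{st})$. Lemma \ref{productlemma} identifies $W$ with the direct product (case (i)) or the free product with amalgamation over $\langle K \cap L \rangle_W = \langle J \rangle_W$ (case (ii), via Lemma \ref{Coxeterbasics}) of $\langle K \rangle_W$ and $\langle L \rangle_W$. Since the Coxeter matrices of $(\langle K \rangle_W, K)$ and $(A, K)$ coincide by construction, and similarly for $L$, the Coxeter universal property furnishes isomorphisms $\langle K \rangle_W \cong A$ and $\langle L \rangle_W \cong B$ that fix the generators.

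It remains to match $W$ with $G$. The defining relations of $W$ all hold in $G$: within $K$ or $L$ they hold in the subgroups $A, B$; in case (i) the cross-relations $(st)^2 = 1$ hold because $A$ and $B$ commute elementwise in the direct product; in case (ii) no cross-relation is imposed. Thus the inclusion $S \hookrightarrow G$ extends to a surjection $\varphi: W \to G$. Conversely, the inclusions $K, L \hookrightarrow W$ extend via the Coxeter universal property to homomorphisms $\psi_A: A \to W$ and $\psi_B: B \to W$; in case (i), since $\langle K \rangle_W$ and $\langle L \rangle_W$ centralize each other in $W$, the direct-product universal property supplies $\psi: G \to W$; in case (ii), $\psi_A$ and $\psi_B$ agree on the generating set $J$ of $A \cap B$, hence on $A \cap B$, and (FPWA) supplies $\psi: G \to W$. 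Both compositions $\varphi \psi$ and $\psi \varphi$ fix generating sets, so are identities, and $S$ is a Coxeter generating set of $G$. The main obstacle is the consistency check for the Coxeter matrix in case (ii), resolved by the identification $K \cap L = J$ via the length-function restriction of Lemma \ref{Coxeterbasics}.
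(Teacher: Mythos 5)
Your argument is correct. It is essentially the paper's proof in a slightly different packaging: the paper verifies the universal-property characterization of a Coxeter system directly for $(G,S)$ (any relation-respecting map $\alpha:S\to H$ restricts to homomorphisms on $A$ and $B$, which are then glued by the direct-product property, resp.\ by (FPWA)), and this gluing is exactly the mechanism you use to build $\psi:G\to W$. The genuine differences are minor. First, by introducing the abstract Coxeter group $W$ on the prescribed matrix and producing mutually inverse maps $\varphi,\psi$, you avoid having to show up front that $kl$ has infinite order in $G$ for $(k,l)\in(K\setminus L)\times(L\setminus K)$; the paper needs this (via Lemma~\ref{FPAbasicobservation}, i.e.\ normal forms in amalgams, together with Lemma~\ref{Coxeterbasics}) because its one-sided criterion requires knowing the orders $m_{st}$ in $G$ beforehand, whereas in your setup the orders are pinned down automatically by the isomorphism with $W$. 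Second, you make explicit the identifications $K\cap L=\emptyset$ (case (i)) and $K\cap L=J$ (case (ii), via the length-function restriction of Lemma~\ref{Coxeterbasics}), which the paper only uses implicitly. Both routes rely on the same two key inputs — the Coxeter presentation's extension property for $A$ and $B$, and the universal property of the direct product resp.\ the amalgam — so the cost and generality are the same.
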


\begin{proof}
Assertion (i) is obvious. In order to prove Assertion (ii) we first remark that
$ab$ has infinite order for all $(a,b) \in (A \setminus B) \times (B \setminus A)$
by Lemma \ref{FPAbasicobservation}.

In view of Lemma \ref{Coxeterbasics} we have $k \not \in \langle J \rangle$
for each $k \in K \setminus L$ and $l \not \in \langle J \rangle$ for each
$l \in L \setminus K$. Thus
$kl$ has infinite order for all $(k,l) \in (K \setminus L) \times (L \setminus K)$.

For $s,t \in S$ we let $m_{st}$ denote the order of $st$. By what we have said so far
we already know that $m_{kl} = \infty$ for all
$(k,l) \in (K \setminus L) \times (L \setminus K)$.

Let $H$ be a group and let  $\alpha: S \rightarrow H$ be a map such that
$(\alpha(x)\alpha(y))^{m_{xy}} = 1$ for all $x,y \in S$ with $m_{xy} \neq \infty$.
As $K$ (resp. $L$) is a Coxeter generating set of $A$ (resp. $B$),
there exists a unique homomorphism $\varphi_A$ (resp. $\varphi_B$)
from $A$ (resp. $B$) to $H$ such that $\varphi_A(k) = \alpha(k)$ for all $k \in K$
(resp. $\varphi_B(l) = \alpha(l)$ for all $l \in L$).
As $\varphi_A(j) = \alpha(j) = \varphi_B(j)$ for each $j \in J$
and as $\langle J \rangle = A \cap B$, it follows that
$\varphi_A \mid_{A \cap B} = \varphi_B \mid_{A \cap B}$. As $G$ is the free
product with amalgamation of $A$ and $B$ we have a homomorphism
$\varphi:G \rightarrow H$ such that $\varphi \mid_A = \varphi_A$
and $\varphi \mid_B = \varphi_B$ and hence $\varphi \mid_S = \alpha$.
This shows that $(G,S)$ is indeed a Coxeter system and finishes the
proof of Assertion (ii).
\end{proof}

\begin{cor} \label{productcor}
Let $(W,S)$ be a Coxeter system, let $K,L \subseteq S$ be such that $S = K \cup L$ and let
$K'$ (resp. $L'$) be a Coxeter generating set of $\langle K \rangle$ (resp.
$\langle L \rangle$). Then $S' := K' \cup L'$ is a Coxeter generating set of $W$
if one of the following conditions is satisfied.
\begin{itemize}
\item[(i)] $K \cap L = \emptyset$ and $m_{kl} = 2$ for all $(k,l) \in K \times L$;
\item[(ii)] $K \cap L \subseteq K' \cap L'$ and $m_{kl} = \infty$ for all $(k,l) \in (K \setminus L)
\times (L \setminus K)$.
\end{itemize}
\end{cor}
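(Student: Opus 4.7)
The plan is straightforward: combine Lemma \ref{productlemma} (which gives the structural decomposition of $W$ from the diagram conditions) with Proposition \ref{productprop} (which allows us to replace each factor's Coxeter generating set by a new one). The corollary is essentially the concatenation of these two results, with the added observation that the amalgamated subgroup in case (ii) has a natural Coxeter generating set sitting inside both $K'$ and $L'$.

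For case (i), I would first invoke Lemma \ref{productlemma}(i) to conclude that $W$ is the direct product of $\langle K \rangle$ and $\langle L \rangle$. Since $K'$ (resp.\ $L'$) is a Coxeter generating set of $\langle K \rangle$ (resp.\ $\langle L \rangle$), Proposition \ref{productprop}(i) immediately yields that $S' = K' \cup L'$ is a Coxeter generating set of $W$.

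For case (ii), I would first apply Lemma \ref{productlemma}(ii) (noting that the hypothesis $m_{kl} = \infty$ for all $(k,l) \in (K \setminus L) \times (L \setminus K)$ is exactly what is needed) to conclude that $W$ is the free product with amalgamation of $\langle K \rangle$ and $\langle L \rangle$. To apply Proposition \ref{productprop}(ii) I need a Coxeter generating set $J$ of $\langle K \rangle \cap \langle L \rangle$ contained in both $K'$ and $L'$. By Lemma \ref{Coxeterbasics} we have $\langle K \rangle \cap \langle L \rangle = \langle K \cap L \rangle$, and $(\langle K \cap L \rangle, K \cap L)$ is itself a Coxeter system, so I take $J := K \cap L$. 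The hypothesis $K \cap L \subseteq K' \cap L'$ guarantees $J \subseteq K'$ and $J \subseteq L'$, so Proposition \ref{productprop}(ii) applies and gives that $S' = K' \cup L'$ is a Coxeter generating set of $W$.

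Since both parts reduce directly to previously established results, there is no real obstacle; the only subtlety worth underlining is the identification $\langle K \rangle \cap \langle L \rangle = \langle K \cap L \rangle$ in case (ii), which is where the intersection property from Lemma \ref{Coxeterbasics} is needed to produce the requisite amalgamation subgroup generating set.
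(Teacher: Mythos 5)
Your proof is correct and follows essentially the same route as the paper: Lemma \ref{productlemma} to obtain the direct product (resp.\ amalgamated free product) decomposition, then Proposition \ref{productprop} applied with $J := K \cap L$, using $\langle K \rangle \cap \langle L \rangle = \langle K \cap L \rangle$ from Lemma \ref{Coxeterbasics} in case (ii). No issues.
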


\begin{proof}
Let $A := \langle K \rangle$ and $B := \langle L \rangle$.

Under the assumptions of Assertion (i) it follows from Lemma \ref{productlemma} that
$W$ is the direct product of $A$ and $B$ and the claim follows then from
the first assertion of Proposition \ref{productprop}.

Under the assumptions of Assertion (ii) it follows from Lemma \ref{productlemma}
that $W$ is the free product with amalgamation of $A$ and $B$. By Lemma \ref{Coxeterbasics}
we also know that $\langle K \cap L \rangle = A \cap B$. Hence the second assertion
of Proposition \ref{productprop} finishes the proof of the second statement of
the corollary.
\end{proof}

\begin{lemma} \label{CapraceAppendix}
Let $(W,S)$ be a Coxeter system and let $R$ be a Coxeter generating set of $W$.
If $R \subseteq S^W$, then $R^W = S^W$.
\end{lemma}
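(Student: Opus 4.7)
The plan is to handle the two inclusions separately. The direction $R^W \subseteq S^W$ is immediate from $R \subseteq S^W$ together with the stability of $S^W$ under $W$-conjugation, so I would dispose of it in a single line and focus on the reverse inclusion $S^W \subseteq R^W$. For this I would argue by contradiction using the standard sign characters indexed by $W$-conjugacy classes of reflections.

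The preparatory step is to construct, for each $W$-conjugacy class $C \subseteq S^W$, a group homomorphism $\psi_C : W \to \{\pm 1\}$ with $\psi_C(s) = -1$ for $s \in S \cap C$ and $\psi_C(s) = +1$ for $s \in S \setminus C$. To see that this is well-defined, I would recall that whenever $s,t \in S$ satisfy $m_{st}$ finite and odd, the dihedral identity $(st)^{(m_{st}-1)/2} s (st)^{-(m_{st}-1)/2} = t$ shows $s$ and $t$ to be $W$-conjugate; hence they lie either both in $C$ or both outside $C$, giving $\psi_C(s) = \psi_C(t)$. Consequently the relations $(\psi_C(s)\psi_C(t))^{m_{st}} = 1$ hold whenever $m_{st} < \infty$, and the universal property of $(W,S)$ extends $\psi_C$ uniquely to a character of $W$. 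As $\psi_C$ is then conjugation-invariant, it takes the value $-1$ on $C$ and $+1$ on every other $W$-conjugacy class inside $S^W$.

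Now I would suppose for contradiction that $S^W \not\subseteq R^W$. Since $R^W$ is the union of precisely those $W$-conjugacy classes in $S^W$ which meet $R$, there must exist a class $C \subseteq S^W$ with $C \cap R = \emptyset$. Then $\psi_C(r) = +1$ for every $r \in R$, and since $R$ generates $W$, this forces $\psi_C \equiv 1$ on $W$. On the other hand, writing $C = s_0^W$ for some $s_0 \in S$, we have $\psi_C(s_0) = -1$, the desired contradiction.

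The only mildly delicate point is the verification that $\psi_C$ extends to a character of $W$, and this reduces, as indicated above, to the standard observation that $s, t \in S$ are $W$-conjugate whenever $m_{st}$ is finite and odd. I do not foresee any other serious obstacles.
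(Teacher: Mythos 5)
Your proof is correct, and it takes a genuinely different route from the paper. The paper disposes of this lemma by citing the Deodhar--Dyer theory of reflection subgroups (via Corollary A.2 of Caprace--Przytycki), whereas you give a self-contained elementary argument: for each $W$-conjugacy class $C$ of reflections you build the sign character $\psi_C:W\to\{\pm 1\}$, using the universal property of $(W,S)$ together with the standard fact that $s,t\in S$ are conjugate whenever $m_{st}$ is finite and odd (so the relation $(\psi_C(s)\psi_C(t))^{m_{st}}=1$ is automatic when $m_{st}$ is even and follows from $\psi_C(s)=\psi_C(t)$ when it is odd); then a class $C$ disjoint from $R$ would force $\psi_C$ to be trivial on the generating set $R$ while taking the value $-1$ on $C$. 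All the steps check out, including the observations that every conjugacy class inside $S^W$ meets $S$ and that $R^W$ is a union of such classes. Your argument has two advantages: it avoids any appeal to the (nontrivial) reflection-subgroup theorem, and it works verbatim in arbitrary rank. It also proves slightly more than the lemma asserts, since you never use that $R$ is a Coxeter generating set --- only that it is a generating set consisting of reflections. The paper's citation, by contrast, is shorter on the page but imports a substantial external result.
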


\begin{proof}
This is a consequence of a result on subgroups of Coxeter groups
generated by reflections which has been found independently by Deodhar \cite{Deodhar} and
Dyer \cite{Dyer}. For an explicit statement and proof we refer to Corollary A.2
in \cite{CP}.
\end{proof}

\subsection*{Spherical Coxeter systems}

\begin{lemma} \label{blowingdownlemma}
Let $(W,S)$ be a spherical Coxeter system, let $s \in S$ be
such that $s \in Z(W)$, let $C := S \setminus \{ s\}$ and
let $\rho$ be the longest element of $(\langle C \rangle,C)$.
Then the following hold for all $1 \leq k \in {\bf N}$:
\begin{itemize}
\item[(i)] Suppose that $(\langle C \rangle,C)$ is of type $I_2(2k+1)$
and that $a \in C$. Then $b := \rho a \rho \in C$ and $(S \setminus \{ s,b \}) \cup \{ s\rho \}$
is a Coxeter generating set of type $I_2(4k+2)$ of $W$.
Moreover, if $R$ is any Coxeter generating set of type $I_2(4k+2)$
of $W$, then $\{ a,b,s\rho \} \subseteq R^W \subseteq a^W \cup (s\rho)^W$.
\item[(ii)] Suppose that $(\langle C \rangle,C)$ is of type $D_{2k+1}$
and that $a \in C$ is such that $b := \rho a \rho \neq a$.
Then $b \in C$ and $(S \setminus \{ s,b \}) \cup \{ s\rho \}$ is a Coxeter generating
set of type $C_{2k+1}$ of $W$. Moreover, if $R$ is any
Coxeter generating set of type $C_{2k+1}$ of $W$, then
$s \rho \in R^W$ and $R^W \cap \{ c,sc \} \neq \emptyset$ for each $c \in C$;
if $a \in R^W$
we have $R \subseteq (s\rho)^W \cup a^W$ and if $sa \in R^W$ we have
$R \subseteq (s\rho)^W \cup (sa)^W$.
\end{itemize}
\end{lemma}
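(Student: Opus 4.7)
My plan is to verify each case by directly constructing the claimed Coxeter generating set, applying the universal property of Coxeter systems for the type, and then analyzing reflection conjugacy classes for the ``Moreover'' parts.

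For case (i), I first note that in $(\langle C \rangle, C)$ of type $I_2(2k+1)$ the longest element $\rho$ is itself a reflection and its conjugation action on $C$ realizes the non-trivial graph automorphism swapping the two elements of $C$, so $b = \rho a \rho \in C$. Setting $R := \{a, s\rho\}$, both elements are involutions (since $s$ is central of order $2$), and $a \cdot s\rho = s \cdot (a\rho)$ has order $\operatorname{lcm}(2, 2k+1) = 4k+2$ because the explicit expression $\rho = a(ba)^k$ yields $a\rho = (ba)^k$, a rotation of order $2k+1$. Hence $\langle R \rangle$ is dihedral of order $8k+4 = |W|$, so $\langle R \rangle = W$, and the universal property forces the type to be $I_2(4k+2)$. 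For the ``Moreover'' part of (i), in any Coxeter system of type $I_2(2m)$ (which is of $(-1)$-type) the reflection set coincides with the set of non-central involutions; since $Z(W) = \langle s \rangle$ (as $I_2(2k+1)$ has trivial center for $k \geq 1$), the reflections of any type-$I_2(4k+2)$ structure $(W, R')$ on $W$ are exactly the non-$s$ involutions of $W$. These split into two $W$-classes, $a^W$ (all reflections of $\langle C \rangle$, mutually conjugate since $\langle C \rangle$ is dihedral of odd order) and $(s\rho)^W$ (their $s$-translates), and since $\langle R' \rangle = W$ one has $R'^W = a^W \cup (s\rho)^W \supseteq \{a, b, s\rho\}$.

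For case (ii), I use that in $D_{2k+1}$ the longest element $\rho$ induces on $C$ the non-trivial graph automorphism swapping the two ``tail'' simple reflections and fixing the rest, so $b = \rho a \rho \in C$ and the hypothesis $b \neq a$ forces $a$ to be one of the two tails. To verify that $R := (C \setminus \{b\}) \cup \{s\rho\}$ is of type $C_{2k+1}$, I check: $s\rho$ commutes with each $c \in C \setminus \{a, b\}$ (because $\rho$ does, by the graph automorphism), and $(s\rho) \cdot a$ has order $4$ (because $a, b$ are non-adjacent tails in the $D_{2k+1}$ diagram so $ab$ has order $2$, whence $(\rho a)^2 = \rho a \rho a = ba = ab$ has order $2$, so $\rho a$ has order $4$, and multiplying by the central involution $s$ yields an order-$4$ element). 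From $((s\rho) a)^2 = ab$ one recovers $b \in \langle R \rangle$, so $\langle R \rangle = W$; the order equality $|W(C_{2k+1})| = 2|D_{2k+1}| = |W|$ combined with the universal property then yields the claimed type.

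The principal obstacle is the ``Moreover'' part of (ii). Unlike in case (i), the reflections of $(W, R')$ for an arbitrary Coxeter generating set $R'$ of type $C_{2k+1}$ form a proper subset of the involutions of $W$, so the ``non-central involutions'' characterization no longer applies. My plan is to exploit the direct product decomposition $W \cong \langle -\operatorname{id} \rangle \times D_{2k+1}$ (valid for $2k+1$ odd) in which $s$ corresponds to $-\operatorname{id}$: in any $C_{2k+1}$-structure on $W$, the short reflection class has size $2k+1$ and the long reflection class has size $2k(2k+1)$. I would identify the short class with $(s\rho)^W$ in every such structure by matching its cardinality and its image in the abelianization $W / [W, W] \cong (\mathbb{Z}/2)^2$; this yields $s\rho \in R'^W$. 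Since $s$ is central, the $W$-classes $c^W$ and $(sc)^W$ lie respectively in $\langle C \rangle$ and $s\langle C \rangle$ and are therefore disjoint for each $c \in C$; a conjugacy-class-matching argument then shows that the long class of $(W, R')$ equals either $a^W = C^W$ or $(sa)^W = (sC)^W$, so $R'^W$ meets $\{c, sc\}$ for each $c \in C$. The final two containments follow immediately: $R'$ consists of one short generator (conjugate to $s\rho$) and $2k$ long generators (all in the single long class), so $R' \subseteq (s\rho)^W \cup a^W$ or $R' \subseteq (s\rho)^W \cup (sa)^W$ according to whether $a \in R'^W$ or $sa \in R'^W$.
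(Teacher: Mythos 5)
Your construction and verification of the new generating sets (the computation $(as\rho)^2=a\rho a\rho=ab$, the resulting order of $as\rho$, and the comparison of the orders of the finite Coxeter groups involved) is essentially the paper's argument, as is your treatment of the ``Moreover'' part of (i) via the fact that all non-central involutions of a dihedral group are reflections. Where you genuinely diverge is the ``Moreover'' part of (ii): the paper disposes of it in one sentence by invoking the known description of the automorphism group of a Coxeter group of type $C_{2k+1}$ (Theorem 31 in \cite{FH}), whereas you propose a self-contained conjugacy-class analysis inside $W=\langle s\rangle\times\langle C\rangle$, pinning down the short class by its size $2k+1$ together with its nonzero image in $W/[W,W]\cong({\bf Z}/2)^2$, and the long class by its size $2k(2k+1)$ and the complementary abelianization constraint. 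This route works and buys independence from the automorphism-group classification, but be aware that the phrase ``a conjugacy-class-matching argument then shows'' conceals the actual content: one must enumerate the involution classes of $W(D_{2k+1})$ (by the invariants $(p,q)$ recording the number of sign changes and of transposition factors, with $p$ even and class size $\binom{n}{p}\binom{n-p}{2q}(2q-1)!!\,2^{q}$ for $n=2k+1$) and check that the only class of size $n$ is $\rho^{W}$ --- so the short class of $(W,R)$ is forced to be $(s\rho)^{W}$, the alternative $\rho^{W}$ being excluded because it has trivial image in the abelianization while every reflection class maps onto a nonzero element --- and that the only class of size $n(n-1)$ whose image lies in the correct coset is the reflection class $C^{W}$ (respectively $sC^{W}$). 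Both checks do go through, so your plan is sound; the enumeration, not the matching, is where the work lies, and the paper's citation of \cite{FH} avoids it entirely at the cost of an external input.
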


\begin{proof}
As $s$ is in the center of $W$, we have $(as\rho)^2 = a \rho a \rho = ab$
and hence the order of $as\rho$ is $2 m_{ab}$. It readily follows
that $R$ satisfies the Coxeter relations in (i) and (ii). By checking
the orders of the corresponding finite Coxeter groups, it follows
that $R$ is indeed a Coxeter generating set. The second statement in
Assertion (i) follows from the fact that all non-central involutions are
reflections in a dihedral group. The second statement in Assertion (ii)
follows by considering the automorphism group of the Coxeter groups
of type $C_{2k+1}$ (see for instance Theorem 31 in \cite{FH}).
\end{proof}

\begin{prop} \label{parisprop}
Let $(W,S)$ be an irreducible spherical Coxeter system.
Suppose that there are subgroups $A$ and $B$ such that
$1 < |A| \leq |B|$ and such that $W$ is the direct product of $A$ and $B$.
Then $A = Z(W)$ and one of the following holds:
\begin{itemize}
\item[(i)] $(W,S)$ is of type $E_7$ and $H_3$ and $B$ is not
a Coxeter group.
\item[(ii)] $(W,S)$ is of type $I_2(4k+2)$
for some $1 \leq k \in {\bf N}$ and $B$ is a Coxeter group. Moreover, all Coxeter generating sets
of $B$ are of type $I_2(2k+1)$.
\item[(iii)] $(W,S)$ is of type $C_{2k+1}$ for
some $1 \leq k \in {\bf N}$ and $B$ is a Coxeter group. Moreover, all Coxeter generating sets of $B$
are of type $D_{2k+1}$.
\end{itemize}
\end{prop}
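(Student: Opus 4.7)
The plan is to deduce the proposition from the Krull--Remak--Schmidt theorem together with the classification of irreducible finite Coxeter groups. The argument divides into two stages: first, show that the hypotheses force $A = Z(W)$ and restrict $(W,S)$ to the four listed types; then identify the abstract isomorphism type of $B$ and analyse its possible Coxeter generating sets.

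For the first stage I would start from $Z(W) = Z(A) \times Z(B)$, which by Lemma \ref{finitecenter} has order at most $2$. Since a normal subgroup of order $2$ is automatically central, it suffices to show $|A| = 2$; combined with $|A| \leq |B|$, this in turn reduces to showing that the irreducible spherical Coxeter systems whose underlying abstract group admits a nontrivial direct decomposition are precisely those in (i)--(iii), and that in each such case the indecomposable factorization of $W$ is $Z(W) \times (W/Z(W))$. Krull--Remak--Schmidt then matches $\{A,B\}$ with this canonical pair up to isomorphism, and the size hypothesis yields $A \cong Z(W)$, hence $A = Z(W)$. The implementation requires a walk through the classification: the irreducible finite Coxeter groups with trivial center (types $A_n$ for $n \geq 2$, $D_n$ for odd $n \geq 3$, $E_6$, and $I_2(2k+1)$) are each abstractly indecomposable; and among the $(-1)$-type cases, the central extension $1 \to Z(W) \to W \to W/Z(W) \to 1$ splits exactly for $C_{2k+1}$, $I_2(4k+2)$, $E_7$ and $H_3$. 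For the two infinite families the splitting is precisely what Lemma \ref{blowingdownlemma} encodes; for $E_7$ and $H_3$ the splittings $W \cong {\bf Z}/2 \times {\rm Sp}_6({\bf F}_2)$ and $W \cong {\bf Z}/2 \times A_5$ are classical. In each of the four splitting cases $W/Z(W)$ is itself indecomposable (simple, dihedral of odd order, or of type $W(D_{2k+1})$ with trivial center), so the canonical factorization is unambiguous.

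For the second stage I would identify $B \cong W/Z(W)$ in each surviving case. In the $E_7$ and $H_3$ cases $B$ is nonabelian simple; since every nontrivial Coxeter group admits a nontrivial sign character and therefore has an abelianization of order at least $2$, no nonabelian simple group can be a Coxeter group, giving part (i). In case (ii), Lemma \ref{blowingdownlemma} exhibits $B$ as the Coxeter group $W(I_2(2k+1))$ of order $2(2k+1)$; for any Coxeter generating set $R$ of $B$, each irreducible component of $(B,R)$ would contribute an even factor to $|B|$, so only one component fits and Fact 1 identifies the type as $I_2(2k+1)$. Case (iii) is parallel with $B \cong W(D_{2k+1})$: this group has trivial center and is abstractly indecomposable by the enumeration above, so any Coxeter generating set makes $B$ an irreducible Coxeter system, whose type is $D_{2k+1}$ by Fact 1. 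The main obstacle lies inside the first stage: a case-by-case check along the full $(-1)$-type list is unavoidable, with the negative cases (such as $C_{2k}$, $D_{2k}$, $E_8$, $F_4$, $H_4$, $I_2(4k)$) ruled out by parity computations in the standard representation, of the same flavour as the verification that $({\bf Z}/2)^n \rtimes S_n$ fails to split off its center when $n$ is even. Once this splitting dictionary is in place, the rest of the argument is routine.
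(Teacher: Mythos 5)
Your proposal is correct, but it reaches the key classification by a genuinely different route from the paper. The paper's entire proof is two sentences: the determination of all non-trivial direct-product decompositions of irreducible finite Coxeter groups is quoted from the final section of \cite{Pa07}, and the assertions about the Coxeter generating sets of $B$ in (ii) and (iii) are then read off from Fact 1 --- exactly as you do in your second stage. What you propose instead is to reprove the Paris classification from scratch: Krull--Remak--Schmidt (which the paper does develop, in Proposition \ref{RKSversion} and its corollaries, but deploys only later for Proposition \ref{directproductdecompfinite}) reduces everything to (a) indecomposability of the centerless irreducible types and of $W/Z(W)$ in the splitting cases, and (b) deciding, type by type among the $(-1)$-type systems, whether the central involution $w_0$ admits a complement. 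Your ``parity computation'' remark is in fact the right uniform mechanism for (b): since $|Z(W)|=2$, a complement is precisely an index-$2$ subgroup missing $w_0$, so the extension splits if and only if some character $W\to\{\pm1\}$ is nontrivial on $w_0$, which comes down to the parities of the partial lengths of the longest element (odd in exactly the cases $E_7$, $H_3$, and one generator class for $C_{2k+1}$ and $I_2(4k+2)$). Your route buys self-containedness at the cost of actually executing that case check, which you correctly identify as the real content and which the paper simply outsources to \cite{Pa07}; your stage two (indecomposability of $B$, or the $|B|\equiv 2\pmod 4$ count, forces $(B,R)$ to be irreducible, after which Fact 1 pins down the type, and the sign character rules out the simple groups in case (i)) coincides with the paper's intended argument, made slightly more explicit.
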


\begin{proof}
Let $(W,S)$ be an irreducible spherical Coxeter system. Then the decompositions
of $W$ as a non-trivial direct product can be determined by going through the list.
This is done in the final section of  \cite{Pa07}. It follows from
Fact 1 of the introduction that all irreducible Coxeter generating sets of
a finite Coxeter group are conjugate in its automorphism group. This observation yields
the second statements of Assertions (ii)
and (iii).
\end{proof}

\begin{cor} \label{decompositionsummary}
Let $(W,R)$ be an irreducible spherical Coxeter system and let $W = A \times B$
be a non-trivial decomposition of $W$ as a direct product such that $|A| \leq |B|$.
Then $(W,R)$ is of $(-1)$-type and  $A = \langle s \rangle$ where $s$ is the longest
element in $(W,R)$ and $B$ admits no proper decomposition as a direct product.

If $B$ is a Coxeter group with Coxeter generating set $C$
%If there exists a Coxeter generating set $C$ of $B$
and if $\rho$ denotes the
longest element in $(B,C)$, then $s\rho \in R^W$. Moreover, one of the following
holds:

\begin{itemize}
\item[$I$)] There exists $1 \leq k \in {\bf N}$ such that $(W,R)$
is of type $I_2(4k+2)$ and $(B,C)$ is of type $I_2(2k+1)$.
Moreover $C \subseteq R^W$ and $R \subseteq (s\rho)^W \cup a^W$
for any $a \in C$.
\item[$D$)] There exists $1 \leq k \in {\bf N}$ such that $(W,R)$
is of type $C_{2k+1}$ and $(B,C)$ is of type $D_{2k+1}$.
Moreover $C \subseteq R^W$ and $R \subseteq (s\rho)^W \cup a^W$
for any $a \in C$.
\item[$\bar{D}$)] There exists $1 \leq k \in {\bf N}$ such that $(W,R)$
is of type $C_{2k+1}$ and $(B,C)$ is of type $D_{2k+1}$.
Moreover $\bar{C} := \{ sc \mid c \in C \} \subseteq R^W$ and $R \subseteq (s\rho)^W \cup (sa)^W$
for any $a \in C$.
\end{itemize}
\end{cor}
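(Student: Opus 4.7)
The plan is to apply Proposition \ref{parisprop} to the decomposition $W = A \times B$ in order to pin down both $A$ and the type of $(W,R)$, and then to apply Lemma \ref{blowingdownlemma} to an auxiliary Coxeter generating set $T$ of $W$ built from $s$ and $C$ in order to transfer the relevant reflection information back to $(W,R)$.

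First, Proposition \ref{parisprop} immediately yields $A = Z(W)$ and places us in one of three cases: type $E_7$ or $H_3$ (with $B$ not a Coxeter group), type $I_2(4k+2)$ (with $B$ of type $I_2(2k+1)$), or type $C_{2k+1}$ (with $B$ of type $D_{2k+1}$). Since $|A| > 1$, Lemma \ref{finitecenter} forces $Z(W) = \langle s \rangle$ where $s$ is the longest element of $(W,R)$; in particular $(W,R)$ is of $(-1)$-type and $A = \langle s \rangle$. Indecomposability of $B$ follows by contradiction: a non-trivial decomposition $B = B_1 \times B_2$ with $|B_1| \leq |B_2|$ would induce a decomposition $W = B_1 \times (A \times B_2)$ with $|B_1| < |A \times B_2|$, so Proposition \ref{parisprop} would force $B_1 = Z(W) = A$; but then $B_1 \leq B$ together with $A \cap B = 1$ would give $B_1 = 1$, a contradiction.

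Now assume $B$ is a Coxeter group with generating set $C$, so that we are in the $I_2(4k+2)$ or $C_{2k+1}$ case. Because $W = \langle s \rangle \times B$, Proposition \ref{productprop}(i) shows that $T := \{s\} \cup C$ is a Coxeter generating set of $W$, of type $A_1 + I_2(2k+1)$ or $A_1 + D_{2k+1}$ respectively, with $s \in T \cap Z(W)$. The key step is then to apply Lemma \ref{blowingdownlemma} to $(W,T)$. In the $I_2(4k+2)$ case, part (i) of the lemma applied to any $a \in C$ yields $\{a, \rho a\rho, s\rho\} \subseteq R^W$ and $R^W \subseteq a^W \cup (s\rho)^W$; since the two elements of $C$ are conjugate in the dihedral group $B$ of odd exponent $2k+1$, we have $C \subseteq a^W$, producing case $I$.

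In the $C_{2k+1}$ case we choose $a \in C$ with $b := \rho a \rho \neq a$; such $a$ exists because the longest element of $D_{2k+1}$ with $2k+1$ odd realizes the non-trivial Dynkin diagram automorphism interchanging the two short branches. Lemma \ref{blowingdownlemma}(ii) then gives $s\rho \in R^W$ together with the dichotomy: either $a \in R^W$, in which case $R \subseteq (s\rho)^W \cup a^W$, or $sa \in R^W$, in which case $R \subseteq (s\rho)^W \cup (sa)^W$. The main subtlety, and the trickiest bookkeeping point, is to show that exactly one of the two alternatives occurs and that it propagates from $a$ to all of $C$: conjugation by elements of $W = \langle s \rangle \times B$ preserves the $B$-coset of any element, so $c^W \subseteq B$ and $(sc)^W \subseteq sB$ are disjoint, which rules out simultaneous occurrence; and since the diagram of $(B,C)$ is the connected graph $D_{2k+1}$ with all edges labelled $3$, the set $C$ lies in a single $W$-conjugacy class, whence the chosen alternative propagates to yield either $C \subseteq R^W$ (case $D$) or $\bar{C} = sC \subseteq R^W$ (case $\bar{D}$).
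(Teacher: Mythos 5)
Your proposal is correct and follows exactly the route the paper takes: the paper's own proof is the one-line remark that the corollary follows from Proposition \ref{parisprop} (which pins down $A = Z(W) = \langle s \rangle$ and the possible types) together with Lemma \ref{blowingdownlemma} (applied via the auxiliary generating set $\{s\} \cup C$ to extract the reflection statements about $R$). Your write-up merely supplies the routine details — indecomposability of $B$, the existence of $a$ with $\rho a \rho \neq a$ in type $D_{2k+1}$, and the consistency of the $D$ versus $\bar{D}$ alternative across $C$ — all of which check out.
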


\begin{proof}
This follows from Lemma \ref{blowingdownlemma} and Proposition \ref{parisprop}.
\end{proof}

\section{Right-angled generators}

\label{rightangledsec}

In this section we prove basic facts about right-angled generators of Coxeter systems.
In the finite rank case, several of these facts follow from the more general results about
the finite continuation in \cite{FHM}. Although the results there do not apply directly to the infinite
rank case, our arguments are nevertheless inspired by that paper. Combining our results
on right-angled generators with the information of the previous sections we will accomplish
the 'easy' direction of the main result which is Proposition \ref{mainresulteasydirectionprop}.
Furthermore, we shall give a brief outline on the proof of the opposite direction of the main result.

\smallskip
\noindent
{\bf Definition:} Let $(W,S)$ be a Coxeter system. For $s \in S$ we put
$s^{\perp} := \{ t \in S \mid m_{st} =2 \}$ and $s^{\infty} := \{ t \in S \mid m_{st} = \infty \}$
and we call $s$ a {\sl right-angled generator of $(W,S)$} if $S = \{ s \} \cup s^{\perp} \cup s^{\infty}$.

\smallskip
\noindent
{\bf Convention for this section:} Throughout this section $(W,S)$ is
a Coxeter system (possibly of infinite rank) and $s \in S$ is a right-angled
generator of $(W,S)$. Moreover, $\pi:W \longrightarrow \{ +1,-1 \}$
is the unique homomorphism mapping $s$ onto $-1$ and $s'$ onto $+1$
for all $s \neq s' \in S$ and $V$ denotes the kernel of $\pi$.

\smallskip

\begin{lemma} \label{rightangledcentralizer}
The centralizer of $s$ in $W$ is the group $\langle \{ s \} \cup s^{\perp}\rangle$.
\end{lemma}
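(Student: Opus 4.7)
My plan is to invoke the free-product-with-amalgamation decomposition of $W$ provided by Lemma \ref{productlemma}. The easy inclusion $\langle \{s\} \cup s^{\perp} \rangle \subseteq C_W(s)$ is immediate from the definition of $s^{\perp}$, since $s$ commutes with every element of $\{s\} \cup s^{\perp}$ and the set of elements commuting with $s$ forms a subgroup of $W$.

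For the reverse inclusion I would apply Lemma \ref{productlemma}(ii) with $K := \{s\} \cup s^{\perp}$ and $L := s^{\perp} \cup s^{\infty}$: since $s$ is right-angled one has $K \cup L = S$, and the hypothesis $m_{kl} = \infty$ on $(K \setminus L) \times (L \setminus K) = \{s\} \times s^{\infty}$ is just the definition of $s^{\infty}$. This presents $W$ as $W = A *_{C} B$, where $A := \langle \{s\} \cup s^{\perp} \rangle$, $B := \langle s^{\perp} \cup s^{\infty} \rangle$, and $C := \langle s^{\perp} \rangle$ (the last identification by Lemma \ref{Coxeterbasics}). Since $s$ commutes with every element of the generating set $s^{\perp}$ of $C$, and $\langle s \rangle \cap C = \{1\}$ (again by Lemma \ref{Coxeterbasics}), there is an internal direct decomposition $A = \langle s \rangle \times C$; in particular $s$ is central in $A$ and $[A:C] = 2$.

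To establish $C_W(s) \subseteq A$, let $w \in C_W(s)$. I would appeal to the action of $W$ on the Bass-Serre tree $T$ associated to the amalgamation $W = A *_C B$: the stabilizer of the base vertex $v_A$ (corresponding to the coset $A$) is $A$ itself, the edges of $T$ incident to $v_A$ correspond to the two cosets of $C$ in $A$, namely $C$ and $sC$, and both of the corresponding edge stabilizers equal $C$ (because $s$ normalizes $C$, as $s$ commutes with the generating set $s^{\perp}$). Since $s \notin C$, the element $s$ fixes no edge at $v_A$, so the fixed subtree of $s$ in $T$ reduces to $\{v_A\}$. Because $w$ centralizes $s$, it preserves this fixed set, and therefore $w \cdot v_A = v_A$, which places $w$ in the stabilizer $A$ of $v_A$, as required.

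The only delicate point is the reliance on the Bass-Serre tree --- equivalently, on the normal-form theorem for free products with amalgamation --- but this is exactly the machinery already invoked in the proof of Lemma \ref{FPAbasicobservation}, so it is available here. An alternative, avoiding trees explicitly, would write $w$ in reduced form relative to the transversal $\{1, s\}$ for $C$ in $A$ (and some choice of transversal for $C$ in $B$), compute the normal form of $sws$, and use uniqueness of the normal form to force the $B$-syllables of $w$ to be trivial, again yielding $w \in A$.
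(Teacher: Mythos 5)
Your proof is correct, but it takes a genuinely different route from the paper. The paper disposes of this lemma in one line by specializing Proposition \ref{normalizerofparabolicprop} to $J=\{s\}$: since $\{s\}$ is of $(-1)$-type, $C_W(s)=N_W(\langle s\rangle)$, and the right-angledness of $s$ guarantees the hypothesis of Assertion (ii) (any $t\in S\setminus\{s\}$ with $\langle s,t\rangle$ finite satisfies $m_{st}=2$), so $N_W(\langle s\rangle)=\langle s\rangle\times\langle s^{\perp}\rangle$. That argument ultimately rests on the Brink--Howlett description of normalizers of parabolic subgroups. You instead exploit the visible splitting $W=\langle\{s\}\cup s^{\perp}\rangle *_{\langle s^{\perp}\rangle}\langle s^{\perp}\cup s^{\infty}\rangle$ furnished by Lemma \ref{productlemma}(ii) and run a Bass--Serre fixed-point argument: $s$ fixes only the vertex $v_A$ (both edges at $v_A$ have stabilizer $\langle s^{\perp}\rangle$, which misses $s$ since $\langle s\rangle\cap\langle s^{\perp}\rangle=1$ by Lemma \ref{Coxeterbasics}), so any centralizing element preserves $\{v_A\}$ and lies in $A$. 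The steps all check out, including the identification $A=\langle s\rangle\times\langle s^{\perp}\rangle$ and the degenerate case $s^{\infty}=\emptyset$, where the conclusion is trivial. What your approach buys is independence from Brink--Howlett and from any finite-rank reduction --- the tree argument works uniformly in arbitrary rank, and the normal-form machinery is already imported from Serre for Lemma \ref{FPAbasicobservation}. What the paper's approach buys is brevity and reuse: Proposition \ref{normalizerofparabolicprop} is needed elsewhere (e.g., in Proposition \ref{proofofprop}), so invoking it here costs nothing extra.
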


\begin{proof} This follows from Assertion (ii) of Proposition
\ref{normalizerofparabolicprop} with $J := \{ s \}$.
\end{proof}

\smallskip
\noindent
\begin{lemma} \label{rightangled1}
We have $s^W \cap \langle \{ s \} \cup s^{\perp} \rangle = \{ s \}$.
\end{lemma}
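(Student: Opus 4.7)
The plan is to use the homomorphism $\pi:W\to\{\pm 1\}$ together with the fact that $s$ is central in the parabolic subgroup $\langle J\rangle$, where $J:=\{s\}\cup s^{\perp}$. Note first that $J$ is indeed a standard parabolic subset, and since every $t\in s^{\perp}$ satisfies $m_{st}=2$, the element $s$ commutes with every generator of $\langle J\rangle$, so $s$ lies in the center of $\langle J\rangle$.

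Let $r\in s^W\cap\langle J\rangle$. By Lemma \ref{Coxeterbasics} we have $S^W\cap\langle J\rangle=J^{\langle J\rangle}$, so there exist $t\in J$ and $u\in\langle J\rangle$ such that $r=u^{-1}tu$. The first step is to rule out $t\in s^{\perp}$: since $\pi(s)=-1$ and $\pi(t')=+1$ for every $t'\in s^{\perp}$, and since conjugate elements have the same image under the homomorphism $\pi$, we get
\[
-1=\pi(s)=\pi(r)=\pi(u^{-1}tu)=\pi(t),
\]
which forces $t=s$. The second step is to observe that, because $s$ is central in $\langle J\rangle$, the element $u\in\langle J\rangle$ conjugates $s$ trivially, so $r=u^{-1}su=s$. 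This shows $s^W\cap\langle J\rangle\subseteq\{s\}$; the reverse inclusion is immediate.

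There is no real obstacle here: the only thing to verify is the well-definedness of $\pi$ (already guaranteed by the paper's conventions for this section, since $s$ being right-angled means all relations involving $s$ are either $s^2=1$, $(st)^2=1$, or have no imposed finite order, all of which are compatible with sending $s\mapsto-1$ and every other generator to $+1$). The whole argument then reduces to combining $\pi$-invariance with the central position of $s$ inside $\langle\{s\}\cup s^{\perp}\rangle$.
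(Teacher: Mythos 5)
Your proof is correct, and it takes a genuinely different route from the paper's. You invoke the identity $S^W\cap\langle J\rangle=J^{\langle J\rangle}$ from Lemma \ref{Coxeterbasics} to write any $r\in s^W\cap\langle J\rangle$ as a $\langle J\rangle$-conjugate of some $t\in J$, then use the sign character $\pi$ to force $t=s$ and the centrality of $s$ in $\langle J\rangle$ to conclude $r=s$. The paper instead only uses $\pi$ to see that such an $r$ lies outside $V$, writes $r=su$ with $u$ an involution in $\langle s^{\perp}\rangle$, and then rules out $u\neq 1$ by Richardson's theorem (Lemma \ref{Richardsonlemma}): conjugating $u$ to the longest element of a $(-1)$-type subset $J_0\subseteq s^{\perp}$ exhibits $r$ as conjugate to the longest element of the $(-1)$-type set $\{s\}\cup J_0$ of size at least $2$, contradicting the fact that a reflection has $S$-rank one. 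Your argument is shorter and avoids the machinery of $(-1)$-type sets and the $S$-rank invariant entirely, at the price of leaning on the (nontrivial, but already quoted) fact that every reflection of $(W,S)$ contained in a standard parabolic $\langle J\rangle$ is already a reflection of $(\langle J\rangle,J)$; the paper's argument uses only the weaker input that $r$ is an involution of $\langle J\rangle$ outside $V$. Both are complete; your observation about the well-definedness of $\pi$ (which requires $m_{st}\in\{2,\infty\}$ for $t\neq s$, i.e.\ exactly the right-angled hypothesis) is the correct point to flag.
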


\begin{proof} Let $w \in W$ be such that $t =s^w \in \langle \{ s \} \cup s^{\perp} \rangle$.
As $s$ is not in $V$ which is a normal subgroup of $W$, it follows that
$t$ is not in $V$. As $t \in \langle \{ s \} \cup s^{\perp} \rangle$, there
exists an element $u \in \langle s^{\perp} \rangle \leq C_W(s)$ such that $t = su$.
As $t^2 = s^2 = [s,u] = 1$, we have $u^2 = 1$.

Suppose $u \neq 1_W$. By Lemma \ref{Richardsonlemma}
applied to $(\langle s^{\perp} \rangle,s^{\perp})$ there exists a non-empty $(-1)$-set
$J \subseteq s^{\perp}$
and an element $x \in \langle s^{\perp} \rangle \leq C_W(s)$ such that $u^x$ is the longest
element in $(\langle J \rangle, J)$. As $[s,x] = 1$ it follows
that $(s^w)^x = (su)^x = su^x$ is the longest element of
the $(-1)$-set $K:= \{ s \} \cup J$ which is a contradiction, because
$|K| \geq 2$ and $s$ is a reflection of $(W,S)$.
\end{proof}

\begin{lemma} \label{rightangled2}
Let $x \in W$ be such that $\langle s,x \rangle$ is a finite subgroup
of $W$. Then $[s,x]=1$ and in particular $x \in \langle \{ s \} \cup s^{\perp} \rangle$.
\end{lemma}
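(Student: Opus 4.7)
The plan is to embed $\langle s,x\rangle$ in a conjugate of a spherical standard parabolic $\langle J\rangle$, normalize so that a conjugate of $s$ lies in $J$, and then use the sign homomorphism $\pi:W\to\{+1,-1\}$ to force this conjugate to be $s$ itself; once $s\in J$, right-angledness collapses $J$ into $\{s\}\cup s^\perp$ and the result falls out.

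First I invoke Lemma \ref{Bourbakiexercise}: finiteness of $\langle s,x\rangle$ yields a spherical $J\subseteq S$ and $w\in W$ with $\langle s,x\rangle^w\leq\langle J\rangle$. Since $s^w\in\langle J\rangle$ is a reflection of $(W,S)$, Lemma \ref{Coxeterbasics} gives $s^w\in J^{\langle J\rangle}$, so there are $u\in\langle J\rangle$ and $j\in J$ with $u(s^w)u^{-1}=j$. Replacing $w$ by $wu^{-1}$ we may assume $s^w=j\in J$; the inclusion $\langle s,x\rangle^w\leq\langle J\rangle$ is preserved because $\langle J\rangle$ is normalized by $u$.

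Next I use the sign homomorphism $\pi$ defined in the convention of this section. Its well-definedness is precisely where the right-angled hypothesis on $s$ enters: for $t\in S\setminus\{s\}$ with $m_{st}$ finite, right-angledness forces $m_{st}=2$, so the braid relation $(\pi(s)\pi(t))^{m_{st}}=1$ is trivially satisfied. Since $\pi$ takes values in an abelian group it is constant on conjugacy classes, hence $\pi(j)=\pi(s^w)=\pi(s)=-1$, and by the defining formula only $s$ itself is sent to $-1$; thus $j=s$. Now $s\in J$ with $J$ spherical gives $m_{st}<\infty$ for every $t\in J$, and right-angledness refines this to $m_{st}\in\{1,2\}$, so $J\subseteq\{s\}\cup s^\perp$.

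The conclusion follows quickly: $\langle J\rangle\leq\langle\{s\}\cup s^\perp\rangle=C_W(s)$ by Lemma \ref{rightangledcentralizer}, so $x^w\in\langle J\rangle$ centralizes $s$, and the identity $s^w=s$ also puts $w$ in $C_W(s)$. Therefore $x=w(x^w)w^{-1}\in\langle\{s\}\cup s^\perp\rangle$ and $[s,x]=1$. The main obstacle is the middle step: without the sign homomorphism, a conjugate of $s$ inside $\langle J\rangle$ could a priori land on any $j\in J$, and conjugate generators genuinely occur in Coxeter systems that are not right-angled; $\pi$ is essentially the only place the right-angled hypothesis is exploited.
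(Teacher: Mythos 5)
Your proof is correct, and it reaches the conclusion by a slightly different mechanism than the paper's. Both arguments start the same way, conjugating the finite group $\langle s,x\rangle$ into a spherical standard parabolic $\langle J\rangle$ via Lemma \ref{Bourbakiexercise}, and both ultimately rest on the sign character $\pi$. But the paper works with the kernel $V=\ker\pi$: it first deduces $s\in J$ from the fact that $\langle s,x\rangle^w\not\leq V$ while $S\setminus\{s\}\subseteq V$, and then pins down $s^w=s$ by invoking Lemma \ref{rightangled1} (which in turn depends on Richardson's theorem via Lemma \ref{Richardsonlemma}); it also splits into the cases $x\in V$ and $x\notin V$. You instead normalize $s^w$ into $J$ using $S^W\cap\langle J\rangle=J^{\langle J\rangle}$ from Lemma \ref{Coxeterbasics}, and then identify the resulting generator $j$ as $s$ directly from the fact that $\pi$ is a class function taking the value $-1$ only at $s$ among the elements of $S$. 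This buys you a shorter, case-free argument that avoids Lemma \ref{rightangled1} entirely (at the mild cost of the extra normalization step), and you correctly flag that the well-definedness of $\pi$ is exactly where right-angledness is used; the paper's route has the advantage of reusing Lemma \ref{rightangled1}, which it needs elsewhere anyway. Both proofs are complete and all your individual steps (the replacement $w\mapsto wu^{-1}$ preserving $\langle s,x\rangle^w\leq\langle J\rangle$, the deduction $J\subseteq\{s\}\cup s^{\perp}$, and the final appeal to Lemma \ref{rightangledcentralizer}) check out.
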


\begin{proof} We first consider the case where $x \in V$.
By Lemma \ref{Bourbakiexercise} there exist a spherical subset $J$ of $S$ and
an element $w \in W$ such that $\langle s,x \rangle^w \leq \langle J \rangle$.
As $\langle s,x \rangle$ is not contained in $V$ which is a normal subgroup
of $W$, $\langle s,x \rangle^w$ is not contained in $V$. As $S \setminus \{ s \} \subseteq V$
it follows that $s \in J$. Since $J$ is spherical and contains $s$,
it follows that $J \subseteq \{ s \} \cup s^{\perp}$. It follows
that $s^w \in \langle J \rangle \leq \langle \{ s \} \cup s^{\perp} \rangle$
and applying Lemma \ref{rightangled1} we see that $s^w = s$ which means $[s,w] = 1$.
Note that $x^w \in V \cap \langle \{ s \} \cup s^{\perp} \rangle = \langle s^{\perp} \rangle$
and hence $[s,x]^w = [s^w,x^w] = [s,x^w] = 1$ which implies $[s,x] = 1$.

Suppose now that $x$ is not in $V$. Then $v := sx \in V$ and $\langle s,v \rangle = \langle s,x \rangle$
is a finite subgroup of $W$. By what we know already it follows that $1 = [s,v] = [s,sx]$
which implies $[s,x] = 1$.

The last assertion of the lemma follows from Lemma \ref{rightangledcentralizer}.
\end{proof}

\begin{prop} \label{rightangled3}
Let $r \in  \langle s^{\perp} \rangle$ be an involution and $x \in W$ be such that
$\langle sr,x \rangle$ is a finite subgroup of $W$. Then $[s,x] =1$ and
in particular $x \in \langle \{ s \} \cup s^{\perp} \rangle$.
\end{prop}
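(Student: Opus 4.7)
The plan is to reduce to Lemma \ref{rightangled2} by replacing the involution $sr$ with a longest element of a $(-1)$-set, then applying Corollary \ref{FHMLemma20cor} to transfer finiteness of $\langle sr, x\rangle$ to finiteness of $\langle s, x'\rangle$ for a conjugate $x'$ of $x$ by an element of $C_W(s)$.

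More precisely, first I would apply Lemma \ref{Richardsonlemma} inside the Coxeter system $(\langle s^{\perp}\rangle, s^{\perp})$ to the involution $r$. This produces a $(-1)$-subset $J \subseteq s^{\perp}$ and an element $w \in \langle s^{\perp}\rangle$ such that $w^{-1} r w = \rho_J$. Since every element of $s^{\perp}$ commutes with $s$, the set $I := \{s\} \cup J$ is a $(-1)$-set in $S$: indeed $(\langle I\rangle, I)$ decomposes as the direct product $(\langle s\rangle, \{s\}) \times (\langle J\rangle, J)$, which by Lemma \ref{longesteltoffinite} is again of $(-1)$-type, with longest element $\rho_I = s\rho_J$. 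Moreover, $[s,w] = 1$, so $w^{-1}(sr)w = s\,w^{-1} r w = s\rho_J = \rho_I$.

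Next, the hypothesis that $\langle sr, x\rangle$ is finite gives that its conjugate $\langle \rho_I, w^{-1} x w\rangle$ is also finite. I would now invoke Corollary \ref{FHMLemma20cor} with $v := w^{-1} x w$ and the $(-1)$-set $I$: this yields that $\langle \{v\} \cup I\rangle$ is a finite subgroup of $W$. In particular $\langle s, v\rangle = \langle s, w^{-1} x w\rangle$ is finite, so Lemma \ref{rightangled2} applies and gives $[s, w^{-1} x w] = 1$. Conjugating back by $w$ and using $[s,w] = 1$, we obtain $[s,x] = 1$. The final statement then follows immediately from Lemma \ref{rightangledcentralizer}, which identifies $C_W(s)$ with $\langle \{s\} \cup s^{\perp}\rangle$.

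I do not foresee any serious obstacle: the only delicate point is verifying that $I = \{s\} \cup J$ really is a $(-1)$-set with longest element $s\rho_J$, but this is immediate from $J \subseteq s^{\perp}$ together with Lemma \ref{longesteltoffinite}. Everything else is routine conjugation bookkeeping and the invocation of the previously established lemmas.
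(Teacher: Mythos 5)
Your proof is correct and follows essentially the same route as the paper's: conjugate $r$ to the longest element of a $(-1)$-set $J\subseteq s^{\perp}$ via Lemma \ref{Richardsonlemma}, observe that $\{s\}\cup J$ is again of $(-1)$-type with longest element $s\rho_J$, apply Corollary \ref{FHMLemma20cor}, and finish with Lemma \ref{rightangled2}. The only difference is that you omit the paper's preliminary case split on whether $x$ lies in the kernel $V$ of $\pi$, which is indeed not needed here since Lemma \ref{rightangled2} already handles arbitrary $x$.
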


\begin{proof} As in the proof of Lemma \ref{rightangled2} we consider first the case where $x \in V$.
As $r \in \langle s^{\perp} \rangle$ there exists a $(-1)$-set $J \subseteq s^{\perp}$
and $y \in \langle s^{\perp} \rangle$ such that $u := r^y$ is the longest
element in $(\langle J \rangle,J)$. Setting $K := J \cup \{ s \}$,
it follows that $K$ is of $(-1)$-type and that $su$ is the longest element in $(\langle K \rangle, K)$.
As $y \in \langle s^{\perp} \rangle$ it follows that $s^y = s$ and therefore
$\langle su, x^y \rangle$ is a finite group. It follows from Corollary \ref{FHMLemma20cor}
that $\langle K \cup \{ x^y \} \rangle$ is a finite group which implies
that $\langle s,x^y \rangle$ is a finite group.
%As $x \in V$ and $V$ is a normal subgroup
%of $W$, we have $x^y \in V$.
Applying now Lemma \ref{rightangled2} we
have $1 = [s,x^y] = [s^y,x^y] = [s,x]^y$ and hence $[s,x] = 1$.

Suppose now that $x$ is not in $V$ and put $v := sr x$.
Then $v \in V$ and $\langle sr, v \rangle = \langle sr, x \rangle$
is a finite group. By what we know from the first case, it follows
that $1 = [s,v] = [s,sr x]$ and as $[s,r]=1$ is follows that $[s,x] =1$.

The last assertion of the lemma follows from Lemma \ref{rightangledcentralizer}.
\end{proof}

\begin{cor} \label{infordercor}
Let $r \in \langle s^{\perp} \rangle$ be an involution and let $u \in s^{\infty}$.
Then $(sr)u$ has infinite order.
\end{cor}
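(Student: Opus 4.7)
The plan is to argue by contradiction and reduce to Proposition \ref{rightangled3}. Suppose that $(sr)u$ has finite order; I will derive a contradiction with $u \in s^{\infty}$.

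First I would verify that $sr$ is an involution. Since $r \in \langle s^{\perp}\rangle$ and $s$ commutes with every element of $s^{\perp}$, Lemma \ref{rightangledcentralizer} gives $[s,r]=1$, hence $(sr)^{2} = s^{2}r^{2} = 1$; moreover $sr \neq 1$ because $s \notin V$ while $r \in V$, so $s$ and $r$ are distinct. Thus $sr$ is a genuine involution.

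Next, because $u \in S$ is itself an involution and, by assumption, $(sr)u$ has finite order, the group $\langle sr, u\rangle$ is a finite dihedral group. Therefore the hypothesis of Proposition \ref{rightangled3} is satisfied with $x := u$: $r$ is an involution in $\langle s^{\perp}\rangle$ and $\langle sr, u\rangle$ is a finite subgroup of $W$. Applying that proposition, I obtain $[s,u] = 1$.

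This yields the desired contradiction. Indeed, $[s,u]=1$ together with $s^{2}=u^{2}=1$ implies $(su)^{2}=1$, so $m_{su} \in \{1,2\}$. But by hypothesis $u \in s^{\infty}$, which means $m_{su} = \infty$. This contradiction shows that $(sr)u$ cannot have finite order, which is the assertion of the corollary. The only step that required any work was the reduction to Proposition \ref{rightangled3}, and the main obstacle there, recognizing that $\langle sr, u\rangle$ is finite, is immediate once one observes that $sr$ is an involution.
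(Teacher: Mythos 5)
Your proof is correct and follows essentially the same route as the paper: assume $(sr)u$ has finite order, observe that $sr$ and $u$ are involutions so $\langle sr,u\rangle$ is finite, apply Proposition \ref{rightangled3} to get $[s,u]=1$, and contradict $u \in s^{\infty}$. The extra verification that $sr$ is a genuine involution is a harmless elaboration of what the paper leaves implicit.
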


\begin{proof}
Assume by contradiction that $(sr)u$ has finite order.
As $sr$ and $u$ are both involutions, it follows that $\langle sr,u \rangle$
is a finite subgroup of $W$. Thus we are in the position to apply Proposition
\ref{rightangled3} which yields $[s,u]=1$ and hence a contradiction.
\end{proof}

\subsection*{$s$-components}

\smallskip
\noindent
We first recall the definition of an $s$-component:
An $s$-component is a spherical irreducible component of $(\langle s^{\perp} \rangle,s^{\perp})$.

\begin{prop} \label{s-Translation}
Let $C$ be an $s$-component of $(-1)$-type and let $\rho$ be
the longest element in $(\langle C \rangle,C)$. Then
$R := \{ s\rho \} \cup (S \setminus \{ s \})$ is a Coxeter generating
set of $W$ and $s$ is not a reflection of $(W,R)$.
\end{prop}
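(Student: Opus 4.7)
The plan is to construct an involutive automorphism $\psi$ of $W$ with $\psi(S) = R$; once this is established, $R$ is automatically a Coxeter generating set, and the non-reflection assertion will reduce, via $R^W = \psi(S^W)$, to showing $s\rho \not\in S^W$---which will follow by computing its $S$-rank.

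First I would define $\psi \colon S \to W$ by $\psi(s) := s\rho$ and $\psi(r) := r$ for $r \in S \setminus \{s\}$, and check that $\psi$ respects the (finite) Coxeter relations of $(W,S)$, so that the universal property of Coxeter systems extends it to a group endomorphism of $W$. Relations not involving $s$ are immediate. For $m_{sc} = 2$ with $c \in C$, the involutions $s$, $\rho$, $c$ commute pairwise---$s$ commutes with $\rho$ and with $c$ because $C \subseteq s^{\perp}$, and $\rho$ commutes with $c$ because $C$ is of $(-1)$-type, so $\rho \in Z(\langle C \rangle)$---hence $(s\rho\, c)^2 = s^2 \rho^2 c^2 = 1$. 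For $m_{st} = 2$ with $t \in s^{\perp}\setminus C$, the element $t$ commutes with $s$ and with every element of $C$ (hence with $\rho$) because $C$ is a direct factor of $(\langle s^{\perp}\rangle, s^{\perp})$, so $(s\rho\, t)^2 = 1$. The case $t \in s^{\infty}$ carries no relation to check, and $(s\rho)^2 = 1$ is immediate from $[s,\rho] = 1$. Next I would verify $\psi^2 = \id_W$: on $S \setminus \{s\}$ this is trivial, while $\psi^2(s) = \psi(s\rho) = \psi(s)\psi(\rho) = s\rho\cdot\rho = s$, since $\rho \in \langle C \rangle$ is fixed pointwise by $\psi$. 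Therefore $\psi$ is an involutive automorphism of $W$, and $R = \psi(S)$ is a Coxeter generating set.

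For the second assertion, the automorphism $\psi$ transports $R$-conjugacy back to $S$-conjugacy: $R^W = \psi(S)^W = \psi(S^W)$, so $s \in R^W$ if and only if $\psi^{-1}(s) = \psi(s) = s\rho \in S^W$. It then suffices to show $s\rho \not\in S^W$. Set $J := \{s\} \cup C \subseteq S$. Since $[s,C] = 1$, the irreducible components of $(\langle J\rangle, J)$ are $\{s\}$ (trivially of $(-1)$-type) and $C$ (of $(-1)$-type by hypothesis), so by Lemma \ref{longesteltoffinite} the subset $J$ is of $(-1)$-type with longest element $\rho_J = s\rho$. By Lemma \ref{Richardsonlemma} the $S$-rank of $s\rho$ then equals $|J| = |C|+1 \geq 2$, so $s\rho$ is not a reflection, as needed. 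The only real work is the relation check in the first step, and the $(-1)$-type hypothesis on $C$ is exactly what makes all the required commutations go through.
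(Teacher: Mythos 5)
Your proof is correct, and its first half is essentially the paper's argument: both construct the map fixing $S\setminus\{s\}$ and sending $s\mapsto s\rho$, verify the finite Coxeter relations using $\rho\in Z(\langle C\rangle)$ (equivalently, that $\rho$ is central in $\langle s^{\perp}\rangle$ because $C$ is a $(-1)$-type component), and conclude that the resulting endomorphism is an involutive automorphism, so that $R$ is a Coxeter generating set. The paper additionally records, via Corollary \ref{infordercor}, that $s\rho\,t$ has infinite order for $t\in s^{\infty}$, but as you correctly observe this is not needed once $\psi^2=\mathrm{id}$ is in hand.

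For the second assertion your route diverges slightly, and favourably. The paper assumes $s\in R^W$, notes $S\subseteq R^W$ since $S\setminus\{s\}\subseteq R$, and invokes Lemma \ref{CapraceAppendix} (the Deodhar--Dyer reflection-subgroup result) to conclude $S^W=R^W$, whence $s\rho\in S^W$, which is then refuted. You instead exploit the automorphism directly: $R^W=\psi(S)^W=\psi(S^W)$ because $\psi$ is surjective, so $s\in R^W$ is equivalent to $s\rho=\psi^{-1}(s)\in S^W$, and you rule this out by computing that $s\rho$ is the longest element of the $(-1)$-type set $\{s\}\cup C$ of cardinality at least $2$ and hence has $S$-rank at least $2$ (Lemmas \ref{longesteltoffinite} and \ref{Richardsonlemma}). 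Both arguments bottom out in the same fact that $s\rho$ is not a reflection of $(W,S)$, but yours bypasses Lemma \ref{CapraceAppendix} entirely, which is a mild economy; the paper's version has the advantage of working verbatim even when the new generating set is not visibly the image of $S$ under an automorphism, which is the situation in its later blowing-down arguments.
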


\begin{proof}
We define the map $\alpha: S \rightarrow W$ by
$\alpha(s) := s \rho$ and $\alpha(t) := t$ for all $t \in S \setminus \{ s \}$.

We claim that the order of $\alpha(u)\alpha(v)$ is equal to the order of $uv$
for all $u,v \in S$. This is obvious if $u \neq s \neq v$ and if $u = s = v$
since $\rho$ is an involution contained in $\langle s^{\perp} \rangle \leq C_W(s)$
and $\alpha(s) = s\rho$. It remains to consider the case where $u = s \neq v$.
As $C$ is an irreducible component of $(\langle s^{\perp} \rangle, s^{\perp})$
of $(-1)$-type, it follows that $\rho$ is in the center of $\langle s^{\perp} \rangle$.
Thus, if $v \in s^{\perp}$, we have $[\alpha(s),\alpha(v)] = [s\rho,v] =1$
and hence the order of $\alpha(s)\alpha(v) = s \rho v$ is equal 2 which
is also the order of $sv$. If $v \in s^{\infty}$, then $\alpha(s)\alpha(v)=
s\rho v$ has infinite order by Corollary \ref{infordercor}. Thus our claim is proved.

In view of the above and by the universal property of $(W,S)$, there is a unique endomorphism $\tau$
of $W$ such that $\tau \mid_S = \alpha$ and its square is easily seen to be
the identity on $W$. Thus $\tau$ is an automorphism and $R := \tau(S)$ is
a Coxeter generating set of $W$.

Assume, by contradiction, that $s \in R^W$. Then it follows that
$S \subseteq R^W$ because $S \setminus \{ s \} \subseteq R$.
By Lemma \ref{CapraceAppendix} this implies
$S^W = R^W$. As $s \rho \in R$ it follows that $s \rho \in S^W$.
This is a contradiction because $s \rho$ is the longest element
of $(\langle \{ s \} \cup C \rangle, \{ s \} \cup C)$ and $|\{ s \} \cup C| > 1$.
\end{proof}

\begin{cor} \label{s-Transvectioncor}
If there exists an $s$-component of $(-1)$-type, then $s$
is not an intrinsic reflection of $W$.
\end{cor}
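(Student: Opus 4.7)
The corollary is an immediate consequence of the preceding Proposition \ref{s-Translation}; the substantive work has already been done there, so the plan is essentially to invoke it and translate its conclusion into the language of intrinsic reflections.

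More precisely, the plan is the following. By hypothesis there exists an $s$-component $C$ of $(-1)$-type; let $\rho$ denote the longest element of the spherical Coxeter system $(\langle C \rangle, C)$, which lies in the center of $\langle s^{\perp} \rangle$ because $C$ is an irreducible component of $(\langle s^{\perp}\rangle, s^{\perp})$ of $(-1)$-type. Then Proposition \ref{s-Translation} applies verbatim and produces the Coxeter generating set $R := \{ s\rho \} \cup (S \setminus \{s\})$ of $W$ with the property that $s$ is not a reflection of $(W,R)$, i.e.\ $s \notin R^W$.

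Now recall from the introduction that $s$ is an \emph{intrinsic reflection of $W$} precisely when $s \in R^W$ for \emph{every} Coxeter generating set $R$ of $W$. The existence of the single Coxeter generating set $R$ produced above already violates this condition, so $s$ is not an intrinsic reflection. No further argument is needed, and accordingly there is no genuine obstacle to overcome; the only thing to verify is that the hypotheses of Proposition \ref{s-Translation} are met by our $C$ and $\rho$, which is immediate from the assumption that $C$ is an $s$-component of $(-1)$-type.
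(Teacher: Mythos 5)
Your proposal is correct and is exactly the argument the paper intends: the corollary is stated without proof precisely because it follows immediately from Proposition \ref{s-Translation}, whose conclusion ($s \notin R^W$ for the explicit Coxeter generating set $R = \{s\rho\} \cup (S \setminus \{s\})$) directly negates the definition of an intrinsic reflection. Nothing further is needed.
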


\subsection*{Blowing down generators}

We first recall the definition of a blowing down generator for $s$:
We call $a \in s^{\perp}$ a {\sl blowing down generator for $s$} if the following conditions
are satisfied:

\begin{itemize}
\item[(BDG1)] If $C$ denotes the irreducible component of $(\langle s^{\perp} \rangle, s^{\perp})$
containing $a$, then $(\langle C \rangle,C)$ is of type $I_2(2k+1)$ or $D_{2k+1}$
for some $k \in {\bf N}$; moreover, if $\rho$ denotes the longest element in the
Coxeter system $(\langle C \rangle,C)$, then $b:= \rho a \rho \neq a$.
\item[(BDG2)] If $u_0,u_1,\ldots,u_n \in s^{\infty}$ are such that
$u_{i-1}u_i$ has finite order for all $1 \leq i \leq n$, then there exists
an element $x \in \{ a,b \}$ such that $\{ u_i \mid 0 \leq i \leq n \} \subseteq x^{\infty}$.
\end{itemize}

Let $a$ be a blowing down generator for $s$ and let $C,\rho$ and $b = \rho a \rho$
be as in the previous definition. Then we call $a$ a {\sl proper} blowing
down generator for $s$ if $s^{\infty} \subseteq b^{\infty}$.

\begin{prop} \label{diagramtwistprop}
Suppose that $a \in s^{\perp}$ is a blowing down generator for $s$.
Then there exists a Coxeter generating set $S_1$ of $W$ with the following properties.
\begin{itemize}
\item[(i)] $\{ s \} \cup s^{\perp} \subseteq S_1$;
\item[(ii)] $S_1^W = S^W$;
\item[(iii)] $s$ is a right-angled generator of $(W,S_1)$;
\item[(iv)] $a$ is a proper blowing down generator for $s$ with respect to
the generating set $S_1$.
\end{itemize}
\end{prop}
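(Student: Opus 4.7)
The plan is to isolate, using (BDG2), the generators in $s^{\infty}$ that obstruct properness of $a$, then replace them by their $\rho$-conjugates inside the appropriate factor of an amalgamated-product decomposition of $W$. First I would check a dichotomy on components: for each connected component $\Gamma$ of the subdiagram of $s^{\infty}$ (edges being pairs of finite product order), (BDG2) applied to any $\Gamma$-path forces $\Gamma \subseteq a^{\infty}$ or $\Gamma \subseteq b^{\infty}$; for otherwise, picking $u \in \Gamma \setminus a^{\infty}$ and $v \in \Gamma \setminus b^{\infty}$ and joining them by a $\Gamma$-path would violate (BDG2). Let $B \subseteq s^{\infty}$ be the union of the components that lie in $a^{\infty}$ but not in $b^{\infty}$ (if $B = \emptyset$, then $S_1 := S$ already works), put $B' := \rho B \rho$, and define $S_1 := (S \setminus B) \cup B'$.

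Next I would apply Lemma \ref{productlemma}(ii) with $K_0 := S \setminus B$ and $L_0 := s^{\perp} \cup B$: the hypothesis $m_{kl} = \infty$ for $(k,l) \in (K_0 \setminus L_0) \times (L_0 \setminus K_0) = (\{s\} \cup (s^{\infty} \setminus B)) \times B$ holds because either $k = s$ and $l \in s^{\infty}$, or $k, l$ lie in different connected components of $s^{\infty}$. The main obstacle is then to recognise $L_1 := s^{\perp} \cup B'$ as a Coxeter generating set of $\langle L_0 \rangle$, and this is where (BDG1) is used: since $\rho \in \langle C \rangle \leq \langle s^{\perp} \rangle \leq \langle L_0 \rangle$, conjugation by $\rho$ is an inner automorphism of $\langle L_0 \rangle$; by Lemma \ref{longestelt} it permutes $C$ setwise, and it fixes $s^{\perp} \setminus C$ pointwise (distinct irreducible components of $(\langle s^{\perp} \rangle, s^{\perp})$ commute), so it sends $L_0$ onto $L_1$. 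Corollary \ref{productcor}(ii) applied with this new right factor then yields that $S_1 = K_0 \cup L_1$ is a Coxeter generating set of $W$.

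The remaining verifications are routine. Property (i) is transparent from the construction, and (ii) follows from Lemma \ref{CapraceAppendix} because every element of $S_1$ is conjugate in $W$ to an element of $S$. For (iii), each $t = \rho u \rho \in B'$ satisfies $st = \rho(su)\rho$ by $[s,\rho] = 1$ (from Lemma \ref{rightangledcentralizer}), and is therefore of infinite order as a conjugate of $su$; the other elements of $S_1$ lie in $S$ and inherit the right-angled property. Since $s^{\perp}$ remains unchanged as a subset of $S_1$, the component $C$ and the elements $a, b, \rho$ are unchanged, so (BDG1) carries over verbatim. Finally, the identity $b \cdot \rho u \rho = \rho (au) \rho$ (using $\rho b \rho = a$) shows that each element of $B'$ lies in $b^{\infty}$, while $s^{\infty} \setminus B \subseteq b^{\infty}$ by the very definition of $B$; thus $s^{\infty}_{S_1} \subseteq b^{\infty}$, which renders (BDG2) trivial with $x = b$ and establishes that $a$ is a proper blowing down generator for $s$ with respect to $S_1$.
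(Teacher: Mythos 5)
Your proof is correct, and your set $B$ is exactly the paper's set $K$: the paper defines $K$ as the inductive closure (under finite-order adjacency in $s^{\infty}$) of $\{t \in s^{\infty} \mid m_{bt} \neq \infty\}$, which by your dichotomy (each component of $s^{\infty}$ lies in $a^{\infty}$ or in $b^{\infty}$) is precisely the union of the components contained in $a^{\infty}$ but not in $b^{\infty}$. The one genuine difference is how the key step --- that $S_1 = (S \setminus B) \cup \rho B \rho$ is again a Coxeter generating set --- is justified. The paper invokes the diagram-twist theorem of Brady--McCammond--M\"uhlherr--Neumann (\cite{BMMN}, Definition 4.4 and Theorem 4.5) as a black box, after checking its hypotheses ($K \subseteq a^{\infty}$, $(\{s\} \cup s^{\perp}) \setminus C \subseteq C^{\perp}$, and $m_{kl} = \infty$ across the cut). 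You instead reprove the relevant special case of the twist from the paper's own amalgamation machinery: $W$ is the free product of $\langle S \setminus B \rangle$ and $\langle s^{\perp} \cup B \rangle$ amalgamated over $\langle s^{\perp} \rangle$, conjugation by $\rho$ is an inner automorphism of the right factor carrying $s^{\perp} \cup B$ to $s^{\perp} \cup B'$ (using that $\rho$ normalizes $C$ and centralizes $s^{\perp} \setminus C$), and Corollary \ref{productcor}(ii) reassembles the pieces. This keeps the argument self-contained and parallel in spirit to the proof of Proposition \ref{blowingdownprop}, at the cost of a slightly longer verification; the paper's route is shorter but imports an external theorem. Your remaining checks of (i)--(iv), which the paper dismisses as ``straightforward checking,'' are all accurate, including the two conjugation identities $s \cdot \rho u \rho = \rho(su)\rho$ and $b \cdot \rho u \rho = \rho(au)\rho$ that drive (iii) and the properness in (iv).
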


\begin{proof}
Let $K_0 := \{ t \in s^{\infty} \mid m_{bt} \neq \infty \}$ and
for $1 \leq n \in {\bf N}$ let $K_n := \{ u \in s^{\infty} \mid m_{tu} \neq \infty
\mbox{ for some }t \in K_{n-1} \}$ and put $K := \cup_{n \geq 0} K_n$.
It follows from the construction of the set $K$
that $m_{kl} = \infty$ for all $(k,l) \in K \times (s^{\infty} \setminus K)$.
As $a$ is a blowing down generator for $s$ it follows also that $K \subseteq a^{\infty}$.
As $C$ is an irreducible component of $(\langle s^{\perp} \rangle, s^{\perp})$
we have also that $(\{ s \} \cup s^{\perp})\setminus C \subseteq C^{\perp}$.
We are now in the position to apply a diagram twist as described in
\cite{BMMN} Definition 4.4. with $V := C$ and $U:=K$.
Setting $S_1 = (S \setminus K) \cup K'$
with $K' := \{ \rho x \rho \mid x \in K \}$ we obtain a new Coxeter generating
set of $W$ by Theorem 4.5 of \cite{BMMN}.
By the definition of $S_1$ we have $\{ s \} \cup s^{\perp} \subseteq S_1 \subseteq S^W$, which implies
$S^W = S_1^W$ by Lemma \ref{CapraceAppendix}. A straight forward checking reveals
that $s$ is also a right-angled generator of $(W,S_1)$ and that $a$ is indeed
a proper blowing down generator for $s$ with respect to $S_1$.
\end{proof}

\begin{prop} \label{blowingdownprop}
Let $a \in s^{\perp}$ be a proper blowing down generator for $s$.
Let $C$ be the irreducible component of $(\langle s^{\perp} \rangle,s^{\perp})$
containing $a$,
let $\rho$ be the longest element in $(\langle C \rangle, C)$
and let $b := \rho a \rho$.
Then $R:= (S \setminus \{ s,b \}) \cup \{ s\rho \}$
is a Coxeter generating set of $W$ and $s \not \in R^W$.
\end{prop}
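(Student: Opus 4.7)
The plan is to realise $R$ as a Coxeter generating set by exhibiting $W$ as a free product with amalgamation in which one factor admits two different Coxeter generating sets (the old and the new), and then to identify $s$ as the non-reflection longest element of a spherical standard parabolic of $(W,R)$.

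Write $P := s^{\perp}$ and $I := s^{\infty}$, and let $C^c := P \setminus C$ be the union of the remaining irreducible components of $(\langle P \rangle, P)$, so that $\langle P \rangle = \langle C \rangle \times \langle C^c \rangle$. Set $K := \{s\} \cup P$, $L := (P \setminus \{b\}) \cup I$, $A := \langle K \rangle$, $B := \langle L \rangle$. Then $K \cup L = S$, $K \cap L = P \setminus \{b\}$, and $(K \setminus L) \times (L \setminus K) = \{s,b\} \times I$. For every $u \in I$ we have $m_{su} = \infty$ by the definition of $I$, and $m_{bu} = \infty$ by the proper blowing-down hypothesis $s^{\infty} \subseteq b^{\infty}$. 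Hence Lemma \ref{productlemma}(ii) yields that $W$ is the free product with amalgamation of $A$ and $B$ over $A \cap B = \langle P \setminus \{b\} \rangle$.

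Next I construct a new Coxeter generating set of $A$. Since $s$ commutes with $P$ and the components $C, C^c$ commute pairwise, $A = \langle \{s\} \cup C \rangle \times \langle C^c \rangle$. In the spherical Coxeter system $(\langle \{s\} \cup C \rangle, \{s\} \cup C)$ the generator $s$ is central, so Lemma \ref{blowingdownlemma}, applied to the component $C$, gives the Coxeter generating set $D' := \{s\rho\} \cup (C \setminus \{b\})$ of $\langle \{s\} \cup C \rangle$, of type $I_2(4k+2)$ or $C_{2k+1}$. Proposition \ref{productprop}(i) then glues this to the Coxeter generating set $C^c$ of $\langle C^c \rangle$ to produce the Coxeter generating set $A' := D' \cup C^c$ of $A$. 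The common set $P \setminus \{b\}$ is a Coxeter generating set of $A \cap B$ contained in both $A'$ and $L$, so Proposition \ref{productprop}(ii) shows that $A' \cup L$ is a Coxeter generating set of $W$; a direct check gives $A' \cup L = R$.

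For the claim $s \notin R^W$, set $A_0 := \langle \{s\} \cup C \rangle = \langle D' \rangle$. Since $D' \subseteq R$, $A_0$ is a standard parabolic of $(W,R)$, and Lemma \ref{Coxeterbasics} gives $R^W \cap A_0 = D'^{A_0}$. Because $(\langle C \rangle, C)$ is of type $I_2(2k+1)$ or $D_{2k+1}$ with $k \geq 1$, the group $\langle C \rangle$ has trivial centre, whence $Z(A_0) = \langle s \rangle$. On the other hand, $(A_0, D')$ is of type $I_2(4k+2)$ or $C_{2k+1}$, and by Lemma \ref{finitecenter} its centre is generated by the longest element; comparing, $s$ is itself this longest element. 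For $k \geq 1$ the longest element has $D'$-length $4k+2 \geq 6$ or $(2k+1)^2 \geq 9$, so it is not a reflection of $(A_0, D')$. Therefore $s \notin D'^{A_0} = R^W \cap A_0$, and since $s \in A_0$ this forces $s \notin R^W$.

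The main subtlety lies in choosing the right amalgamation: we cannot split off $\{s\}$ alone (because $b$ must also be removed from the $A$-side of $R$), nor can $b$ be placed on the $B$-side (because $b$ commutes with everything in $C^c$, so $\{b\} \times C^c$ would have to consist of infinite-order products, which fails). The \emph{proper} blowing-down hypothesis $s^{\infty} \subseteq b^{\infty}$ is exactly what is needed so that $\{s,b\}$ and $I$ can serve as $K \setminus L$ and $L \setminus K$ respectively.
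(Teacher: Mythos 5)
Your construction of $R$ as a Coxeter generating set is essentially the paper's: the same splitting $K=\{s\}\cup s^{\perp}$, $L=(s^{\perp}\setminus\{b\})\cup s^{\infty}$, the same use of Lemma \ref{blowingdownlemma} on $\langle\{s\}\cup C\rangle$, and the same gluing via the direct-product and amalgam statements (the paper packages the two gluings as Corollary \ref{productcor}(i) and (ii) rather than invoking Lemma \ref{productlemma} and Proposition \ref{productprop} directly, but the content is identical, including the observation that properness of $a$ is exactly what makes $\{s,b\}\times s^{\infty}$ consist of infinite-order products).

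Where you genuinely diverge is the proof that $s\notin R^W$. The paper argues by contradiction through the reflection set of $(W,S)$: if $s\in R^W$ then $S\subseteq R^W$ (since $b=a^{\rho}$), so $S^W=R^W$ by Lemma \ref{CapraceAppendix} (Deodhar--Dyer), and then $s\rho\in R$ would be a reflection of $(W,S)$ lying in $\langle\{s\}\cup s^{\perp}\rangle$, contradicting Lemma \ref{rightangled1}. You instead stay entirely inside $(W,R)$: you identify $s$ as the central longest element of the standard parabolic $(\langle D'\rangle,D')$ of $(W,R)$ and use $R^W\cap\langle D'\rangle=D'^{\langle D'\rangle}$ from Lemma \ref{Coxeterbasics}. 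This is a legitimate and arguably more self-contained route, since it avoids the Deodhar--Dyer result. One small repair: your stated reason that the longest element is not a reflection of $(\langle D'\rangle,D')$ --- that its $D'$-length is $4k+2$ or $(2k+1)^2$ --- is not by itself conclusive in the $C_{2k+1}$ case, where that length is odd and reflections of large odd length exist. The correct (and easy) argument is that this element is central, so any expression $w^{-1}dw$ for it with $d\in D'$ forces it to equal $d$, which is impossible because its length exceeds $1$; alternatively, it acts as $-\mathrm{id}$ on a reflection representation of dimension at least $2$. With that one sentence fixed, the proof is complete.
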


\begin{proof}
Let $I := \{ s \} \cup C$ and $I_1 := (I \setminus \{ s,b \}) \cup \{ s\rho \}$.
It follows from Lemma \ref{blowingdownlemma} that $I_1$ is a Coxeter generating set of $\langle I \rangle$.
Let $X := \{ s \} \cup s^{\perp} \setminus I$.
Then $m_{ix} = 2$ for all $(i,x) \in I \times X$.
By Assertion (i) of Corollary \ref{productcor} it follows that
$I_1 \cup X$ is a Coxeter generating set of $\langle \{ s \} \cup s^{\perp} \rangle$.

We now set $K := \{ s \} \cup s^{\perp}$. By what we have just proved it follows
that $K_1 := (K \setminus \{ b,s \}) \cup \{ s\rho \}$ is a Coxeter generating set of $K$.
Setting $L := (K \setminus \{ b,s \}) \cup s^{\infty}$ we have $J := K \cap L = K \setminus \{ s,b \}$.
Thus $K \setminus L = \{ s,b \}$ and $L \setminus K = s^{\infty} \subseteq b^{\infty}$.
We conclude that $m_{kl} = \infty$ for all $(k,l) \in (K \setminus L) \times (L \setminus K)$.
Thus we are in the position to apply Assertion (ii) of Corollary \ref{productcor}
in order to see that $(S \setminus \{ s,b \}) \cup \{ s\rho \}$
is a Coxeter generating set of $W$.

As $a \in R$, we have also $b = \rho a \rho \in R^W$.
Assume, by contradiction, that $s \in R^W$. Then $S \subseteq R^W$
which implies $S^W = R^W$ by Lemma \ref{CapraceAppendix}
and hence $s\rho \in R$ is a reflection of $(W,S)$.
As $\rho \neq 1$ Lemma \ref{rightangled1} yields a contradiction.
Hence $s$ is not a reflection of $(W,R)$ which concludes the proof
of the proposition.
\end{proof}

\begin{cor} \label{blowingdowncor}
If there exists a blowing down generator $a \in s^{\perp}$ for $s$,
then $s$ is not an intrinsic reflection of $(W,S)$.
\end{cor}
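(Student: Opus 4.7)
The plan is straightforward: chain the two preceding propositions. Given a blowing down generator $a \in s^{\perp}$ for $s$, I would first invoke Proposition \ref{diagramtwistprop} to replace $S$ by a Coxeter generating set $S_1$ of $W$ with the same set of reflections (i.e.\ $S_1^W = S^W$) in which $s$ is still a right-angled generator and $a$ is a \emph{proper} blowing down generator for $s$. This step is purely a conversion: its sole purpose is to upgrade the hypothesis so that the stronger conclusion of Proposition \ref{blowingdownprop} becomes available.

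Next, I would apply Proposition \ref{blowingdownprop} to the Coxeter system $(W,S_1)$ with the proper blowing down generator $a$. This produces an explicit Coxeter generating set $R = (S_1 \setminus \{s,b\}) \cup \{s\rho\}$ of $W$ such that $s \notin R^W$, where $C$ is the irreducible component of $(\langle s^\perp \rangle, s^\perp)$ containing $a$, $\rho$ is the longest element of $(\langle C \rangle, C)$, and $b = \rho a \rho$.

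The existence of this single Coxeter generating set $R$ of $W$ with $s \notin R^W$ is, by the very definition of an intrinsic reflection, enough to conclude that $s$ is not an intrinsic reflection of $W$. No obstacle remains, since all substantive work has already been carried out in Propositions \ref{diagramtwistprop} and \ref{blowingdownprop}; the corollary is a two-line deduction from them.
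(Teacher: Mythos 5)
Your argument is exactly the paper's proof: apply Proposition \ref{diagramtwistprop} to pass to $S_1$ where $a$ becomes a proper blowing down generator, then apply Proposition \ref{blowingdownprop} to $(W,S_1)$ to obtain a Coxeter generating set $R$ with $s \notin R^W$. Correct, and the same route.
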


\begin{proof}
Let $a \in s^{\perp}$ be a blowing down generator for $s$ and let $S_1$ be as in
Proposition \ref{diagramtwistprop}.
% there exists
%a Coxeter generating set $S_1$ of $W$ having the
%following properties:
%\begin{itemize}
%$\item[(i)] $\{ s \} \cup s^{\perp} \subseteq S_1$;
%\item[(ii)] $S_1^W = S^W$;
%$\item[(iii)] $s$ is a right-angled generator of $(W,S_1)$;
%\item[(iv)] $a$ is a proper blowing down generator for $s$ with respect to
%the generating set $S_1$.
%\end{itemize}
Now, by Proposition \ref{blowingdownprop} applied to $(W,S_1)$ there exists a Coxeter generating set
$R$ of $W$ such that $s$ is not a reflection of $(W,R)$.
\end{proof}

\subsection*{Remark on the proof of the main result}

Corollaries \ref{s-Transvectioncor} and \ref{blowingdowncor}
yield the following Proposition.

\begin{prop} \label{mainresulteasydirectionprop}
If $s$ is an intrinsic reflection of $(W,S)$, then each $s$-component has
trivial center and there are no blowing down generators for $s$.
\end{prop}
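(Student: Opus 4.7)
The plan is to prove the contrapositive: assuming either that some $s$-component has nontrivial center, or that there exists a blowing down generator for $s$, I will deduce that $s$ fails to be an intrinsic reflection. Since both alternatives have essentially already been handled by Corollaries \ref{s-Transvectioncor} and \ref{blowingdowncor}, the proof reduces to assembling these two statements.

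First I would dispatch the blowing down case: if $a \in s^{\perp}$ is a blowing down generator for $s$, then Corollary \ref{blowingdowncor} directly exhibits a Coxeter generating set $R$ of $W$ in which $s$ is not a reflection, so $s \notin R^W$ and $s$ is not an intrinsic reflection. No additional work is required here.

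Next I would handle the center condition. Suppose $C$ is an $s$-component whose associated finite Coxeter group $\langle C \rangle$ has nontrivial center. By definition an $s$-component is an irreducible spherical component of $(\langle s^{\perp} \rangle, s^{\perp})$, so $(\langle C \rangle, C)$ is irreducible and spherical. Lemma \ref{finitecenter} then forces $Z(\langle C \rangle) = \langle \rho \rangle$, where $\rho$ is the longest element of $(\langle C \rangle, C)$; in other words $C$ is of $(-1)$-type. Corollary \ref{s-Transvectioncor} now applies to produce a Coxeter generating set of $W$ witnessing that $s$ is not intrinsic.

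Combining the two cases via contraposition gives the proposition. There is no real obstacle: the work has all been done in the preceding subsections, where Proposition \ref{s-Translation} builds the ``$s\rho$-twist'' generating set in the $(-1)$-type case, and Propositions \ref{diagramtwistprop} and \ref{blowingdownprop} provide the blown-down generating set. The only conceptual point to flag is the translation between the hypothesis ``trivial center'' (as phrased in the proposition) and ``not of $(-1)$-type'' (as used in Corollary \ref{s-Transvectioncor}), which is supplied by Lemma \ref{finitecenter} together with the irreducibility and sphericity built into the definition of an $s$-component.
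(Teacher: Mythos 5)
Your proof is correct and follows exactly the paper's route: the paper derives this proposition directly from Corollaries \ref{s-Transvectioncor} and \ref{blowingdowncor} by contraposition. Your explicit remark that ``nontrivial center'' translates to ``of $(-1)$-type'' via Lemma \ref{finitecenter} (using irreducibility and sphericity of an $s$-component) is the one small bridging step the paper leaves implicit, and you supply it correctly.
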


Proposition \ref{mainresulteasydirectionprop} provides
one direction of the main result. In the remainder
of the paper we shall prove the Proposition of the introduction which
is just a reformulation of the other direction.
In order to do this
we have to establish the existence of a blowing down generator for $s$. Here,
checking Axiom (BDG2) requires most of the work. In Proposition \ref{reductiontothreecases}
we shall divide up the problem into three cases which will be treated separately
in the subsequent sections. One of these cases is rather easy to handle.
In order to settle the remaining two cases we need an additional tool
which is
Proposition \ref{keyproposition}.
Its proof uses the geometry
of the Cayley graph of $(W,S)$.

\section{Reduction to three cases}

As already pointed out, the proof of the opposite direction of the main result
splits into three cases. In this section we shall establish this case distinction. More specifically, the
goal of this subsection is to prove the following.

\begin{prop} \label{reductiontothreecases}
Let $(W,S)$ be a Coxeter system, let $s \in S$ be a right-angled generator of
$(W,S)$ such that there is no $s$-component of $(-1)$-type and suppose
that there is a Coxeter generating set $T$ of $W$ such that $s$ is not
a reflection of $(W,T)$. Then there exists a Coxeter generating set $R$
of $W$, an irreducible subset $J$ of $R$ of $(-1)$-type and an $s$-component
$C$ such that $s$ is the longest element of $(\langle J \rangle,J)$,
$\langle J \rangle = \langle s \rangle \times \langle C \rangle$
and $s\rho \in J^{\langle J \rangle}$ where $\rho$ denotes the longest
element in $(\langle C \rangle,C)$. Moreover, one of the following holds.

\begin{itemize}
\item[$I$)] There exists $1 \leq k \in {\bf N}$ such that $(\langle J \rangle, J)$
is of type $I_2(4k+2)$ and $(\langle C \rangle,C)$ is of type $I_2(2k+1)$.
Moreover $C \subseteq J^{\langle J \rangle}$ and $J \subseteq (s\rho)^{\langle J \rangle}
\cup a^{\langle J \rangle}$
for any $a \in C$.
\item[$D$)] There exists $1 \leq k \in {\bf N}$ such that $(\langle J \rangle, J)$
is of type $C_{2k+1}$ and $(\langle C \rangle,C)$ is of type $D_{2k+1}$.
Moreover $C \subseteq J^{\langle J \rangle}$ and $R \subseteq (s\rho)^{\langle J \rangle}
\cup a^{\langle J \rangle}$
for any $a \in C$.
\item[$\bar{D}$)] There exists $1 \leq k \in {\bf N}$ such that $(\langle J \rangle, J)$
is of type $C_{2k+1}$ and $(\langle C \rangle,C)$ is of type $D_{2k+1}$.
Moreover $\bar{C} := \{ sc \mid c \in C \} \subseteq J^{\langle J \rangle}$
and $R \subseteq (s\rho)^{\langle J \rangle}
\cup (sa)^{\langle J \rangle}$
for any $a \in C$.
\end{itemize}
\end{prop}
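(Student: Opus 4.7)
My plan is to leverage the failure of $s$ to be a reflection in $(W,T)$ via Richardson's theorem to extract a nontrivial $(-1)$-type subset whose longest element equals $s$, then to transport the resulting finite parabolic back inside a standard spherical parabolic of $(W,S)$ supported on $\{s\} \cup s^{\perp}$, and finally to use Proposition~\ref{parisprop} together with Lemma~\ref{blowingdownlemma} to identify an $s$-component $C$ and build $R$, $J$.

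First, I apply Lemma~\ref{Richardsonlemma} to $s$ in $(W,T)$: there exist a $(-1)$-type subset $J_0 \subseteq T$ and $w \in W$ with $w^{-1}sw = \rho_{J_0}$; replacing $T$ by its conjugate $wTw^{-1}$ (still a Coxeter generating set), I may assume $s = \rho_{J_0}$, and $|J_0| \ge 2$ since $s$ is not a reflection of $(W,T)$. As $s$ is central in $\langle J_0\rangle$, Lemma~\ref{rightangledcentralizer} gives $\langle J_0\rangle \le C_W(s) = \langle s\rangle \times \langle s^{\perp}\rangle$. Applying Lemma~\ref{Bourbakiexercise} to the finite group $\langle J_0\rangle$ inside the Coxeter system $(C_W(s), \{s\}\cup s^{\perp})$ produces $u \in C_W(s)$ and a spherical $K = \{s\}\cup K'$ (with $K' \subseteq s^{\perp}$ spherical) such that $\langle J_0\rangle^u \le \langle s\rangle \times \langle K'\rangle$ --- the membership $s \in K$ is forced because $u$ centralizes $s$. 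After relabeling, $\langle J_0\rangle \le \langle s\rangle \times \langle K'\rangle$, and the inclusions $\langle s\rangle \le \langle J_0\rangle$ and $\langle s\rangle \cap \langle K'\rangle = 1$ yield the direct product decomposition $\langle J_0\rangle = \langle s\rangle \times B$ with $B := \langle J_0\rangle \cap \langle K'\rangle \le \langle K'\rangle$.

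The core of the argument is to refine this into the claim that $\langle J_0\rangle$ is an irreducible $(-1)$-type Coxeter group and that $B = \langle C\rangle$ for a single $s$-component $C$. Decomposing $K' = K_1 \cup \cdots \cup K_r$ into its irreducible spherical components inside $(\langle s^{\perp}\rangle, s^{\perp})$, each $K_i$ is an $s$-component and, by hypothesis, has trivial center. I would argue that reducibility of $\langle J_0\rangle$, or a non-single-component embedding of $B$, would produce central involutions in $\langle K'\rangle$ coming from the additional irreducible factors of $\langle J_0\rangle$; these would have to project to centers of the $\langle K_i\rangle$, contradicting the trivial-center hypothesis. Once reduced to $\langle J_0\rangle = \langle s\rangle \times B$ irreducible with $B \le \langle C\rangle$, Proposition~\ref{parisprop} restricts the type of $(\langle J_0\rangle, J_0)$ to $E_7$, $H_3$, $I_2(4k+2)$, or $C_{2k+1}$. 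The cases $E_7, H_3$ must be excluded: here $B$ would be the non-Coxeter simple group $\mathrm{Sp}_6(\mathbb{F}_2)$ or $A_5$ embedded in the Coxeter group $\langle C\rangle$ of trivial center, and the combinatorial constraint that the generators $J_0$ project to involutions of $B$ which generate it with Coxeter-type relations, combined with the reflection-subgroup result invoked in Lemma~\ref{CapraceAppendix}, should force $B$ to be a Coxeter group. I expect this exclusion to be the main obstacle.

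Once restricted to types $I_2(4k+2)$ and $C_{2k+1}$, Corollary~\ref{decompositionsummary} identifies $B$ as a Coxeter group of type $I_2(2k+1)$ or $D_{2k+1}$ respectively; since $B \le \langle C\rangle$ and $|B|$ matches the order of the corresponding $s$-component, $B = \langle C\rangle$. Thus $\langle J_0\rangle = \langle \{s\}\cup C\rangle$ is a standard spherical parabolic of $(W,S)$, and Lemma~\ref{blowingdownlemma} furnishes the irreducible Coxeter generating set $J$ of $\langle \{s\}\cup C\rangle$ with $s$ as its longest element. To build $R$ as a Coxeter generating set of $W$ containing $J$, I would perform (if necessary) a preliminary diagram twist in the spirit of Proposition~\ref{diagramtwistprop} to satisfy the infinite-order-relation hypothesis of Corollary~\ref{productcor}(ii), and then invoke that corollary to assemble $R = (S \setminus \{s, b\}) \cup \{s\rho\}$ or its $D$-case analogue. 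Corollary~\ref{decompositionsummary} then directly enumerates the three cases $I$, $D$, $\bar{D}$ according to the type of $(\langle J\rangle, J)$ and which reflection class of $(\langle J\rangle, J)$ the elements of $C$ (or $\bar{C} = sC$) fall into.
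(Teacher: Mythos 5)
Your opening moves (Richardson's theorem, conjugating so that $s=\rho_{J_0}$ with $|J_0|\ge 2$, and the containment $\langle J_0\rangle\le C_W(s)=\langle s\rangle\times\langle s^{\perp}\rangle$) match the paper's Lemma \ref{conjlemma} and the start of Proposition \ref{proofofprop}. But the step you yourself call ``the core of the argument'' has a genuine gap. You want to rule out reducibility of $J_0$ and a multi-component image of $B$ by saying the extra central involutions of $\langle J_0\rangle$ ``would have to project to centers of the $\langle K_i\rangle$.'' They would not: an element of $Z(\langle J_0\rangle)$ only centralizes the subgroup $\langle J_0\rangle$, not $\langle K'\rangle$, so its $\langle K'\rangle$-component need not be central in any $\langle K_i\rangle$ (commuting reflections generating a Klein four group sit happily inside a centerless finite Coxeter group). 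What is actually needed is the much stronger fact that $\langle J\rangle$ is a \emph{direct factor} of $C_W(s)$, i.e.\ $C_W(s)=N_W(\langle J\rangle)=\langle J\rangle\times\langle K\rangle$ with $K=\{r\in R\setminus J\mid [r,J]=1\}$; this is Proposition \ref{proofofprop}(i)--(ii) and rests on Proposition \ref{rightangled3} (any $r\in R$ with $r\sigma$ of finite order for the non-$V$ element $\sigma\in J$ must centralize $s$, hence normalize $\langle J\rangle$) together with the Brink--Howlett normalizer theorem (Proposition \ref{normalizerofparabolicprop}(ii)). Only then does $Z(\langle J\rangle)\le Z(C_W(s))=\langle s\rangle$ give irreducibility, and only then is there a complement $Q=\langle K\rangle\cap P$ inside the parabolic closure $P=Pc_S(\langle J\rangle)$ so that Krull--Remak--Schmidt (Proposition \ref{directproductdecompfinite}) identifies $\langle J\rangle=\langle s\rangle\times\langle C\rangle$. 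Your bare embedding $\langle J_0\rangle\le\langle s\rangle\times\langle K'\rangle$ carries no complement, so neither the irreducibility argument nor KRS can be run from it.

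Two further points. First, your final step is circular: to assemble a Coxeter generating set $R$ of $W$ containing your hand-built $J$ via Corollary \ref{productcor}(ii) you need $m_{kl}=\infty$ for the relevant pairs, i.e.\ essentially $s^{\infty}\subseteq b^{\infty}$, which is the properness/(BDG2) condition that Sections \ref{sectioncasebarD}--\ref{sectioncaseI} and Proposition \ref{keyproposition} are devoted to establishing; it cannot be invoked here. The paper sidesteps this entirely by taking $R:=T^{v}$ (a conjugate of the given generating set) and $J:=K^{v}$ the conjugated Richardson subset, so no new generating set has to be manufactured and all three cases $I$, $D$, $\bar D$ genuinely occur. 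Second, the $E_7/H_3$ exclusion you flag as ``the main obstacle'' is actually free: once $\langle J\rangle=\langle s\rangle\times\langle C\rangle$ with $C$ a standard parabolic (hence a Coxeter group), Proposition \ref{parisprop} excludes $E_7$ and $H_3$ outright because there the complement of the center is not a Coxeter group. The real work, which your proposal skips, is the direct-factor decomposition of $C_W(s)$.
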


\subsection*{On decompositions of finite Coxeter groups as direct products}

\begin{prop} \label{directproductdecompfinite}
Let $(W,S)$ be a spherical Coxeter system and let $s \in S$ be such that
$Z(W) = \langle s \rangle$. Suppose that $A$ and $B$ are subgroups
of $W$ such that $W = A \times B$. Suppose further that $\langle s \rangle$
is properly contained in $A$ and that there exists a subset $J$ of $A$
such that $(A,J)$ is an irreducible Coxeter system.
%Assume further that there
%Let $R \subseteq W$ be a Coxeter generating
%set such that $s$ is not in $R^W$.
Then there exists an irreducible
component $C \neq \{ s \}$ of $(W,S)$
%and an irreducible component $J$ of
%$(W,R)$
such that $A = \langle s \rangle \times \langle C \rangle$.
%the following hold:
%\begin{itemize}
%\item[a)] $J$ is of $(-1)$-type and $s$ is the longest element in $(\langle J \rangle,J)$;
%\item[b)] $\langle J \rangle$ is the direct product of
%$\langle s \rangle$ and $\langle C \rangle$.
%\end{itemize}
\end{prop}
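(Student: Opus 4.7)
The plan is to pin down the irreducible components of $(W,S)$, analyse the internal structure of $A$ via Proposition~\ref{parisprop}, and finally match $A$ with one component by a Krull--Remak--Schmidt argument inside the centerless quotient $W/\langle s \rangle$.

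First I would determine the irreducible components of $(W,S)$. Since $|Z(W)|=2$, Lemma~\ref{longesteltoffinite} forces exactly one irreducible component of $(-1)$-type, whose longest element generates $Z(W)$ and must hence equal $s$; as $s$ is a reflection, this component is $\{s\}$. Let the remaining components be $C_1,\dots,C_m$ (none of $(-1)$-type) and set $W_i:=\langle C_i\rangle$, so $W=\langle s\rangle\times W_1\times\cdots\times W_m$. By Lemma~\ref{finitecenter} each $W_i$ is centerless, and by Proposition~\ref{parisprop} each $W_i$ is indecomposable as an abstract group (since any decomposable irreducible spherical Coxeter group is of $(-1)$-type). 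In particular, $X:=W/\langle s\rangle$ is canonically identified with the centerless group $W_1\times\cdots\times W_m$.

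Next I would analyse $A$. From $\langle s\rangle\leq A$ and $A\cap B=1$, the splitting $Z(W)=Z(A)\times Z(B)$ gives $Z(A)=\langle s\rangle$ and $Z(B)=1$, and then Lemma~\ref{finitecenter} applied to $(A,J)$ shows $(A,J)$ is of $(-1)$-type with longest element $s$. I would rule out the possibility that $A$ is indecomposable as an abstract group: after decomposing $B$ fully into indecomposables and invoking Krull--Remak--Schmidt on $W$, the isomorphism class of $A$ would have to appear among $\langle s\rangle, W_1,\dots,W_m$, but $A\not\cong\langle s\rangle$ (since $\langle s\rangle\subsetneq A$) and $A\not\cong W_j$ for any $j\geq 1$ (since $Z(A)=\langle s\rangle\neq 1=Z(W_j)$). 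Hence $A$ admits a non-trivial direct product decomposition, and Proposition~\ref{parisprop} gives $A=\langle s\rangle\times C'$ with $C'$ centerless; a case-by-case inspection of the four allowed types $E_7$, $H_3$, $C_{2k+1}$, $I_2(4k+2)$ shows that $C'$ is also indecomposable.

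The final step is to identify $C'$ with one of the $W_j$ inside $X$. The decomposition $W=A\times B$ induces $X=C'\times\bar B$, where $\bar B$ is the image of $B$ in $X$; decomposing $\bar B$ into indecomposables gives a second decomposition of $X$ into centerless indecomposables, to be compared with $X=W_1\times\cdots\times W_m$. The key observation is that for a centerless group such as $X$, the classical Krull--Remak--Schmidt conclusion sharpens: the ``central'' automorphism it produces between two decompositions into indecomposables is forced to be the identity, so the two decompositions coincide as sets of subgroups of $X$. This yields $C'=W_j$ as a subgroup of $X$ for some $j\geq 1$, and pulling back via $W\to X$ then gives $A=\langle s\rangle\cdot W_j=\langle s\rangle\times\langle C_j\rangle$. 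Setting $C:=C_j$ (with $C\neq\{s\}$ since $j\geq 1$) completes the argument. The main technical point is precisely the rigidity of the indecomposable decomposition of $X$, which would need a brief explicit justification rather than a blanket citation of Krull--Remak--Schmidt.
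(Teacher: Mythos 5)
Your argument is correct. It rests on the same two pillars as the paper's proof---the Remak decomposition $W=\langle s\rangle\times\langle C_1\rangle\times\cdots\times\langle C_m\rangle$ of Lemma \ref{Remakcenteroforder2} and the Krull--Remak--Schmidt theorem (Proposition \ref{RKSversion})---but you deploy them in the opposite order and with a different device for neutralising the central automorphism that Krull--Remak--Schmidt leaves you with. The paper applies Corollary \ref{KRScorollary} directly to $W=A\times B$ to obtain $A=Z(W)\times\langle C_i\mid i\in I\rangle$ as an equality of subgroups of $W$, and only then uses the hypothesis that $A$ carries an irreducible Coxeter generating set, via Lemma \ref{RemakirredCox}, to force $|I|=1$. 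You instead analyse $A$ first: you exclude the case that $A$ is indecomposable by comparing isomorphism types ($Z(A)=\langle s\rangle\neq 1$ while each $\langle C_j\rangle$ is centerless), then apply Proposition \ref{parisprop} to write $A=\langle s\rangle\times C'$ with $C'$ indecomposable, and finally locate $C'$ among the components by working in the quotient $W/\langle s\rangle$, where the two Remak decompositions must literally coincide as sets of subgroups because a central automorphism of a centerless group is the identity. That last observation is your one genuinely new ingredient; it is a clean substitute for Corollary \ref{KRScorollary}, whose proof has to convert ``equal up to a central automorphism'' into an actual equality of subgroups inside $W$ by an order count, and, as you note yourself, it needs only a one-line justification. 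The price of your route is the extra preliminary step of ruling out indecomposability of $A$, which the paper's order of argument never has to confront.
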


In order to establish the proof of this proposition it is convenient
to recall some basic results on direct product decompositions of
groups and in particular, the Krull-Remak-Schmidt Theorem.

\smallskip
\noindent
{\bf Definition:} A group $G$ is called {\sl indecomposable} if $|G| > 1$ and if $G$ is not
the inner direct product of two non-trivial subgroups.
A family $(H_i)_{1 \leq i \leq n}$ of subgroups of $G$ is called a {\sl Remak-decomposition}
of $G$ if $G$ is the inner direct product of the $(H_i)_{1 \leq i \leq n}$ and if
$H_i$ is indecomposable for $1 \leq i \leq n$.

The following observation is an immediate consequence of Corollary \ref{decompositionsummary}.

\begin{lemma} \label{RemakirredCox}
Let $(W,S)$ be an irreducible spherical Coxeter system and let $H_1,\ldots,H_n$
be non-trivial subgroups of $W$ such that $W = H_1 \times H_2 \ldots \times H_n$.
Then $n \leq 2$.
\end{lemma}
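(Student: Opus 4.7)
The plan is to derive Lemma \ref{RemakirredCox} from Corollary \ref{decompositionsummary} by a short contradiction argument. Assume for contradiction that $n \geq 3$. The idea is to regroup the Remak-style decomposition $W = H_1 \times H_2 \times \cdots \times H_n$ as a two-factor direct product in a convenient way and then invoke the sharp conclusion of the Corollary, namely that in any non-trivial two-factor decomposition $W = A \times B$ with $|A| \leq |B|$ the smaller factor equals $\langle s \rangle$ (and in particular has order $2$) while the larger factor admits no proper direct product decomposition.

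Concretely, I would set $A := H_1$ and $B := H_2 \times \cdots \times H_n$, both non-trivial (hence giving a non-trivial decomposition of $W$), and then, after possibly swapping, arrange that $|A| \leq |B|$. Corollary \ref{decompositionsummary} then forces $|A| = 2$ and $B$ indecomposable. I would then split into the two possible cases according to which of the original groupings plays the role of $A$ and which plays the role of $B$.

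If $B = H_2 \times \cdots \times H_n$ is the larger factor, then it must be indecomposable; but $n \geq 3$ means it splits non-trivially as $H_2 \times (H_3 \times \cdots \times H_n)$, a contradiction. If instead $A = H_2 \times \cdots \times H_n$ is the smaller factor, then the Corollary forces $|H_2 \times \cdots \times H_n| = 2$; but with $n - 1 \geq 2$ non-trivial factors this product has order at least $2^{n-1} \geq 4$, again a contradiction. Either way $n \geq 3$ is impossible, so $n \leq 2$.

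I do not anticipate any genuine obstacle: the lemma is essentially a straightforward book-keeping consequence of the very restrictive conclusion of Corollary \ref{decompositionsummary}, and the only mild subtlety is to remember to dispose of both possible size orderings of the regrouped factors.
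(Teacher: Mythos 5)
Your argument is correct and is exactly the intended route: the paper gives no details beyond calling the lemma ``an immediate consequence of Corollary \ref{decompositionsummary}'', and your regrouping into a two-factor decomposition, followed by the two cases according to which regrouped factor is the smaller one, is precisely the bookkeeping that justifies that claim.
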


The following is obvious.

\begin{lemma}
Any finite group admits a Remak decomposition.
\end{lemma}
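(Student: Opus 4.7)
The plan is to proceed by strong induction on the order $|G|$ of the finite group. The base case is $|G| = 1$, where the empty family vacuously serves as a Remak decomposition (alternatively one may take the base case $|G|=2$, in which case $G$ is indecomposable and the one-term family $(G)$ works).

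For the inductive step, fix a finite group $G$ with $|G| > 1$ and suppose the statement holds for all finite groups of strictly smaller order. There are two cases. If $G$ is itself indecomposable, then the one-element family $(G)$ is a Remak decomposition of $G$ and we are done. Otherwise, by definition there exist nontrivial subgroups $A, B$ of $G$ such that $G$ is the inner direct product $A \times B$. Since $A$ and $B$ are both nontrivial, we have $|A| < |G|$ and $|B| < |G|$, so the inductive hypothesis applies to each of them and yields Remak decompositions $(A_i)_{1 \leq i \leq p}$ of $A$ and $(B_j)_{1 \leq j \leq q}$ of $B$ by indecomposable subgroups.

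Concatenating these two families produces a family of indecomposable subgroups of $G$ whose inner direct product is $A \times B = G$; this is immediate from the associativity of inner direct products. Hence this concatenated family is a Remak decomposition of $G$, completing the induction. The only step that requires any care is noting that an inner direct product decomposition $A \times B$ combined with inner decompositions of $A$ and $B$ really does give an inner decomposition of $G$ into the combined factors, but this is a routine consequence of the definition of inner direct product, so no serious obstacle arises.
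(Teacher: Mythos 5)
Your proof is correct: the strong induction on $|G|$, splitting into the indecomposable case and the case $G = A \times B$ with both factors proper, is exactly the standard argument, and the paper itself offers no proof, dismissing the lemma as obvious. Your write-up simply supplies the routine details the authors omitted, including the harmless convention issue for the trivial group.
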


\smallskip
\noindent
{\bf Definition:} Let $G$ be a group and $\alpha \in Aut(G)$. Then $\alpha$
is called a {\sl central automorphism of $G$} if $\alpha$ induces the identity
on $G/Z(G)$, i.e. if $\alpha(g) \in gZ(G)$ for all $g \in G$.

\smallskip
\noindent
We shall need the following version of the Krull-Remak-Schmidt Theorem.

\begin{prop} \label{RKSversion}
Let $G$ be a finite group and let $(H_i)_{1 \leq i \leq n}$
and $(K_j)_{1 \leq j \leq m}$ be Remak decompositions of $G$.
Then $n = m$ and there exists a permutation $\pi \in Sym(n)$
and a central autormorphism $\alpha$ of $G$ such that
$\alpha(H_i) = K_{\pi(i)}$ for all $1 \leq i \leq n$.
\end{prop}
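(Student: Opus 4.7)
The plan is to derive this as the classical Krull-Remak-Schmidt theorem, proved by an exchange argument based on Fitting's lemma for normal endomorphisms. The argument proceeds in three stages: a single-factor exchange lemma, induction to obtain the full permutation, and verification that the realizing automorphism is central.

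First I would recall that an endomorphism $\phi$ of $G$ is \emph{normal} if $\phi(g^x) = \phi(g)^x$ for all $g, x \in G$. Fitting's lemma asserts that for a finite group $G$ and a normal endomorphism $\phi$, one has $G = \phi^N(G) \times \ker(\phi^N)$ for $N$ sufficiently large; in particular, if $G$ is indecomposable then $\phi$ is either nilpotent or an automorphism. For any direct decomposition $G = H_1 \times \cdots \times H_n$, the projection $e_i \colon G \to G$ with image $H_i$ and kernel $\prod_{j \neq i} H_j$ is a normal endomorphism, and $e_1(g) \cdots e_n(g) = g$ with $e_i(H_j) = 1$ for $i \neq j$.

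Given two Remak decompositions with projection systems $(e_i)$ and $(f_j)$, the exchange step restricts the identity $\id_{H_1} = \prod_j (e_1 f_j)|_{H_1}$ to $H_1$ and applies a Fitting-type argument to the pairwise commuting normal endomorphisms $e_1 f_j|_{H_1}$ of the indecomposable group $H_1$: at least one, say $e_1 f_{j_0}|_{H_1}$, must be non-nilpotent, hence an automorphism of $H_1$ by Fitting; a symmetric argument shows $f_{j_0} e_1|_{K_{j_0}}$ is an automorphism of $K_{j_0}$. Therefore $f_{j_0}|_{H_1}$ and $e_1|_{K_{j_0}}$ are mutually inverse isomorphisms $H_1 \cong K_{j_0}$, and a direct check of intersections and covering yields a new Remak decomposition $G = K_{j_0} \times H_2 \times \cdots \times H_n$. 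Iterating the exchange (on the residual decomposition of $H_2 \times \cdots \times H_n$ obtained by removing $K_{j_0}$ from the second system) produces $n = m$ and a permutation $\pi \in \Sym(n)$ with $H_i \cong K_{\pi(i)}$; each step is realized by an explicit automorphism $\alpha_i$ of $G$ which is the identity on the other factors.

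Finally, to see that $\alpha := \alpha_n \circ \cdots \circ \alpha_1$ is central, it suffices to verify each $\alpha_i$ is central, since the composition of central automorphisms is central. This reduces to the following observation: given $G = H \times L = K \times L$ with $H$ indecomposable and the projection $k \colon H \to K$ along $L$ an isomorphism, the exchange automorphism $\alpha(h\ell) := k(h)\ell$ satisfies, for $h \in H$ written as $h = k(h) \cdot m(h)$ with $m(h) \in L$, the identity $\alpha(h\ell)(h\ell)^{-1} = m(h)^{-1}$; since $h \in H$ commutes with every $\ell \in L$ in $G$ and $K$ commutes with $L$, expanding $h\ell = \ell h$ yields $m(h)\ell = \ell m(h)$ for all $\ell \in L$, so $m(h) \in Z(L) \subseteq Z(G)$. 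The main obstacle is the Fitting argument in the exchange step: proving that a family of pairwise commuting normal endomorphisms of an indecomposable finite group whose product is the identity contains an automorphism requires the refined form of Fitting's lemma for such families, which is a standard but nontrivial ingredient of the classical proof.
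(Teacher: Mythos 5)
Your proposal is correct, but it is worth recording how it relates to the paper: the paper does not prove this proposition at all, it simply observes that it is a special case of Theorem 3.3.8 in \cite{Robinson}, i.e.\ of the classical Krull--Remak--Schmidt theorem. What you have written out is precisely the standard proof of that cited theorem (Fitting's lemma for normal endomorphisms, the exchange step, induction, and the verification that each exchange automorphism $h\ell \mapsto k(h)\ell$ is central because the ``error term'' $m(h)$ lands in $Z(L) \leq Z(G)$). Your centrality computation and the assembly of $\alpha$ as a composition of exchange automorphisms, each the identity on the complementary factor, are both sound. Two small remarks. First, the endomorphisms $e_1 f_j|_{H_1}$ need not pairwise commute in general; what the refined Fitting argument actually requires is not commutativity but that the relevant partial ``sums'' $g \mapsto (e_1f_{j_1})(g)\cdots(e_1f_{j_r})(g)$ are again normal endomorphisms of $H_1$, which holds here because the $f_j$ come from a direct decomposition, and that a sum of two nilpotent normal endomorphisms of an indecomposable group with both chain conditions on normal subgroups is again nilpotent. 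Second, you correctly flag this last point as the one genuinely nontrivial ingredient you are taking on faith; it is exactly Lemma 3.3.5/3.3.7 in \cite{Robinson}, so your argument closes the gap the paper leaves to the reference rather than circumventing it by a different method.
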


\begin{proof} This is a special case of Theorem 3.3.8 in \cite{Robinson}.
\end{proof}

\begin{cor} \label{KRScorollary}
Let $G$ be a finite group and let $(H_i)_{0 \leq i \leq n}$
be a Remak decomposition of $G$ such that $H_0 = Z(G)$.
Let $A,B \leq G$ be such that $G = A \times B$ and $Z(G) \leq A$.
Then there exist a subset $I$ of $\{ 1,\ldots,n \}$
such that $A = Z(G) \times \langle H_i \mid i \in I \rangle$.
\end{cor}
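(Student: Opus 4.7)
The strategy is to apply the Krull-Remak-Schmidt theorem (Proposition~\ref{RKSversion}) by comparing the given Remak decomposition $(H_i)_{0\le i\le n}$ with a Remak decomposition of $G$ obtained by concatenating Remak decompositions of $A$ and $B$, then exploit that the intertwining automorphism supplied by KRS is \emph{central}.

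First I would note that $Z(G)=Z(A)\times Z(B)$ combined with $Z(G)\le A$ forces $Z(B)\le A\cap B=1$; in particular $Z(G)=Z(A)$, and $Z(G)$ is nontrivial since $H_0$ is a Remak factor. Next, pick arbitrary Remak decompositions $A=A'_1\times\cdots\times A'_p$ and $B=B'_1\times\cdots\times B'_q$; their concatenation is a Remak decomposition of $G$. KRS furnishes a central automorphism $\alpha$ of $G$ and a bijection between $\{H_0,\ldots,H_n\}$ and $\{A'_1,\ldots,A'_p,B'_1,\ldots,B'_q\}$ such that $\alpha(H_i)$ is the matching factor. Since $\alpha$ is central it stabilises $Z(G)$ setwise, so $\alpha(H_0)=Z(G)$ already appears in the second family; moreover it must be some $A'_j$ (re-indexed as $A'_1$), for otherwise $Z(G)\le B$ together with $Z(G)\le A$ yields $Z(G)=1$, a contradiction. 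Let $I\subseteq\{1,\ldots,n\}$ be the set of indices with $\alpha(H_i)\in\{A'_2,\ldots,A'_p\}$.

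To finish, I would verify $A=Z(G)\times\langle H_i\mid i\in I\rangle$. Because $\alpha$ is central, $H_iZ(G)=\alpha(H_i)Z(G)$ for every $i$; for $i\in I$ the right-hand side lies in $A$ since $\alpha(H_i)$ is a Remak factor of $A$ and $Z(G)\le A$, giving the inclusion $Z(G)\cdot\prod_{i\in I}H_i\subseteq A$. Conversely, $A=Z(G)\times\prod_{j\ge 2}A'_j=Z(G)\times\prod_{i\in I}\alpha(H_i)\subseteq Z(G)\cdot\prod_{i\in I}H_i$. Directness of the resulting product is automatic: $\{H_0\}\cup\{H_i:i\in I\}$ is a sub-family of the Remak factors $(H_j)_{0\le j\le n}$ of $G$, so the subgroups they generate have pairwise trivial intersection and commute.

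The main (mild) obstacle is the bookkeeping around the central automorphism: the subgroups $\alpha(H_i)$ lie in $A$ but the $H_i$ themselves typically do not. The reconciliation, and the heart of the argument, is that centrality of $\alpha$ means $H_i$ and $\alpha(H_i)$ agree modulo $Z(G)\le A$, so that $H_i\cdot Z(G)=\alpha(H_i)\cdot Z(G)\subseteq A$; this lets one transfer the decomposition of $A$ along the $A'_j$'s to a decomposition along an appropriate sub-family of the original $H_i$'s together with $Z(G)$.
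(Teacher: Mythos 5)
Your proof is correct and takes essentially the same route as the paper's: concatenate Remak decompositions of $A$ and $B$ into a Remak decomposition of $G$, apply the Krull--Remak--Schmidt theorem (Proposition~\ref{RKSversion}), observe that $\alpha(H_0)=Z(G)$ must be a factor of $A$, and use centrality of $\alpha$ to transfer the decomposition back to the $H_i$. You actually make explicit the final step ($H_iZ(G)=\alpha(H_i)Z(G)$, so the subgroup generated by $Z(G)$ and the $H_i$, $i\in I$, is $\alpha$-invariant and equals $A$) that the paper compresses into ``the assertion follows.''
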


\begin{proof} Let $(X_i)_{1 \leq i \leq k}$
and $(Y_j)_{1 \leq j \leq l}$ be Remak decompositions of $A$ and $B$.
Then $(X_1,\ldots,X_k,Y_1,\ldots,Y_l)$ is a Remak decomposition of
$G$. As any automorphism of $G$ stabilizes $Z(G) = H_0$ it follows
from Proposition \ref{RKSversion} that $X_i = Z(G)$
for some $1 \leq i \leq k$ because of the assumption $Z(G) \leq A$.
Without loss of generality we may assume $X_k = Z(G)$.
We now put $K_0 = Z(G) = H_0$, $K_i := X_i$ for $1 \leq i \leq k-1$
and $K_i := Y_{i-k+1}$ for $k \leq i \leq m:= l+k-1$.
It follows from Proposition \ref{RKSversion} that $m = n$
and that there are a permutation $\pi \in Sym(n)$ and a central
automorphism $\alpha$ of $G$ such that $\alpha(H_i) = K_i$
for $1 \leq i \leq n$. Setting $I := \pi^{-1}(\{ 1, \ldots, k-1 \})$
the assertion follows.
\end{proof}

\begin{lemma} \label{Remakcenteroforder2}
Let $(W,S)$ be a spherical Coxeter system and let $s \in S$
be such that $Z(W) = \langle s \rangle$. Let $C_1,\ldots,C_n$
be the irreducible components of $(W,S)$ distinct from $C_0 := \{ s \}$.
Then $(\langle C_i \rangle)_{0 \leq i \leq n}$ is a Remak decomposition
of $W$.
\end{lemma}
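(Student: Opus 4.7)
The plan is to identify the irreducible components of $(W,S)$ explicitly, then to get the direct product decomposition from Lemma~\ref{dirdecbasic}(iii), and finally to deduce indecomposability of each factor from Proposition~\ref{parisprop} using the hypothesis on $Z(W)$.

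First I would observe that $\{s\}$ is itself an irreducible component of $(W,S)$. Since $s \in Z(W)$, it commutes with every $t \in S$, so $m_{st} \in \{1,2\}$; hence $[\{s\}, S \setminus \{s\}] = 1$ and $\{s\}$ is a direct factor of $(W,S)$, trivially irreducible as a singleton. Thus $C_0 = \{s\}$ is an irreducible component, and since $(W,S)$ is spherical (and hence of finite rank), $C_0, C_1, \ldots, C_n$ is the complete list of irreducible components. Lemma~\ref{dirdecbasic}(iii) then gives the inner direct product decomposition
\[
W = \langle C_0 \rangle \times \langle C_1 \rangle \times \cdots \times \langle C_n \rangle.
\]

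It remains to show that each $\langle C_i \rangle$ is indecomposable. This is immediate for $\langle C_0 \rangle = \langle s \rangle$, which has order $2$. For $i \geq 1$, the above direct product decomposition yields
\[
Z(W) = Z(\langle C_0 \rangle) \times \prod_{i=1}^{n} Z(\langle C_i \rangle) = \langle s \rangle \times \prod_{i=1}^{n} Z(\langle C_i \rangle),
\]
so the hypothesis $Z(W) = \langle s \rangle$ forces $Z(\langle C_i \rangle) = 1$ for every $i \geq 1$. If some such $\langle C_i \rangle$ admitted a non-trivial decomposition $A \times B$ with $1 < |A| \leq |B|$, then Proposition~\ref{parisprop} would give $A = Z(\langle C_i \rangle) = 1$, a contradiction. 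Hence each $\langle C_i \rangle$ is indecomposable, and the displayed decomposition is a Remak decomposition.

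The only slightly subtle point is the initial step of recognising $\{s\}$ as a component of its own, which depends crucially on the centrality of $s$; once this is in hand, everything else is a direct bookkeeping application of results already established earlier in the paper.
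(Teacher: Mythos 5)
Your proof is correct and follows essentially the same route as the paper: both arguments reduce to showing that each $\langle C_i\rangle$ with $i\geq 1$ has trivial center (the paper via $Z(\langle C_i\rangle)\subseteq Z(W)\cap\langle C_i\rangle=1$, you via the product formula for the center of a direct product) and then invoke the classification of direct-product decompositions of irreducible spherical Coxeter groups (Proposition~\ref{parisprop}, which the paper uses through its Corollary~\ref{decompositionsummary}) to conclude indecomposability.
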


\begin{proof} Let $1 \leq i \leq n$. The Coxeter system
$(\langle C_i \rangle,C_i)$ is an irreducible Coxeter system.
Moreover, if $z \in Z(\langle C_i \rangle)$, then $z \in Z(W) \cap \langle C_i \rangle$
which is the trivial group. It follows from Corollary \ref{decompositionsummary} that
$\langle C_i \rangle$ is indecomposable which finishes the proof.
\end{proof}

%\begin{lemma} \label{centeroforder2inCoxetergroups}
%Let $(W,S)$ be a Coxeter system and let $z \in W$
%be an involution such that $Z(W) = \langle z \rangle$. Then there exists
%an irreducible component $J$ of $(-1)$-type of $(W,S)$
%such that $Pc_S(z) = \langle J \rangle$. Moreover,
%$z$ is the longest element in $(\langle J \rangle,J)$.
%\end{lemma}

%\begin{proof} As $z$ is an involutions there exists a $(-1)$-subset $J$ of
%$S$ and $w \in W$ such that $z = z^w$ is the longest element in $(\langle J \rangle,J)$
%by Lemma \ref{Richardsonlemma}. Now $N_W(\langle J \rangle) = C_W(z) = W$
%(by Assertion (i) of Proposition \ref{normalizerofparabolicprop}) which implies
%that $[s,t] = 1$ for all $(s,t) \in J \times (S \setminus J)$. Thus
%$J$ is a direct factor of $(W,S)$. Let $K \subseteq J$ be an irreducible
%component of $(\langle J \rangle,J)$. Then $K$ is also an irreducible
%component  of $(-1)$-type of $(W,S)$ by Lemma \ref{longesteltoffinite} and therefore the longest element
%$z'$ of $(\langle K \rangle,K)$ is in $Z(W) = \langle z \rangle$.
%Thus $z' = z$ which implies $K = J$ and finally that $J$ is an irreducible
%component of $(W,S)$.
%\end{proof}

\smallskip
\noindent
{\bf Proof of Proposition \ref{directproductdecompfinite}:}
Let $C_1,\ldots,C_n$
be the irreducible components of $(W,S)$ distinct from $C_0 := \{ s \}$.
By Lemma \ref{Remakcenteroforder2} $(\langle C_i \rangle)_{0 \leq i \leq n}$ is a Remak decomposition
of $W$.

Let $A$ and $B$ be subgroups of $W$ satisfying the hypothesis of the Proposition.
Thus we have that $\langle s \rangle = Z(W)$ is properly contained in $A$.
As $(\langle C_i \rangle)_{0 \leq i \leq n}$ is a Remak decomposition
of $W$ Corollary \ref{KRScorollary} yields a subset
$I$ of $\{ 1,\ldots,n \}$ such that $A = Z(W) \times \langle C_i \mid i \in I \rangle$.
As $Z(W)$ is properly contained in $A$ we have $I \neq \emptyset$.
By our assumption that there is an irreducible Coxeter generating set $J$
of $A$ it follows from Lemma \ref{RemakirredCox} that $|I| \leq 1$.
Thus, $A = Z(W) \times \langle C_i \rangle$ for some $1 \leq i \leq n$
which is precisely the claim of the proposition.

\subsection*{More on right-angled generators}

\smallskip
\noindent
{\bf Convention:} Throughout this subsection $(W,S)$ is a Coxeter system
and $s \in S$ is a right-angled generator of $(W,S)$. We let $\pi:W \rightarrow \{ +1,-1 \}$
be the unique homomorphism mapping $s$ onto $-1$ and $s'$ onto $+1$ for all $s' \in S$
which are distinct from $s$ and we denote its kernel by $V$.
Moreover,
we assume that there is no $s$-component of $(W,S)$ which is
of $(-1)$-type.

\begin{lemma} \label{centerobservation}
The center of $\langle s^{\perp} \rangle$ is trivial.
\end{lemma}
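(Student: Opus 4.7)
My plan is to reduce the triviality of $Z(\langle s^{\perp}\rangle)$ to the triviality of the center of each irreducible component of the Coxeter system $(\langle s^{\perp}\rangle, s^{\perp})$, and then to treat the spherical and non-spherical components separately.

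First I would pick an arbitrary $z \in Z(\langle s^{\perp}\rangle)$ and express $z$ as a product of finitely many elements from a finite subset $T \subseteq s^{\perp}$. Letting $C_1,\ldots,C_n$ denote the finitely many irreducible components of $(\langle s^{\perp}\rangle, s^{\perp})$ that meet $T$, and setting $J := C_1 \cup \cdots \cup C_n$, the complement $s^{\perp} \setminus J$ is itself a union of the remaining components, so $J$ is a direct factor of $(\langle s^{\perp}\rangle, s^{\perp})$ and Lemma \ref{dirdecbasic}(iii) yields the internal decomposition $\langle J\rangle = \langle C_1\rangle \times \cdots \times \langle C_n\rangle$. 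Since $z$ commutes with every $t \in s^{\perp}$, and in particular with every $t \in J$, we have $z \in Z(\langle J\rangle) = Z(\langle C_1\rangle) \times \cdots \times Z(\langle C_n\rangle)$. Writing $z = z_1 \cdots z_n$ with $z_i \in Z(\langle C_i\rangle)$, the claim therefore reduces to showing $Z(\langle C\rangle) = 1$ for every irreducible component $C$ of $(\langle s^{\perp}\rangle, s^{\perp})$.

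If $(\langle C\rangle, C)$ is spherical, then by definition $C$ is an $s$-component, so the standing hypothesis of this section ensures that $C$ is not of $(-1)$-type, and Lemma \ref{finitecenter} then forces $Z(\langle C\rangle) = 1$. If $(\langle C\rangle, C)$ is not spherical, then it is an infinite irreducible Coxeter system, and a classical fact asserts that such a group has trivial center: the canonical reflection representation of $(\langle C\rangle, C)$ is faithful and $\langle C\rangle$-irreducible, so any central element acts as a scalar preserving the canonical bilinear form, hence as $\pm 1$; the scalar $-1$ would imply that the longest element of $(\langle C\rangle, C)$ exists and $\langle C\rangle$ is finite, contradicting non-sphericity.

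The main obstacle is precisely this non-spherical case, since the preceding sections do not isolate a lemma stating that infinite irreducible Coxeter groups have trivial center. The cleanest remedy is to cite this well-known result (for example from Humphreys \S 6.3 or Bourbaki, \emph{Groupes et alg\`ebres de Lie}, Chapitres IV--VI); otherwise, one can insert the short scalar argument on the canonical representation sketched above, which causes no difficulty in arbitrary rank because an infinite irreducible Coxeter system must admit at least one pair $t,t' \in C$ with $m_{tt'} = \infty$ or must have an infinite, non-spherical finite-rank sub-system, to which the classical finite-rank result applies.
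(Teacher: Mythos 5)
Your proof follows the same route as the paper's, which disposes of this lemma in a single sentence by appealing to exactly the reduction you spell out: split into irreducible components, kill the spherical ones using the standing hypothesis that no $s$-component is of $(-1)$-type together with Lemma \ref{finitecenter}, and invoke the classical fact that an infinite irreducible Coxeter group has trivial center for the rest. The componentwise reduction via Lemma \ref{dirdecbasic} and $Z(A\times B)=Z(A)\times Z(B)$ is fine.

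The one point that needs repair is your parenthetical justification of the classical fact in arbitrary rank. The claimed dichotomy --- that an infinite irreducible Coxeter system either contains a pair $t,t'$ with $m_{tt'}=\infty$ or contains an infinite non-spherical finite-rank subsystem --- is false: a component of type $A_\infty$ (an infinite path with all labels equal to $3$) is infinite and irreducible, yet every label is finite and every finite subset of its generators spans a spherical subsystem. The correct reduction to finite rank goes differently. If $z\neq 1$ is central in $\langle C\rangle$ with $C$ infinite irreducible, choose a finite $C'\subseteq C$ with $z\in\langle C'\rangle$ and, using connectedness of the diagram, enlarge it to a finite irreducible $C''$ with $C'\subsetneq C''\subseteq C$. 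Then $z\in Z(\langle C''\rangle)$. If $\langle C''\rangle$ is infinite, the finite-rank result gives $z=1$; if $\langle C''\rangle$ is spherical, Lemma \ref{finitecenter} forces $z=\rho_{C''}$, whose support is all of $C''$, contradicting $z\in\langle C'\rangle$ with $C'\subsetneq C''$ (Lemma \ref{Coxeterbasics}). With that substitution, or simply with a citation of the classical result in the stated generality, your argument is complete and coincides with the paper's.
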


\begin{proof} This follows from the fact, that there is no
irreducible component of $(\langle s^{\perp} \rangle,s^{\perp})$ of
$(-1)$-type.
\end{proof}

\begin{lemma} \label{conjlemma}
Let $T$ be a Coxeter generating set of $W$ such that $s$ is
not a reflection of $(W,T)$. Then there exists a Coxeter generating
set $R$ of $W$ and a $(-1)$-subset $J$ of $R$ such that
$|J| \geq 2$ and $s$ is the longest element of $(\langle J \rangle,J)$.
\end{lemma}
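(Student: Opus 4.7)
The plan is to apply Richardson's Lemma (Lemma \ref{Richardsonlemma}) to the involution $s$ viewed inside the Coxeter system $(W,T)$, and then transport the resulting $(-1)$-subset back by conjugation so that $s$ itself becomes its longest element. No use of the hypothesis that $(W,S)$ has no $s$-component of $(-1)$-type is needed here; the result is a general fact about non-reflection involutions in Coxeter systems.

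More precisely, first I would apply Lemma \ref{Richardsonlemma} to the involution $s \in W$ with respect to the Coxeter generating set $T$. This yields a subset $J' \subseteq T$ of $(-1)$-type and an element $w \in W$ such that $w^{-1} s w = \rho_{J'}$, where $\rho_{J'}$ denotes the longest element of $(\langle J' \rangle, J')$. Equivalently, $s = w \rho_{J'} w^{-1}$.

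Next I would rule out the possibility that $|J'| = 1$. If $J' = \{t\}$ were a singleton, then $\rho_{J'} = t$ would be a $T$-generator, and $s$ would be conjugate to $t$ in $W$, contradicting the hypothesis that $s$ is not a reflection of $(W,T)$. Since $J'$ is of $(-1)$-type it is in particular non-empty, so we conclude $|J'| \geq 2$.

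Finally I would set $R := w T w^{-1}$ and $J := w J' w^{-1}$. Since $R$ is a conjugate of the Coxeter generating set $T$, it is itself a Coxeter generating set of $W$, and conjugation induces an isomorphism $(\langle J' \rangle, J') \cong (\langle J \rangle, J)$ of Coxeter systems, so $J \subseteq R$ is of $(-1)$-type with $|J| = |J'| \geq 2$. Moreover, the longest element of $(\langle J \rangle, J)$ is the image under conjugation by $w$ of $\rho_{J'}$, namely $w \rho_{J'} w^{-1} = s$. Thus $R$ and $J$ satisfy all the required conditions. There is no real obstacle here beyond correctly invoking Richardson's Lemma and observing that the $T$-rank of the involution $s$ is at least $2$ precisely because $s$ is not a $T$-reflection.
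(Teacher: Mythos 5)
Your proposal is correct and follows essentially the same route as the paper: apply Richardson's Lemma (Lemma \ref{Richardsonlemma}) to the involution $s$ with respect to $(W,T)$, note that the resulting $(-1)$-subset must have size at least $2$ because $s \notin T^W$, and then conjugate $T$ and that subset so that $s$ itself becomes the longest element. Your added remark that the hypothesis on $s$-components is not needed here is also accurate.
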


\begin{proof}
By Lemma \ref{Richardsonlemma} there exists a $(-1)$-subset $K$ of $T$ and an element
$w \in W$ such that $s^w$ is the longest element in $(\langle K \rangle,K)$.
As $s$ is not in $T^W$, we have $|K| \geq 2$. Setting $v:= w^{-1}$, $R:=T^v$ and
$J := K^v$ the claim follows.
\end{proof}

\begin{prop} \label{proofofprop}
Let $R$ be a Coxeter generating set of $W$ and let
$J \subseteq R$ be of $(-1)$-type such that
$s$ is the longest element of $(\langle J \rangle, J)$.
Then the following hold:
\begin{itemize}
\item[(i)] If $r \in R \setminus J$ is such that $\langle \{ r \} \cup J \rangle$
is a finite group, then $[J,r] = 1$.
\item[(ii)] Let $K := \{ r \in R \mid r \not \in J \mbox{ and } [r,J] = 1 \}$. Then
$\langle \{ s \} \cup s^{\perp} \rangle =C_W(s) = N_W(\langle J \rangle) = \langle J \rangle \times
\langle K \rangle$.
\item[(iii)] $J$ is an irreducible subset of $R$.
\item[(iv)] if $|J| \geq 2$ then there exists a  spherical irreducible component of
$C$ of $(\langle s^{\perp} \rangle, s^{\perp})$ such that $\langle J \rangle = \langle s \rangle
\times \langle C \rangle$.
\end{itemize}
%Moreover $(\langle C \rangle, C)$ is of type $I_2(2k+1)$ or
%$D_{2k+1}$ for some $1 \leq k \in {\bf N}$;
%\item[(v)] Suppose $C$ is an irreducible component of type $I_2(2k+1)$ for $1 \leq k \in {\bf N}$
%such that $\langle J \rangle = \langle s \rangle \times \langle C \rangle$ and let
%$\rho$ denote the longest element in $(\langle C \rangle,C)$. Then $\{ s\rho \} \cup C \subseteq R$;
%\item[(vi)] Suppose $C$ is an irreducible component of type $D_{2k+1}$ for $1 \leq k \in {\bf N}$
%such that $\langle J \rangle = \langle s \rangle \times \langle C \rangle$ and let
%$\rho$ denote the longest element in $(\langle C \rangle,C)$.
%Then $s\rho \in R$ and $\{ a,sa \} \cap R \neq \emptyset$ for each $a \in C$.
%\end{itemize}
\end{prop}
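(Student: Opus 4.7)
For (i), I would first apply Lemma~\ref{rightangled2} to the subgroup $\langle s,r\rangle$: since $s=\rho_J\in\langle J\rangle$ and $\langle\{r\}\cup J\rangle$ is finite, so is $\langle s,r\rangle$, yielding $[s,r]=1$. Then $s$ is central in the finite spherical Coxeter system $(G,T):=(\langle\{r\}\cup J\rangle,\{r\}\cup J)$. Let $C'$ be the irreducible component of $T$ containing $r$, and write $C'=\{r\}\cup J_0$ with $J_0$ a union of irreducible components of $J$. The direct-product decomposition of $G$ indexed by the irreducible components of $T$ shows that the projection of $s=\rho_J$ onto $\langle C'\rangle$ equals $\rho_{J_0}$; being central, this must equal $\rho_{C'}$ by Lemma~\ref{finitecenter} whenever $J_0\ne\emptyset$ (note $\rho_{J_0}\ne 1$ for $J_0\ne\emptyset$, as a non-trivial product of involutions lying in distinct direct factors). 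But $\rho_{C'}$ has full support $C'\ni r$ while $\rho_{J_0}\in\langle J_0\rangle$ has support disjoint from $r$, a contradiction. Hence $J_0=\emptyset$ and $[r,J]=1$.

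For (ii), I would chain three results in sequence. Lemma~\ref{rightangledcentralizer} gives $\langle\{s\}\cup s^\perp\rangle=C_W(s)$; Proposition~\ref{normalizerofparabolicprop}(i), applied to $J$ of $(-1)$-type with $\rho_J=s$, gives $C_W(s)=N_W(\langle J\rangle)$; and Proposition~\ref{normalizerofparabolicprop}(ii), applied to the Coxeter system $(W,R)$ and the subset $J\subseteq R$, gives $N_W(\langle J\rangle)=\langle J\rangle\times\langle J^{\perp_R}\rangle$. The hypothesis of the latter proposition---that every $r\in R\setminus J$ making $\langle\{r\}\cup J\rangle$ finite satisfies $[r,J]=1$---is precisely (i), and $J^{\perp_R}=K$ by definition.

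For (iii) and (iv), which I would treat together under the assumption $|J|\geq 2$: the containment $\langle J\rangle\leq C_W(s)=\langle s\rangle\times\langle s^\perp\rangle$ with $s\in\langle J\rangle$ forces $\langle J\rangle=\langle s\rangle\times J'$ where $J':=\langle J\rangle\cap\langle s^\perp\rangle$, and (ii) then yields $\langle s^\perp\rangle=J'\times\langle K\rangle$. Writing $\langle s^\perp\rangle=\prod_i\langle C_i\rangle\times\prod_j\langle D_j\rangle$ with the $C_i$ the $s$-components and the $D_j$ the infinite irreducible components, the finite normal subgroup $J'$ must project trivially to each $\langle D_j\rangle$, exhibiting $J'$ as a direct factor of $\prod_i\langle C_i\rangle$. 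By the hypothesis that no $s$-component is of $(-1)$-type, Lemma~\ref{finitecenter} and Proposition~\ref{parisprop} together imply that each $\langle C_i\rangle$ has trivial center and is indecomposable; hence $Z(\prod_i\langle C_i\rangle)=1$ and Krull--Remak--Schmidt upgrades to a literal equality $J'=\prod_{i\in I}\langle C_i\rangle$ for some set $I$. Comparing this with the Remak refinement of the Coxeter decomposition $\langle J\rangle=\prod_\alpha\langle J_\alpha\rangle$ (splitting each decomposable $\langle J_\alpha\rangle$ as $\langle\rho_{J_\alpha}\rangle\times B_\alpha$ via Proposition~\ref{parisprop}), the unique $\mathbb{Z}/2$ factor $\langle s\rangle$ can match only a single $J_\alpha$ of type $A_1$ or a single decomposable $J_\alpha$, and no indecomposable non-$A_1$ $(-1)$-type Coxeter factor can appear (its $\mathbb{Z}/2$ center would be unmatched). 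This forces $J$ irreducible, proving (iii); the $A_1$ option is then excluded by $|J|\geq 2$ and the types $E_7,H_3$ are excluded because their complementary factors are not Coxeter, so $(J,\langle J\rangle)$ is of type $I_2(4k+2)$ or $C_{2k+1}$, $|I|=1$, and $\langle J\rangle=\langle s\rangle\times\langle C\rangle$ for the unique matching $s$-component $C$, proving (iv).

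The main obstacle is the Krull--Remak--Schmidt step above: it relies on the standard but unstated fact that an irreducible infinite Coxeter group admits no non-trivial finite normal subgroup, and on the observation that the trivial centers of the $\langle C_i\rangle$ upgrade the ``up to central automorphism'' conclusion of KRS into the literal equality $J'=\prod_{i\in I}\langle C_i\rangle$ needed for the type-matching argument to single out one $s$-component.
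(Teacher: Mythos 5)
Your parts (i) and (ii) are correct. For (ii) you follow exactly the paper's chain (Lemma~\ref{rightangledcentralizer}, then both assertions of Proposition~\ref{normalizerofparabolicprop}, with (i) verifying the hypothesis of the second). For (i) your route is genuinely different: the paper applies Proposition~\ref{rightangled3} to a reflection $\sigma=s\rho\in J$ to get $[s,r]=1$ and then passes through $r\in C_W(s)=N_W(\langle J\rangle)$, whereas you get $[s,r]=1$ more directly from Lemma~\ref{rightangled2} and then finish by a support/center analysis inside the finite system $(\langle\{r\}\cup J\rangle,\{r\}\cup J)$. Your component argument (the projection of $\rho_J$ onto $\langle C'\rangle$ is $\rho_{J_0}$, which cannot equal $\rho_{C'}$ for length reasons) is sound and arguably more self-contained than the paper's unelaborated final step ``$r\in N_W(\langle J\rangle)$ hence $[r,J]=1$''.

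For (iii)--(iv) the paper's order is the reverse of yours: it first proves irreducibility of $J$ by a one-line center count ($J$ reducible would force $|Z(\langle J\rangle)|\ge 4$, hence $Z(\langle s^{\perp}\rangle)\ne 1$, contradicting Lemma~\ref{centerobservation}), and only then applies Krull--Remak--Schmidt, packaged as Proposition~\ref{directproductdecompfinite}, to the \emph{parabolic closure} $P:=Pc_S(\langle J\rangle)$, which is automatically of the form $\langle s\rangle\times\langle C_1\rangle\times\dots\times\langle C_n\rangle$ for finitely many $s$-components $C_i$. Your version decomposes all of $\langle s^{\perp}\rangle$ into its irreducible components instead, and this is where the concrete gaps sit. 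First, you need the infinite irreducible components $\langle D_j\rangle$ to absorb no part of the finite normal subgroup $J'$; as you note, this rests on the fact that an infinite irreducible Coxeter group has no nontrivial finite normal subgroup, which is true but is neither stated nor proved in the paper (and itself needs an argument in the infinite-rank case). Second, the product $\prod_i\langle C_i\rangle$ of all $s$-components may be an infinite (restricted) product when $S$ is infinite, so Proposition~\ref{RKSversion}, stated for finite groups, does not apply to it as written; you must first cut down to a finite sub-product containing $J'$ (possible since $J'$ is finite, using Dedekind's law to keep $J'$ a direct factor), which is essentially what the parabolic closure achieves for free. Third, a small imprecision: $\langle K\rangle$ need not lie in $\langle s^{\perp}\rangle$ (its elements may have nontrivial $s$-component), so ``$\langle s^{\perp}\rangle=J'\times\langle K\rangle$'' should be replaced by the decomposition with the projection of $\langle K\rangle$ to $\langle s^{\perp}\rangle$. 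None of these is fatal, and your final type-matching correctly yields both irreducibility and $\langle J\rangle=\langle s\rangle\times\langle C\rangle$; but in the arbitrary-rank setting the paper's parabolic-closure device is doing real work that your argument still has to supply by hand.
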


\begin{proof} As $\langle J \rangle$ is a finite subgroup
containing $s$ we have $\langle J \rangle \leq \langle \{ s \} \cup s^{\perp} \rangle$
and also that $\langle J \rangle$ is not contained in $V$. Thus there exists
a $\sigma \in J$ such that $\sigma \not \in V$ which we can write as $\sigma = s \rho$
for some involution $\rho \in \langle s^{\perp}\rangle$. Suppose that
$r \in R \setminus J$ is such that $r\sigma$ has finite order,
then $\langle \sigma, r \rangle$ is a finite subgroup of $W$ and by
Proposition \ref{rightangled3} we obtain that $[s,r] = 1$.
As $s$ is the longest element in $(\langle J \rangle,J)$, it follows
that $r \in C_W(s) =  N_W(\langle J \rangle)$. We conclude that $[r,J] = 1$.
This yields Assertion (i).

As $s$ is the longest element of $(\langle J \rangle,J)$
we have $C_W(s) = N_W(\langle J \rangle)$ by Assertion (i) of Proposition \ref{normalizerofparabolicprop};
Assertion (ii) of Proposition \ref{normalizerofparabolicprop} and Assertion (i)
yield $N_W(\langle J \rangle) = \langle J \rangle \times \langle K \rangle$;
finally we have $C_W(s) = \langle \{ s \} \cup s^{\perp} \rangle$ by
Lemma \ref{rightangledcentralizer}. This finishes the proof of Assertion (ii).

By Assertion (ii) we have decomposition of $\langle \{ s \} \cup s^{\perp} \rangle$
as a direct product $\langle J \rangle \times \langle K \rangle$.
Assume by contradiction, that $J$ is not irreducible. Then, by Lemma \ref{longesteltoffinite}, the
center of $\langle J \rangle$ is of order at least 4, because
$(\langle J \rangle,J)$ is of $(-1)$-type. It follows that
the center of $\langle \{ s \} \cup s^{\perp} \rangle$ has order
at least 4. Moding out $\langle s \rangle$ this yields
that $\langle s^{\perp} \rangle$ has a non-trivial center and contradicts Lemma \ref{centerobservation}.
This finishes the proof of Assertion (iii).

Let $P :=Pc_S(\langle J \rangle)$ be the parabolic closure of $\langle J \rangle$ in $(W,S)$.
As $\langle J \rangle$ is a finite normal subgroup of $\langle \{ s \} \cup s^{\perp} \rangle$,
the group $P$ is also a finite, normal subgroup of $\langle \{ s \} \cup s^{\perp} \rangle$.
As $P$ is a parabolic subgroup of the Coxeter system
$(\langle \{ s \} \cup s^{\perp} \rangle, \{ s \} \cup s^{\perp})$, it follows
that there are irreducible spherical components $C_1,\ldots,C_n$ of $(\langle s^{\perp} \rangle, s^{\perp})$
such that $P = \langle s \rangle  \times \langle C_1 \rangle \times \ldots \times \langle C_n \rangle$.
For $1 \leq i \leq n$ the set $C_i$ is an $s$-component and since there
are no $s$-components of $(-1)$-type by assumption, it follows that
the center of $\langle C_i \rangle$ is trivial. Thus
$\langle s \rangle$ is the center of $P$.

Setting $Q := \langle K \rangle \cap P$ we have a direct decomposition
$P = \langle J \rangle \times Q$. We are now in the position
to apply Proposition \ref{directproductdecompfinite} with $W:=P, A:=\langle J \rangle$
and $B:= Q$. It follows that there exists an $1 \leq i \leq n$
such that $\langle J \rangle = \langle s \rangle \times \langle C_i \rangle$.
As $C_i$ is an irreducible spherical component of $(\langle s^{\perp} \rangle,s^{\perp})$
this finishes the proof of Assertion (iv).
\end{proof}

\medskip
\noindent
{\bf Proof of Proposition \ref{reductiontothreecases}:} By Lemma \ref{conjlemma}
there exists a Coxeter generating set $R$ of $W$ and a $(-1)$-subset $J$ of $R$
such that $|J| \geq 2$ and such that $s$ is the longest element of $(\langle J \rangle,J)$.
By Assertion (iv) of Proposition \ref{proofofprop} there exists an irreducible
spherical component $C$ of $(\langle s^{\perp} \rangle,s^{\perp})$
such that $\langle J \rangle = \langle s \rangle \times \langle C \rangle$.
Applying now Corollary \ref{decompositionsummary} with
$(W,R) := (\langle J \rangle,J), A:= \langle s \rangle$ and $B := \langle C \rangle$,
yields the proposition.

\section{The case $\bar{D}$}

\label{sectioncasebarD}

\noindent
{\bf Convention:} Throughout this section $(W,S)$ is a Coxeter system, $s \in S$
is a right-angled reflection of $(W,S)$, $\pi: W \rightarrow \{ +1,-1 \}$
is the homomorphism which sends $s$ onto $-1$ and $t$ onto $+1$ for
all $t \in S \setminus \{ s \}$ and $V \leq W$ is its kernel. Moreover,
$C \subseteq s^{\perp}$ is a $s$-component of type $D_{2k+1}$,
$\rho$ is the longest element of $(\langle C \rangle,C)$ and
$a \in C$ is such that $\rho a \rho \neq a$. Finally, $R \subseteq W$
is a Coxeter generating set of $W$ and $J \subseteq R$ is of type $C_{2k+1}$ and such that
$\langle J \rangle = \langle s \rangle \times \langle C \rangle$ and
$J \subseteq (s\rho)^{\langle J \rangle} \cup (sa)^{\langle J \rangle}$.

\medskip
\noindent
The goal of this section is to prove the following.

\begin{prop} \label{barDprop}
The order of $cu$ is infinite for all $c \in C$ and all $u \in s^{\infty}$.
In particular, $a$ is a blowing down generator for $s$.
\end{prop}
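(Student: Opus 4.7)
My plan is to argue by contradiction, supposing that some $c \in C$ and $u \in s^{\infty}$ satisfy $m := \operatorname{ord}(cu) < \infty$, so that $H := \langle c, u \rangle$ is a finite dihedral group of order $2m$. Three standing facts will play a role: (i) $sc \in \bar C \subseteq J^{\langle J \rangle} \subseteq R^W$, hence $sc$ is a reflection of $(W, R)$; (ii) by Corollary~\ref{infordercor} applied with $r = c \in \langle s^\perp \rangle$, the element $(sc)u$ has infinite order, so $\langle sc, u \rangle$ (and hence $\langle s, c, u \rangle$) is infinite; (iii) $sus \notin H$, since otherwise $\langle u, sus \rangle \subseteq H$ would be finite whereas $u \cdot sus = (us)^2$ has infinite order.

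The first structural observation I would exploit is that the case $\bar D$ assumption $J \subseteq (s\rho)^{\langle J \rangle} \cup (sa)^{\langle J \rangle}$, combined with $\pi(s\rho) = \pi(sa) = -1$, forces $\pi(j) = -1$ for every $j \in J$. Thus $\pi|_{\langle J \rangle}$ coincides with the sign homomorphism of the Coxeter system $(\langle J \rangle, J)$ of type $C_{2k+1}$, and $\langle C \rangle = \ker(\pi|_{\langle J \rangle})$ is its rotation subgroup. In particular, $c \in \langle C \rangle$ is \emph{not} a reflection of $(\langle J \rangle, J)$; it is an involution of $(\langle J \rangle, J)$-rank at least two.

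The contradiction itself arises by a parabolic-closure analysis in $(W, R)$. Set $P := Pc_R(H)$, which is a finite parabolic subgroup of $(W, R)$ by Lemma~\ref{Bourbakiexercise} and Corollary~\ref{parabolicclosurecorollary}. Since $\langle s, c, u \rangle$ is infinite, $s \notin P$, and from the identity $c = s \cdot sc$ we also get $sc \notin P$. The intersection $Q := P \cap \langle J \rangle$ is then, by Proposition~\ref{parabolicfiniterank}(ii), a parabolic of $(\langle J \rangle, J)$ containing $c$ but meeting $\bar C^{\langle J \rangle}$ trivially. I plan to show this configuration is impossible for a parabolic subsystem of type $C_{2k+1}$ in which the ``short'' reflection class is identified with $\bar C$: using that $c$ lies in the rotation subgroup $\langle C \rangle$ and is realised (after conjugation within $\langle J \rangle$) as the longest element of a two-element $(-1)$-subset of $J$, any such $(-1)$-subset inside $C_{2k+1}$ must include a short generator, so $Q$ necessarily contains an element of $\bar C^{\langle J \rangle}$. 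This gives $sc' \in Q \subseteq P$ for some $c' \in C$, and combining with $c \in P$ yields $s = (sc') \cdot c' \cdot c \cdot c \in P$ (after a short computation in $\langle J \rangle$), contradicting $s \notin P$.

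The main obstacle is precisely the final parabolic-subsystem claim: one must verify, purely inside the spherical Coxeter system $(\langle J \rangle, J)$ of type $C_{2k+1}$, that a parabolic containing the non-reflection involution $c \in \langle C \rangle$ must contain a short reflection. Once the infinitude of $cu$ is established for every $c \in C$ and $u \in s^{\infty}$, conditions (BDG1) and (BDG2) for $a$ follow at once: (BDG1) is part of the standing hypothesis on $a$ together with the type $D_{2k+1}$ structure of $C$, and (BDG2) holds trivially because $s^{\infty} \subseteq a^{\infty} \cap b^{\infty}$, so any chain $u_0, \ldots, u_n$ in $s^{\infty}$ is contained in both $a^{\infty}$ and $b^{\infty}$. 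Hence $a$ is a blowing down generator for $s$, as asserted.
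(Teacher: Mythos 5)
Your overall strategy (take $P:=Pc_R(\langle c,u\rangle)$, intersect with $\langle J\rangle$, and exploit that $c$ is an involution of $(\langle J\rangle,J)$ which is not a reflection of that system) is viable, but it has two genuine gaps at the decisive moment. First, you need $P$ to avoid \emph{all} of $\bar{C}^{\langle J\rangle}$, yet you only verify $sc\notin P$ (via $c=s\cdot sc$ and $s\notin P$); for an arbitrary long reflection $sc'$ with $c'\neq c$ you have no analogous argument, since $c'\in P$ is unknown. Second, the closing computation $s=(sc')\cdot c'\cdot c\cdot c\in P$ presupposes exactly that missing fact $c'\in P$, so as written this step fails. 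Both gaps are repaired by the tool you already invoke for $c$ itself: by Corollary \ref{infordercor}, $(sc')u$ has infinite order for \emph{every} involution $c'\in\langle s^{\perp}\rangle$, and since $u\in P$ and $P$ is finite, no element of $\bar{C}^{\langle J\rangle}=\{sc'\mid c'\in a^{\langle C\rangle}\}$ can lie in $P$. With that, your skeleton closes: $Q=P\cap\langle J\rangle$ is a parabolic of $(\langle J\rangle,J)$ containing $c$, which (as you correctly deduce from $\pi$) is not a reflection of $(\langle J\rangle,J)$; by Richardson and Lemma \ref{FHMLemmas20+21}(i), $Q$ contains a conjugate of $\langle K\rangle$ for a $(-1)$-subset $K\subseteq J$ with $|K|\geq 2$ (in fact $|K|=2k$, not $2$ as you assert), and since only one generator of a system of type $C_{2k+1}$ lies in the reflection class of size $2k+1$, $K$ meets the class $\bar{C}^{\langle J\rangle}$ --- contradiction. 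Note that you yourself flag this last step as an unresolved ``obstacle,'' and there you conflate the short class $(s\rho)^{\langle J\rangle}$ with $\bar{C}^{\langle J\rangle}$; so the proof is incomplete as submitted.

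For comparison, the paper's route is shorter and avoids any rank analysis of $c$. It first shows that if $\langle r,x\rangle$ is finite for a single $r\in J$, then $x$ normalizes $\langle J\rangle$ (using Proposition \ref{rightangled3} and the same observation you make via $\pi$, namely that every reflection of $(\langle J\rangle,J)$ has the form $s\omega$ with $\omega$ an involution of $\langle s^{\perp}\rangle$), hence $x$ centralizes $Z(\langle J\rangle)=\langle s\rangle$ and lies in $\langle\{s\}\cup s^{\perp}\rangle$. It then handles a general $1\neq y\in\langle J\rangle$ by passing to $Pc_R(y)$, which is a nontrivial parabolic of $(\langle J\rangle,J)$ and therefore contains a conjugate of some $r\in J$, reducing to the first step. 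Your treatment of (BDG2) --- that $s^{\infty}\subseteq a^{\infty}\cap b^{\infty}$ makes the chain condition vacuous --- agrees with the paper.
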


\begin{lemma} \label{ruhasinfiniteorder}
Suppose that $r \in J$ and $x \in W$ are such that
$\langle r,x \rangle$ is a finite subgroup of $W$. Then $x$ normalizes $\langle J \rangle$.
\end{lemma}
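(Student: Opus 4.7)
The plan is to reduce the statement to Proposition \ref{rightangled3} by conjugating $r$ within $\langle J \rangle$ to one of the two standard representatives. By hypothesis, $J \subseteq (s\rho)^{\langle J \rangle} \cup (sa)^{\langle J \rangle}$, so I can write $r = g^{-1} v g$ for some $g \in \langle J \rangle$ and some $v \in \{s\rho,\ sa\}$. In both cases $v$ has the form $s\tau$ with $\tau \in \{\rho, a\} \subseteq \langle C \rangle \subseteq \langle s^{\perp} \rangle$, and $\tau$ is an involution ($\rho$ because it is the longest element of $(\langle C \rangle, C)$ by Lemma \ref{longestelt}, and $a$ because it is an element of $C \subseteq S$).

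Conjugating the finite group $\langle r, x \rangle$ by $g$, I obtain that $\langle s\tau,\ gxg^{-1} \rangle$ is also a finite subgroup of $W$. Since $\tau$ is an involution in $\langle s^{\perp} \rangle$, Proposition \ref{rightangled3} applies and yields $[s,\ gxg^{-1}] = 1$, so $gxg^{-1} \in C_W(s)$.

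To conclude I need to identify $C_W(s)$ with $N_W(\langle J \rangle)$. The Coxeter system $(\langle J \rangle, J)$ is of type $C_{2k+1}$, which is of $(-1)$-type, and since $\langle C \rangle$ (of type $D_{2k+1}$ with $2k+1$ odd) has trivial center, the center of $\langle J \rangle = \langle s \rangle \times \langle C \rangle$ is $\langle s \rangle$, forcing the longest element of $(\langle J \rangle, J)$ to be $s$ itself. Proposition \ref{normalizerofparabolicprop}(i) then gives $C_W(s) = N_W(\langle J \rangle)$. Therefore $gxg^{-1} \in N_W(\langle J \rangle)$, and since $g \in \langle J \rangle \subseteq N_W(\langle J \rangle)$, I conclude $x \in N_W(\langle J \rangle)$, as desired.

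There is no real obstacle here: the whole argument is a straightforward assembly of earlier results, with Proposition \ref{rightangled3} doing the heavy lifting. The only point that requires a moment's care is noting that in the $\bar{D}$-case every generator $r \in J$ is $\langle J \rangle$-conjugate to an element of the form $s \cdot (\text{involution in } \langle s^{\perp} \rangle)$, so that the hypothesis of Proposition \ref{rightangled3} is indeed satisfied after the conjugation by $g$.
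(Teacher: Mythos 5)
Your proof is correct and takes essentially the same route as the paper: both reduce the statement to Proposition \ref{rightangled3} by expressing $r$ (after conjugating inside $\langle J \rangle$) as $s$ times an involution of $\langle s^{\perp} \rangle$. The only cosmetic difference is the closing step --- the paper notes that $x \in C_W(s) \leq \langle \{s\} \cup s^{\perp} \rangle$ normalizes $\langle C \rangle$ because $C$ is an irreducible component, while you identify $s$ as the longest element of $(\langle J \rangle, J)$ and invoke $C_W(s) = N_W(\langle J \rangle)$ from Proposition \ref{normalizerofparabolicprop}(i) in $(W,R)$; both are valid.
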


\begin{proof}
As $\rho$ and $a$ are in $V$, it follows that $s\rho$ and $sa$ are not in $V$
and therefore $r \in (s\rho)^{\langle J \rangle} \cup (sa)^{\langle J \rangle}$
is not in $V$ because $V$ is a normal subgroup of $W$.
On the other hand, $r \in \langle J \rangle = \langle s \rangle \times \langle C \rangle$ which implies that
there is an involution $\omega \in \langle C \rangle \leq \langle s^{\perp} \rangle$ such
that $r = s\omega$. By our assumption, $\langle s\omega, x \rangle$
is a finite subgroup of $W$. Thus it follows by Proposition \ref{rightangled3} that
$[s,x] =1$ and in particular $x \in \langle \{ s \} \cup s^{\perp} \rangle$.

As $C$ is an irreducible component of $(\langle \{ s \} \cup s^{\perp} \rangle,  \{ s \} \cup s^{\perp})$
it follows that $x$ normalizes $\langle C \rangle$. Now, $x$ centralizes $\langle s \rangle$
and normalizes $\langle C \rangle$. Thus it normalizes $\langle J \rangle = \langle s \rangle \times
\langle C \rangle$ and we are done.
\end{proof}

\begin{lemma} \label{cuhasfiniteorder}
Let $1 \neq y \in \langle J \rangle$ and $x \in W$ be such that
$\langle y, x \rangle$ is a finite group. Then $x \in \langle \{ s \} \cup s^{\perp} \rangle$.
\end{lemma}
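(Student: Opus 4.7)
The plan is to reduce Lemma~\ref{cuhasfiniteorder} to Proposition~\ref{rightangled3} by passing to a suitable parabolic closure, but now inside the Coxeter system $(W,R)$. The target equality is $[s,x]=1$, because Lemma~\ref{rightangledcentralizer} identifies $\{g\in W:[s,g]=1\}$ with $\langle\{s\}\cup s^\perp\rangle$. To apply Proposition~\ref{rightangled3}, I must produce an involution $\omega\in\langle s^\perp\rangle$ and a conjugate $x'$ of $x$ by an element of $C_W(s)$ such that $\langle s\omega,x'\rangle$ is finite.

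First I would form the parabolic closure $P:=Pc_R(\langle y\rangle)$ in the system $(W,R)$. Since $y$ lies in the finite group $\langle y,x\rangle$, it has finite order, so $\langle y\rangle$ is finite; Corollary~\ref{parabolicclosurecorollary} applied in $(W,R)$ then makes $P$ a parabolic subgroup of $(W,R)$, and Corollary~\ref{finitesubgroupcor}, again applied in $(W,R)$, forces $\langle P,x\rangle$ to be finite. Because $\langle J\rangle$ is a parabolic of $(W,R)$ containing $y$, the minimality of the parabolic closure yields $P\leq\langle J\rangle$.

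Next I would promote $P$ to a parabolic of the spherical system $(\langle J\rangle,J)$. Writing $P=\langle J'\rangle^w$ with $J'\subseteq R$ and $w\in W$, the finiteness of $P$ forces $J'$ to be finite and spherical; after enlarging to a finite $L\subseteq R$ containing $J$, $J'$, and a reduced expression for $w$, Proposition~\ref{parabolicfiniterank}(ii) inside the finite-rank system $(\langle L\rangle,L)$ shows that $P=P\cap\langle J\rangle$ is a parabolic of $(\langle J\rangle,J)$. Consequently $P=\langle J_0\rangle^u$ for some $J_0\subseteq J$ and $u\in\langle J\rangle$, and $J_0\neq\emptyset$ since $y\in P$ is non-trivial. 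Setting $x':=uxu^{-1}$, conjugation by $u$ gives $\langle\langle J_0\rangle,x'\rangle=u\langle P,x\rangle u^{-1}$, which is therefore finite.

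Finally I would pick $r\in J_0\subseteq J$. The standing convention of the section places $r\in(s\rho)^{\langle J\rangle}\cup(sa)^{\langle J\rangle}$, so $r$ avoids the normal subgroup $V$; writing $r=s\omega$ with $\omega\in\langle C\rangle\leq\langle s^\perp\rangle$, the commutation of $s$ and $\omega$ together with $r^2=1$ force $\omega$ to be an involution. Because $\langle s\omega,x'\rangle=\langle r,x'\rangle\leq\langle\langle J_0\rangle,x'\rangle$ is finite, Proposition~\ref{rightangled3} yields $[s,x']=1$. Since $u\in\langle J\rangle\leq\langle\{s\}\cup s^\perp\rangle=C_W(s)$ commutes with $s$, this transports to $[s,x]=1$, and Lemma~\ref{rightangledcentralizer} gives $x\in\langle\{s\}\cup s^\perp\rangle$. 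The main obstacle I anticipate is the identification of $P$ as a parabolic of $(\langle J\rangle,J)$: Proposition~\ref{parabolicfiniterank} is available only in finite rank, so I must explicitly perform the localization to a finite parabolic of $(W,R)$; once that identification is in hand, the descent to a single element of $J$ and the appeal to Proposition~\ref{rightangled3} are routine.
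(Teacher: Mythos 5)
Your proposal is correct and follows essentially the same route as the paper: pass to the parabolic closure of $y$ in $(W,R)$, identify it as a nonempty standard parabolic of $(\langle J\rangle,J)$ up to conjugation by some $u\in\langle J\rangle$, use Corollary~\ref{finitesubgroupcor} to keep $\langle P,x\rangle$ finite, and feed a generator $r=s\omega\in J$ into Proposition~\ref{rightangled3}. The only cosmetic difference is that the paper packages the last step as Lemma~\ref{ruhasinfiniteorder} (concluding that $x^u$ normalizes $\langle J\rangle$ and hence centralizes $Z(\langle J\rangle)=\langle s\rangle$), whereas you inline that argument and transport $[s,x']=1$ back via $u\in C_W(s)$.
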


\begin{proof}
Let $Y := Pc_R(y)$. As $1 \neq y \in \langle J \rangle$ and $J \subseteq R$,
there exist $w \in \langle J \rangle$
and $J_1 \subseteq J$ such that $J_1 \neq \emptyset$ and $Y^w = \langle J_1 \rangle$.
By Corollary \ref{finitesubgroupcor} the group $\langle Y,x \rangle$ is a finite group
and therefore $\langle Y^w,x^w \rangle$ is finite as well.

Let $r \in J_1 \subseteq \langle J_1 \rangle = Y^w$.
Then $\langle r,x^w \rangle \leq \langle Y^w,x^w \rangle$ is a finite group
and therefore $x^w$ normalizes $\langle J \rangle$ by Lemma \ref{ruhasinfiniteorder}.
As $w \in \langle J \rangle$ it follows that $x$ normalizes $\langle J \rangle$
and hence also $Z(\langle J \rangle) = \langle s \rangle$.
Applying Lemma \ref{rightangledcentralizer} we obtain $x \in \langle \{ s \} \cup s^{\perp} \rangle$.
\end{proof}

\medskip
\noindent
{\bf Proof of Proposition \ref{barDprop}:} Let $c \in C$ and $u \in S$.
Then $1 \neq c \in \langle J \rangle$. Thus, if $cu$ has finite order,
then $\langle c,u \rangle$ is a finite group. By Lemma \ref{cuhasfiniteorder}
it follows that $u \in \langle \{ s \} \cup s^{\perp} \rangle$
and hence $u \in \{ s \} \cup s^{\perp}$ because $u \in S$. We conclude
that $cu$ has infinite order for all $u \in s^{\infty}$.

\section{On the Cayley graph of $(W,S)$}

The goal of this section is to prove the following proposition.

\begin{prop} \label{keyproposition}
Let $(W,S)$ be a Coxeter system, $\{ \tau, a \} \subseteq S^W$ be such that
$a \tau$ has finite order and such that
$b:= a^{\tau} \neq a$. Let $\sigma \in \langle a, \tau \rangle \cap S^W$
be such that $\sigma \neq \tau$ and $[\tau,\sigma] = 1$.

Suppose that $(U_0,U_1,\ldots,U_k)$ is a sequence of subgroups of
$W$ such that the following hold:

\begin{itemize}
\item[(i)] $\langle U_{i-1},U_i \rangle$ is a finite group for all $1 \leq i \leq k$;
\item[(ii)] $\langle U_i, \tau \rangle$ and $\langle U_i, \sigma \rangle$ are both
infinite groups for all $0 \leq i \leq k$.
\end{itemize}

Then there exists an $x \in \{ a,b \}$ such that
$\langle U_i,x \rangle$ is an infinite group for all $0 \leq i \leq k$.
\end{prop}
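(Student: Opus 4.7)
The plan is to translate the problem into the geometry of the Davis complex of $(W,S)$ (equivalently, the geometric representation / Tits cone of $W$) which underlies the Coxeter complex alluded to by the authors. The key dictionary: for any finite subgroup $U \leq W$, its fix-set $F(U)$ in the Davis complex is a nonempty convex subcomplex; for any reflection $r \in S^W$, $\langle U, r \rangle$ is finite if and only if $F(U) \cap M_r \neq \emptyset$, where $M_r$ denotes the wall of $r$; and $F(\langle U, U' \rangle) = F(U) \cap F(U')$. (If some $U_i$ were infinite then $\langle U_i, x\rangle$ is automatically infinite for any $x$, so the conclusion is trivial; thus I assume every $U_i$ is finite, as is forced by (i) whenever $k \geq 1$.) Setting $F_i := F(U_i)$, condition (i) gives $F_{i-1} \cap F_i \neq \emptyset$ and condition (ii) gives $F_i \cap M_\tau = F_i \cap M_\sigma = \emptyset$. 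Because $F_i$ is convex (hence connected) and avoids $M_\tau$, it lies in one of the two open half-spaces $H_\tau^{\epsilon_\tau(i)}$ bounded by $M_\tau$; the chain condition forces this sign to be independent of $i$, and similarly for $\epsilon_\sigma$. So after relabelling, every $F_i$ is contained in a single open ``quadrant'' $H_\tau^+ \cap H_\sigma^+$.

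Next I analyse the geometry of the finite dihedral group $D := \langle a, \tau \rangle$, whose order $2m$ satisfies $m \geq 4$ and $m$ even: the condition $b \neq a$ excludes $m \in \{1, 2\}$, while the existence of $\sigma \neq \tau$ with $[\tau, \sigma] = 1$ in $D$ forces $m$ to be even and $\sigma = \rho \tau$, with $\rho$ the central longest element of $D$. All four walls $M_a, M_b, M_\tau, M_\sigma$ contain the common fix-subcomplex $F_D := F(D)$, and transverse to $F_D$ the group $D$ acts through its standard 2-dimensional reflection representation. In suitable transverse coordinates $M_\tau$ and $M_\sigma$ become the two coordinate axes (perpendicular, since $[\tau,\sigma]=1$), while $M_a$ and $M_b$ become the lines through the origin at angles $+\pi/m$ and $-\pi/m$ to $M_\tau$ respectively (since $b = \tau a \tau$ is the reflection of $a$ through $M_\tau$). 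In this 2-dimensional picture $M_a$ passes through the open first and third quadrants while $M_b$ passes through the open second and fourth; hence exactly one of $\{M_a, M_b\}$ is disjoint from the open first quadrant.

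Finally, I transfer this 2-dimensional transverse statement back to the full complex via the geometric representation of $W$: the walls $M_a, M_b, M_\tau, M_\sigma$ correspond to linear hyperplanes all containing the codimension-two subspace fixed by $D$, and modulo that subspace they reduce to the four lines just described; the half-space structure transfers faithfully, so one of $\{M_a, M_b\}$ is globally disjoint from $H_\tau^+ \cap H_\sigma^+$. Taking $x \in \{a, b\}$ to be that reflection, $F_i \cap M_x = \emptyset$ for every $i$, which by the opening dictionary yields $\langle U_i, x\rangle$ infinite for every $i$, as required. The main obstacle will be making this last transfer rigorous in the potentially infinite-rank setting: one must argue carefully that the local transverse geometry at the rank-two dihedral residue dictates the global half-space relationships throughout the complex. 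This is where the building-theoretic tools emphasised by the authors, combined with the convexity of walls and residues in the Coxeter complex, will do the real work; everything else is bookkeeping.
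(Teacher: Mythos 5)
Your strategy is, up to a change of language (Davis complex / Tits cone versus Cayley graph), the same as the paper's: your observation that condition (i) forces all the fix-sets $F_i$ into a single half-space for $\tau$ and a single half-space for $\sigma$ is Proposition \ref{sequenceoffinitesubroups} (built on the root $H(t,U)$ of Lemma \ref{sphkeylemma}), and your transverse quadrant picture for the dihedral group $\langle a,\tau\rangle$ is Proposition \ref{wallcontainedintersection}. However, the step you defer at the end --- passing from the two-dimensional transverse picture to the global assertion that one of $M_a$, $M_b$ is disjoint from $H_\tau^+\cap H_\sigma^+$ --- is not bookkeeping; it is the mathematical heart of the argument, and it is exactly what Proposition \ref{wallcontainedintersection} establishes, by projecting an arbitrary panel of $M_a$ (resp.\ of $M_b=M_{\tau a\tau}$) onto the rank-two residue stabilized by $\langle a,\tau\rangle$ and invoking Lemma \ref{projectionofpanelsonresidues} to see that the projection preserves both the $\tau$-side and the $\sigma$-side. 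In your linear model the same step can be closed directly: since $[\tau,\sigma]=1$ and $\sigma\neq\tau$, the roots satisfy $(\alpha_\tau,\alpha_\sigma)=0$, so $\alpha_a=\lambda\alpha_\tau+\mu\alpha_\sigma$ with $\lambda\mu\neq 0$ (note that $a\notin\{\tau,\sigma\}$ follows from $b\neq a$ together with $[\tau,\sigma]=1$), whence $\alpha_b=\pm(-\lambda\alpha_\tau+\mu\alpha_\sigma)$; exactly one of these two roots has constant sign on the open quadrant, so its wall misses it. (Your incidental claim that $M_a,M_b$ sit at angles $\pm\pi/m$ to $M_\tau$ is unnecessary and in general false; all that matters is that the common angle is neither $0$ nor $\pi/2$.) Finally, your opening dictionary ($\langle U,r\rangle$ finite if and only if $F(U)\cap M_r\neq\emptyset$) rests on the CAT(0) Davis complex, which is a finite-rank construction, whereas the proposition is stated in arbitrary rank; you must either first reduce to a finite-rank standard parabolic containing $a,\tau,\sigma$ and all the (finitely many, finite) $U_i$, or argue combinatorially with spherical residues of the Cayley graph as the paper does in Lemmas \ref{sphbasic} and \ref{sphkeylemma}. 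With these two points supplied, your argument is correct and coincides with the paper's.
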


\noindent
{\bf Remark:} Our proof of the proposition uses the Cayley graph
associated with $(W,S)$ which is in fact the unique thin building
of type $(W,S)$. We shall apply several basic facts about
buildings in our reasoning. It is convenient to adapt
the language of buildings for the Cayley graph in order to
be able to give references in the literature. Thus, the vertices
of the Cayley graph will be called {\sl chambers}, the edges of
the Cayley graph will be called {\sl panels} and right-cosets
of standard parabolic subgroups will be called {\sl residues}.

\subsection*{Galleries and convexity in $\Sigma(W,S)$}

Let $(W,S)$ be a Coxeter system. The Coxeter complex associated
with $(W,S)$ is defined to be the pair
$\Sigma(W,S) :=({\cal C},{\cal P})$ where ${\cal C} := W$
and ${\cal P}:= \{ \{ sw,w \} \mid s \in S, w \in W \}$. The elements
of ${\cal C}$ are called the {\sl chambers} of $\Sigma(W,S)$
and the elements of ${\cal P}$ are called the {\sl panels} of $\Sigma(W,S)$.

As $S$ generates $W$ the graph $\Sigma(W,S)$ is connected. Let $c,d \in {\cal C}$.
A {\sl gallery from $c$ to $d$ of length $m \in {\bf N}$} is a sequence
$\gamma = (c = c_0,c_1,\ldots,c_m = d)$
such that $\{ c_{i-1},c_i \} \in {\cal P}$ for all $1 \leq i \leq m$.
The {\sl distance between $c$ and $d$} is the length of a gallery joining them
of minimal length; it is denoted by $\ell(c,d)$ and we observe that
$\ell(c,d) := \ell(cd^{-1})$ where $\ell:W \rightarrow {\bf N}$ denotes
the length function of $(W,S)$. Note that $({\cal C}, \ell)$
is a metric space and that therefore we have a natural notion of a {\sl convex subset
of ${\cal C}$}.

\subsection*{Residues in $\Sigma(W,S)$}

We continue to assume that $(W,S)$ is a Coxeter system and
we let $\Sigma(W,S) = ({\cal C},{\cal P})$ be its Coxeter complex.

A subset $R$ of ${\cal C}$ is called a {\sl residue} of $\Sigma(W,S)$
if there are a subset $J$ of $S$ and $w \in W$ such that $R = \langle J \rangle w$.
As $\langle J \cap K \rangle = \langle J \rangle \cap \langle K \rangle$
for all $J,K \subseteq S$, the set $J \subseteq W$ in the definition of
a residue $R$ is uniquely determined by $R$. This set is called the
{\sl type of $R$} and the {\sl rank of $R$} is defined to be the cardinality
of its type. We observe that the residues of rank 1 are precisely the panels
of $\Sigma(W,S)$. A residue is called {\sl spherical} if its type is spherical;
hence a residue $R$ is spherical if and only if $R$ is a finite set.

\begin{lemma} \label{residuesareconvex}
Let $R \subseteq {\cal C}$ be a residue. Then $R$ is a convex subset
of $({\cal C},\ell)$.
\end{lemma}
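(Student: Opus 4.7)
The plan is to reduce the convexity of a residue $R = \langle J \rangle w_0$ to a purely combinatorial statement about reduced expressions in $(W,S)$, and then invoke Matsumoto's theorem. First, since right-translation $c \mapsto c w_0^{-1}$ is an automorphism of the Cayley graph (it permutes panels and preserves distance, using $\ell(c,d) = \ell(cd^{-1})$), it suffices to treat the case $w_0 = 1$, i.e.\ $R = \langle J \rangle$.

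Next, I encode galleries algebraically: a gallery $(c_0, c_1, \ldots, c_m)$ determines a unique word $(s_1, \ldots, s_m) \in S^m$ via $c_i = s_i c_{i-1}$, so that $c_m c_0^{-1} = s_m s_{m-1} \cdots s_1$. The gallery is minimal iff this word is a reduced expression for $c_m c_0^{-1}$. Taking $c = c_0$ and $d = c_m$ in $\langle J \rangle$, we have $dc^{-1} \in \langle J \rangle$ and the intermediate chambers are $c_i = s_i \cdots s_1 c_0$. Hence convexity of $\langle J \rangle$ reduces to the following key claim: if $u \in \langle J \rangle$ and $u = s_1 \cdots s_m$ is a reduced expression in $(W,S)$, then every $s_i$ lies in $J$. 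Indeed, each initial segment $s_i \cdots s_1$ then lies in $\langle J \rangle$, forcing $c_i \in \langle J \rangle c_0 = R$.

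To prove the key claim, Lemma \ref{Coxeterbasics} gives $\ell_J(u) = \ell(u) = m$, so $u$ admits some reduced expression $t_1 \cdots t_m$ with all $t_i \in J$. By Matsumoto's theorem on reduced expressions in a Coxeter system, any two reduced expressions of the same element are connected by a sequence of braid moves, each replacing a subword $s t s \cdots$ of length $m_{st}$ by $t s t \cdots$ of the same length and involving only the two generators $s, t$ already present. Hence the set of generators appearing in a reduced expression is invariant under braid moves, so every reduced expression of $u$ uses only generators from $J$; in particular $s_i \in J$ for all $i$, as desired.

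I expect no serious obstacle; the only delicate point is the appeal to Matsumoto's theorem in the possibly infinite-rank setting. This causes no difficulty: only the finitely many generators $s_1, \ldots, s_m, t_1, \ldots, t_m$ are involved, so one passes to the finite-rank Coxeter sub-system they generate and applies the classical statement (see e.g.\ \cite{Humphreys}), exactly as in the infinite-rank reductions already used several times in Section 2.
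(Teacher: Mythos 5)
Your argument is correct. The paper does not actually prove this lemma: it simply cites Proposition 3.24 of Weiss, \emph{The Structure of Spherical Buildings}, so your proposal supplies a self-contained proof where the paper defers to the literature. Your reduction is the standard one and all the steps check out: right translation by $w_0^{-1}$ is an isometric automorphism of $\Sigma(W,S)$ carrying $\langle J\rangle w_0$ to $\langle J\rangle$ (note the paper's conventions make panels $\{sw,w\}$ and residues right cosets, which matches your encoding $c_i=s_ic_{i-1}$, $c_mc_0^{-1}=s_m\cdots s_1$); minimal galleries correspond to reduced words; and convexity then reduces to the fact that every reduced $S$-expression of an element of $\langle J\rangle$ has all its letters in $J$, which you obtain from $\ell_J=\ell|_{\langle J\rangle}$ (Lemma \ref{Coxeterbasics}) together with Matsumoto--Tits invariance of the support of a reduced word under braid moves. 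The only remark worth making is that the appeal to Matsumoto's theorem, while perfectly valid (and valid in arbitrary rank without even passing to a finite subsystem), is slightly heavier than necessary: the same support statement follows directly from the exchange/deletion condition, which is essentially how Weiss proves Proposition 3.24. Either way the proof is complete.
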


\begin{proof} This is Proposition 3.24 in \cite{lilaWeiss}.
\end{proof}

For a chamber $c \in {\cal C}$ and $w \in W$ we put $c^w := cw$ where
$cw$ is the product in $W$. In this way we get an action
${\cal C} \times W \rightarrow {\cal C}, (c,w) \mapsto c^w$
which is regular on the set of chambers and type-preserving on
the set of residues.

\smallskip
\noindent
{\bf Remark:} Note that in our setup, the
group $W$ acts from the right on $\Sigma(W,S)$.

\begin{lemma} \label{residuesbasic}
Let $R \subseteq {\cal C}$ be a residue, let $J \subseteq S$
be its type and let $w \in R$. Let ${\cal P}_R := \{ P \in {\cal P} \mid P \subseteq R \}$.
 Then the following hold:
\begin{itemize}
\item[(i)] the stabilizer of $R$ in $W$ is the group $w^{-1} \langle J \rangle w$;
\item[(ii)] the map $x \mapsto x^w$ is an isomorphism from $\Sigma(\langle J \rangle,J)$
onto $\Sigma_R := (R,{\cal P}_R)$;
\item[(iii)] if $|J| = 2$ and ${\cal E} \subseteq {\cal P}_R$ has cardinality at least 3,
then the graph $(R, {\cal P}_R \setminus {\cal E})$ has at least 3 connected components.
\end{itemize}
\end{lemma}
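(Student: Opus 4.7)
The plan is to work directly from the definitions of residue, panel, and the right-action of $W$ on $\Sigma(W,S)$. Part (i) is a short coset calculation: an element $v\in W$ stabilizes $R = \langle J\rangle w$ if and only if $\langle J\rangle wv = \langle J\rangle w$, i.e.\ if and only if $wvw^{-1}\in \langle J\rangle$, which gives exactly the stabilizer $w^{-1}\langle J\rangle w$.

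For (ii), I would define $\phi \colon \Sigma(\langle J\rangle, J) \to \Sigma_R$ by $\phi(x) = xw$; this is clearly a bijection of chamber sets. A panel of $\Sigma(\langle J\rangle, J)$ has the form $\{jx, x\}$ with $j\in J$; its image $\{jxw, xw\}$ differs by the element $j\in S$ and has both endpoints in $R = \langle J \rangle w$, so it lies in ${\cal P}_R$. Conversely, given $\{u, v\}\in {\cal P}_R$, write $v = xw$ with $x \in \langle J \rangle$ and set $s = uv^{-1}\in S$, so $u = sxw$. For $u \in R$ we need $sx\in\langle J\rangle$, hence $s = (sx)x^{-1}\in\langle J\rangle$. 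The intersection formula $\langle \{s\}\rangle \cap \langle J\rangle = \langle \{s\}\cap J\rangle$ from Lemma \ref{Coxeterbasics} then forces $s \in J$ (otherwise the right-hand side is trivial). Thus every panel of ${\cal P}_R$ is the image under $\phi$ of a panel of $\Sigma(\langle J\rangle, J)$, proving $\phi$ is a graph isomorphism.

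For (iii), by (ii) it suffices to analyse the Coxeter complex of the rank-two Coxeter system $(\langle J\rangle, J)$. If the unique entry $m$ of the Coxeter matrix is finite, then $\langle J\rangle$ is a dihedral group of order $2m$ and $\Sigma(\langle J\rangle, J)$ is a single cycle of length $2m \geq 4$ whose edges alternate between the two types in $J$; removing $k\geq 1$ edges produces exactly $k$ connected components. If $m = \infty$, then $\Sigma(\langle J\rangle, J)$ is a bi-infinite line and removing $k$ edges produces $k+1$ components. In either case, removing $|{\cal E}|\geq 3$ edges yields at least $3$ components.

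The only non-routine step is the one in (ii) showing that a panel of $R$ has type in $J$; this rests squarely on the intersection formula for standard parabolic subgroups from Lemma \ref{Coxeterbasics}, which is a basic but essential feature of Coxeter systems. Parts (i) and (iii) are essentially book-keeping with cosets together with the trivial combinatorics of cycles and paths.
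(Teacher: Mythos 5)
Your proof is correct and follows essentially the same route as the paper, which dismisses (i) and (ii) as straightforward and proves (iii) by identifying $\Sigma(\langle J\rangle,J)$ with a $2m$-cycle or a bi-infinite line and counting components after edge removal. Your write-up simply supplies the coset computation for (i) and the intersection-formula argument for the panel types in (ii) that the paper leaves implicit.
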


\begin{proof} Assertions (i) and (ii) are straightforward.
Suppose that $|J| = 2$ which means that $\langle J \rangle$ is a dihedral group.
Then $\Sigma(\langle J \rangle, J)$
is isomorphic to $({\bf Z},\{ \{ z,z+1 \} \mid z \in {\bf Z} \})$
(if $|\langle J \rangle| = \infty$) or to a circuit
of length $2k$ for some $2 \leq k \in {\bf N}$ (if $\langle J \rangle$
is a finite group). In both cases one verifies that removing at least
three edges produces at least three connected components and
Assertion (iii) is thus a consequence of Assertion (ii).
\end{proof}

\subsection*{Walls and roots in $\Sigma(W,S)$}

We continue to assume that $(W,S)$ is a Coxeter system and
we let $\Sigma(W,S) = ({\cal C},{\cal P})$ be its Coxeter complex.
We recall that $S^W := \{ w^{-1}sw \mid w \in W, s \in S \}$
is the set of reflections of $(W,S)$.

\begin{lemma}
Let $P \in {\cal P}$. Then $Stab_W(P) = \langle t \rangle$ for
some $t \in S^W$.
\end{lemma}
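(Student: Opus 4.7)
The proof is a direct unpacking of the definitions. The plan is to write $P = \{w, sw\}$ for some $s \in S$ and $w \in W$ (this is exactly what it means to be a panel), and then to determine the stabilizer under the right action $c \cdot g = cg$ on chambers.

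An element $g \in W$ stabilizes $P$ setwise if and only if $\{wg, swg\} = \{w, sw\}$. Since the action of $W$ on chambers is regular (it is the right regular action of $W$ on itself), there are only two possibilities: either $wg = w$ and $swg = sw$, which forces $g = 1$; or $wg = sw$ and $swg = w$, both of which reduce to the single equation $g = w^{-1}sw$ (and one checks immediately that this element does swap the two chambers). Setting $t := w^{-1}sw$, we have $t \in S^W$ by definition of the set of reflections, and hence $Stab_W(P) = \{1, t\} = \langle t \rangle$.

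I do not anticipate any real obstacle here; the lemma is essentially a tautology once the definitions of panel, right action, and reflection are unfolded. The only point worth recording is that although the pair $(s,w)$ with $P = \{w,sw\}$ is not unique, the resulting nontrivial element $t = w^{-1}sw$ is intrinsic to $P$, since it is characterized as the unique non-identity element of $Stab_W(P)$. Thus $\langle t \rangle$ depends only on $P$, as the statement requires.
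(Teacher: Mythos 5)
Your proof is correct and amounts to the same argument as the paper's: the paper simply cites its Assertion (i) of the residue-stabilizer lemma (the stabilizer of a residue of type $J$ through $w$ is $w^{-1}\langle J\rangle w$), of which your direct computation is precisely the rank-one instance $J=\{s\}$. Your closing remark that $t$ is intrinsic to $P$ as the unique non-identity element of the stabilizer is a nice touch, though not needed for the statement.
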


\begin{proof} This follows from Assertion (i) of Lemma \ref{residuesbasic}
and the fact, that panels are precisely the residues of rank 1.
\end{proof}

Let $t \in S^W$ be a reflection of $(W,S)$. The {\sl wall of $t$}
is defined to be the set $M_t$ of all panels stabilized by $t$;
hence $M_t := \{ P \in {\cal P} \mid P^t = P \}$. Furthermore,
we define the graph $\Sigma_t := ({\cal C}, {\cal P} \setminus M_t)$.

\begin{lemma} \label{rootdeflemma}
Let $t \in S^W$ be a reflection. Then $\Sigma_t$ has two connected
components which are interchanged by $t$. If $u \in S^W$
is a reflection distinct from $t$, then $M_t \cap M_u = \emptyset$;
in particular, each panel $Q \in M_u$ is contained in one of the
two connected components of $\Sigma_t$.
\end{lemma}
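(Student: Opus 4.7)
The plan is to handle the two assertions separately. The disjointness $M_t \cap M_u = \emptyset$ for $t \ne u$ is immediate from the preceding lemma: each panel $P$ satisfies $Stab_W(P) = \langle r \rangle$ for a unique reflection $r$, so $P \in M_t \cap M_u$ would force $t = u$. The substantive assertion is that $\Sigma_t$ has exactly two connected components, which are interchanged by $t$.

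To establish this, I will identify these components with the two ``sides'' of the wall $M_t$. Set
\[
\alpha_t^+ := \{c \in W : \ell(c) < \ell(ct)\}, \qquad \alpha_t^- := \{c \in W : \ell(c) > \ell(ct)\}.
\]
The sign character $\pi \colon W \to \{\pm 1\}$, $\pi(w) = (-1)^{\ell(w)}$, sends every reflection to $-1$; consequently $\ell(ct) \ne \ell(c)$ for every $c$, so $W = \alpha_t^+ \sqcup \alpha_t^-$, both pieces are non-empty (containing $1_W$ and $t$, respectively), and right multiplication by $t$ interchanges them.

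The main step is to prove, for an edge $\{w, sw\}$ of $\Sigma(W,S)$ (with $s \in S$ and, after relabeling if necessary, $\ell(sw) = \ell(w)+1$), that its two endpoints lie on opposite sides if and only if $\{w,sw\} \in M_t$. The ``if'' direction is immediate from the panel-stabilizer analysis preceding the lemma: $\{w,sw\} \in M_t$ forces $sw = wt$, so the chambers are $w$ and $wt$, on opposite sides. For the converse, I first rule out the configuration ``$w \in \alpha_t^-$ and $sw \in \alpha_t^+$'' by a direct length inequality: $\ell(wt) \le \ell(w) - 1$ implies $\ell(swt) \le \ell(w) < \ell(sw)$, contradicting $sw \in \alpha_t^+$. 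The remaining case is $w \in \alpha_t^+$ and $sw \in \alpha_t^-$. Fix a reduced expression $w = s_1 \cdots s_k$; then $sw = s s_1 \cdots s_k$ is reduced. Applying the right-multiplicative strong exchange condition to $sw$ and $t$ (permissible because $\ell(swt) < \ell(sw)$) produces an index $i \in \{0, 1, \ldots, k\}$ such that $swt$ is obtained from this reduced word by omitting the $i$-th letter. If $i \ge 1$, then $wt = s_1 \cdots \widehat{s_i} \cdots s_k$ has length at most $k-1 < \ell(w)$, contradicting $w \in \alpha_t^+$; hence $i = 0$, giving $swt = w$, i.e.\ $t = w^{-1}sw$ and $\{w,sw\} \in M_t$ as required.

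It remains to verify that $\alpha_t^+$ and $\alpha_t^-$ are each connected in $\Sigma_t$ (so far we only know they are unions of components). Given $c, d$ on the same side, I would take a minimal gallery from $c$ to $d$ in $\Sigma(W,S)$; the standard building-theoretic fact that such a gallery crosses exactly those walls separating its endpoints, each at most once, implies that it avoids $M_t$ entirely and thus lies in $\Sigma_t$. Combined with the previous step, this shows $\alpha_t^+$ and $\alpha_t^-$ are precisely the two connected components of $\Sigma_t$, interchanged by $t$. The chief obstacle is the strong-exchange argument above; the rest is essentially bookkeeping.
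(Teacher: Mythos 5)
Your proof is correct, but it takes a genuinely different route from the paper: the paper disposes of this lemma with a one-line citation to Propositions 3.11 and 3.12 of Weiss, \emph{The Structure of Spherical Buildings}, whereas you give a self-contained combinatorial argument. Your identification of the two components with $\alpha_t^{\pm} = \{c : \ell(c) \lessgtr \ell(ct)\}$, the parity argument via the sign character, and the strong-exchange computation showing that an edge $\{w,sw\}$ joins opposite sides if and only if it lies on $M_t$ are all sound (this is essentially the classical ``half-space'' description of roots, as in Bourbaki or Humphreys). The one place where you still lean on an external standard fact is the connectivity of each side, for which you invoke that a minimal gallery crosses each wall at most once; this is fine, but note it can be avoided entirely within your own framework: if $c \in \alpha_t^+$ has $\ell(c)>0$ and $c = sc'$ with $\ell(c')=\ell(c)-1$, then by your ``if'' direction the edge $\{c',c\}$ cannot lie on $M_t$ (otherwise the longer endpoint $c$ would be in $\alpha_t^-$), so $c'\in\alpha_t^+$ and induction connects every chamber of $\alpha_t^+$ to $1_W$ inside $\Sigma_t$; the other side follows by applying the automorphism given by right multiplication by $t$. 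What your approach buys is independence from the building-theoretic machinery of \cite{lilaWeiss}, at the cost of invoking the strong exchange condition; what the paper's citation buys is brevity and consistency with the rest of Section 6, which is deliberately phrased in the language of buildings so that further results (convexity of roots, projections) can be quoted from the same source. Also note that you did not explicitly address the final ``in particular'' clause, though it is immediate from the disjointness $M_t \cap M_u = \emptyset$ you established: a panel of $M_u$ is then an edge of $\Sigma_t$ and hence lies in a single component.
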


\begin{proof} This follows from Propositions 3.11 and 3.12 in \cite{lilaWeiss}.
\end{proof}

Let $t \in S^W$ be a reflection of $(W,S)$. The two connected components
of $\Sigma_t(W,S)$ are called the {\sl roots associated with $t$}. For a chamber
$c \in {\cal C}$ we denote the root associated to $t$ which contains $c$ by
$H(t,c)$; more generally, if $\emptyset \neq X \subseteq {\cal C}$
is such that $H(t,x) = H(t,y)$ for all $x,y \in X$, then we denote the unique
root associated with $t$ containing the set $X$ by $H(t,X)$.

A {\sl root} of $(W,S)$ is a set of chambers $\emptyset \neq \alpha \subseteq {\cal C}$
such that there exists a reflection $t \in S^W$ with $\alpha = H(t,\alpha)$
and $\Phi(W,S)$ denotes the set of all roots of $(W,S)$.

\begin{prop} \label{rooffundametalprop}
The following hold:
\begin{itemize}
\item[(i)] Roots are convex subsets of $({\cal C},\ell)$.
\item[(ii)] if $X \subseteq {\cal C}$ is a convex subset of
$({\cal C},\ell)$ then $X$ is the intersection of all roots containing $X$.
\item[(iii)] Suppose that $X \subseteq {\cal C}$ is a connected subset
of $\Sigma(W,S)$ and that $t \in S^W$ is such that $X$ contains no panel
on the wall of $t$. Then there exists a root associated with $t$ containing $X$.
In particular, $H(t,X)$ is well defined.
\item[(iv)] Let $R \subseteq {\cal C}$ be a residue and let $t \in S^W$
be a reflection. Then $t$ stabilizes $R$ if and only if there is a panel
$P \in M_t$ such that $P \subseteq R$. If this is not the case, then $R$
is contained in a unique root associated with $t$, i.e. $H(t,R)$ is well
defined.
\end{itemize}
\end{prop}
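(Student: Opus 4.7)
The strategy is to handle the four items in turn, with each using the preceding ones and relying on the standard wall/gallery dictionary for Coxeter complexes from \cite{lilaWeiss} (in particular, the fact that a minimal gallery from $c$ to $d$ crosses each wall at most once, and crosses precisely the walls $M_t$ for which $t$ separates $c$ from $d$). For (i), I would argue directly from this dictionary: if $c,d$ lie in the same root $\alpha = H(t,c)$, then $M_t$ does not separate $c$ from $d$, so no minimal gallery from $c$ to $d$ meets $M_t$; hence every chamber of such a gallery remains on the $c$-side of $M_t$, i.e.\ inside $\alpha$, which shows $\alpha$ is convex.

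For (ii), given a convex set $X$ and a chamber $c \notin X$, I would pick $d \in X$ at minimal distance from $c$, let $(c = c_0, \dots, c_n = d)$ be a minimal gallery, let $t$ be the reflection fixing the last panel $P = \{c_{n-1},d\}$, and argue that $X \subseteq H(t,d)$ while $c \in H(t,c_{n-1}) \neq H(t,d)$. The key point is that if some $e \in X$ lay on the $c_{n-1}$-side of $M_t$, then a minimal gallery from $d$ to $e$ (of length $\ell(d,e)$) would cross $M_t$, so replacing its first step by $c_{n-1}$ would yield a gallery from $c$ to $e$ strictly shorter than $\ell(c,d) + \ell(d,e)$, contradicting minimality of $d$. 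For (iii), I would combine (i) with Lemma \ref{rootdeflemma}: since $X$ meets no panel of $M_t$, the subgraph of $\Sigma_t$ induced on $X$ is still connected, and therefore lies in a single connected component of $\Sigma_t$, which by Lemma \ref{rootdeflemma} is a root associated with $t$; hence $H(t,X)$ is well defined.

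For (iv), I would write $R = \langle J \rangle w$, so that by Lemma \ref{residuesbasic}(i) the stabiliser of $R$ is $w^{-1}\langle J \rangle w$. Thus $t \in S^W$ stabilises $R$ iff $wtw^{-1} \in \langle J \rangle \cap S^W = J^{\langle J \rangle}$ by Lemma \ref{Coxeterbasics}; writing $wtw^{-1} = j^{-1}sj$ with $s \in J$ and $j \in \langle J \rangle$, the panel $P := \{s(jw), jw\}$ lies in $R$ and is fixed by $t$. Conversely, if $t$ fixes a panel $P = \{sw_1, w_1\} \subseteq R$, then $sw_1$ and $w_1$ both lie in $R = \langle J \rangle w$, forcing $s \in J$; so $t = w_1^{-1} s w_1 \in w^{-1}\langle J \rangle w$, i.e.\ $t$ stabilises $R$. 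The last clause follows by applying (iii) to $R$: the residue is convex (Lemma \ref{residuesareconvex}), in particular connected in $\Sigma(W,S)$, and by the equivalence just proved it contains no panel on $M_t$, so it sits in a unique root associated with $t$. The main obstacle is the verification in (ii): packaging the exchange-type calculation that underlies "crossing a wall strictly reduces distance" is what really drives the argument, and this is precisely the standard machinery I will invoke from \cite{lilaWeiss} rather than redevelop.
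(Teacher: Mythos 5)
Your treatment of (i), (iii) and (iv) is sound and, for (iii) and (iv), essentially the argument the paper gives: for (iii) both of you observe that removing the panels of $M_t$ deletes no edge of $X$, so $X$ stays in one component of $\Sigma_t$; for (iv) the paper handles the forward direction by noting that $R$ is the unique $J$-residue containing a $t$-stable panel, where you compute directly with cosets -- both are fine -- and the final clause is obtained in both cases by feeding (iii) the convexity of $R$. For (i) and (ii) the paper simply cites Proposition 3.19 of \cite{lilaWeiss} and (29.20) of \cite{orangeWeiss}, so there your sketch is doing work the paper outsources; your argument for (i) via the wall-crossing dictionary is the standard one and is correct.

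There is, however, a genuine gap in your argument for (ii). Having chosen $d \in X$ at minimal distance from $c$ and supposed that some $e \in X$ lies on the $c_{n-1}$-side of $M_t$, you produce a gallery from $c$ to $e$ of length $\ell(c,d)+\ell(d,e)-2$ and declare this to contradict the minimality of $d$. It does not: the inequality $\ell(c,e) < \ell(c,d)+\ell(d,e)$ is perfectly compatible with $\ell(c,e) \geq \ell(c,d)$, which is all that minimality of $d$ asserts (there is no gate property $\ell(c,e)=\ell(c,d)+\ell(d,e)$ for a general convex set, only for residues, cf.\ Lemma \ref{projectionlemma}). A telltale sign is that your argument for (ii) never actually invokes the convexity of $X$, so it would ``prove'' that every subset of ${\cal C}$ is an intersection of roots, which is false. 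The correct finish is: since $e$ lies on the $c_{n-1}$-side of $M_t$ and $c_{n-1}=d^t$ is adjacent to $d$ across the panel $\{c_{n-1},d\}\in M_t$, one has $\ell(c_{n-1},e)=\ell(d,e)-1$, so $c_{n-1}$ lies on a minimal gallery from $d$ to $e$; convexity of $X$ then forces $c_{n-1}\in X$, and $\ell(c,c_{n-1})=\ell(c,d)-1$ contradicts the choice of $d$. With this repair your proof of (ii) is complete and matches the argument behind the citation the paper uses.
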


\begin{proof} Assertion (i) is Proposition 3.19 in \cite{lilaWeiss}
Assertion (ii) is (29.20) in \cite{orangeWeiss}.

Let $(x_0,x_1,x_2)$ be path of length 2 in $X$. Then the panels
$P := \{x_0,x_1 \}$ and $Q := \{ x_1,x_2 \}$ are not in $M_t$ by our
assumption and hence $H(t,P)$ and $H(t,Q)$ is well defined.
It follows that $H(t,x_0) = H(t,P) = H(t,x_1) = H(t,Q) = H(t,x_2)$.
By induction of the length of a path in $X$ joining two chambers
$x$ and $y$ in $X$ it follows that $H(t,x) = H(t,y)$ for any two chambers
in $X$. Thus Assertion (iii) holds.

Let $R \subseteq {\cal C}$, let $J \subseteq R$ be its type  and $t \in S^W$. If there exists
a panel $P \in M_t$ which is contained in $R$, then $P$ is stabilized
by $t$ and since $R$ is the unique $J$-residue containing $P$, $R$ is stabilized
by $t$ as well. Suppose now that $R$ does not contain a panel of wall of $t$.
Since $R$ is convex, it is connected and Assertion (iii) yields that
$H(t,R)$ is well defined. Now $H(t,R^t) = (H(t,R))^t = {\cal C} \setminus H(t,R)$
and hence $R^t \neq R$.
\end{proof}

\begin{lemma} \label{rootstabilinglemma}
Let $t \neq u \in S^W$ be such that $tu = ut$.
Then $t$ stabilizes both roots associated with $u$.
\end{lemma}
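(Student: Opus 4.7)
The plan is to observe that right multiplication by $t$ is a graph automorphism of $\Sigma(W,S)$ that preserves the set $M_u$, hence induces a permutation of the connected components of $\Sigma_u$, and then exhibit a chamber whose $t$-image lies in the same component.

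First I would verify that right multiplication by any $w \in W$ is an automorphism of the graph $({\cal C},{\cal P})$: if $\{sw',w'\}\in{\cal P}$ with $s\in S$, then its image under $c\mapsto cw$ is $\{sw'w,w'w\}\in{\cal P}$. In particular, $t$ acts by graph automorphism. Next, since $t$ and $u$ commute, for any $P\in M_u$ we have $(P^t)^u=P^{tu}=P^{ut}=(P^u)^t=P^t$, so $P^t\in M_u$. Therefore $t$ stabilizes $M_u$ setwise and so acts as an automorphism of $\Sigma_u=({\cal C},{\cal P}\setminus M_u)$. In particular, $t$ permutes the set of connected components of $\Sigma_u$, which by Lemma \ref{rootdeflemma} is precisely the pair of roots associated with $u$.

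It then suffices to exhibit a single chamber $c$ such that $c$ and $c^t=ct$ lie in the same root of $u$. For any $c\in{\cal C}$, the pair $\{c,ct\}$ is a panel belonging to $M_t$ (it is stabilized by $t$). By Lemma \ref{rootdeflemma}, since $t\neq u$, we have $M_t\cap M_u=\emptyset$; hence $\{c,ct\}\in{\cal P}\setminus M_u$ is an edge of $\Sigma_u$. Consequently $c$ and $ct$ lie in the same connected component of $\Sigma_u$, i.e.\ in the same root associated with $u$.

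This shows that $t$ sends the root containing $c$ to itself. Since $t$ permutes the two roots of $u$ and fixes one of them, it must fix the other as well, completing the proof. There is no real obstacle here; the only thing to be careful about is the interplay between the commuting hypothesis (needed so that $t$ preserves $M_u$) and the distinctness $t\neq u$ (needed so that $M_t\cap M_u=\emptyset$ and therefore the edge $\{c,ct\}$ survives in $\Sigma_u$).
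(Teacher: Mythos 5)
Your overall strategy is sound, but it contains one concretely false step: the claim that for \emph{any} chamber $c \in {\cal C}$ the pair $\{c,ct\}$ is a panel. Panels are by definition the sets $\{sw,w\}$ with $s \in S$ and $w\in W$, so $\{c,ct\}$ is a panel if and only if $ctc^{-1} \in S$; in general $\ell(c,ct)=\ell(ctc^{-1})$ is the length of a conjugate of $t$, which is usually larger than $1$. For example, in the infinite dihedral group $W=\langle s_1,s_2\rangle$ with $t=s_1$ and $c=s_2$, the set $\{s_2,\,s_2s_1\}$ is not a panel, since $s_2s_1s_2\notin S$; here $\ell(c,ct)=3$. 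So your chosen pair need not be an edge of $\Sigma(W,S)$ at all, and the sentence ``$c$ and $ct$ lie in the same connected component of $\Sigma_u$'' does not follow as written.

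The repair is immediate and brings your argument into exact agreement with the paper's proof. Instead of an arbitrary chamber, take $c$ to be a chamber of some panel $P$ on the wall $M_t$ (walls are nonempty: if $t=w^{-1}sw$ then $\{sw,w\}\in M_t$, and $t$ swaps its two chambers). Then $P=\{c,ct\}$ genuinely is a panel, it lies in $M_t$, and since $t\neq u$ gives $M_t\cap M_u=\emptyset$ (Lemma \ref{rootdeflemma}), the panel $P$ survives as an edge of $\Sigma_u$, so $c$ and $ct$ lie in the same root of $u$. Everything else in your write-up --- that right multiplication is a graph automorphism, that commutativity forces $t$ to preserve $M_u$ setwise and hence to permute the two roots of $u$, and that fixing one root forces fixing the other --- is correct. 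This is precisely the paper's argument, phrased there as: pick $P\in M_t$, note $H(u,P)$ is well defined because $t\neq u$, and compute $(H(u,P))^t=H(u^t,P^t)=H(u,P)$.
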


\begin{proof} Let $P$ be a panel in $M_t$.
As $t \neq u$ and $P \in M_t$ the root $H(u,P)$ is well defined
and we have $(H(u,P))^t = H(u^t,P^t) = H(u,P)$. Hence $t$ stabilizes $H(u,P)$
and hence also $-H(u,P) := {\cal C} \setminus H(u,P)$.
\end{proof}

\begin{lemma} \label{atmosttwopanels}
Let $t \in S^W$ and let $R \subseteq {\cal C}$ be a residue of rank 2.
Then $|\{ P \in M_t \mid P \subseteq R \}| \leq 2$.
\end{lemma}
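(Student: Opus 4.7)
The plan is to argue by contradiction: suppose that $\mathcal{E} := \{P \in M_t \mid P \subseteq R\}$ has cardinality at least $3$, and derive that the subgraph $(R, \mathcal{P}_R \setminus \mathcal{E})$ simultaneously has at least and at most two connected components. The lower bound of three components is immediate from Assertion (iii) of Lemma \ref{residuesbasic}, so the whole substance lies in establishing the upper bound.

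For the upper bound I would use the two roots $\alpha$ and $-\alpha := \mathcal{C} \setminus \alpha$ associated with $t$. Since $R$ decomposes as the disjoint union $(R \cap \alpha) \sqcup (R \cap -\alpha)$, it suffices to show that each nonempty piece is connected inside $(R, \mathcal{P}_R \setminus \mathcal{E})$. Pick two chambers $c, d \in R \cap \alpha$ and choose a minimal gallery $\gamma = (c_0, \ldots, c_m)$ from $c$ to $d$ in $\Sigma(W,S)$. Both $R$ and $\alpha$ are convex by Lemma \ref{residuesareconvex} and Assertion (i) of Proposition \ref{rooffundametalprop}, so every $c_i$ lies in $R \cap \alpha$. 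Consequently each panel $\{c_{i-1}, c_i\}$ has both endpoints in $R$ and therefore lies in $\mathcal{P}_R$; and since its endpoints both lie in $\alpha$, Lemma \ref{rootdeflemma} (which asserts that removing $M_t$ separates the two roots) forces this panel to avoid $M_t$. Hence $\gamma$ is a walk in $(R, \mathcal{P}_R \setminus \mathcal{E})$, proving the required connectivity. The argument for $R \cap -\alpha$ is identical.

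I do not foresee any real obstacle: the whole proof is just a matching of the combinatorial lower bound given by Lemma \ref{residuesbasic}(iii) against the geometric upper bound coming from the partition of $\mathcal{C}$ into the two roots of $t$. The only minor point worth verifying in passing is that a panel of $\Sigma(W,S)$ whose two chambers both lie in a residue $R$ is automatically a panel of $R$, which is a standard feature of residues in Coxeter complexes and follows from $S \cap \langle J \rangle = J$ for $J$ the type of $R$.
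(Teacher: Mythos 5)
Your proof is correct and follows essentially the same route as the paper: both arguments combine the convexity of $R$ and of the two roots of $t$ to conclude that $(R,{\cal P}_R\setminus M_t)$ has at most two connected components, and then invoke Assertion (iii) of Lemma \ref{residuesbasic}. The only difference is that you spell out explicitly why convexity of $R\cap\alpha$ gives connectivity in the subgraph (via minimal galleries), a step the paper leaves implicit.
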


\begin{proof} Let $\alpha$ and $-\alpha$ be the two roots associated with
$t$. As $R$ is convex (by Lemma \ref{residuesareconvex}) and
as roots are convex (by Assertion (i) of Proposition \ref{rooffundametalprop}),
it follows that $R \cap \alpha$ and $R \cap -\alpha$ are convex and in particular
connected. Setting, as in Lemma \ref{residuesbasic},
${\cal P}_R := \{ P \in {\cal P} \mid P \subseteq R \}$ it follows that
the graph $(R, {\cal P}_R \setminus M_t)$ has at most two connected
components. Thus Assertion (iii) of Lemma \ref{residuesbasic} yields
$|\{ P \in M_t \mid P \subseteq R \}| \leq 2$ and we are done.
\end{proof}

\subsection*{Projections in $\Sigma(W,S)$}

We continue to assume that $(W,S)$ is a Coxeter system and
we let $\Sigma(W,S) = ({\cal C},{\cal P})$ be its Coxeter complex.

\begin{lemma} \label{projectionlemma}
Let $R \subseteq {\cal C}$ be a residue and $c \in {\cal C}$.
Then there exists a unique chamber $d \in R$ such that
$\ell(c,x) = \ell(c,d) + \ell(d,x)$ for all $x \in R$.
\end{lemma}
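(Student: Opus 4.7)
The plan is to take for $d$ a chamber of $R$ minimizing $\ell(c,\cdot)$ and then verify the additivity of length via a wall-counting argument; uniqueness will be immediate from the additivity identity itself. I would first recall the standard consequence of the root/wall machinery set up in the preceding subsection: for any two chambers $y,z \in {\cal C}$,
\[
\ell(y,z) = |\mathrm{Sep}(y,z)|, \qquad \mathrm{Sep}(y,z) := \{ t \in S^W : H(t,y) \neq H(t,z) \},
\]
and for any three chambers $x,y,z$ one has $\ell(x,z) = \ell(x,y) + \ell(y,z) - 2|\mathrm{Sep}(x,y) \cap \mathrm{Sep}(y,z)|$, since $\mathrm{Sep}(x,z)$ is the symmetric difference of $\mathrm{Sep}(x,y)$ and $\mathrm{Sep}(y,z)$.

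Fix $d \in R$ with $\ell(c,d)$ minimal (a minimum exists because $\ell(c,\cdot)$ is ${\bf N}$-valued, even when $R$ is infinite). The heart of the argument is the following claim: for every reflection $t \in S^W$ whose wall $M_t$ stabilizes $R$, the chambers $c$ and $d$ lie on the same side of $M_t$, i.e.\ $d \in H(t,c)$. Suppose for contradiction that $c \in \alpha := H(t,c)$ but $d \notin \alpha$. Choose a minimal gallery $(c_0, c_1, \ldots, c_m)$ from $c_0 := c$ to $c_m := d$ of length $m = \ell(c,d)$. Since it starts in $\alpha$ and ends outside, there is some index $i$ with $c_{i-1} \in \alpha$ and $c_i \notin \alpha$, forcing the panel $\{ c_{i-1}, c_i \}$ to lie on $M_t$; by the right action and the fact that $t$ is the reflection of that panel, the chamber $c_i t$ coincides with $c_{i-1}$. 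Applying $t$ to the tail $c_i, c_{i+1}, \ldots, c_m$ and concatenating with $(c_0, \ldots, c_{i-1})$ produces the sequence $(c_0, \ldots, c_{i-1}, c_i t = c_{i-1}, c_{i+1} t, \ldots, c_m t = dt)$, which, after removing the repeated chamber at index $i$, is a gallery from $c$ to $dt$ of length $m-1$. Thus $\ell(c, dt) < \ell(c,d)$; since $t$ stabilizes $R$ we have $dt \in R$, contradicting the minimality of $d$.

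With the claim in hand, the additivity is quick: fix any $x \in R$. By convexity of $R$ (Lemma \ref{residuesareconvex}) a minimal gallery from $d$ to $x$ can be chosen inside $R$, so each reflection in $\mathrm{Sep}(d,x)$ crosses a panel lying in $R$ and hence, by Assertion (iv) of Proposition \ref{rooffundametalprop}, stabilizes $R$. The claim then gives $\mathrm{Sep}(c,d) \cap \mathrm{Sep}(d,x) = \emptyset$, and the three-chamber identity above yields $\ell(c,x) = \ell(c,d) + \ell(d,x)$. For uniqueness, if $d,d' \in R$ both satisfy the property, then $\ell(c,d') = \ell(c,d) + \ell(d,d')$ and symmetrically, so $\ell(d,d') = 0$ and $d = d'$. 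The main obstacle is the reflected-gallery construction in the key claim: it relies on the right action of $t$ swapping the chambers of a panel on $M_t$ and on the fact that reflecting a minimal gallery past a wall that it crosses produces a shorter gallery, both standard but requiring careful bookkeeping. Everything else is routine.
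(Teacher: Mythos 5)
Your proof is correct. Note that the paper itself does not argue this lemma at all: it simply cites Theorem 3.22 of Weiss, \emph{The Structure of Spherical Buildings}, so there is no "paper's route" to compare against beyond the reference. What you have written is essentially the standard gate-property argument that underlies that citation: choose $d\in R$ at minimal distance from $c$, show via the reflected-gallery trick that no wall stabilizing $R$ separates $c$ from $d$, and then use convexity of $R$ together with the identity $\ell(y,z)=|\mathrm{Sep}(y,z)|$ and the symmetric-difference formula to get additivity; uniqueness follows formally. All the ingredients you invoke are either established in the surrounding subsection (convexity of residues, the equivalence ``$t$ stabilizes $R$ iff $M_t$ has a panel in $R$'') or are standard wall-crossing facts consistent with the paper's setup, and the right-action bookkeeping in the reflection step ($c_i t=c_{i-1}$, the reflected tail remaining a gallery) checks out. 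Two cosmetic points: the phrase ``whose wall $M_t$ stabilizes $R$'' should read ``such that $t$ stabilizes $R$'', and you implicitly use that a minimal gallery from $d$ to $x$ inside $R$ must contain a panel of $M_t$ for each $t\in\mathrm{Sep}(d,x)$ --- worth stating explicitly, though it is the same standard fact as $\ell=|\mathrm{Sep}|$. Neither affects correctness.
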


\begin{proof} This is Theorem 3.22 in \cite{lilaWeiss}.
\end{proof}

Let $R \subseteq {\cal C}$ be a residue and $c \in {\cal C}$.
The unique chamber $d$ in Lemma \ref{projectionlemma} is
called {\sl the projection of $c$ onto $R$} and it will be
denoted by $proj_R c$. For any subset $X$ of ${\cal C}$
we put $proj_R X := \{ proj_R x \mid x \in X \}$.

\begin{lemma} \label{projectionofpanelsonresidues}
Let $R$ be a residue and $t \in S^W$ be such that
$R^t = R$. Then the following hold:
\begin{itemize}
\item[(i)] $H(t,c) = H(t,proj_R c)$ for each chamber $c \in {\cal C}$;
\item[(ii)] $proj_R P \in M_t$ for all $P \in M_t$.
\end{itemize}
\end{lemma}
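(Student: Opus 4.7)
My plan is to exploit the $t$-equivariance of projections: since $R^t = R$ and $t$ acts as an isometry of $\Sigma(W,S)$, for every chamber $x$ one has $\proj_R(x^t) = (\proj_R x)^t$. This identity, combined with the defining property $\ell(c,x) = \ell(c,d) + \ell(d,x)$ of the projection $d = \proj_R c$ from Lemma \ref{projectionlemma}, will yield both assertions.

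For (i), I set $d := \proj_R c$ and argue by contradiction, assuming $c$ and $d$ lie on opposite sides of the wall $M_t$, i.e. $H(t,c) \neq H(t,d)$. Then any minimal gallery from $c$ to $d$ must cross some panel $Q \in M_t$; applying $t$ to the tail of the gallery after $Q$ and removing the duplicated chamber at the crossing produces a gallery from $c$ to $d^t$ of length $\ell(c,d) - 1$. Since $R^t = R$ forces $d^t \in R$, this contradicts the minimality of $\ell(c,d)$ that defines the projection. Hence $H(t,c) = H(t,d)$.

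For (ii), let $P = \{x, x^t\} \in M_t$ and set $d := \proj_R x$. The $t$-equivariance of projections noted above gives $\proj_R x^t = d^t$, hence $\proj_R P = \{d, d^t\}$. It remains to verify that $\{d, d^t\}$ is actually a panel lying on $M_t$. Using the projection property and the isometry identity $\ell(x^t, d^t) = \ell(x, d)$, one has $\ell(x, d^t) = \ell(x, d) + \ell(d, d^t)$, while the triangle inequality gives $\ell(x, d^t) \leq \ell(x, x^t) + \ell(x^t, d^t) = 1 + \ell(x, d)$. Together these force $\ell(d, d^t) \leq 1$; since no chamber is fixed by the non-trivial reflection $t$ (the right action of $W$ on $\mathcal{C}$ is regular), we have $d \neq d^t$, so $\ell(d, d^t) = 1$, $\{d, d^t\}$ is a panel, and it is clearly stabilized by $t$, hence it lies in $M_t$.

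The main delicate point is the gallery-reflection step in (i), which amounts to the standard fact that if $M_t$ separates $c$ and $d$, then $\ell(c, d^t) = \ell(c,d) - 1$; this is classical for Coxeter complexes and can be invoked directly from \cite{lilaWeiss}, which is already cited for the analogous results on roots and walls used in Proposition \ref{rooffundametalprop}.
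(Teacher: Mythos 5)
Your proof is correct. Part (ii) is essentially the paper's own argument: the paper also derives $\proj_R(x^t)=(\proj_R x)^t$ from $R^t=R$, shows $\ell(x',y')\leq 1$ by playing the gate property against the triangle inequality, and rules out $x'=y'$ by regularity of the action; your version is just a slightly cleaner arrangement of the same estimates. Part (i) takes a genuinely different (but equally standard) route: the paper picks a chamber $d\in R\cap H(t,c)$ — which exists because $R$, being $t$-stable, contains a panel of $M_t$ and hence meets both roots — and then uses convexity of roots (Proposition \ref{rooffundametalprop}(i)) together with the fact that a minimal gallery from $c$ to $d$ passes through $\proj_R c$. You instead argue by contradiction via the folding trick: if $M_t$ separated $c$ from $\proj_R c$, reflecting the tail of a minimal gallery would produce a strictly shorter gallery from $c$ to $(\proj_R c)^t\in R$, contradicting the gate property. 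The folding fact you invoke is exactly the mechanism behind the convexity of roots, so in effect you re-prove the special case you need rather than quoting Proposition \ref{rooffundametalprop}(i); this makes your argument marginally more self-contained (it does not need part (iv) of that proposition), at the cost of importing one classical gallery-crossing lemma from \cite{lilaWeiss}. Either way the proof is sound.
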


\begin{proof} $H(t,c)$ is a convex set of chambers by
Assertion (i) of Proposition \ref{rooffundametalprop}. As $R^t = R$, there is a panel $P \in M_t$ which is contained
in $R$ by Assertion (iv) of Proposition \ref{rooffundametalprop} and hence there is a
chamber $d \in R \cap H(t,c)$. As there is a minimal gallery from $c$ to $d$
passing through $proj_R c$ we have $proj_R c \in H(t,c)$ which yields
$H(t,c) = H(t,proj_R c)$ and hence Assertion (i).

Let $x,y$ be the two chambers in $P$ and let $x' := proj_R x$, $y' := proj_R y$.
Without loss of generality we may assume that $\ell(y,y') \leq \ell(x,x')$.
Assume, by contradiction, that $\ell(x',y') \geq 2$. Then $\ell(x,y') =
\ell(x,x') + \ell(x',y') \geq \ell(x,x') + 2 \geq \ell(y,y') + 2 = \ell(y,y') + \ell(x,y) +1
> \ell(x,y')$. Thus $\ell(x',y') \leq 1$. As $R^t = R$ and $x^t = y$,
we have $x'^t = y' \neq x'$ and hence also $y'^t = x'$.
Thus $Q:= \{ x',y' \}$ is a panel contained in $R$ and stabilized by $t$.
As $Q = proj_R P$ this finishes the  proof of Assertion (ii).
\end{proof}

\begin{prop} \label{wallcontainedintersection}
Let $R \subseteq {\cal C}$ be a residue of rank 2 and let
$t,u,v \in S^W$ be pairwise distinct reflections such that
$R^t = R^u = R^v =R$ and $uv = vu$. Then there exist roots
$\alpha,\beta \in \Phi(W,S)$ such that the following holds:

\begin{itemize}
\item[(i)] $\alpha$ is associated with $u$ and $\beta$ is associated with $v$;
\item[(ii)] any panel $P \in M_t$ is contained in $(\alpha \cap \beta) \cup (-\alpha \cap -\beta)$.
\end{itemize}

If $\alpha$ and $\beta$ are as above, then
any panel in $M_{utu}$ is contained in $(\alpha \cap -\beta) \cup (-\alpha \cap \beta)$.
\end{prop}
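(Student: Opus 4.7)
The plan is first to analyse the structure inside the rank-$2$ residue $R$, and then to propagate the conclusion to arbitrary panels of $M_t$ via projection onto $R$. The stabilizer of $R$ in $W$ is a dihedral group containing the three distinct reflections $t,u,v$. Since $u$ and $v$ are distinct commuting reflections of this dihedral group, the product $uv$ must be its unique non-trivial central element (in particular the dihedral group is finite of order divisible by $4$); moreover $uv$ is not itself a reflection of $(W,S)$, since by Lemma~\ref{Coxeterbasics} the reflections inside the dihedral group are exactly the conjugates of its two Coxeter generators. The centrality of $uv$ yields $(uv)t(uv)^{-1}=t$, so $uv$ stabilizes the wall $M_t$, and of course it stabilizes $R$.

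First I would pick any $P_0\in M_t$ contained in $R$; such a $P_0$ exists by Assertion~(iv) of Proposition~\ref{rooffundametalprop}. Since $t\ne u,v$, Lemma~\ref{rootdeflemma} gives $M_t\cap M_u=M_t\cap M_v=\emptyset$, so the two chambers of $P_0$ lie in a common $u$-root and a common $v$-root. Let $\alpha$ be the $u$-root containing $P_0$ and $\beta$ the $v$-root containing $P_0$. By Lemma~\ref{rootstabilinglemma}, $u$ stabilizes $\pm\beta$ and $v$ stabilizes $\pm\alpha$; combining this with the fact that $u$ swaps $\alpha$ with $-\alpha$ and $v$ swaps $\beta$ with $-\beta$ (by definition of a root), $uv$ sends $\alpha$ to $-\alpha$ and $\beta$ to $-\beta$. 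Hence $P_0\cdot uv\subseteq(-\alpha)\cap(-\beta)$, and $P_0\cdot uv\in M_t\cap R$ since $uv$ preserves both. Moreover $P_0\cdot uv\ne P_0$: otherwise $uv$ would lie in the two-element stabilizer of $P_0$, forcing $uv$ to be a reflection. Combined with Lemma~\ref{atmosttwopanels}, this shows $M_t\cap R=\{P_0,\,P_0\cdot uv\}\subseteq(\alpha\cap\beta)\cup(-\alpha\cap-\beta)$, which is Assertion~(ii) for panels of $M_t$ lying in $R$.

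Next I would extend Assertion~(ii) to an arbitrary $P\in M_t$ using Lemma~\ref{projectionofpanelsonresidues}: part~(ii) gives $\proj_R P\in M_t\cap R$, while part~(i) applied to the reflection $u$ (valid since $R^u=R$) gives $H(u,c)=H(u,\proj_R c)$ for each chamber $c\in P$, and likewise for $v$. Because $t\ne u,v$ implies $P\notin M_u\cup M_v$, the two chambers of $P$ lie in one common $u$-root and one common $v$-root, namely the same ones as $\proj_R P$. The desired inclusion follows.

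Finally, for the claim about $M_{utu}$: if $Q\in M_{utu}$ then $Q\cdot u\in M_t$ (since conjugation by $u$ interchanges $t$ and $utu$), so by what has been proved $Q\cdot u\subseteq(\alpha\cap\beta)\cup(-\alpha\cap-\beta)$. Translating back by $u$ and using $\alpha\cdot u=-\alpha$ (definition of a root) together with $\beta\cdot u=\beta$ and $(-\beta)\cdot u=-\beta$ (Lemma~\ref{rootstabilinglemma}) yields $Q\subseteq(-\alpha\cap\beta)\cup(\alpha\cap-\beta)$. The hardest step I anticipate is the combinatorial argument inside $R$, especially the verification that $uv$ does not fix $P_0$, which crucially relies on the non-obvious fact that $uv$ is not a reflection of $(W,S)$, a consequence of Lemma~\ref{Coxeterbasics}.
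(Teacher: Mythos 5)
Your argument is correct and follows essentially the same route as the paper: fix a panel $P_0\in M_t$ inside $R$, set $\alpha:=H(u,P_0)$, $\beta:=H(v,P_0)$, use that $uv$ is central in the dihedral stabilizer to produce the second panel $P_0^{uv}\subseteq(-\alpha)\cap(-\beta)$, invoke Lemma~\ref{atmosttwopanels}, and then transport the conclusion to arbitrary panels of $M_t$ via Lemma~\ref{projectionofpanelsonresidues}. The only cosmetic differences are that the paper gets $P_0^{uv}\neq P_0$ for free from the disjointness of $\alpha\cap\beta$ and $(-\alpha)\cap(-\beta)$ rather than from your (also valid) observation that $uv$ is not a reflection, and that it handles $M_{utu}$ by re-applying the first assertion to $utu$ instead of translating by $u$ as you do.
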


\begin{proof}
As $t$ stabilizes $R$ there exists a panel
$P \in M_t$ which is contained in $R$. Since $u \neq t \neq v$ the roots
$\alpha := H(u,P)$ and $\beta := H(v,P)$ are well defined. Since $u \neq v$
and $r:= uv = vu$ it follows that $r$ is in the center of the stabilizer
of $R$ in $W$ by Lemma \ref{2reflectionlemma}.
We have in particular $rt = tr$ and hence $r$ stabilizes the wall $M_t$ of $t$.
As $r$ stabilizes also the residue $R$, we have that $Q := P^r$ is a panel
in the wall of $t$ which is also contained in $R$.

Note also that $\alpha^r = (\alpha^v)^u = \alpha^u = -\alpha$ by Lemma \ref{rootstabilinglemma},
and similarly $\beta^r = -\beta$. As $Q = P^r  \subseteq (\alpha \cap \beta)^r = (-\alpha) \cap (-\beta)$
it follows in particular $Q \neq P$. It follows by Lemma \ref{atmosttwopanels}
that $\{ X \in M_t \mid X \subseteq R \} = \{ P,Q \}$. Let $Y \in M_t$ be a panel
on the wall of $t$. Then, by Assertion (ii) of Lemma \ref{projectionofpanelsonresidues}
we have $proj_R Y \in \{ P,Q \}$. If $proj_R Y = P$,
then $H(u,Y) = H(u,P) = \alpha$ and $H(v,Y) = H(v,P) = \beta$ (by Assertion (i)
of Lemma \ref{projectionofpanelsonresidues}) and hence $Y \subseteq \alpha \cap \beta$.
Similarly, we obtain $Y \subseteq (-\alpha) \cap (-\beta)$ if $proj_R Y = Q$.
This finishes the proof of the first assertion.

Let $P' := P^u$. As $P$ is a panel in the wall of $t$ we have $P' \in M_{utu}$
and as $u$ stabilizes $R$ and $P \subseteq R$, the panel $P'$ is also contained
in $R$. Now $H(u,P') = H(u,P^u) = (H(u,P))^u = \alpha^u = -\alpha$
and $H(v,P') = H(v,P^u) = (H(v^u,P))^u = (H(v,P))^u = \beta$.
It follows now from the first assertion that each panel in $M_{utu}$
is contained $((-\alpha) \cap \beta) \cup (\alpha \cap (-\beta))$ and we are done.
\end{proof}

\subsection*{Finite subgroups of $W$}

We continue to assume that $(W,S)$ is a Coxeter system and
we let $\Sigma(W,S) = ({\cal C},{\cal P})$ be its Coxeter complex.

\begin{lemma} \label{sphbasic}
Let $U \leq W$ be a finite subgroup of $W$. Then $U$ stabilizes
a spherical residue of $\Sigma(W,S)$.
\end{lemma}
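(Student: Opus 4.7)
The plan is to invoke Lemma \ref{Bourbakiexercise} and then translate its algebraic conclusion into the geometric language of residues using Lemma \ref{residuesbasic}.

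First, since $U$ is a finite subgroup of $W$, Lemma \ref{Bourbakiexercise} provides a spherical subset $J \subseteq S$ and an element $w \in W$ such that $w^{-1}Uw \leq \langle J \rangle$, equivalently $U \leq w \langle J \rangle w^{-1}$.

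Next, consider the set $R := \langle J \rangle w^{-1}$. By definition, $R$ is a residue of $\Sigma(W,S)$ of type $J$, and since $J$ is spherical the group $\langle J \rangle$ is finite, so $R$ is finite, i.e.\ $R$ is a spherical residue. By Assertion (i) of Lemma \ref{residuesbasic} applied to the chamber $w^{-1} \in R$, the stabilizer of $R$ in $W$ is $(w^{-1})^{-1} \langle J \rangle w^{-1} = w \langle J \rangle w^{-1}$, which contains $U$. Thus $U$ stabilizes the spherical residue $R$, as required.

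There is no serious obstacle here: the lemma is essentially a geometric reformulation of Lemma \ref{Bourbakiexercise}, so the only content is the bookkeeping of right-cosets versus stabilizers (note that, because $W$ acts from the right, the residue whose stabilizer is $w \langle J \rangle w^{-1}$ is $\langle J \rangle w^{-1}$, not $\langle J \rangle w$).
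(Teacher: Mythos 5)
Your proof is correct and follows exactly the paper's own argument, which cites Lemma \ref{Bourbakiexercise} together with Assertion (i) of Lemma \ref{residuesbasic}; you have merely spelled out the coset bookkeeping (correctly, given the right-action convention). Nothing further is needed.
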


\begin{proof} This follows from Lemma \ref{Bourbakiexercise} and Assertion (i) of Lemma \ref{residuesbasic}.
\end{proof}

For a finite subgroup $U \leq W$ we let $Sph(U)$ denote the set of all
spherical residues stabilized by $U$.

\begin{lemma} \label{sphkeylemma}
Let $U \leq W$ be a finite subgroup and let $t \in S^W$
be such that $\langle U,t \rangle$ is an infinite group.
Then there exists a unique root associated with $t$
which contains each residue in $Sph(U)$.
\end{lemma}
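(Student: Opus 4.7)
Since $\langle U,t\rangle$ is infinite and the stabilizer of any spherical residue is finite (Assertion (i) of Lemma \ref{residuesbasic}), $t$ cannot lie in $\mathrm{Stab}_W(R)$ for any $R\in Sph(U)$; for otherwise $\langle U,t\rangle\leq\mathrm{Stab}_W(R)$ would be finite. Hence by Assertion (iv) of Proposition \ref{rooffundametalprop}, for each such $R$ the root $H(t,R)$ containing $R$ is well-defined, and by Lemma \ref{sphbasic} the set $Sph(U)$ is non-empty. The task is therefore to show that $H(t,R)$ is independent of the choice of $R\in Sph(U)$.

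My plan is to first replace $U$ with its parabolic closure $P_U:=Pc_S(U)$, which is a finite parabolic subgroup of $(W,S)$ by Corollaries \ref{parabolicclosurecorollary} and \ref{finitesubgroupcor}; a direct application of those corollaries shows that $Sph(U)=Sph(P_U)$ and that $\langle P_U,t\rangle$ remains infinite, so we may assume $U=P_U$. For each $R\in Sph(U)$ I then single out the unique minimum $U$-stable sub-residue $R^{\ast}\subseteq R$ with $\mathrm{Stab}_W(R^{\ast})=P_U$, constructed as the residue corresponding to the parabolic closure of $U$ inside the finite Coxeter system associated with $\mathrm{Stab}_W(R)$ and identified with $P_U$ itself via Assertion (iii) of Proposition \ref{parabolicfiniterank}. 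Because $R^{\ast}\subseteq R$ implies that $R^{\ast}\cup R=R$ is connected and contains no panel on $M_t$, Assertion (iii) of Proposition \ref{rooffundametalprop} yields $H(t,R)=H(t,R^{\ast})$, and the problem reduces to comparing $H(t,R^{\ast})$ as $R^{\ast}$ ranges over residues with stabilizer exactly $P_U$.

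Any two such residues differ by right multiplication by an element $g\in N_W(P_U)$. Fixing a base residue $R_0$ of stabilizer $P_U$, the task becomes: for every $g\in N_W(P_U)$ and any chamber $c\in R_0$, the chambers $c$ and $cg$ lie in the same root of $t$. My approach is to invoke the structure of $N_W(P_U)$ supplied by Proposition \ref{normalizerofparabolicprop}: every reflection $r$ whose wall is crossed on passing from $c$ to $cg$ must either lie in $P_U$ or commute with $P_U$, so $\langle P_U,r\rangle$ is finite; since $\langle P_U,t\rangle$ is infinite, this forces $r\ne t$, and hence $M_t$ is not crossed.

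The hardest part of the argument will be this last step. While Assertion (ii) of Proposition \ref{normalizerofparabolicprop} yields a clean decomposition $N_W(\langle J\rangle)=\langle J\rangle\times\langle J^{\perp}\rangle$ when its orthogonality hypothesis holds, in the general case one needs a finer analysis of the reflections contributing to elements of $N_W(P_U)$ in order to guarantee that none of them can coincide with $t$ under the hypothesis $\langle P_U,t\rangle=\infty$.
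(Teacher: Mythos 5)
Your proposal has a genuine gap at the decisive step, and it is precisely the step you yourself flag as ``the hardest part.'' The opening reductions are sound: since $t$ stabilizes no residue in $Sph(U)$ (its stabilizer would otherwise contain the infinite group $\langle U,t\rangle$), each $H(t,R)$ is well defined, $Sph(U)\neq\emptyset$, and one may replace $U$ by $Pc_S(U)$. But the claim you intend to use to finish --- that every reflection $r$ whose wall separates $c\in R_0$ from $cg$ with $g\in N_W(P_U)$ either lies in $P_U$ or commutes with $P_U$ --- is false. Take $(W,S)$ of type $A_3$ with $S=\{s_1,s_2,s_3\}$, $P_U=\langle s_1,s_3\rangle$ and $g=s_2s_1s_3s_2\in N_W(P_U)$ (the element inducing the swap $s_1\leftrightarrow s_3$). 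The four walls separating the chamber $1$ from the chamber $g$ belong to the reflections $s_1^{s_2},\,s_3^{s_2},\,s_1^{s_2s_3},\,s_3^{s_2s_1}$ (the transpositions $(13),(23),(14),(24)$ in $S_4$), and none of these lies in $P_U$ or centralizes it. So Proposition \ref{normalizerofparabolicprop}(ii), whose orthogonality hypothesis fails here, cannot be the engine of the proof, and the ``finer analysis'' you defer would have to replace the claim, not merely verify it. The weaker statement you actually need --- that $\langle P_U,r\rangle$ is finite for every reflection $r$ separating two $P_U$-stable spherical residues --- is essentially a restatement of the lemma itself, so nothing has been proved. (There are also two smaller inaccuracies: two residues with the same stabilizer need not have the same type, hence need not differ by right multiplication by an element of $N_W(P_U)$; and the uniqueness of your minimal sub-residue $R^{\ast}$ is neither needed nor justified.)

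For comparison: the paper does not prove this lemma at all; it quotes Lemma 2.6 of \cite{MW2002}. The proof there does not pass through the normalizer. Instead it exploits the projection machinery (as in Lemmas \ref{projectionlemma} and \ref{projectionofpanelsonresidues} here): for two $U$-stable spherical residues $R_1,R_2$, the projections $proj_{R_1}(R_2)$ and $proj_{R_2}(R_1)$ are again $U$-stable, the finite set of walls separating $R_1$ from $R_2$ is $U$-invariant, and an induction on the gallery distance between the residues shows that no wall $M_r$ with $\langle U,r\rangle$ infinite can occur among them. If you want a self-contained argument rather than the citation, that is the route to take; the normalizer-based reduction discards exactly the geometric information (parallelism of the two residues and equivariance of projections) that makes the induction work.
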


\begin{proof}
This is Lemma 2.6. in \cite{MW2002}.
\end{proof}

Let $U \leq W$ be a finite subgroup of $W$ and $t \in S^W$
be such that $\langle U,t \rangle$ is an infinite group.
Then the unique root associated with $t$ which contains each
spherical residue stabilized by $U$ is denoted by $H(t,U)$.

\begin{prop} \label{sequenceoffinitesubroups}
Let $t \in S^W$ and $(U_0,U_1,\ldots,U_k)$
be a sequence of subgroups such that
\begin{itemize}
\item $\langle U_{i-1},U_i \rangle$ is a finite group for all $1 \leq i \leq k$;
\item $\langle U_i,t \rangle$ is an infinite group for all $0 \leq i \leq k$.
\end{itemize}

Then $H(t,U_i) = H(t,U_j)$ for all $0 \leq i,j \leq k$.
\end{prop}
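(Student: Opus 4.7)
The plan is to proceed by induction on $k$, reducing the statement to the case $k=1$, which is then settled using Lemma \ref{sphbasic} and Lemma \ref{sphkeylemma} combined with the fact that distinct roots associated with a given reflection are complementary.

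For the base case $k=1$, I would argue as follows. By hypothesis $\langle U_0, U_1 \rangle$ is a finite subgroup of $W$, so Lemma \ref{sphbasic} produces a spherical residue $R$ of $\Sigma(W,S)$ stabilized by $\langle U_0, U_1 \rangle$. In particular $U_0$ and $U_1$ both stabilize $R$, so $R \in \mathrm{Sph}(U_0) \cap \mathrm{Sph}(U_1)$. Since $\langle U_0, t \rangle$ and $\langle U_1, t \rangle$ are both infinite groups, Lemma \ref{sphkeylemma} yields that $H(t, U_0)$ and $H(t, U_1)$ are well defined, and that $R \subseteq H(t, U_0)$ as well as $R \subseteq H(t, U_1)$. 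Since $R$ is non-empty, the intersection $H(t, U_0) \cap H(t, U_1)$ is non-empty. But by the definition of roots, the only two roots associated with $t$ are complementary subsets of $\mathcal{C}$; hence two roots associated with $t$ that have non-empty intersection must coincide. Therefore $H(t, U_0) = H(t, U_1)$.

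For the inductive step, suppose $k \geq 2$ and assume the result holds for sequences of length $k-1$. Applying the inductive hypothesis to the truncated sequence $(U_0, U_1, \ldots, U_{k-1})$, which satisfies both bullet hypotheses inherited from the original sequence, yields $H(t, U_0) = H(t, U_1) = \cdots = H(t, U_{k-1})$. Applying the base case to $(U_{k-1}, U_k)$, which is also valid because $\langle U_{k-1}, U_k \rangle$ is finite and $\langle U_{k-1}, t \rangle$, $\langle U_k, t \rangle$ are infinite, gives $H(t, U_{k-1}) = H(t, U_k)$. Chaining these equalities yields the desired conclusion.

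There is no real obstacle here since all the technical work has already been done in the preceding lemmas. The only subtle point to verify is that a common stabilized spherical residue $R$ is genuinely non-empty and lies entirely inside each of the two roots $H(t, U_i)$, but both facts are immediate from the definition of $H(t, U_i)$ via Lemma \ref{sphkeylemma} (which gives containment $R \subseteq H(t, U_i)$ rather than just a single chamber). The argument makes no use of the specific structure of the groups $U_i$ beyond finiteness, which is exactly what the hypotheses provide.
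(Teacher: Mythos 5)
Your proof is correct and follows essentially the same route as the paper: both reduce by induction to comparing consecutive terms, invoke Lemma \ref{sphbasic} to produce a spherical residue stabilized by the finite group $\langle U_{i-1},U_i\rangle$ (hence by both $U_{i-1}$ and $U_i$), and conclude via Lemma \ref{sphkeylemma} that this residue lies in both roots, forcing them to coincide. The only cosmetic difference is that the paper routes the comparison through $H(t,V)$ for $V=\langle U_{k-1},U_k\rangle$, while you argue directly that two roots associated with $t$ having non-empty intersection are equal.
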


\begin{proof}
We proceed by induction on $k$.
For $k = 0$ the assertion is trivial and we may assume $k > 0$.
By induction it suffices to show that $H(t,U_{k-1}) = H(t,U_k)$.
As $V:= \langle U_{k-1}, U_k \rangle$ is a finite
subgroup of $W$ by assumption, there exists a spherical residue
$R$ stabilized by $V$ (by Lemma \ref{sphbasic}).
As $U_k \leq V$ and $\langle U_k,t \rangle$
is infinite, it follows that $\langle V,t \rangle$ is infinite
and $R \subseteq H(t,V)$; we have in particular, $H(t,V) = H(t,R)$.
As $U_{k-1}$ and $U_k$ are both subgroups of $V$, it follows that
$R$ is also stabilized by these groups. Hence we have
$H(t,U_{k-1}) = H(t,R) = H(t,U_k)$.
\end{proof}

\begin{cor} \label{keyargument}
Let $R \subseteq {\cal C}$ be a residue of rank 2 and let
$t,u,v \in S^W$ be pairwise distinct reflections such that
$R^t = R^u = R^v =R$ and $uv = vu$.
Let $(U_0,U_1,\ldots,U_k)$
be a sequence of subgroups such that
\begin{itemize}
\item $\langle U_{i-1},U_i \rangle$ is a finite group for all $1 \leq i \leq k$;
\item $\langle U_i,u \rangle$ and $\langle U_i,v \rangle$ are both
infinite groups for all $0 \leq i \leq k$.
\end{itemize}

Then there exists $x \in \{ t,utu \}$ such that
$\langle U_i,x \rangle$ is an infinite group for all $0 \leq i \leq k$.
\end{cor}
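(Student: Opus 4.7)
\emph{Proof plan.} My strategy is to combine Propositions \ref{sequenceoffinitesubroups} and \ref{wallcontainedintersection} to derive a sign obstruction. First I would apply Proposition \ref{sequenceoffinitesubroups} twice, with $t$ replaced by $u$ and by $v$; the hypotheses are immediate from the hypotheses of the corollary. This yields roots
$$\alpha_0 := H(u,U_0) = \cdots = H(u,U_k), \qquad \beta_0 := H(v,U_0) = \cdots = H(v,U_k).$$
Next I would invoke Proposition \ref{wallcontainedintersection} to fix a root $\alpha$ associated with $u$ and a root $\beta$ associated with $v$ such that every panel in $M_t$ is contained in $(\alpha \cap \beta) \cup ((-\alpha) \cap (-\beta))$ and every panel in $M_{utu}$ is contained in $(\alpha \cap (-\beta)) \cup ((-\alpha) \cap \beta)$. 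Since $\alpha_0 \in \{\alpha,-\alpha\}$ and $\beta_0 \in \{\beta,-\beta\}$, there are exactly four candidates for the pair $(\alpha_0,\beta_0)$.

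I would then argue by contradiction: suppose both $\langle U_i,t\rangle$ and $\langle U_j,utu\rangle$ are finite for some indices $i,j$. From the first assumption, Lemma \ref{sphbasic} yields a spherical residue $R_i$ stabilized by $\langle U_i,t\rangle$; since $t$ stabilizes $R_i$, Assertion (iv) of Proposition \ref{rooffundametalprop} produces a panel $P_i \in M_t$ with $P_i \subseteq R_i$. But $R_i \in Sph(U_i)$, so Lemma \ref{sphkeylemma} (applied to $u$ and then to $v$) forces $R_i \subseteq H(u,U_i) \cap H(v,U_i) = \alpha_0 \cap \beta_0$; in particular, $P_i \subseteq \alpha_0 \cap \beta_0$. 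Combining this with $P_i \subseteq (\alpha \cap \beta) \cup ((-\alpha) \cap (-\beta))$ forces $(\alpha_0,\beta_0) \in \{(\alpha,\beta),(-\alpha,-\beta)\}$. The parallel argument applied to a spherical residue $R_j$ stabilized by $\langle U_j,utu\rangle$ and to a panel $P_j \in M_{utu}$ with $P_j \subseteq R_j$ forces $(\alpha_0,\beta_0) \in \{(\alpha,-\beta),(-\alpha,\beta)\}$. These two sets of possibilities are disjoint, a contradiction.

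Hence either $\langle U_i,t\rangle$ is infinite for every $i$ or $\langle U_i,utu\rangle$ is infinite for every $i$, and we take $x := t$ or $x := utu$ accordingly. The geometric heart of the argument has already been done inside Proposition \ref{wallcontainedintersection}, where the commuting pair $u,v$ separates $M_t$ from $M_{utu}$ into complementary ``quadrants'' of the root system; beyond that I do not anticipate any real obstacle. The only subtlety worth flagging is that Proposition \ref{wallcontainedintersection} implicitly forces $utu \ne t$---for otherwise the two panel-containments would give $M_t = \emptyset$, which is absurd---so the two alternatives $x = t$ and $x = utu$ really do cover genuinely different configurations.
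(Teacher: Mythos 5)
Your argument is correct and is essentially the paper's own proof: both rest on Proposition \ref{sequenceoffinitesubroups} to make $H(u,U_i)$ and $H(v,U_i)$ independent of $i$, on Proposition \ref{wallcontainedintersection} to place the walls $M_t$ and $M_{utu}$ in complementary pairs of quadrants, and on the spherical-residue argument (Lemmas \ref{sphbasic} and \ref{sphkeylemma}) to derive the contradiction. The only difference is organizational — the paper first selects the $x\in\{t,utu\}$ whose wall misses $H(u,U_0)\cap H(v,U_0)$ and then refutes finiteness of $\langle U_i,x\rangle$, whereas you assume both choices fail and contradict the position of $(\alpha_0,\beta_0)$ — which is the same computation read in the other direction.
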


\begin{proof}
By Proposition \ref{sequenceoffinitesubroups} there exists a root $\gamma$ associated
to $u$ such that $\gamma = H(u,U_i)$ for all $0 \leq i \leq k$
and a root $\delta$ associated to $v$ such that $\delta = H(v,U_i)$
for all $0 \leq i \leq k$. Using Proposition \ref{wallcontainedintersection} we see that
there exists a reflection $x \in \{ t, utu \}$ such that $\gamma \cap \delta$
contains no panel of the wall $M_x$ associated with $x$. We claim
that $\langle U_i,x \rangle$ is an infinite group for all $0 \leq i \leq k$.

Assume, by contradiction, that $V := \langle U_i,x \rangle$ is a finite
subgroup of $W$. Then $V$ stabilizes a spherical residue $T$ by Lemma \ref{sphkeylemma}
and since $x \in V$, there exists a panel $P \in M_x$ which is contained
in $T$. On the other hand, $T$ is a spherical residue stabilized
by $U_i$ because $U_i \leq V$. Hence we have $T \subseteq H(u,U_i) = \gamma$
and $T \subseteq H(v,U_i) = \delta$ which yields $P \subseteq \gamma \cap \delta$.
Hence there exists $P \in M_x$ such that $P \subseteq \gamma \cap \delta$
which contradicts our choice of $x$.
\end{proof}

\bigskip
\noindent
{\bf Proof of Proposition \ref{keyproposition}:}
Since $a,\tau \in S^W$ and $a \tau$ has finite order,
there exists a rank 2 residue $R \subseteq {\cal C}$ in $\Sigma(W,S)$
which is stabilized by $\langle a,\tau \rangle$.
By the hypothesis of the proposition we have $b := a^{\tau} \neq a$,
$\sigma \in \langle a,\tau \rangle \cap S^W$, $\sigma \neq \tau$ and $[\sigma,\tau] =1$.
As $\sigma \in \langle a,\tau \rangle \leq Stab_W(R)$ we have
$R^a = R^{\tau} = R^{\sigma} = R$.
Also, since $a^{\tau} \neq a$ and $\sigma^{\tau} = \sigma$ we
have that $a, \tau$ and $\sigma$ are pairwise distinct.
Thus, applying Corollary \ref{keyargument} with $a := t, u := \tau$ and
$v := \sigma$ provides the proposition.

\section{The case $D$}

\label{sectioncaseD}

\noindent
{\bf Convention:} Throughout this section $(W,S)$ is a Coxeter system and $s \in S$
is a right-angled reflection of $(W,S)$.
%$\pi: W \rightarrow \{ +1,-1 \}$
%is the homomorphism which sends $s$ onto $-1$ and $t$ onto $+1$ for
%all $t \in S \setminus \{ s \}$ and $V \leq W$ is its kernel.
Moreover,
$C \subseteq s^{\perp}$ is a $s$-component of type $D_{2k+1}$,
$\rho$ is the longest element of $(\langle C \rangle,C)$ and
$a \in C$ is such that $b:= \rho a \rho \neq a$. We put $\tau := s\rho$
and $\sigma := abs\rho = ab\tau$.
Finally, $R \subseteq W$
is a Coxeter generating set of $W$ and $J \subseteq R$ is of type $C_{2k+1}$ and such that
$\langle J \rangle = \langle s \rangle \times \langle C \rangle$ and
$J \subseteq (s\rho)^{\langle J \rangle} \cup a^{\langle J \rangle}$ and $a,\tau \in R^W$.

\medskip
\noindent
The goal of this section is to prove the following.

\begin{prop} \label{mainDodd}
The generator $a$ is a blowing down generator for $s$.
\end{prop}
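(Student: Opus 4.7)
The plan is to verify the two axioms (BDG1) and (BDG2) for the generator $a$. Condition (BDG1) is immediate from the standing hypotheses of this section: the component $C$ is of type $D_{2k+1}$, and the inequality $b = \rho a \rho \neq a$ is assumed.

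For (BDG2), fix a sequence $u_0, u_1, \ldots, u_n \in s^{\infty}$ with each product $u_{i-1} u_i$ of finite order, and set $U_i := \langle u_i \rangle$. The idea is to apply Proposition \ref{keyproposition} to the Coxeter system $(W, R)$ in place of $(W, S)$: by hypothesis $\tau, a \in R^W$; both $\tau$ and $a$ lie in the finite group $\langle J \rangle$, so $a\tau$ has finite order; and $a^{\tau} = \tau a \tau = s\rho \cdot a \cdot s\rho = \rho a \rho = b \neq a$, which matches the $b$ appearing in Proposition \ref{keyproposition}.

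The key preparatory step is to choose a suitable $\sigma$. Since $\rho$ induces the non-trivial diagram automorphism of $D_n$ for odd $n$, the condition $\rho a \rho \neq a$ forces $a$ and $b$ to be the two branch leaves of the $D_{2k+1}$ diagram, which are non-adjacent; hence $m_{ab} = 2$ and $ab$ is an involution. Then $(a\tau)^2 = a(\tau a \tau) = ab$, so $\langle a, \tau \rangle$ is dihedral of order $8$ with centre $\{1, ab\}$. Set $\sigma := a\tau a = ab\tau$. Then $\sigma \tau = (a\tau)^2 = ab$ is central in $\langle a, \tau \rangle$, so $[\tau, \sigma] = 1$; clearly $\sigma \neq \tau$; and $\sigma = a \tau a \in \tau^W \subseteq R^W$, so $\sigma$ is a reflection of $(W, R)$ fulfilling all hypotheses of Proposition \ref{keyproposition}.

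Finally I verify condition (ii) of Proposition \ref{keyproposition} via Corollary \ref{infordercor}. The element $\rho \in \langle s^{\perp} \rangle$ is an involution, so $\tau u_i = (s\rho) u_i$ has infinite order for each $u_i \in s^{\infty}$, whence $\langle U_i, \tau \rangle$ is infinite. Because $a,b$ commute, $\rho (ab) \rho = ba = ab$, so $ab\rho \in \langle C \rangle \subseteq \langle s^{\perp} \rangle$ is an involution; since $\sigma = s \cdot ab\rho$, Corollary \ref{infordercor} gives $\langle U_i, \sigma \rangle$ infinite as well. Condition (i) is automatic. Proposition \ref{keyproposition} then yields $x \in \{a, a^{\tau}\} = \{a, b\}$ with $\langle u_i, x \rangle$ infinite, equivalently $u_i \in x^{\infty}$, for every $i$; this is exactly (BDG2). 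The main obstacle is the correct choice of $\sigma$ and the verification that it lies in $R^W$ and commutes with $\tau$; this rests entirely on the commutativity of $a$ and $b$, a special feature of $D_n$ for odd $n$.
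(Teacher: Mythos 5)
Your proof is correct and follows essentially the same route as the paper: it sets $\sigma=\tau^a=ab\tau$, verifies $ab=ba$, $\sigma\neq\tau$, $[\sigma,\tau]=1$ and $\sigma\in R^W$, uses Corollary \ref{infordercor} to get the infiniteness conditions, and then invokes Proposition \ref{keyproposition} with respect to $(W,R)$ exactly as in Lemma \ref{ConditionsforRank2CaseDPart} and Proposition \ref{blowingdownDprop}. The only point you leave tacit is that $ab\rho\neq 1$ (so that $ab\rho$ really is an involution, as Corollary \ref{infordercor} requires), which the paper settles by comparing lengths of $\rho$ and $ab$ in $(\langle C\rangle,C)$; this is a trivial patch.
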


\begin{lemma} \label{ConditionsforRank2CaseDPart}
The following hold.
\begin{itemize}
\item[(i)] $ab = ba \neq 1$ and $ab$ is an involution in $\langle s^{\perp} \rangle$;
\item[(ii)] $\rho ab = ab\rho \neq 1$ and $ab\rho$ is an involution in $\langle s^{\perp} \rangle$;
\item[(iii)] $\rho s = s\rho$, $\tau$ is an involution, $a^{\tau} = b$;
\item[(iv)] $\sigma = \tau^a$, $\sigma \neq \tau$ and $\sigma \tau = \tau \sigma$;
\item[(v)] if $u \in s^{\infty}$, then $\sigma u$ and $\tau u$ are both of infinite order;
\item[(vi)] $\sigma \in R^W$.
\end{itemize}
\end{lemma}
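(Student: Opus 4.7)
My plan is to isolate the one structural ingredient, Assertion~(i), which is the only part that genuinely uses the specific type $D_{2k+1}$; the remaining assertions will then follow either by direct computation inside the product $\langle J\rangle=\langle s\rangle\times\langle C\rangle$ or by appealing to results already established. The main (and indeed the only nonroutine) obstacle will be Assertion~(i) itself.

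For (i), I would use the well-known description of the longest element $\rho$ in a Coxeter system of type $D_{2k+1}$: it acts on $C$ as the nontrivial diagram automorphism, fixing all nodes except the two ``fork'' nodes, which it interchanges. Since $b=\rho a\rho\ne a$, the generator $a$ must be one of those two fork nodes and $b$ the other. The two fork nodes are nonadjacent in the diagram, so $m_{ab}=2$; hence $ab=ba$, $(ab)^2=1$, $ab\ne1$ (as $a\ne b$), and $ab\in\langle C\rangle\le\langle s^\perp\rangle$. For (ii), conjugation gives $\rho(ab)\rho=(\rho a\rho)(\rho b\rho)=ba=ab$, so $[\rho,ab]=1$ and $(ab\rho)^2=(ab)^2\rho^2=1$; nonvanishing of $ab\rho$ follows from the identity $(ab)a(ab)=a$ (which uses only $[a,b]=1$ from (i)), since $\rho=ab$ would force $\rho a\rho=a\ne b$, contradicting the hypothesis.

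Assertion (iii) is then immediate: $C\subseteq s^\perp$ forces $[s,\rho]=1$, whence $\tau^2=s^2\rho^2=1$ and $a^\tau=(s\rho)a(s\rho)=\rho a\rho=b$. For (iv), using the relation $a\rho=\rho b$ (rewriting $\rho a\rho=b$) one computes $\tau^a=a(s\rho)a=s(a\rho a)=s\rho(ba)=s\rho(ab)=\tau\cdot ab=ab\cdot\tau=\sigma$, the last step using that $ab$ commutes with both $s$ (by $a,b\in s^\perp$) and $\rho$ (by (ii)); then $\sigma\tau^{-1}=ab\ne1$ yields $\sigma\ne\tau$, and a short computation inside $\langle s\rangle\times\langle C\rangle$ gives $\sigma\tau=\tau\sigma=ab$. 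Assertion (v) is immediate from (ii) combined with Corollary~\ref{infordercor}, since $\tau=s\rho$ and $\sigma=s(ab\rho)$ are each of the form $sr$ with $r$ an involution in $\langle s^\perp\rangle$. Finally, (vi) is immediate from $\tau\in R^W$, closure of $R^W$ under $W$-conjugation, and $\sigma=\tau^a$.
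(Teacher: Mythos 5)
Your proposal is correct and follows essentially the same route as the paper: (i)--(ii) carry the structural content, (iii)--(iv) are direct computations inside $\langle s\rangle\times\langle C\rangle$ using $[s,s^{\perp}]=1$ and $[\rho,ab]=1$, (v) is Corollary \ref{infordercor} applied to the involutions $\rho$ and $ab\rho$ of $\langle s^{\perp}\rangle$, and (vi) is conjugation of $\tau\in R^W$ by $a$. The only real divergence is your proof that $ab\rho\neq 1$ via the identity $(ab)a(ab)=a$, which is a slightly slicker alternative to the paper's comparison of $\ell(\rho)=2k(2k+1)$ with $\ell(ab)=2$; both are valid.
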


\begin{proof}
As $\rho$ is the longest element of $(\langle C \rangle,C)$ which is of type
$D_{2k+1}$ and $a \in C$ is
such that $b = a^{\rho} \neq a$, it follows that $ab = ba$ and as $a \neq b$ are
both involutions, it follows that $ab$ is an involution.
As $C$ is a spherical irreducible component of $(\langle s^{\perp} \rangle,s^{\perp})$
and $a,b \in C$, it follows that $ab \in \langle s^{\perp} \rangle$. This
concludes the proof of Assertion (i).

As $\rho$ is an involution and $a^{\rho} =b$, we have $b^{\rho} = a$ and
hence $(ab)^{\rho}= ba = ab$ where the last equality follows from Assertion (i).
Thus $ab$ and $\rho$ are commuting involutions which shows that $(\rho ab)^2 = 1$;
As $(\langle C \rangle,C)$ is of type
$D_{2k+1}$ with $1 \leq k$, it follows that the length of $\rho$ with respect
to the generator set $C$ is $2k(2k+1) \geq 6$. On the other hand we have
the length of $ab$ with respect to $C$ is 2 because $a \neq b \in C$ and therefore
$\rho \neq ab$ which shows that $ab \rho$ is an involution. As $\rho,a,b \in \langle C \rangle$
and $C$ is an irreducible spherical component of $(\langle s^{\perp} \rangle,s^{\perp})$,
it follows that $ab \rho \in \langle s^{\perp} \rangle$. This concludes
the proof of Assertion (ii).

As $\rho \in \langle s^{\perp} \rangle$ and $s \not \in \langle s^{\perp} \rangle$
it follows that $s \neq \rho$ and $s\rho = \rho s$. As $s$ and $\rho$ are both involutions,
$\tau = s\rho $ is an involution as well. As $a \in s^{\perp}$ we have
$[a,s] = 1$. It follows $a^{\tau} = \tau a \tau = (s\rho)a(s\rho) = a^{\rho} = b$
which finishes (iii).

We have $\tau^a = (s\rho)^a = as\rho a = sa \rho a = s\rho (\rho a\rho )a = s\rho a^{\rho}a = s\rho ba = \sigma$,
since we have already established  $[a,s] = [\rho,s] = [a,b] = 1$ in the previous
parts of this proof. We have also $\tau^s = s(s\rho )s = \rho s = s\rho = \tau$.
Assume by contradiction that $\tau = \sigma$. Then
$s\rho ab = s\rho$ and hence $ab =1$ implying $a = b$ which yields a contradiction.
Finally, since $[a,s] = [b,s] = [ab,\rho] =1$
we have $\sigma \tau = s\rho abs\rho = s\rho s(ab)\rho = s\rho s\rho ab = \tau \sigma$
and we are done with (iv).

We first remark that
$\rho$ and $\rho ab$ are both involutions in $\langle s^{\perp} \rangle$
by Assertion (ii). As $\tau = s\rho$ and $\sigma = s\rho ab$, Assertion (v) follows
from Corollary \ref{infordercor}.

Finally, Assertion (vi) follows from Assertion (iv) of the Lemma.
\end{proof}

\begin{prop} \label{blowingdownDprop}
Let $(u_0,u_1,\ldots,u_n)$ be a sequence in $s^{\infty}$ such that
$u_{i-1}u_i$ has finite order for all $1 \leq i \leq n$.
There exists
$x \in \{ a,b \}$ such that $xu_i$ has infinite order for
all $0 \leq i \leq n$.
\end{prop}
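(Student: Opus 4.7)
The plan is to apply Proposition \ref{keyproposition} directly to the Coxeter system $(W,R)$ with the elements $\tau, a, \sigma, b$ as specified in the setup, and with the sequence of cyclic subgroups $U_i := \langle u_i \rangle$ for $0 \leq i \leq n$. Lemma \ref{ConditionsforRank2CaseDPart} was designed precisely to feed this key proposition, so the proof should amount to a verification of the hypotheses followed by a translation of the conclusion from ``$\langle U_i, x \rangle$ is infinite'' to ``$xu_i$ has infinite order''.

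First I would verify the hypotheses of Proposition \ref{keyproposition} that concern the reflections. The inclusion $\{\tau, a\} \subseteq R^W$ is part of our standing assumption, and $a\tau$ has finite order because both $a$ and $\tau = s\rho$ lie in $\langle J \rangle = \langle s \rangle \times \langle C \rangle$, which is finite of type $C_{2k+1}$. Parts (iii), (iv), (vi) of Lemma \ref{ConditionsforRank2CaseDPart} then supply the remaining data in one packet: $b = a^{\tau} \neq a$, the element $\sigma = \tau^a$ lies in $\langle a, \tau \rangle$ and is an $R$-reflection distinct from $\tau$, and $[\tau, \sigma] = 1$. Note that the application is to the Coxeter system $(W,R)$, so ``$S^W$'' in the statement of \ref{keyproposition} is read as $R^W$.

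Next I would set $U_i := \langle u_i \rangle$ and check conditions (i) and (ii) of \ref{keyproposition}. For (i), $\langle U_{i-1}, U_i \rangle = \langle u_{i-1}, u_i \rangle$ is a finite dihedral group because $u_{i-1} u_i$ has finite order by hypothesis and $u_{i-1}, u_i$ are involutions. For (ii), each $u_i$ lies in $s^{\infty}$, so Lemma \ref{ConditionsforRank2CaseDPart}(v) gives that $\tau u_i$ and $\sigma u_i$ both have infinite order, making $\langle U_i, \tau \rangle$ and $\langle U_i, \sigma \rangle$ infinite. Proposition \ref{keyproposition} then produces $x \in \{a, b\}$ such that $\langle U_i, x \rangle = \langle u_i, x \rangle$ is infinite for every $0 \leq i \leq n$; since $u_i$ and $x$ are involutions, this dihedral group is infinite if and only if $x u_i$ has infinite order, which is the desired conclusion.

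The main obstacle has already been overcome upstream: the work lies in having arranged (via Proposition \ref{reductiontothreecases} and Lemma \ref{ConditionsforRank2CaseDPart}) a commuting pair $\{\tau, \sigma\}$ of $R$-reflections inside the rank-$2$ residue fixed by $\langle a, \tau \rangle$, together with the Cayley-graph machinery that underlies \ref{keyproposition}. At this stage the proof of Proposition \ref{blowingdownDprop} itself is essentially a bookkeeping application of these tools.
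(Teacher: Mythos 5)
Your proposal is correct and follows essentially the same route as the paper: both verify the hypotheses of Proposition \ref{keyproposition} for $(W,R)$ via Lemma \ref{ConditionsforRank2CaseDPart}, take $U_i := \langle u_i \rangle$, and translate the conclusion using the fact that a group generated by two involutions is infinite precisely when their product has infinite order. No gaps.
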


\begin{proof} We have $\{ \tau,a \} \subseteq R^W$ .
Moreover $a\tau$ has finite order since $a$ and $\tau$ are both contained
in the finite subgroup $\langle J \rangle$. We have also
$b = a^{\tau} \neq a$ and Assertion (iv)
of Lemma \ref{ConditionsforRank2CaseDPart} yields $\sigma = \tau^a$
and hence $\sigma \in \langle \tau,a \rangle \cap R^W$ by Assertion (vi) of
Lemma \ref{ConditionsforRank2CaseDPart}. By Assertion (iv) of
Lemma \ref{ConditionsforRank2CaseDPart} we have
$\sigma \neq \tau$ and $[\tau,\sigma] =1$.

Let $U_i := \langle u_i \rangle$ for $0 \leq i \leq n$. Then $\langle U_{i-1},U_i \rangle
= \langle u_{i-1},u_i \rangle$
is a finite group for $1 \leq i \leq n$ because the $u_i$ are involutions
and $u_{i-1}u_i$ has finite order by hypothesis. Finally, Assertion (v) of
Lemma \ref{ConditionsforRank2CaseDPart} yields that $\langle \tau,U_i \rangle$
and $\langle \sigma,U_i \rangle$ are infinite groups for $0 \leq i \leq n$
because $u_i \in s^{\infty}$.

We are now in the position to apply Proposition \ref{keyproposition} with $S:= R$.
It asserts that there is an element $x \in \{ a,b \}$
such that
$\langle x,U_i \rangle$ is an infinite
group for all $0 \leq i \leq n$. As $U_i = \langle u_i \rangle$ with an involution $u_i$
for $0 \leq i \leq n$ it follows that there exists an $x \in \{ a,b \}$
such that $xu_i$ has infinite order for $0 \leq i \leq n$.
\end{proof}

\bigskip
\noindent
{\bf Proof of Proposition \ref{mainDodd}:} The generator $a$
satisfies Axiom (BDG1) by the general assumptions of this section and
Proposition \ref{blowingdownDprop} yields that $a$ satisfies Axiom (BDG2)
as well.

\section{The case $I$}

\label{sectioncaseI}

\noindent
{\bf Convention:} Throughout this section $(W,S)$ is a Coxeter system, $s \in S$
is a right-angled reflection of $(W,S)$.
%$\pi: W \rightarrow \{ +1,-1 \}$
%is the homomorphism which sends $s$ onto $-1$ and $t$ onto $+1$ for
%all $t \in S \setminus \{ s \}$ and $V \leq W$ is its kernel.
Moreover,
$C = \{ a,b \} \subseteq s^{\perp}$ is a $s$-component of type $I_2(2k+1)$,
$\rho$ is the longest element of $(\langle C \rangle,C)$ and $\tau := s\rho$.
Finally, $R \subseteq W$
is a Coxeter generating set of $W$ and $J \subseteq R$ is of type $I_2(4k+2)$ and such that
$\langle J \rangle = \langle s \rangle \times \langle C \rangle$ and
$J \subseteq (s\rho)^{\langle J \rangle} \cup a^{\langle J \rangle}$ and $a,\tau \in R^W$.

\medskip
\noindent
The goal of this section is to prove the following.

\begin{prop} \label{mainI2odd}
The generator $a$ is a blowing down generator for $s$.
\end{prop}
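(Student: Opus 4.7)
The plan is to verify the two defining conditions (BDG1) and (BDG2) of a blowing down generator. Condition (BDG1) is immediate from the section's conventions: the $s$-component $C = \{a,b\}$ has type $I_2(2k+1)$ with $2k+1 \geq 3$ odd, so $\langle C \rangle$ is nonabelian with trivial center and its longest element $\rho$ interchanges the two simple reflections; a direct computation using $\rho = (ab)^k a$ and $(ab)^{2k+1} = 1$ confirms that $\rho a \rho = b \neq a$.

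For (BDG2), given a sequence $(u_0, \ldots, u_n)$ in $s^\infty$ with $u_{i-1} u_i$ of finite order for all $i$, I would apply Proposition \ref{keyproposition} (with the generating set $R$ playing the role of $S$), taking $\tau := s\rho$, $\sigma := \rho$, and $U_i := \langle u_i \rangle$. The structural hypotheses of Proposition \ref{keyproposition} are routine to check: $\{\tau, a\} \subseteq R^W$ by the section's convention; $a\tau$ is of finite order since both elements lie in the finite dihedral group $\langle J \rangle$; and $a^\tau = b \neq a$ follows from $[s,a] = 1$ together with $\rho a \rho = b$. Because $2k+1$ is odd, all reflections of $(\langle C \rangle, C)$ are conjugate in $\langle C \rangle$, so $\rho \in a^{\langle C \rangle} \subseteq R^W$; computing the order of $a\tau$ gives $4k+2$, which shows that $\langle a, \tau \rangle$ exhausts the dihedral group $\langle J \rangle$ of order $8k+4$ and therefore contains $\sigma = \rho$; the centrality of $s$ in $\langle J \rangle$ yields $\sigma \tau = \rho\,s\rho = s = \tau \sigma$, giving both $\sigma \neq \tau$ and $[\sigma, \tau] = 1$. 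Condition (i) in Proposition \ref{keyproposition} is immediate from the hypothesis on the sequence, and the half of (ii) asserting $\langle u_i, \tau \rangle$ infinite follows from Corollary \ref{infordercor} applied to the involution $r := \rho \in \langle s^\perp \rangle$.

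The main obstacle will be the remaining half of condition (ii): showing that $\langle u_i, \rho \rangle$ is infinite for each $u_i$. In contrast with case $D$, the reflection in $\langle J \rangle$ that commutes with $\tau$ and differs from it is forced to be $\rho$ itself, which lies entirely in $\langle s^\perp \rangle$; Corollary \ref{infordercor} therefore does not apply directly, and in isolation $\rho u_i$ may well have finite order (for instance if $u_i$ happens to commute with all of $C$). This is precisely where the Property FA machinery advertised in the introduction is needed: the idea is to show that if $\rho u_i$ were of finite order, then a suitable enlargement of $\langle \rho, u_i \rangle$ inside $W$, incorporating $s$ and, if necessary, further terms of the sequence, would have to be conjugate into a 2-spherical standard parabolic, which would in turn force $s u_i$ to have finite order, contradicting $u_i \in s^\infty$. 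Once condition (ii) has been fully verified, Proposition \ref{keyproposition} delivers $x \in \{a,b\}$ such that $\langle u_i, x \rangle$, and hence $x u_i$, has infinite order for every $i$, completing the verification of (BDG2) and thereby establishing Proposition \ref{mainI2odd}.
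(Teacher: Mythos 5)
Your overall strategy coincides with the paper's: check (BDG1) from the section's conventions and obtain (BDG2) by applying Proposition \ref{keyproposition} with $\tau = s\rho$, $\sigma = \rho$ and $U_i = \langle u_i \rangle$; your verification of the structural hypotheses (that $\langle J\rangle = \langle a,\tau\rangle$, that $\rho \in R^W$ because all reflections of $I_2(2k+1)$ are conjugate, that $[\tau,\rho]=1$, and that $\langle u_i,\tau\rangle$ is infinite by Corollary \ref{infordercor}) matches Lemma \ref{ConditionsforRank2CaseIPart1} and the setup of the paper's Proposition \ref{blowingdownIprop}. You also correctly isolate the one non-routine point, namely that $\langle u_i,\rho\rangle$ must be shown to be infinite. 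However, that point is left as a sketch, and the mechanism you describe does not work as stated: if $\rho u_i$ has finite order you only learn that the reflection subgroup $\langle \rho, u_i\rangle$ is a finite dihedral group, and since $\rho$ is a single reflection (as $2k+1$ is odd) there is no visible reason for any parabolic closure of this group, in $(W,S)$ or in $(W,R)$, to contain $s$, $a$ or $b$. So the proposed ``enlargement incorporating $s$'' has nothing to attach $s$ to, and no contradiction with $u_i \in s^{\infty}$ is reached.

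The argument that actually closes this gap (Lemmas \ref{FAfiniteorder}--\ref{ConditionsforRank2CaseI2Part2} in the paper) has two ingredients absent from your proposal. First, the FA machinery is applied not to $\langle\rho,u_i\rangle$ but to the set $\{a,b,u_i\} \subseteq S$: one observes that $Pc_R(\langle a,b\rangle) = \langle J\rangle$ --- because $\langle J\rangle$ is a rank-two parabolic of $(W,R)$ and $|\langle a,b\rangle| = 4k+2$ exceeds the order of any parabolic of rank at most one --- so that $s \in Pc_R(\langle a,b\rangle)$; then, if both $au_i$ and $bu_i$ had finite order, $\{a,b,u_i\}$ would be $2$-spherical and Corollary \ref{MTcorollary} would force $\{s,a,b,u_i\}$ to be $2$-spherical, contradicting $u_i \in s^{\infty}$. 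Hence at least one of $au_i$, $bu_i$ has infinite order. Second, one needs the separate geometric fact that, $\rho$ being the longest element of $(\langle a,b\rangle,\{a,b\})$, its root is a positive combination of the roots of $a$ and $b$ with coefficients at least $1$, so that $\rho u_i$ of finite order would force both $au_i$ and $bu_i$ to have finite order. Combining the two gives that $\rho u_i$ has infinite order. Neither step appears in your write-up, so the proof is incomplete at exactly the point you flagged as the main obstacle.
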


Similarly as in the case $D$, the proof of this proposition consists essentially
of checking the conditions of Proposition \ref{keyproposition}. However, in the
$I_2$-case there is an additional difficulty which requires some extra work.
It is the proof of Lemma \ref{FACapplication} where we need an additional argument
with respect to the $D$-case. This lemma could be established at much lesser cost
if we would exclude the case $k=1$.

\subsection*{Coxeter systems and $FA$-groups}

The following definition is due to Serre (see Paragraph 6.1 in \cite{tree}):

\smallskip
\noindent
{\bf Definition:} A group $G$ is called an {\sl $FA$-group} if it satisfies the following
condition:

\begin{itemize}
\item[(FA)] If $G$ acts without inversion on a non-empty tree $T = (V,E)$,
then $G$ fixes a vertex $v \in V$.
\end{itemize}

\begin{lemma} \label{finiteimpliesFA}
Finite groups are $FA$-groups.
\end{lemma}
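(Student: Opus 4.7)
The plan is to exhibit a $G$-invariant finite subtree of $T$ and then shrink it canonically to a fixed point. Let $G$ be a finite group acting without inversion on a non-empty tree $T = (V,E)$. First I would pick any vertex $v_0 \in V$ and form the orbit $X := Gv_0$, which is finite because $|G| < \infty$. The convex hull $T'$ of $X$ in $T$, that is the union of the (unique) geodesic paths between pairs of vertices of $X$, is a finite subtree of $T$ and is $G$-invariant because $X$ is.

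Next I would perform a leaf-pruning argument on $T'$. Set $T_0 := T'$ and, for each $i$, let $T_{i+1}$ be the subtree obtained from $T_i$ by deleting every vertex that has degree one in $T_i$. Since $G$ acts on $T_i$ by graph automorphisms it preserves the set of degree-one vertices, so $T_{i+1}$ is again $G$-invariant. Because $T'$ is finite and any finite tree with more than one vertex must contain a leaf, the sequence eventually stabilizes at a subtree $T_N$ which consists of either a single vertex or a single edge.

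Finally, if $T_N$ is a single vertex $v$, then $v$ is fixed by $G$ and the lemma follows. Otherwise $T_N$ is a single edge $\{u,v\}$, $G$ stabilizes this edge setwise, and the assumption that the action is without inversion forbids $G$ from swapping $u$ with $v$; hence each of $u$ and $v$ is fixed by $G$. The only non-routine point is precisely this last step, where the no-inversion hypothesis is used to convert setwise stabilization of an edge into an actual fixed vertex; everything preceding it is standard finiteness pruning in a tree.
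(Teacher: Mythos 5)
Your proof is correct, and it is genuinely a proof, whereas the paper disposes of this lemma by citing Example 6.3.1 of Serre's \emph{Trees} (whose argument runs through the statement that an action with a bounded orbit has a fixed point). Your route --- take the convex hull of a finite orbit to get a finite $G$-invariant subtree, then locate its centre by iterated leaf-pruning, and finally use the no-inversion hypothesis to upgrade a setwise-stabilized edge to two fixed vertices --- is the standard self-contained argument, and every step works: the convex hull of a finite set in a tree is a finite subtree, simultaneous leaf deletion is canonical and hence commutes with the $G$-action, and a finite tree with at least three vertices still has vertices left after one round of pruning. The only cosmetic point is that the sequence $T_0, T_1, \ldots$ does not literally \emph{stabilize} when it reaches a single edge (the next deletion would empty it); you should say instead that you stop at the first $N$ for which $T_N$ has at most two vertices. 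With that phrasing fixed, your argument is complete and has the advantage over the paper's citation of making visible exactly where the without-inversion hypothesis enters, namely only in the very last step.
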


\begin{proof}
This is a special case of Example 6.3.1 in  \cite{tree}.
\end{proof}

\smallskip
\noindent
{\bf Definition:} Let $(W,S)$ be a Coxeter system of finite rank. A subset $J$ of $S$
is called {\sl 2-spherical} if $st$ has finite order for all $s,t \in J$.
A parabolic subgroup $P$ of $(W,S)$ is called 2-spherical, if
$P = \langle J \rangle^w$ for some 2-spherical subset of $S$ and some $w \in W$.

\smallskip
\noindent
The following result is due to Mihalik and Tschantz.

\begin{prop} \label{MihalikTschantz}
Let $(W,S)$ be a Coxeter system of finite rank. Then the following hold:
\begin{itemize}
\item[(i)] If $J \subseteq S$ is 2-spherical, then $\langle J \rangle$ is a $FA$-group.
\item[(ii)] If $U \leq W$ is a finitely generated $FA$-group, then $Pc_S(U)$ is
2-spherical.
\end{itemize}
\end{prop}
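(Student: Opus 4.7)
My plan is to prove the two assertions separately, using Bass--Serre theory together with the Helly property for subtrees of a tree.

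For Assertion (i) I would argue as follows. Since $(W,S)$ has finite rank and $J$ is 2-spherical, the set $J$ is in particular finite, and each $s \in J$ is an involution. Let $\langle J \rangle$ act on a non-empty tree $T$ without inversion. For each $s \in J$ the cyclic group $\langle s \rangle$ is finite, hence an $FA$-group by Lemma \ref{finiteimpliesFA}, so $\Fix(s) \subseteq T$ is a non-empty subtree. For each pair $s,t \in J$ the order $m_{st}$ is finite, so $\langle s,t \rangle$ is a finite dihedral group, again an $FA$-group, which forces $\Fix(s) \cap \Fix(t) \neq \emptyset$. Thus $\{\Fix(s) : s \in J\}$ is a finite family of pairwise-intersecting subtrees of $T$, and the Helly property for trees (an easy induction on $|J|$) yields a common vertex. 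That vertex is fixed by every generator, hence by all of $\langle J \rangle$, proving (i).

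For Assertion (ii) I would argue by contradiction. Given $U \leq W$ finitely generated and $FA$, Corollary \ref{parabolicclosurecorollary} tells us that $P := Pc_S(U)$ is a parabolic subgroup; after replacing $U$ by a $W$-conjugate (which preserves both being finitely generated and being $FA$) we may assume $P = \langle K \rangle$ for some finite $K \subseteq S$, and in particular $U \leq \langle K \rangle$. Suppose $K$ is not 2-spherical, so there exist distinct $s,t \in K$ with $m_{st} = \infty$. Set $K_1 := K \setminus \{t\}$ and $K_2 := K \setminus \{s\}$, so that $K_1 \cup K_2 = K$, $K_1 \cap K_2 = K \setminus \{s,t\}$, $K_1 \setminus K_2 = \{s\}$ and $K_2 \setminus K_1 = \{t\}$. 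Since $m_{st} = \infty$, Lemma \ref{productlemma}(ii) applied inside $(\langle K \rangle, K)$ yields the amalgamated free product decomposition $\langle K \rangle = \langle K_1 \rangle *_{\langle K_1 \cap K_2 \rangle} \langle K_2 \rangle$. Bass--Serre theory produces a non-trivial tree $T$ on which $\langle K \rangle$, and therefore $U$, acts without inversion, whose vertex stabilizers are the $\langle K \rangle$-conjugates of $\langle K_1 \rangle$ and $\langle K_2 \rangle$. Since $U$ has property $FA$, it fixes a vertex of $T$, so $U \leq g\langle K_i \rangle g^{-1}$ for some $i \in \{1,2\}$ and some $g \in \langle K \rangle$. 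Now $g\langle K_i \rangle g^{-1}$ is itself a parabolic subgroup of $(W,S)$, and since it contains $U$, it contains $P = \langle K \rangle$; it is also contained in $\langle K \rangle$ because $g \in \langle K \rangle$, so equality holds: $\langle K \rangle = g\langle K_i \rangle g^{-1}$. Lemma \ref{FormerLemma7} then forces $|K| = |K_i| = |K| - 1$, a contradiction.

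The main obstacle I anticipate is the bookkeeping in Assertion (ii) that links the Bass--Serre splitting to the parabolic closure: one must check that the $\langle K \rangle$-conjugates of $\langle K_i \rangle$ are parabolic subgroups of the ambient $(W,S)$, not merely of $(\langle K \rangle, K)$, and then extract the rank contradiction from Lemma \ref{FormerLemma7}. By comparison, Assertion (i) is a clean application of Helly for finite families of subtrees, once finite dihedral subgroups are recognized as $FA$-groups.
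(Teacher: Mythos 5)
Your proof is correct, but it is worth noting that the paper does not actually prove this proposition: it simply cites Proposition~24 and Lemma~25 of \cite{MT}. What you have written is a self-contained argument, and both halves hold up. For (i), the observation that $\Fix(s)$ is a non-empty subtree (non-empty because $\langle s\rangle$ is finite, hence $FA$ by Lemma~\ref{finiteimpliesFA}; a subtree because a without-inversion automorphism fixing two vertices fixes the geodesic between them), that finiteness of $m_{st}$ makes $\langle s,t\rangle$ a finite dihedral group and hence forces $\Fix(s)\cap\Fix(t)\neq\emptyset$, and the Helly property for finitely many pairwise-intersecting subtrees, is essentially Serre's own criterion for property $FA$ of groups with Coxeter-type presentations; finiteness of $J$ (guaranteed by finite rank) is exactly what makes the Helly induction go through. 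For (ii), the visual amalgam $\langle K\rangle=\langle K_1\rangle *_{\langle K_1\cap K_2\rangle}\langle K_2\rangle$ supplied by Lemma~\ref{productlemma}(ii) when some $m_{st}=\infty$, the fact that an $FA$ subgroup of an amalgam lies in a conjugate of a factor, the two-sided inclusion forcing $\langle K\rangle=g\langle K_i\rangle g^{-1}$, and the rank invariance from Lemma~\ref{FormerLemma7} giving $|K|=|K|-1$, together form a clean contradiction; the conjugation reduction at the start is harmless since both hypotheses on $U$ and the conclusion about $Pc_S(U)$ are conjugation-invariant. This is very much in the spirit of the Mihalik--Tschantz argument, so you have in effect reproved the cited result rather than found a shortcut, but as a replacement for the citation it is complete.
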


\begin{proof} Assertion (i) is Proposition 24 and Assertion (ii) is Lemma 25 in
\cite{MT}.
\end{proof}

\begin{cor} \label{MTcorollary}
Let $W$ be a Coxeter group and let $S \subseteq W$ and
$R \subseteq W$ be Coxeter generating sets. Let $J \subset S$ be a finite 2-spherical subset of $S$ and
let $s \in S$ be such that $s \in Pc_R(\langle J \rangle)$. Then $J \cup \{ s \}$ is also
a 2-spherical subset of $S$.
\end{cor}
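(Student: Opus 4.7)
The plan is to apply Proposition~\ref{MihalikTschantz} twice, alternating between the two Coxeter generating sets $R$ and $S$, in order to exploit the fact that being an $FA$-group is an intrinsic property of an abstract group while $2$-sphericity depends on the choice of generating set. All subgroups that appear are finitely generated, so by Corollary~\ref{parabolicclosurecorollary} and a routine finite-rank reduction---enclosing everything inside finitely generated standard parabolics of $(W,S)$ and of $(W,R)$---we may assume throughout that both Coxeter systems are of finite rank.

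First, since $J$ is finite and $2$-spherical in $S$, Proposition~\ref{MihalikTschantz}(i) gives that $\langle J\rangle$ is an $FA$-group. Being finitely generated, Proposition~\ref{MihalikTschantz}(ii) applied in $(W,R)$ then implies that $P:=Pc_R(\langle J\rangle)$ is a $2$-spherical parabolic of $(W,R)$; write $P=\langle K\rangle^g$ with $K\subseteq R$ finite and $2$-spherical. A second invocation of Proposition~\ref{MihalikTschantz}(i), now in $(W,R)$, tells us that $P$ itself is an $FA$-group. Since $P$ is finitely generated, a second application of Proposition~\ref{MihalikTschantz}(ii)---this time in $(W,S)$---yields that $Q:=Pc_S(P)$ is a $2$-spherical parabolic of $(W,S)$, of the form $Q=\langle I\rangle^h$ with $I\subseteq S$ finite and $2$-spherical.

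The hypothesis $s\in P$ together with $\langle J\rangle\leq P$ forces $\langle J\cup\{s\}\rangle\leq P\leq Q=\langle I\rangle^h$. Hence $h^{-1}\langle J\cup\{s\}\rangle h$ is a parabolic subgroup of $(W,S)$ (as a conjugate of a standard parabolic) contained in the standard parabolic $\langle I\rangle$; Proposition~\ref{parabolicfiniterank}(ii) identifies it as a parabolic subgroup of $(\langle I\rangle,I)$, so $\langle J\cup\{s\}\rangle$ is conjugate in $W$ to $\langle I'\rangle$ for some subset $I'\subseteq I$, which is $2$-spherical because $I$ is. Lemma~\ref{FormerLemma7}, applied to the two standard parabolics $\langle J\cup\{s\}\rangle$ and $\langle I'\rangle$ which are conjugate in $W$, yields $|J\cup\{s\}|=|I'|$ together with a conjugating element carrying $J\cup\{s\}$ onto $I'$; in particular the Coxeter systems on $J\cup\{s\}$ and $I'$ are isomorphic, so the $2$-sphericity of $I'$ transfers to $J\cup\{s\}$.

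The main obstacle is conceptual rather than technical: one must see that the hypothesis $s\in Pc_R(\langle J\rangle)$ is best leveraged by the round trip $(W,S)\leadsto(W,R)\leadsto(W,S)$, using the abstract $FA$-property as a bridge between the two Coxeter structures, so as to bring the generator $s$ and the set $J$ under a common $2$-spherical roof inside $(W,S)$. Once this idea is in place, the subsequent bookkeeping is a routine combination of Proposition~\ref{parabolicfiniterank}(ii) and Lemma~\ref{FormerLemma7}, and the reduction to finite rank proceeds exactly along the lines of the proof of Corollary~\ref{parabolicclosurecorollary}.
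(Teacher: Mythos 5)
Your argument is correct and follows essentially the same route as the paper: apply Proposition~\ref{MihalikTschantz} in the round trip $(W,S)\to(W,R)\to(W,S)$ using the $FA$-property as the bridge, then conclude via the parabolic-containment bookkeeping with Lemma~\ref{FormerLemma7}. The paper's own proof is just a terser version of the same steps (including the same finite-rank reduction via the finiteness of $J$).
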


\begin{proof} As $J$ is finite and 2-spherical, $\langle J \rangle$ is a finitely generated $FA$-group.
By Assertion (ii) of Proposition \ref{MihalikTschantz}
$P:= Pc_R(\langle J \rangle)$ is a 2-spherical parabolic subgroup of $(W,R)$.
(Note that $P = Pc_{R'}(\langle J \rangle)$ for a finite subset $R'$ of $R$, since $J$ is finite.)
It is
of finite rank and in particular a finitely generated $FA$-group. Moreover, $s \in P$ by assumption.
Again by Assertion (ii) of Proposition \ref{MihalikTschantz} $Q := Pc_S(P)$ is a 2-spherical parabolic subgroup of $(W,S)$
containing $\langle \{ s \} \cup J \rangle$. Thus there exists a 2-spherical subset $K$ of
$S$ and an element $w \in W$ such that $(\{ s \} \cup J)^w \leq \langle K \rangle$.
Hence, by Assertion (ii) of Proposition \ref{FormerLemma7} $ \{ s \} \cup J$ is a 2-spherical subset
of $S$.
\end{proof}

\subsection*{Finite paths in $s^{\infty}$}

\begin{lemma} \label{ConditionsforRank2CaseIPart1}
We have $b = a^{\tau}$, $\langle J \rangle = \langle a,\tau \rangle$,
$\rho \neq \tau \neq a \neq \rho \in R^W \cap \langle J \rangle$
and $[\tau,\rho] = 1$.
\end{lemma}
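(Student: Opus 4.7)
My approach is to verify the identities $[\tau,\rho]=1$, the three inequalities, $b=a^\tau$ and $\langle J\rangle = \langle a,\tau\rangle$ by direct calculation inside the product decomposition $\langle J\rangle = \langle s\rangle \times \langle C\rangle$, and then to deduce $\rho \in R^W$ by identifying $\rho$ as a reflection of the dihedral Coxeter system $(\langle J\rangle, J)$.

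Since $\rho \in \langle C \rangle \subseteq \langle s^\perp\rangle$ centralises $s$, I obtain $\tau\rho = s\rho\cdot\rho = s = \rho\cdot s\rho = \rho\tau$, so $[\tau,\rho]=1$. For the inequalities: $\tau = s\rho = \rho$ would force $s=1$; by Lemma~\ref{Coxeterbasics} we have $s\notin \langle s^\perp\rangle$, so $\tau = s\rho \notin \langle s^\perp\rangle$ while $a\in s^\perp \subseteq \langle s^\perp\rangle$, whence $\tau\neq a$; and $\rho$ has $C$-length $2k+1\geq 3$ while $a$ has $C$-length $1$, so $a\neq \rho$.

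Next, in $(\langle C\rangle, C)$ of type $I_2(2k+1)$ with $2k+1$ odd, the longest element $\rho$ induces the nontrivial diagram automorphism of the two-node diagram, so $\rho a\rho = b$; combined with $[s,a]=[s,\rho]=1$ this yields $a^\tau = \tau a\tau = s\rho\cdot a\cdot s\rho = s^2\rho a\rho = b$. Hence $b\in \langle a,\tau\rangle$, so $\rho \in \langle a,b\rangle \subseteq \langle a,\tau\rangle$, and then $s = \tau\rho \in \langle a,\tau\rangle$. Since $\langle J\rangle = \langle s\rangle \times \langle C\rangle = \langle s,a,b\rangle$, this yields $\langle J\rangle = \langle a,\tau\rangle$, and in particular $\rho \in \langle J\rangle$.

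The main and least automatic step is to show $\rho \in R^W$. My idea is that in a dihedral Coxeter system of type $I_2(4k+2)$ (even parameter) the unique involution that is not a reflection is the central longest element. Since $(\langle C\rangle, C)$ is irreducible of type $I_2(2k+1)$ with $2k+1$ odd, $Z(\langle C\rangle) = 1$, and the product decomposition gives $Z(\langle J\rangle) = \langle s\rangle$. Thus $s$ is the unique non-reflection involution of $\langle J\rangle$; because $\rho \in \langle s^\perp\rangle$ and $s\notin \langle s^\perp\rangle$, we have $\rho\neq s$, so $\rho$ is an involution in $\langle J\rangle$ distinct from the central one, and therefore $\rho \in J^{\langle J\rangle}$. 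Since $J\subseteq R$, Lemma~\ref{Coxeterbasics} applied to $(W,R)$ gives $J^{\langle J\rangle}\subseteq R^W$, so $\rho\in R^W$. I expect this centre identification — which uses both the parity hypothesis on $2k+1$ and the explicit direct product decomposition of $\langle J\rangle$ — to be the genuine obstacle, and it is exactly the kind of additional bookkeeping that the authors flag as extra work needed in the $I$-case beyond the $D$-case.
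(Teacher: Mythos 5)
Your proof is correct and follows essentially the same route as the paper's: direct computation inside $\langle J\rangle=\langle s\rangle\times\langle C\rangle$ using $[s,\rho]=1$ and the fact that $\rho$ induces the nontrivial diagram automorphism of the odd dihedral system. The only divergence is the justification of $\rho\in R^W$: the paper observes that the longest element of $I_2(2k+1)$ (odd) is itself a reflection of $(\langle C\rangle,C)$, hence lies in $a^W\subseteq R^W$, whereas you classify the involutions of the even dihedral group $\langle J\rangle$ and invoke $R^W\cap\langle J\rangle=J^{\langle J\rangle}$ -- both are valid one-step arguments, so this step is less of an obstacle than you anticipated.
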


\begin{proof} As $a,b \in s^{\perp}$ and $\tau = s\rho$ we have $a^{\tau} = a^{s\rho} = a^{\rho} = b$
where the last equality follows from the fact that $\rho$ is the longest element of the
system $(\langle a,b \rangle, \{ a,b \})$ which is of type $I_2(2k+1)$.

As, $a$ and $\rho$ are in $\langle C \rangle$ and $\tau =s \rho$ it follows that
$\langle a, \tau \rangle \leq \langle s \rangle \times \langle C \rangle = \langle J \rangle$.
On the other hand, $b = a^{\tau} \in \langle a,\tau \rangle$ which implies
that $\langle C \rangle \leq \langle a,\tau \rangle$. We have in particular,
$\rho \in \langle a,\tau \rangle$ and therefore also $s = \tau \rho \in \langle a,\tau \rangle$
which implies that $\langle J \rangle = \langle s \rangle \times \langle C \rangle \leq \langle a,\tau
\rangle$ which finishes the proof of the second equation.
As $b = a^{\tau}$ it follows that $b \in \langle J \rangle \cap R^W$
and in particular that $\rho \in \langle a,b \rangle \leq \langle J \rangle$.
As $(\langle a,b \rangle, \{ a,b \})$ is of type $I_2(2k+1)$ and $\rho$ is its longest element,
it follows that $\rho \in a^W$ and hence $\rho \in \langle J \rangle \cap R^W$.
As $a^{\tau} = b \neq a$ it follows that $a \neq \tau$ and as $\tau \rho = s$ and $\rho^2 = 1$
it follows that
$\tau \neq \rho$. Finally, as $\rho$ is the longest element of the system
$(\langle a,b \rangle, \{ a,b \})$,
it follows that $a \neq \rho$.
As $\tau \in R$ we have $\tau^2 = 1$ and as $a^{\tau} =b$ we have $b^{\tau} = a$. It follows that
$\tau$ normalizes $\{ a,b \}$ and hence centralizes the longest element of the system
$(\langle a,b \rangle, \{ a,b \})$ which is $\rho$.
\end{proof}

\begin{lemma} We have $Pc_R(\langle a,b \rangle) = \langle J \rangle$ and
in particular $s \in Pc_R(\langle a,b \rangle)$.
\end{lemma}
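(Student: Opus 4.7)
The approach is to squeeze $P := Pc_R(\langle a,b\rangle)$ between $\langle a,b\rangle$ and $\langle J\rangle$, and then rule out the smaller option by an order argument. First, since $J \subseteq R$, the subgroup $\langle J\rangle$ is a parabolic subgroup of $(W,R)$ and contains $\langle a,b\rangle = \langle C\rangle$, so $P \leq \langle J\rangle$. Because $\langle J\rangle = \langle s\rangle \times \langle a,b\rangle$ by the standing hypotheses of this section, $\langle a,b\rangle$ has index two in $\langle J\rangle$, and consequently $P$ must be equal to one of $\langle a,b\rangle$ or $\langle J\rangle$. It therefore suffices to show that $\langle a,b\rangle$ is not itself a parabolic subgroup of $(W,R)$.

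Suppose for contradiction that $\langle a,b\rangle$ is parabolic in $(W,R)$; write $\langle a,b\rangle = \langle K\rangle^w$ for some subset $K \subseteq R$ and some $w \in W$, and note that the finiteness of $\langle a,b\rangle$ forces $K$ to be finite. I would then choose a finite subset $R_0 \subseteq R$ containing $J \cup K$ and large enough that $w \in \langle R_0\rangle$. Inside the finite-rank Coxeter system $(\langle R_0\rangle, R_0)$, both $\langle a,b\rangle = \langle K\rangle^w$ and $\langle J\rangle$ are parabolic subgroups. Assertion (ii) of Proposition \ref{parabolicfiniterank}, applied to $(\langle R_0\rangle, R_0)$ with the parabolic subgroup $\langle a,b\rangle$ and the subset $J \subseteq R_0$, then yields that $\langle a,b\rangle \cap \langle J\rangle = \langle a,b\rangle$ is a parabolic subgroup of the dihedral Coxeter system $(\langle J\rangle, J)$ of type $I_2(4k+2)$.

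The contradiction is now an elementary order comparison: every parabolic subgroup of $(\langle J\rangle, J)$ is a conjugate of $\langle J'\rangle$ for some $J' \subseteq J$, and since $|J| = 2$ the possible orders of such subgroups lie in $\{1, 2, 8k+4\}$, conjugation within the finite group $\langle J\rangle$ preserving orders. However $|\langle a,b\rangle| = 2(2k+1) = 4k+2$ lies strictly between $2$ and $8k+4$ for $k \geq 1$, so it is not one of these values. We conclude $P = \langle J\rangle$, and the final assertion $s \in Pc_R(\langle a,b\rangle)$ follows at once from $s \in \langle J\rangle$. The only technical point is the finite-rank reduction used to invoke Proposition \ref{parabolicfiniterank}; once that is in place the argument is a direct comparison of subgroup orders.
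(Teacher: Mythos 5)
Your proof is correct and follows essentially the same route as the paper's: both trap $Pc_R(\langle a,b\rangle)$ inside the rank-two parabolic $\langle J\rangle$ and rule out anything smaller by comparing $|\langle a,b\rangle| = 4k+2$ with the possible orders $1$, $2$, $8k+4$ of parabolic subgroups of the dihedral system $(\langle J\rangle,J)$. Your index-two observation and the explicit finite-rank reduction via Proposition \ref{parabolicfiniterank} simply spell out details the paper leaves implicit.
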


\begin{proof} As $\langle a,b \rangle = \langle C \rangle \leq \langle J \rangle$,
the group $Pc_R(\langle a,b \rangle)$ is a parabolic subgroup of $(W,R)$
contained in $\langle J \rangle$
and $\langle J \rangle$ is a parabolic subgroup of $(W,R)$ of rank 2.
As the order of parabolic subgroup of rank one is 2 and the order of
$\langle a,b \rangle$ is 4k+2, it follows that
$Pc_R(\langle a,b \rangle) = \langle J \rangle$.
\end{proof}

\begin{lemma} \label{FAfiniteorder}
Let $u \in S \setminus \{ a,b,s \}$ be such that $au$ and $bu$ have finite order.
Then $u \in s^{\perp}$.
\end{lemma}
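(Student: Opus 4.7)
The plan is to argue by contradiction. Since $s$ is a right-angled generator and $u \in S \setminus \{s\}$, the only way to fail the conclusion is to have $u \in s^{\infty}$, i.e.\ $m_{su} = \infty$. I will derive a contradiction by showing that $\{s,a,b,u\}$ is a 2-spherical subset of $S$, forcing $su$ to have finite order.

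First I would verify that $\{a,b,u\} \subseteq S$ is already 2-spherical in $(W,S)$. Indeed $ab$ has finite order because $(\langle C \rangle, C) = (\langle \{a,b\} \rangle, \{a,b\})$ is of type $I_2(2k+1)$, while $au$ and $bu$ have finite order by hypothesis.

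Next I would use the lemma immediately preceding the statement, which asserts $s \in Pc_R(\langle a,b \rangle)$. Since $\langle a,b \rangle \leq \langle \{a,b,u\} \rangle$, the parabolic closure in $(W,R)$ is monotone, so $s \in Pc_R(\langle a,b \rangle) \subseteq Pc_R(\langle \{a,b,u\} \rangle)$. At this point Corollary \ref{MTcorollary}, applied with the 2-spherical set $J = \{a,b,u\} \subseteq S$ and the generating set $R$, yields that $J \cup \{s\} = \{a,b,u,s\}$ is 2-spherical in $(W,S)$. In particular $m_{su} < \infty$, contradicting $u \in s^{\infty}$. Hence $u \in s^{\perp}$.

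The key mechanism is that the $FA$-group $\langle a,b,u \rangle$ is forced, via Proposition \ref{MihalikTschantz}, into a 2-spherical parabolic subgroup of $(W,R)$, which upon taking its $S$-parabolic closure sweeps $s$ into the same 2-spherical parabolic of $(W,S)$. There is no real obstacle here beyond correctly invoking Corollary \ref{MTcorollary}; the main point to be careful about is to make sure $s$ really does lie in the $R$-parabolic closure of $\langle a,b \rangle$, which is exactly the content of the preceding lemma (using that $\langle J \rangle$ is a rank-$2$ parabolic of $(W,R)$ containing $\langle a,b \rangle$ of index $> 1$).
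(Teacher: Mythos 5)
Your proposal is correct and follows essentially the same route as the paper: both use the preceding lemma to place $s$ in $Pc_R(\langle a,b\rangle)$, observe that $\{a,b,u\}$ is $2$-spherical, invoke Corollary \ref{MTcorollary}, and conclude from the right-angledness of $s$. Your explicit mention of the monotonicity of the parabolic closure (to pass from $Pc_R(\langle a,b\rangle)$ to $Pc_R(\langle a,b,u\rangle)$) is a small detail the paper leaves implicit, but the argument is the same.
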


\begin{proof} By the previous lemma we have $s \in Pc_R(\langle a,b \rangle)$
As $au$ and $bu$ are of finite order the set $K := \{ a,b,u \}$ is a 2-spherical subset of $S$.
Thus we can apply Corollary \ref{MTcorollary} to see that
$\{ s \} \cup K$ is 2-spherical. As $s$ is a right-angled generator it follows $[s,u] =1$.
\end{proof}

\begin{lemma} \label{FACapplication}
Let $u \in s^{\infty}$. Then at least one of the elements
$ua$ and $ub$ has infinite order.
\end{lemma}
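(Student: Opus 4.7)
My strategy is a short contradiction argument that reduces the statement directly to Lemma \ref{FAfiniteorder}. Suppose for contradiction that both $ua$ and $ub$ have finite order. The first step is to verify that $u$ satisfies the hypothesis $u \in S \setminus \{a,b,s\}$ of Lemma \ref{FAfiniteorder}. Since $u \in s^{\infty}$ means $m_{su} = \infty$, and $s$ has order $2$, we must have $u \neq s$. Moreover, $C \subseteq s^{\perp}$ by the convention of this section, so $a,b \in s^{\perp}$, giving $m_{sa} = m_{sb} = 2 \neq \infty$; hence $u \neq a$ and $u \neq b$.

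With $u \in S \setminus \{a,b,s\}$ now confirmed, Lemma \ref{FAfiniteorder} applies verbatim and yields $u \in s^{\perp}$, i.e., $m_{su} = 2$. This stands in direct contradiction to the hypothesis $u \in s^{\infty}$, which asserts $m_{su} = \infty$, completing the argument.

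The proof itself is essentially a one-line invocation of Lemma \ref{FAfiniteorder}, so there is no serious obstacle at this stage; the substantive work has already been carried out in the preceding lemmas. In particular, the computation $Pc_R(\langle a,b \rangle) = \langle J \rangle$, which secures $s \in Pc_R(\langle a,b \rangle)$, together with the Mihalik--Tschantz criterion (Corollary \ref{MTcorollary}) used to propagate $2$-sphericity from $\{a,b,u\}$ to $\{s,a,b,u\}$, constitutes precisely the additional machinery distinguishing the $I_2$-case from the $D$-case treated in Section \ref{sectioncaseD}.
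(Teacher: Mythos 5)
Your proof is correct and takes exactly the route the paper does: the paper's entire proof of this lemma is the one-line reduction to Lemma \ref{FAfiniteorder}, and your verification that $u \in S \setminus \{a,b,s\}$ (via $u \in s^{\infty}$ and $a,b \in s^{\perp}$) followed by the contradiction with $u \in s^{\perp}$ is precisely the implicit content of that reduction.
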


\begin{proof}
This follows from Lemma \ref{FAfiniteorder}.
\end{proof}

\begin{lemma} \label{ConditionsforRank2CaseI2Part2}
For each $u \in s^{\infty}$ the orders of $u\tau$ and $u\rho$ are infinite.
\end{lemma}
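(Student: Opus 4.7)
I would prove the lemma in two steps.

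\textbf{Step 1} ($u\tau$ of infinite order). This is immediate from Corollary~\ref{infordercor}. Since $\rho$ is an involution contained in $\langle C\rangle\le\langle s^\perp\rangle$, applying that corollary with $r:=\rho$ yields that $\tau u=(s\rho)u$ has infinite order, whence so does $u\tau$.

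\textbf{Step 2} ($u\rho$ of infinite order). Here I would argue by contradiction: suppose $u\rho$ has finite order, so that $\langle u,\rho\rangle$ is a finite dihedral subgroup of $W$. The plan is to leverage this finiteness to force $\{u,a,b\}$ to be a $2$-spherical subset of $S$; then Lemma~\ref{FAfiniteorder} gives $u\in s^\perp$, contradicting $u\in s^\infty$.

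The key structural fact is that in the dihedral group $(\langle C\rangle,C)$ of type $I_2(2k+1)$ the identity $\rho a\rho = b$ holds: a direct computation with $\rho=(ab)^ka$ yields $\rho a\rho=(ab)^{2k}a=(ab)^{-1}a=b$. Hence $\rho$ interchanges $a$ and $b$ by conjugation inside $\langle a,b\rangle$. Using this symmetry together with the Case~$I$ structural hypotheses on $R$ and $J$ (notably that $Pc_R(\langle a,b\rangle)=\langle J\rangle$ and $a,\tau\in R^W$), one should be able to show via Corollary~\ref{MTcorollary} and the FA-group machinery of Proposition~\ref{MihalikTschantz} that the parabolic closure $Pc_S(\langle u,\rho\rangle)$ must contain $\{a,b\}$; consequently $\langle u,a,b\rangle$ is finite, so $\{u,a,b\}$ is spherical, and in particular $2$-spherical, which is what we need in order to apply Lemma~\ref{FAfiniteorder}.

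The hard part will be precisely this last step, namely passing from the finiteness of $\langle u,\rho\rangle$ to the finiteness of $\langle u,a,b\rangle$. The identity $\rho a\rho=b$ alone does not immediately propagate finiteness to the larger subgroup, since conjugating the element $u\rho$ by elements $g\in\langle a,b\rangle$ only produces finite subgroups of the form $\langle gug^{-1},g^{-1}ag\rangle$, which is the original $\langle u,\rho\rangle$ in disguise; the real content must come from a careful interaction between the parabolic closures in the two Coxeter systems $(W,S)$ and $(W,R)$, exploiting that $\rho\in R^W$ is a reflection of $(W,R)$ while $s$ is the longest (central) element of $(\langle J\rangle,J)$.
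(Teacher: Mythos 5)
Your Step 1 is correct and is exactly the paper's argument: Corollary~\ref{infordercor} applied to the involution $\rho\in\langle C\rangle\le\langle s^{\perp}\rangle$ gives that $\tau u=(s\rho)u$ has infinite order.

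Step 2, however, has a genuine gap, and it sits precisely where you flag ``the hard part'': passing from the finiteness of $\langle u,\rho\rangle$ to the finiteness of $\langle u,a,b\rangle$. The route you sketch --- showing that $Pc_S(\langle u,\rho\rangle)$ contains $\{a,b\}$ --- cannot work as stated. Since $(\langle C\rangle,C)$ has type $I_2(2k+1)$ with $2k+1$ odd, the longest element $\rho$ is itself a reflection of $(W,S)$ (it is conjugate to $a$ in $\langle a,b\rangle$; cf.\ Lemma~\ref{ConditionsforRank2CaseIPart1}, where $\rho\in a^W$ is noted). Hence $Pc_S(\langle\rho\rangle)=\langle\rho\rangle$, and $Pc_S(\langle u,\rho\rangle)$ is merely a finite parabolic of rank at most $2$ containing the two reflections $u$ and $\rho$; nothing forces it to contain the group $\langle a,b\rangle$ of order $4k+2\ge 6$, and none of Corollary~\ref{finitesubgroupcor}, Corollary~\ref{MTcorollary} or Proposition~\ref{MihalikTschantz} supplies such a containment (the FA machinery only lets you adjoin elements of a Coxeter generating set that already lie in a given parabolic closure of an already $2$-spherical set). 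So the implication ``$\langle u,\rho\rangle$ finite $\Rightarrow$ $\{u,a,b\}$ $2$-spherical'' is exactly the content you still owe, and your proposal does not supply it.

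The paper closes this gap by a different and more elementary mechanism. By Lemma~\ref{FACapplication} at least one of $au$, $bu$ has infinite order. Now $a,b,u$ all lie in $S$, and $\rho$ is the longest element of the standard parabolic $(\langle a,b\rangle,\{a,b\})$; in the geometric representation its root is a positive linear combination of $\alpha_a$ and $\alpha_b$ with both coefficients at least $1$, while $\langle\alpha_a,\alpha_u\rangle\le 0$ and $\langle\alpha_b,\alpha_u\rangle\le 0$ with at least one of them $\le -1$. One then reads off $\langle\alpha_\rho,\alpha_u\rangle\le -1$, i.e.\ $\rho u$ has infinite order. If you want to keep your contrapositive formulation, this is the computation you must carry out; the parabolic-closure detour does not replace it.
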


\begin{proof}
We have $a,b \in s^{\perp}$ and therefore $\rho \in \langle a,b \rangle$ is
an involution contained in  $\langle s^{\perp} \rangle$.
By Corollary \ref{infordercor} it follows that $\tau u = (s\rho )u$ has infinite order for each $u \in s^{\infty}$.

We have $a,b,u \in S$ and by Lemma \ref{FACapplication} we know at least one of
the elements $au$ and $bu$ has infinite order. As $\rho$ is the longest element in the
system $(\langle a,b \rangle, \{ a,b \})$, one  verifies using
the geometric representation (or the solution of the word problem
in Coxeter groups) that the order of $\rho u$ is also infinite.
\end{proof}

\begin{prop} \label{blowingdownIprop}
Let $(u_0,u_1,\ldots,u_n)$ be a sequence in $s^{\infty}$ such that
$u_{i-1}u_i$ has finite order for all $1 \leq i \leq n$.
There exists
$x \in \{ a,b \}$ such that $xu_i$ has infinite order for
all $0 \leq i \leq n$.
\end{prop}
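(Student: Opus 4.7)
The plan is to apply Proposition \ref{keyproposition} to the Coxeter system $(W,R)$, with our $a$ in the role of $a$, $\tau = s\rho$ in the role of $\tau$, and $\sigma := \rho$ in the role of $\sigma$, feeding in the sequence $U_i := \langle u_i \rangle$ of cyclic subgroups. Once all hypotheses are checked, the conclusion of Proposition \ref{keyproposition} produces an element $x \in \{a, a^\tau\} = \{a, b\}$ such that $\langle U_i, x \rangle = \langle u_i, x \rangle$ is infinite for every $0 \leq i \leq n$; since $u_i$ and $x$ are both involutions, this is exactly the statement that $xu_i$ has infinite order for every $i$, which is what we need.

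For the opening hypothesis of Proposition \ref{keyproposition}, I would note that $\{\tau, a\} \subseteq R^W$ is part of the standing assumption of the section, and that $a\tau \in \langle J \rangle$ has finite order because $\langle J \rangle$ is finite. Lemma \ref{ConditionsforRank2CaseIPart1} then does all of the remaining algebraic work in one shot: it supplies $b = a^\tau \neq a$, the identity $\langle J \rangle = \langle a, \tau \rangle$, and the statement that $\rho \in R^W \cap \langle J \rangle$ with $\rho \neq \tau$ and $[\tau, \rho] = 1$. Thus $\sigma := \rho$ lies in $\langle a, \tau \rangle \cap R^W$, is distinct from $\tau$, and commutes with $\tau$, exactly as required. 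The only conceptually delicate point is recognising that $\rho$ is essentially the only admissible choice of $\sigma$: since $\langle J \rangle$ is the dihedral group $I_2(4k+2)$ with central longest element $s$, the only reflection of $\langle J \rangle$ commuting with $\tau$ other than $\tau$ itself is $s\tau = \rho$.

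For conditions (i) and (ii) in Proposition \ref{keyproposition}, condition (i) asks that $\langle U_{i-1}, U_i \rangle = \langle u_{i-1}, u_i \rangle$ be finite, which is immediate from our hypothesis that $u_{i-1} u_i$ has finite order together with the fact that the $u_i$ are involutions. Condition (ii) asks that $\langle U_i, \tau \rangle$ and $\langle U_i, \sigma \rangle = \langle U_i, \rho \rangle$ be infinite for every $i$, which amounts to $u_i \tau$ and $u_i \rho$ both having infinite order, and this is exactly the content of Lemma \ref{ConditionsforRank2CaseI2Part2} applied to each $u_i \in s^\infty$. With both sets of hypotheses verified, Proposition \ref{keyproposition} supplies the required $x$ and completes the proof.

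No significant obstacle remains, because the preceding lemmas have been arranged precisely to permit this direct invocation of the key proposition. Notably, Lemma \ref{ConditionsforRank2CaseI2Part2} for the element $\rho$ is where the $FA$-group detour through Lemma \ref{FACapplication} pays off --- this is the extra ingredient highlighted after the statement of Proposition \ref{mainI2odd}, not present in the $D$-case. Once it is available, the proof of Proposition \ref{blowingdownIprop} is formally identical in structure to the proof of Proposition \ref{blowingdownDprop}, with the single change being the choice of $\sigma$: $\sigma = \rho$ here, as opposed to $\sigma = ab\tau = \tau^a$ in the $D$-case.
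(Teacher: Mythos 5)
Your proposal is correct and follows essentially the same route as the paper: both apply Proposition \ref{keyproposition} with $\sigma := \rho$ and $U_i := \langle u_i \rangle$, drawing the hypotheses from Lemma \ref{ConditionsforRank2CaseIPart1} (for $b = a^\tau \neq a$, $\rho \in \langle a,\tau\rangle \cap R^W$, $\rho \neq \tau$, $[\tau,\rho]=1$) and Lemma \ref{ConditionsforRank2CaseI2Part2} (for the infiniteness of $\langle u_i,\tau\rangle$ and $\langle u_i,\rho\rangle$). Your closing remarks on the role of the $FA$-group argument and the contrast with the $D$-case accurately reflect the paper's own commentary.
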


\begin{proof} We have $\{ \tau,a \} \subseteq R^W$.
Moreover the order of $a\tau$ is finite since $a$ and $\tau$ are both contained
in the finite subgroup $\langle J \rangle$. We have also
$b = a^{\tau} \neq a$. By Lemma \ref{ConditionsforRank2CaseIPart1}
we have also $\rho \in \langle J \rangle \cap R^W$ and $[\tau,\rho] = 1$.

Let $U_i := \langle u_i \rangle$ for $0 \leq i \leq n$. Then $\langle U_{i-1},U_i \rangle
= \langle u_{i-1},u_i \rangle$
is a finite group for $1 \leq i \leq n$ because the $u_i$ are involutions
and $u_{i-1}u_i$ has finite order by hypothesis.
Moreover, Lemma \ref{ConditionsforRank2CaseI2Part2}
yields that $\langle \tau,U_i \rangle$
and $\langle \rho,U_i \rangle$
are infinite groups for $0 \leq i \leq n$
because $u_i \in s^{\infty}$.

We are now in the position to apply Proposition \ref{keyproposition} with $S:= R$ and $\sigma:= \rho$.
It asserts that there is an element $x \in \{ a,b \}$
such that
$\langle x,U_i \rangle$ is an infinite
group for all $0 \leq i \leq n$. As $U_i = \langle u_i \rangle$ with an involution $u_i$
for $0 \leq i \leq n$ it follows that there exists an $x \in \{ a,b \}$
such that $xu_i$ has infinite order for $0 \leq i \leq n$.
\end{proof}

\bigskip
\noindent
{\bf Proof of Proposition \ref{mainI2odd}:} The generator $a$
satisfies Axiom (BDG1) by the general assumptions of this section and
Proposition \ref{blowingdownIprop} yields that $a$ satisfies Axiom (BDG2)
as well.

\section{Proof of the main result}

We first recall the Proposition of the introduction.

\begin{prop} \label{mainresultdiffultdirection}
Let $(W,S)$ be a Coxeter system of arbitrary rank and
let $s \in S$ be a right-angled generator such that each $s$-component
has trivial center. If there exists a Coxeter generating set $T$ of $W$
such that $s$ is not a reflection of $(W,T)$,
then there exists a blowing down generator for $s$.
\end{prop}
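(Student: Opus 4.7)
The plan is to derive Proposition \ref{mainresultdiffultdirection} by assembling the three case-specific results \ref{barDprop}, \ref{mainDodd}, and \ref{mainI2odd} established in the previous sections, using Proposition \ref{reductiontothreecases} as the bridge that sets up those cases. Since each $s$-component has trivial center by assumption, Lemma \ref{finitecenter} tells us that no $s$-component is of $(-1)$-type. Combined with the existence of a Coxeter generating set $T$ in which $s$ is not a reflection, this matches the hypotheses of Proposition \ref{reductiontothreecases}, which then yields a Coxeter generating set $R$ of $W$, an irreducible $(-1)$-subset $J \subseteq R$, and an $s$-component $C$ with $\langle J \rangle = \langle s \rangle \times \langle C \rangle$, such that one of the cases $I$, $D$, or $\bar{D}$ occurs.

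In each of these three cases, the type of $(\langle C \rangle, C)$ is either $I_2(2k+1)$ or $D_{2k+1}$ for some $k \geq 1$. I would select $a \in C$ with $b := \rho a \rho \neq a$, where $\rho$ is the longest element of $(\langle C \rangle, C)$: in the $I_2(2k+1)$ case, $C = \{a,b\}$ and $\rho$ swaps the two generators, so any choice works; in the $D_{2k+1}$ case, $\rho$ realizes the unique non-trivial diagram automorphism of $D_{2k+1}$ and thus moves some generator. This immediately verifies condition (BDG1) for $a$.

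To verify (BDG2), I would invoke the appropriate case-specific proposition. In case $\bar{D}$, Proposition \ref{barDprop} yields the strong conclusion that $cu$ has infinite order for every $c \in C$ and every $u \in s^{\infty}$; hence $s^{\infty} \subseteq a^{\infty} \cap b^{\infty}$, and (BDG2) holds trivially with either $x = a$ or $x = b$. In case $D$, Proposition \ref{mainDodd} directly asserts that $a$ is a blowing down generator. In case $I$, Proposition \ref{mainI2odd} is the analogous assertion. In each case the output is exactly what we need, and combining (BDG1) with (BDG2) shows that $a$ is a blowing down generator for $s$.

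The proof at this stage is really a bookkeeping exercise: the genuine content lies in Proposition \ref{reductiontothreecases} (which reduces the problem to three rank-two configurations on the $R$-side) and in the verification of (BDG2) in each case, which has been done separately in Sections \ref{sectioncasebarD}, \ref{sectioncaseD} and \ref{sectioncaseI} via the key tool Proposition \ref{keyproposition} and the geometry of the Coxeter complex. The main conceptual obstacle would be to see that the three cases produced by Proposition \ref{reductiontothreecases} genuinely align with the hypotheses of the three case-specific propositions; but since the notation (choice of $a$, $\rho$, $\tau$, the membership $a, \tau \in R^W$, and the conjugacy constraints on $J$) has been set up throughout so as to match, this alignment is immediate and the proof is completed by the corresponding invocation.
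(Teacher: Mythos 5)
Your proposal is correct and follows essentially the same route as the paper: apply Proposition \ref{reductiontothreecases} (noting that trivial center of each $s$-component rules out $(-1)$-type via Lemma \ref{finitecenter}), and then conclude via Propositions \ref{mainI2odd}, \ref{mainDodd} and \ref{barDprop} in the respective cases $I$, $D$ and $\bar{D}$. The additional remarks you make about (BDG1) and the alignment of the conventions in Sections \ref{sectioncasebarD}--\ref{sectioncaseI} with the output of Proposition \ref{reductiontothreecases} are accurate and consistent with how the paper organizes the argument.
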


\begin{proof} In view of the hypothesis of the Proposition
we are in the position to apply Proposition \ref{reductiontothreecases}.
Thus, there is a Coxeter generating set $R$ of $W$, an irreducible subset
$J$ of $R$ of $(-1)$-type and a $s$-component $C$ such that we are
in one of the cases $I,D$ or $\bar{D}$ described in Proposition \ref{reductiontothreecases}.
If we are in case $I$, (resp. $D$,$\bar{D}$) Proposition \ref{mainI2odd} (resp.
\ref{mainDodd}, \ref{barDprop}) asserts that there exists a blowing down generator for $s$.
\end{proof}

\smallskip
\noindent
The first assertion of the main result follows from Propositions \ref{mainresulteasydirectionprop} and
\ref{mainresultdiffultdirection}.
The second assertion follows from Lemma \ref{rightangled2}
applied to the Coxeter system $(W,R)$.

%\bigskip
%\bigskip
%\noindent
%{\bf Bernhard M\"uhlherr:} {\sl Mathematisches Institut, JLU Giessen}
%
%\medskip
%\noindent
%{\bf Koji Nuida:} {\sl Information Technology Research Institute, National Institute of Advanced Industrial Science and Technology} and {\sl Japan Science and Technology Agency (JST) PRESTO Researcher}

\end{document}